\def\mylabelonoff{off}
\def\allowdisbrkyesno{yes}
\def\numberingtheoremsectionyesno{yes}
\def\numberingequationsectionyesno{yes}
\def\pagesizeextendednormal{extended}
\def\reportudemathyesno{no}
\def\reportudemathnumber{SM-UDE-809}
\def\reportudemathyear{2016}
\def\reportudematheingang{\mydate}
\def\mytitle{
Solution Theory, Variational Formulations,
and Functional A Posteriori Error Estimates\\
for General First Order Systems
with Applications to Electro-Magneto-Statics
and More}
\def\mytitlerepude{Solution Theory and Functional A Posteriori Error Estimates\\
for General First Order Systems\\ with Applications to Electro-Magneto-Statics}
\def\myshorttitle{General First Order Systems}
\def\myauthorone{Dirk Pauly}
\def\myauthors{\myauthorone}
\def\myaddressone{Fakult\"at f\"ur Mathematik,
Universit\"at Duisburg-Essen, Campus Essen, Germany}
\def\myemailone{dirk.pauly@uni-due.de}
\def\mykeywords{general first order systems, functional a posteriori error estimates,
electro-magneto statics, mixed boundary conditions}
\def\mysubjclass{35F05, 35J46, 47A05, 47A50, 65N15, 78A25, 78A30}
\def\mydate{\today}
\newcommand{\mylabel}[1]{\label{#1}\fbox{{\sf #1}}}}
\newcommand{\mylabel}[1]{\label{#1}}}
\numberwithin{equation}{section}}
\newcommand{\ti}[1]{\tilde{#1}}
\newcommand{\ovl}[1]{\overline{#1}}
\newcommand{\unl}[1]{\underline{#1}}
\DeclareMathOperator{\diam}{diam}
\newcommand{\set}[2]{\{#1\,:\,#2\}}
\newcommand{\setb}[2]{\big\{#1\,:\,#2\big\}}
\newtheorem{lem}{Lemma}[section]}
\newtheorem{lem}{Lemma}}
\newtheorem{theo}[lem]{Theorem}
\newtheorem{cor}[lem]{Corollary}
\newtheorem{rem}[lem]{Remark}
\newtheorem{genass}[lem]{General Assumption}
\newtheorem{algo}[lem]{Algorithm}
\newcommand{\om}{\Omega}
\newcommand{\omb}{\ovl{\om}}
\newcommand{\ga}{\Gamma}
\newcommand{\gat}{\ga_{\mathsf{t}}}
\newcommand{\gan}{\ga_{\mathsf{n}}}
\newcommand{\eps}{\epsilon}
\newcommand{\calF}{\mathcal{F}}
\newcommand{\reals}{\mathbb{R}}
\newcommand{\n}{\mathbb{N}}
\newcommand{\rt}{\reals^{3}}
\newcommand{\rN}{\reals^{N}}
\newcommand{\rttt}{\reals^{3\times3}}
\newcommand{\foh}{\frac{1}{2}}
\newcommand{\oh}{\nicefrac{1}{2}}
\newcommand{\moh}{-\oh}
\newcommand{\impl}{\Rightarrow}
\newcommand{\lpmi}{\Leftarrow}
\newcommand{\equi}{\Leftrightarrow}
\newcommand{\qqequi}{\qquad\equi\qquad}
\DeclareMathOperator{\id}{id}
\DeclareMathOperator{\sym}{sym}
\DeclareMathOperator{\dev}{dev}
\DeclareMathOperator{\supp}{supp}
\DeclareMathOperator{\dist}{dist}
\DeclareMathOperator{\A}{A}
\DeclareMathOperator{\Almo}{\A_{\ell-1}}
\DeclareMathOperator{\Al}{\A_{\ell}}
\DeclareMathOperator{\Alpo}{\A_{\ell+1}}
\DeclareMathOperator{\Aslmo}{\A_{\ell-1}^{*}}
\DeclareMathOperator{\Asl}{\A_{\ell}^{*}}
\DeclareMathOperator{\Aslpo}{\A_{\ell+1}^{*}}
\DeclareMathOperator{\cA}{\mathcal{A}}
\DeclareMathOperator{\cAlmo}{\cA_{\ell-1}}
\DeclareMathOperator{\cAl}{\cA_{\ell}}
\DeclareMathOperator{\cAslmo}{\cA_{\ell-1}^{*}}
\DeclareMathOperator{\cAsl}{\cA_{\ell}^{*}}
\DeclareMathOperator{\cM}{\mathcal{M}}
\DeclareMathOperator{\pil}{\pi_{\ell}}
\DeclareMathOperator{\p}{\partial}
\DeclareMathOperator{\rot}{rot}
\DeclareMathOperator{\divergence}{div}
\renewcommand{\div}{\divergence}
\DeclareMathOperator{\ed}{d}
\DeclareMathOperator{\cd}{\delta}
\newcommand{\csymbol}{\mathsf{C}}
\newcommand{\cgen}[3]{\overset{#1}{\csymbol}{}^{#2}_{#3}}
\newcommand{\ci}{\cgen{}{\infty}{}}
\newcommand{\ciomb}{\ci(\omb)}
\newcommand{\lsymbol}{\mathsf{L}}
\newcommand{\lgen}[3]{\overset{#1}{\lsymbol}{}^{#2}_{#3}}
\newcommand{\lt}{\lgen{}{2}{}}
\newcommand{\li}{\lgen{}{\infty}{}}
\newcommand{\ltbot}{\lgen{}{2}{\bot}}
\newcommand{\lteps}{\lgen{}{2}{\eps}}
\newcommand{\ltmu}{\lgen{}{2}{\mu}}
\newcommand{\ltom}{\lt(\om)}
\newcommand{\ltbotom}{\ltbot(\om)}
\newcommand{\ltepsom}{\lteps(\om)}
\newcommand{\ltmuom}{\ltmu(\om)}
\newcommand{\hsymbol}{\mathsf{H}}
\newcommand{\hgen}[3]{\overset{#1}{\hsymbol}{}^{#2}_{#3}}
\newcommand{\ho}{\hgen{}{1}{}}
\newcommand{\hl}{\hgen{}{\ell}{}}
\newcommand{\hobot}{\hgen{}{1}{\bot}}
\newcommand{\hoga}{\hgen{}{1}{\ga}}
\newcommand{\hoom}{\ho(\om)}
\newcommand{\hlom}{\hl(\om)}
\newcommand{\hobotom}{\hobot(\om)}
\newcommand{\hogaom}{\hoga(\om)}
\newcommand{\hoganom}{\hogan(\om)}
\newcommand{\rsymbol}{\mathsf{R}}
\newcommand{\rgen}[3]{\overset{#1}{\rsymbol}{}^{#2}_{#3}}
\renewcommand{\r}{\rgen{}{}{}}
\newcommand{\rz}{\rgen{}{}{0}}
\newcommand{\rga}{\rgen{}{}{\ga}}
\newcommand{\rgat}{\rgen{}{}{\gat}}
\newcommand{\rgan}{\rgen{}{}{\gan}}
\newcommand{\rgaz}{\rgen{}{}{\ga,0}}
\newcommand{\rgatz}{\rgen{}{}{\gat,0}}
\newcommand{\rganz}{\rgen{}{}{\gan,0}}
\newcommand{\rom}{\r(\om)}
\newcommand{\rzom}{\rz(\om)}
\newcommand{\rgaom}{\rga(\om)}
\newcommand{\rgatom}{\rgat(\om)}
\newcommand{\rganom}{\rgan(\om)}
\newcommand{\rgatzom}{\rgatz(\om)}
\newcommand{\rganzom}{\rganz(\om)}
\newcommand{\rgazom}{\rgaz(\om)}
\newcommand{\dsymbol}{\mathsf{D}}
\newcommand{\dgen}[3]{\overset{#1}{\dsymbol}{}^{#2}_{#3}}
\renewcommand{\d}{\dgen{}{}{}}
\newcommand{\dz}{\dgen{}{}{0}}
\newcommand{\dga}{\dgen{}{}{\ga}}
\newcommand{\dgat}{\dgen{}{}{\gat}}
\newcommand{\dgan}{\dgen{}{}{\gan}}
\newcommand{\dgatz}{\dgen{}{}{\gat,0}}
\newcommand{\dganz}{\dgen{}{}{\gan,0}}
\newcommand{\dgaz}{\dgen{}{}{\ga,0}}
\newcommand{\dom}{\d(\om)}
\newcommand{\dzom}{\dz(\om)}
\newcommand{\dgatom}{\dgat(\om)}
\newcommand{\dganom}{\dgan(\om)}
\newcommand{\dgatzom}{\dgatz(\om)}
\newcommand{\dganzom}{\dganz(\om)}
\newcommand{\dgaom}{\dga(\om)}
\newcommand{\dgazom}{\dgaz(\om)}
\newcommand{\harmsymbol}{\mathcal{H}}
\newcommand{\harmgen}[3]{\overset{#1}{\harmsymbol}{}^{#2}_{#3}}
\newcommand{\harm}{\harmgen{}{}{}}
\newcommand{\norm}[1]{|#1|}
\newcommand{\bnorm}[1]{\big|#1\big|}
\newcommand{\normltom}[1]{\norm{#1}_{\ltom}}
\newcommand{\normltepsom}[1]{\norm{#1}_{\ltepsom}}
\newcommand{\normltmuom}[1]{\norm{#1}_{\ltmuom}}
\newcommand{\scp}[2]{\langle#1,#2\rangle}
\newcommand{\bscp}[2]{\big\langle#1,#2\big\rangle}
\newcommand{\scpltom}[2]{\scp{#1}{#2}_{\ltom}}
\newcommand{\preprintudemath}[5]{
\thispagestyle{empty}
\Large
\begin{center}SCHRIFTENREIHE DER FAKULT\"AT F\"UR MATHEMATIK\end{center}
\vspace*{5mm}
\begin{center}#1\end{center}
\vspace*{5mm}
\begin{center}by\end{center}
\begin{center}#2\end{center}
\vspace*{5mm}
\begin{center}#3\hspace{80mm}#4\end{center}
\newpage
\thispagestyle{empty}
\vspace*{210mm}
Received: #5
\newpage
\addtocounter{page}{-2}
\normalsize}
\title[\sc\myshorttitle]{\Large\sf\mytitle}
\author{\myauthorone}
\address{\myaddressone}
\email[\myauthorone]{\myemailone}
\keywords{\mykeywords}
\subjclass{\mysubjclass}
\date{\mydate}
\DeclareMathOperator{\Hilbert}{\mathsf{H}}
\DeclareMathOperator{\Hiz}{\Hilbert_{0}}
\DeclareMathOperator{\Hio}{\Hilbert_{1}}
\DeclareMathOperator{\Hit}{\Hilbert_{2}}
\DeclareMathOperator{\Hith}{\Hilbert_{3}}
\DeclareMathOperator{\Hif}{\Hilbert_{4}}
\DeclareMathOperator{\Hil}{\Hilbert_{\ell}}
\DeclareMathOperator{\Hilmo}{\Hilbert_{\ell-1}}
\DeclareMathOperator{\Hilpo}{\Hilbert_{\ell+1}}
\DeclareMathOperator{\T}{T}
\DeclareMathOperator{\Az}{A_{0}}
\DeclareMathOperator{\Ao}{A_{1}}
\DeclareMathOperator{\At}{A_{2}}
\DeclareMathOperator{\Ath}{A_{3}}
\DeclareMathOperator{\cAz}{\cA_{0}}
\DeclareMathOperator{\cAo}{\cA_{1}}
\DeclareMathOperator{\cAt}{\cA_{2}}
\DeclareMathOperator{\cAth}{\cA_{3}}
\DeclareMathOperator{\Azs}{A_{0}^{*}}
\DeclareMathOperator{\Aos}{A_{1}^{*}}
\DeclareMathOperator{\Ats}{A_{2}^{*}}
\DeclareMathOperator{\Aths}{A_{3}^{*}}
\newcommand{\Als}{\Asl}
\newcommand{\Almos}{\Aslmo}
\newcommand{\Alpos}{\Aslpo}
\DeclareMathOperator{\cAzs}{\cA_{0}^{*}}
\DeclareMathOperator{\cAos}{\cA_{1}^{*}}
\DeclareMathOperator{\cAts}{\cA_{2}^{*}}
\DeclareMathOperator{\cAths}{\cA_{3}^{*}}
\newcommand{\cAls}{\cAsl}
\newcommand{\cAlmos}{\cAslmo}
\DeclareMathOperator{\pio}{\pi_{1}}
\DeclareMathOperator{\pit}{\pi_{2}}
\DeclareMathOperator{\pith}{\pi_{3}}
\newcommand{\dhookrightarrow}{\hookrightarrow}
\newcommand{\hogatom}{\hgen{}{1}{\gat}(\om)}
\renewcommand{\hoganom}{\hgen{}{1}{\gan}(\om)}
\DeclareMathOperator{\grad}{grad}
\renewcommand{\eps}{\varepsilon}
\newcommand{\harmgantom}{\harmsymbol_{\mathsf{n,t}}(\om)}
\newcommand{\harmgatnepsom}{\harmsymbol_{\mathsf{t,n},\eps}(\om)}
\begin{document}


\ifthenelse{\equal{\reportudemathyesno}{yes}}
{\preprintudemath{\mytitlerepude}{\myauthors}{\reportudemathnumber}{\reportudemathyear}{\reportudematheingang}}
{}


\begin{abstract}
We prove a comprehensive solution theory using tools from functional analysis,
show corresponding variational formulations, and 
present functional a posteriori error estimates
for general linear first order systems of type
\begin{align*}
\At x&=f,\\
\Aos x&=g,
\end{align*}
for two densely defined and closed (possibly unbounded) linear operators $\Ao$ and $\At$
having the complex property $\At\Ao=0$.
As a prototypical application we will discuss
the system of electro-magneto statics in 3D with mixed tangential and normal boundary conditions
\begin{align*}
\rot E&=F,\\
-\div\eps E&=g.
\end{align*}
Our theory covers a lot more applications in 2D, 3D, and ND, such as general differential forms
and all kind of systems arising, e.g., in general relativity, biharmonic problems, Stokes equations,
or linear elasticity, to mention just a few, for example
\begin{align*}
\ed E&=F,
&
\mathrm{Rot}_{\mathbb{S}}M&=F,
&
\mathrm{Div}_{\mathbb{T}}T&=F,
&
\mathrm{Rot\,Rot}_{\mathbb{S}}^{\top}S&=F,\\
-\cd\eps E&=G,
&
\div\mathrm{Div}_{\mathbb{S}}\,\eps M&=G,
&
\sym\mathrm{Rot}_{\mathbb{T}}\,\eps T&=G,
&
-\mathrm{Div}_{\mathbb{S}}\,\eps S&=G,
\end{align*}
all with possibly mixed boundary conditions
of generalized tangential and normal type.
Second order systems of types
\begin{align*}
\Ats\At x&=f,
&
\Ats\At x&=f,\\
\Aos x&=g,
&
\Ao\Aos x&=g
\end{align*}
will be considered as well using the same techniques.
\end{abstract}


\vspace*{-10mm}
\maketitle
\tableofcontents


\section{Introduction}

Throughout this paper we assume the following:
For $\ell\in\mathbb{Z}$ 
let $\Hil$ be Hilbert spaces. Moreover, let
$$\Al:D(\Al)\subset\Hil\to\Hilpo$$ 
be densely defined and closed (possibly unbounded) linear operators.
In applications, often almost all operators $\Al$ will be zero,
i.e., only finitely many $\Al$ are different from zero,
typically $\Az$, $\Ao$, $\At$, $\Ath$, $\A_{4}$ in 3D PDE applications
or $\Az$, $\Ao$, \dots, $\A_{N}$, $\A_{N+1}$ in ND PDE applications.
Here, $D(\A)$ denotes the domain of definition of a linear operator $\A$
and we introduce by $N(\A)$ and $R(\A)$ its kernel and range, respectively.
Inner product, norm, orthogonality, orthogonal sum and difference of (or in)
an Hilbert space $\Hilbert$ will be denoted by
$\scp{\,\cdot\,}{\,\cdot\,}_{\Hilbert}$, $\norm{\,\cdot\,}_{\Hilbert}$,
$\bot_{\Hilbert}$, and $\oplus_{\Hilbert}$, $\ominus_{\Hilbert}$, respectively.
We note that $D(\A)$, equipped with the graph inner product, is a Hilbert space itself.
Moreover, we assume that the operators $\Al$ satisfy the sequence or complex property, 
this is for all $\ell$ 
\begin{align}
\mylabel{exseqAl}
R(\Al)\subset N(\Alpo)
\end{align}
or equivalently $\Alpo\Al\subset0$.
Then 
the (Hilbert space) adjoint operators
$$\Als:D(\Als)\subset\Hilpo\to\Hil$$ 
defined by the relation
$$\forall\,x\in D(\Al)\quad
\forall\,y\in D(\Als)\qquad
\scp{\Al x}{y}_{\Hilpo}=\scp{x}{\Als y}_{\Hil}$$
satisfy also the sequence or complex property, i.e., for all $\ell$
\begin{align}
\mylabel{exseqAls}
R(\Alpos)\subset N(\Als)
\end{align}
or equivalently $\Als\Alpos\subset0$.
We note $\A_{\ell}^{**}=\ovl{\Al}=\Al$, i.e.,
$(\Al,\Als)$ are dual pairs. 
The complex
\begin{align}
\mylabel{seqAl}
\begin{CD}
\cdots @> \Almo >>
D(\Al) @> \Al >>
D(\Alpo) @> \Alpo >>
\cdots
\end{CD}
\end{align}
is called closed, if all ranges 
$R(\Al)$ are closed,
and called exact, if $R(\Al)=N(\Alpo)$ holds for all $\ell$. By the closed range theorem,
\eqref{seqAl} is closed resp. exact, if and only if the adjoint complex
\begin{align}
\mylabel{seqAls}
\begin{CD}
\cdots @< \Almos <<
D(\Almos) @< \Als <<
D(\Als) @< \Alpos <<
\cdots
\end{CD}
\end{align}
is closed resp. exact. 
For all $\ell$
and by the projection theorem the Helmholtz type decompositions
\begin{align*}
\Hil&=N(\Al)\oplus_{\Hil}\ovl{R(\Als)},
&
N(\Al)&=R(\Als)^{\bot_{\Hil}},\\
\Hil
&=\ovl{R(\Almo)}\oplus_{\Hil}N(\Almos),
&
N(\Almos)&=R(\Almo)^{\bot_{\Hil}}
\end{align*}
hold. Moreover, the complex properties \eqref{exseqAl}-\eqref{exseqAls} show
$$N(\Al)=\ovl{R(\Almo)}\oplus_{\Hil}K_{\ell},\qquad
N(\Almos)=K_{\ell}\oplus_{\Hil}\ovl{R(\Als)},$$
where we introduce 
the cohomology groups
$$K_{\ell}:=N(\Al)\cap N(\Almos).$$
Therefore, we obtain the refined Helmholtz type decompositions
$$\Hil=\ovl{R(\Almo)}\oplus_{\Hil}K_{\ell}\oplus_{\Hil}\ovl{R(\Als)}.$$
Note that, if $\Al$ has closed range then 
by the closed range theorem and the projection theorem
$$R(\Al)=N(\Als)^{\bot_{\Hilpo}},\qquad
R(\Als)=N(\Al)^{\bot_{\Hil}}.$$
Finally, we define for all $\ell$
the domains of definitions for our mixed problems
$$D_{\ell}:=D(\Al)\cap D(\Almos).$$

\subsection{Aims and Main Results}

The central aim of this paper is to prove functional a posteriori error estimates
in the spirit of Sergey Repin, see, e.g., 
\cite{repinapostvarprob,repintsestelleq,NeittaanmakiRepin2004,MaliRepinNeittaanmaki2014,repinbookone},
for the linear system
\begin{align}
\mylabel{Aprob}
\begin{split}
\At x&=f,\\
\Aos x&=g,\\
\pit x&=k
\end{split}
\end{align}
with 
$$x\in D_{2}=D(\At)\cap D(\Aos),$$
where $\pi_{2}:\Hit\to K_{2}=N(\At)\cap N(\Aos)$ denotes 
the orthonormal projector onto the cohomology group or kernel $K_{2}$.
We recall the complex property $\At\Ao=0$, and hence also $\Aos\Ats=0$.
Obviously, $f\in R(\At)$, $g\in R(\Aos)$, and $k\in K_{2}$ are necessary 
for solvability of \eqref{Aprob} and there exists at most one solution to \eqref{Aprob}.
A proper solution theory for \eqref{Aprob}, i.e., 
existence of a solution of \eqref{Aprob} depending continuously on the data, 
will be given in the next section. 
The main result for this is Theorem \ref{soltheofos} and reads as follows:\\

\noindent{\bf Theorem I} (Theorem \ref{soltheofos})
{\it Let $R(\Ao)$ and $R(\At)$ be closed.
Then \eqref{Aprob} is uniquely solvable in $D_{2}$, if and only if
$f\in R(\At)$, $g\in R(\Aos)$, and $k\in K_{2}$.
The solution $x\in D_{2}$ depends linearly and continuously on the data, i.e.,
$\norm{x}_{\Hit}\leq c_{2}\norm{f}_{\Hith}+c_{1}\norm{g}_{\Hio}+\norm{k}_{\Hit}$.}\\

\noindent{\bf Remark II} (Lemma \ref{poincarerange}, Lemma \ref{cptembtoolbox}, \eqref{classup})
{\it 
\begin{itemize}
\item[\bf(i)]
By the closed range theorem,
$R(\Ao)$ resp. $R(\At)$ is closed,
if and only if $R(\Aos)$ resp. $R(\Ats)$ is closed.
Moreover, $R(\Ao)$ and $R(\At)$ are closed,
if, e.g., $D_{2}\dhookrightarrow\Hit$ is compact,
see Lemma \ref{cptembtoolbox},
in which case $K_{2}$ is also finite dimensional,
see General Assumption \ref{genass} and Remark \ref{genassrem}.
\item[\bf(ii)]
By the closed graph theorem the following assertions are equivalent: 
\begin{itemize}
\item[$\bullet$]
The range $R(\Ao)$ is closed in $\Hit$.
\item[$\bullet$]
There exists $0<c<\infty$ such that for all
$\phi\in D(\Ao)\cap N(\Ao)^{\bot_{\Hio}}$ it holds
$\norm{\phi}_{\Hio}\leq c\norm{\Ao\phi}_{\Hit}$.
\item[$\bullet$]
The inverse $\cA_{1}^{-1}:R(\Ao)\to D(\Ao)\cap N(\Ao)^{\bot_{\Hio}}$ is continuous,
where $\cAo$ is the corresponding reduced operator of $\Ao$, i.e.,
the restriction of $\Ao$ to $D(\Ao)\cap N(\Ao)^{\bot_{\Hio}}$.
\end{itemize}
\item[\bf(iii)]
If $R(\Ao)$ is closed, then $c_{1}$ is defined as the best possible constant in (ii) 
and hence equals the norm of the inverse
$\cA_{1}^{-1}$ regarded as operator from $R(\Ao)$ to $N(\Ao)^{\bot_{\Hio}}$.
Moreover, $c_{1}$ is also given by the Rayleigh quotient
$$\inf_{0\neq\phi\in D(\Ao)\cap N(\Ao)^{\bot_{\Hio}}}\frac{\norm{\Ao\phi}_{\Hit}^2}{\norm{\phi}_{\Hio}^2}
=\frac{1}{c_{1}^2}=\lambda_{1},$$
which defines the smallest positive eigenvalue $\lambda_{1}$ 
of the selfadjoint operator\footnote{Thus 
$\lambda_{1}$ is also the smallest positive eigenvalue
of the selfadjoint operator $\Ao\Aos$.} 
$\Aos\Ao$.
\item[\bf(iii)]
Similar results and definitions as in (ii) and (iii) hold for the constant $c_{2}$
provided that $R(\At)$ is closed.
\item[\bf(iv)]
The unique solution $x\in D_{2}$ in Theorem I is simply given by $x=\cA_{2}^{-1}f+(\cAos)^{-1}g+k$.\\
\end{itemize}}

Although the solution theory is based on pure functional analysis and operator theory,
we shall give a few variational (multiple) saddle point formulations as well
propose methods for computing the exact solution $x\in D_{2}$. 
These formulations are not only alternatives to prove Theorem I, 
but also suggestions for possible numerical methods in future applications,
and will be discussed extensively, see, e.g.,
Theorem \ref{soltheofosvarform}, Theorem \ref{soltheofosvarformtogether}, Theorem \ref{soltheofosvarformtogetherwithK2},
Theorem \ref{soltheofosvarformpartsolfull}, and Theorem \ref{soltheofosvarformtogetherwithK2andmore}.
One of these results reads as follows:\\

\noindent{\bf Theorem III} (Theorem \ref{soltheofosvarformtogetherwithK2})
{\it Let $R(\Ao)$ and $R(\At)$ be closed.
Moreover, let $f\in R(\At)$ and $g\in R(\Aos)$.
The unique solution $x\in D_{2}$ in Theorem I
can be found by the following two variational double saddle point formulations: 
\begin{itemize}
\item[\bf(i)]
There exists a unique tripple $(\hat{x},z,h)\in D(\At)\times\big(D(\Ao)\cap R(\Aos)\big)\times K_{2}$,
such that for all triples $(\xi,\varphi,\kappa)\in D(\At)\times\big(D(\Ao)\cap R(\Aos)\big)\times K_{2}$
\begin{align*}
\scp{\At\hat{x}}{\At\xi}_{\Hith}
+\scp{\Ao z}{\xi}_{\Hit}
+\scp{h}{\xi}_{\Hit}
&=\scp{f}{\At\xi}_{\Hith},\\
\scp{\hat{x}}{\Ao\varphi}_{\Hit}
&=\scp{g}{\varphi}_{\Hio},\\
\scp{\hat{x}}{\kappa}_{\Hit}
&=\scp{k}{\kappa}_{\Hit}.
\end{align*}
It holds $z=0$ and $h=0$ as well as
$\At\hat{x}=f$ and $\pit\hat{x}=k$.
Moreover, the variational formulation holds for all $\varphi\in D(\Ao)$,
and thus $\hat{x}\in D(\Aos)$ with $\Aos\hat{x}=g$.
Finally, $\hat{x}=x$ from Theorem I.
\item[\bf(ii)]
There exists a unique triple $(\hat{x},y,h)\in D(\Aos)\times\big(D(\Ats)\cap R(\At)\big)\times K_{2}$,
such that for all triples $(\zeta,\phi,\kappa)\in D(\Aos)\times\big(D(\Ats)\cap R(\At)\big)\times K_{2}$
\begin{align*}
\scp{\Aos\hat{x}}{\Aos\zeta}_{\Hio}
+\scp{\Ats y}{\zeta}_{\Hit}
+\scp{h}{\zeta}_{\Hit}
&=\scp{g}{\Aos\zeta}_{\Hio},\\
\scp{\hat{x}}{\Ats\phi}_{\Hit}
&=\scp{f}{\phi}_{\Hith},\\
\scp{\hat{x}}{\kappa}_{\Hit}
&=\scp{k}{\kappa}_{\Hit}.
\end{align*}
It holds $y=0$ and $h=0$ as well as $\Aos\hat{x}=g$ and $\pit\hat{x}=k$.
The variational formulation holds for all $\phi\in D(\Ats)$,
and thus $\hat{x}\in D(\At)$ with $\At\hat{x}=f$.
Finally, $\hat{x}=x$ from Theorem I.\\
\end{itemize}}

Theorem III (i) resp. (ii) is a weak formulation of 
\begin{align*}
\Ats\At\hat{x}+\Ao z+h&=\Ats f,
&
\Aos\hat{x}&=g,
&
\pit\hat{x}&=k,
\intertext{resp.}
\Ao\Aos\hat{x}+\Ats y+h&=\Ao g,
&
\At\hat{x}&=f,
&
\pit\hat{x}&=k,
\end{align*}
i.e., in formal matrix notation
$$\begin{bmatrix}
\Ats\At & \Ao & \iota_{K_{2}}\\
\Aos & 0 & 0\\
\pit=\iota_{K_{2}}^{*} & 0 & 0
\end{bmatrix}
\begin{bmatrix}
\hat{x}\\
z\\
h
\end{bmatrix}
=\begin{bmatrix}
\Ats f\\
g\\
k
\end{bmatrix},\qquad
\begin{bmatrix}
\Ao\Aos & \Ats & \iota_{K_{2}}\\
\At & 0 & 0\\
\pit=\iota_{K_{2}}^{*} & 0 & 0
\end{bmatrix}
\begin{bmatrix}
\hat{x}\\
y\\
h
\end{bmatrix}
=\begin{bmatrix}
\Ao g\\
f\\
k
\end{bmatrix},$$
respectively, where $\iota_{K_{2}}$ is the canonical embedding of $K_{2}$ into $\Hit$.
Note $z=0$, $h=0$ resp. $y=0$, $h=0$.
Often the additional condition $z\in R(\Aos)$ resp. $y\in R(\At)$ is unpleasant,
especially for possible numerical applications,
and hence the saddle point idea has to be repeated until 
$D(\cA_{\ell})=D(\A_{\ell})$, i.e., $R(\A_{\ell}^{*})=\Hil$
resp. $D(\cA_{\ell}^{*})=D(\A_{\ell}^{*})$, i.e., $R(\A_{\ell})=\Hilpo$
holds for some $\ell$.
In 3D we typically have only the operators $\Az$, $\Ao$, $\At$, $\Ath$, $\A_{4}$
with adjoints $\Azs$, $\Aos$, $\Ats$, $\Aths$, $\A_{4}^{*}$
forming the Hilbert complexes and it holds $R(\Azs)=\hsymbol_{0}$ and $R(\A_{4})=\hsymbol_{5}$.
Hence the biggest system in 3D arising for $\At$ resp. $\Aos$ as leading operator 
to compute $\hat{x}=x$ is
$$\begin{bmatrix}
\Ats\At & \Ao & 0 & \iota_{K_{2}} & 0\\
\Aos & 0 & \Az & 0 & \iota_{K_{1}}\\
0 & \Azs & 0 & 0 & 0\\
\pit=\iota_{K_{2}}^{*} & 0 & 0 & 0 & 0\\
0 & \pio=\iota_{K_{1}}^{*} & 0 & 0 & 0
\end{bmatrix}
\begin{bmatrix}
\hat{x}\\
z\\
u\\
h_{2}\\
h_{1}
\end{bmatrix}
=\begin{bmatrix}
\Ats f\\
g\\
0\\
k\\
0
\end{bmatrix}$$
resp.
$$\begin{bmatrix}
\Ao\Aos & \Ats & 0 & 0 & \iota_{K_{2}} & 0 & 0\\
\At & 0 & \Aths & 0 & 0 & \iota_{K_{3}}& 0 \\
0 & \Ath & 0 & \A_{4}^{*} & 0 & 0 & \iota_{K_{4}}\\
0 & 0 & \A_{4} & 0 & 0 & 0 & 0\\
\pit=\iota_{K_{2}}^{*} & 0 & 0 & 0 & 0 & 0 & 0\\
0 & \pith=\iota_{K_{3}}^{*} & 0 & 0 & 0 & 0 & 0 \\
0 & 0 & \pi_{4}=\iota_{K_{4}}^{*} & 0 & 0 & 0 & 0
\end{bmatrix}
\begin{bmatrix}
\hat{x}\\
y\\
v\\
w\\
h_{2}\\
h_{3}\\
h_{4}
\end{bmatrix}
=\begin{bmatrix}
\Ao g\\
f\\
0\\
0\\
k\\
0\\
0
\end{bmatrix}.$$
Note $z=0$, $u=0$, $h_{2}=0$, $h_{1}=0$
resp. $y=0$, $v=0$, $w=0$, $h_{2}=0$, $h_{3}=0$, $h_{4}=0$.\\

\noindent{\bf Remark IV}
{\it Particularly interesting cases are those for which
$R(\Aos)$ in Theorem III (i) or $R(\At)$ in Theorem III (ii)
already have finite co-dimension, i.e., in the best cases $R(\Aos)=\Hio$ or $R(\At)=\Hith$,
i.e., $N(\Ao)=\{0\}$ or $N(\Ats)=\{0\}$. Fortunately,
these situations are typical in many applications,
as we will see at the end of the introduction or in more detail in the 
Application Section \ref{secappl}.
Indeed, typically $N(\Ao)=\{0\}$ or at least $\dim N(\Ao)<\infty$
and $N(\Aths)=\{0\}$ or at least $\dim N(\Aths)<\infty$.\\}

Let $\ti{x}\in\Hit$ and let us consider $\ti{x}$ 
as a possibly (very) non-conforming\footnote{A conforming
``approximation'' $\ti{x}$ would belong to $D_{2}$.} 
``approximation'' for the exact solution 
$$x\in D_{2}=D(\At)\cap D(\Aos)$$
of \eqref{Aprob}. Proving functional a posteriori error estimates,
also called a posteriori error estimates of functional type,
for the linear problem \eqref{Aprob} means, 
that we will present two-sided estimates for the error 
$$e:=x-\ti{x}\in\Hit$$ 
with the following properties:
\begin{enumerate}
\item[\ding{172}]
There exist two functionals $\cM_{\mp}$, a lower and an upper bound, such that
\begin{align}
\mylabel{lowupbdM}
\forall\,z_{i},y_{j}\qquad
\cM_{-}(z_{1},\ldots,z_{I};\ti{x},f,g,k)
\leq\norm{e}_{\Hit}
\leq\cM_{+}(y_{1},\ldots,y_{J};\ti{x},f,g,k),
\end{align}
were the $z_{i}$ and the $y_{j}$ belong to some suitable Hilbert spaces.
The functionals $\cM_{\mp}$ are guaranteed lower and upper bounds for 
the norm of the error $\norm{e}_{\Hit}$
and explicitly computable 
as long as at least upper bounds  for the natural Friedrichs/Poincar\'e type constants $c_{1}$ and $c_{2}$
for the operators $\Ao$ and $\At$ are known\footnote{Just needed for the upper bound $\cM_{+}$.}. 
The bounds $\cM_{\mp}$ do not depend on the possibly and generally unknown exact solution $x$,
but only on the data, the approximation $\ti{x}$, 
and the ``free'' vectors $z_{i}$, $y_{j}$.
\item[\ding{173}] 
The lower and upper bound $\cM_{\mp}$ are sharp, i.e.,
\begin{align}
\mylabel{lowupbdMinfsup}
\max_{z_{1},\ldots,z_{I}}\cM_{-}(z_{1},\ldots,z_{I};\ti{x},f,g,k)
=\norm{e}_{\Hit}
=\min_{y_{1},\ldots,y_{J}}\cM_{+}(y_{1},\ldots,y_{J};\ti{x},f,g,k).
\end{align}
\item[\ding{174}]
The minimization over $z_{i}$ and $y_{j}$ is ``simple'',
typically a minimization of quadratic functionals.
\item[\ding{175}]
The bounds $\cM_{\mp}$ are general in the sense that
they do not depend on any specific numerical method
which might be used in some possible application.
\end{enumerate}

Concerning the error estimates the main result of this contribution 
is Corollary \ref{apostestfoscortsb}, which summarizes 
Theorem \ref{apostestfos}, Theorem \ref{apostestfoslowbd},
and the corresponding corollaries and reads as follows:\\

\noindent{\bf Theorem V} (Corollary \ref{apostestfoscortsb})
{\it Let $R(\Ao)$ and $R(\At)$ be closed.
Moreover, let $x\in D_{2}$ be the exact solution of \eqref{Aprob} 
and let $\ti{x}\in\Hit$, regarded as non-conforming approximation of $x$.
Then the error $e:=x-\ti{x}$ decomposes orthogonally, i.e.,
$$e=e_{\Ao}+e_{K_{2}}+e_{\Ats}
\in R(\Ao)\oplus_{\Hit}K_{2}\oplus_{\Hit}R(\Ats),\qquad
\norm{e}_{\Hit}^2
=\norm{e_{\Ao}}_{\Hit}^2
+\norm{e_{K_{2}}}_{\Hit}^2
+\norm{e_{\Ats}}_{\Hit}^2,$$
and the following a posteriori error estimates 
for the respective error parts hold:
\begin{itemize}
\item[\bf(i)]
The projection $e_{\Ao}\in R(\Ao)$ satisfies
$$\max_{\varphi\in D(\Ao)}
\cM_{-,\Ao}(\varphi;\ti{x},g)
=\norm{e_{\Ao}}_{\Hit}^2
=\min_{\zeta\in D(\Aos)}
\cM_{+,\Ao}^2(\zeta;\ti{x},g),$$
$$\cM_{-,\Ao}(\varphi;\ti{x},g)
:=2\scp{g}{\varphi}_{\Hio}
-\scp{2\ti{x}+\Ao\varphi}{\Ao\varphi}_{\Hit},\qquad
\cM_{+,\Ao}(\zeta;\ti{x},g)
:=c_{1}\norm{\Aos\zeta-g}_{\Hio}
+\norm{\zeta-\ti{x}}_{\Hit}.$$
The maximum is attained at any $\hat{\varphi}\in D(\Ao)$ with $\Ao\hat{\varphi}=e_{\Ao}$
and $\hat{\zeta}:=e_{\Ao}+\ti{x}\in D(\Aos)$ gives the minimum.
It holds $\Aos\hat{\zeta}=\Aos x=g$.
\item[\bf(ii)]
The projection $e_{\Ats}\in R(\Ats)$ satisfies
$$\max_{\phi\in D(\Ats)}
\cM_{-,\Ats}(\phi;\ti{x},f)
=\norm{e_{\Ats}}_{\Hit}^2
=\min_{\xi\in D(\At)}
\cM_{+,\Ats}^2(\xi;\ti{x},f),$$
$$\cM_{-,\Ats}(\phi;\ti{x},f)
:=2\scp{f}{\phi}_{\Hith}
-\scp{2\ti{x}+\Ats\phi}{\Ats\phi}_{\Hit},\qquad
\cM_{+,\Ats}(\xi;\ti{x},f)
:=c_{2}\norm{\At\xi-f}_{\Hith}
+\norm{\xi-\ti{x}}_{\Hit}.$$
The maximum is attained at any $\hat{\phi}\in D(\Ats)$ with $\Ats\hat{\phi}=e_{\Ats}$
and $\hat{\xi}:=e_{\Ats}+\ti{x}\in D(\At)$ gives the minimum.
It holds $\At\hat{\xi}=\At x=f$.
\item[\bf(iii)]
The projection $e_{K_{2}}=\pit e=k-\pit\ti{x}\in K_{2}$ satisfies
$$\max_{\theta\in K_{2}}
\cM_{-,K_{2}}(\theta;\ti{x},k)
=\norm{e_{K_{2}}}_{\Hit}^2
=\min_{\substack{\varphi\in D(\Ao),\\\phi\in D(\Ats)}}
\cM_{+,K_{2}}^2(\varphi,\phi;\ti{x},k)$$
$$\cM_{-,K_{2}}(\theta;\ti{x},k)
:=\bscp{2(k-\ti{x})-\theta}{\theta}_{\Hit},\qquad
\cM_{+,K_{2}}(\varphi,\phi;\ti{x},k)
:=\norm{k-\ti{x}+\Ao\varphi+\Ats\phi}_{\Hit}.$$
The maximum is attained at $\hat{\theta}:=e_{K_{2}}\in K_{2}$
and the minimum at any pair $(\hat{\varphi},\hat{\phi})\in D(\Ao)\times D(\Ats)$
with $\Ao\hat{\varphi}+\Ats\hat{\phi}=(1-\pit)\ti{x}$.\\
\end{itemize}}

\noindent{\bf Remark VI} (Corollary \ref{apostestfoscortsb} continued, Section \ref{seccomperrfunc})
{\it 
\begin{itemize}
\item[\bf(i)]
In applications, often $\ti{x}:=k+\ti{x}_{\bot}$ holds
with some $\ti{x}_{\bot}\in K_{2}^{\bot_{\Hit}}$.
In this case $e_{K_{2}}=0$ and 
in Theorem V (i) and Theorem V (ii) $\ti{x}$ can be replaced by $\ti{x}_{\bot}$.
Moreover,
$\hat{\zeta}_{\bot}:=e_{\Ao}+\ti{x}_{\bot}\in D(\Aos)$ and
$\hat{\xi}_{\bot}:=e_{\Ats}+\ti{x}_{\bot}\in D(\At)$ holds for the attaining minima.
\item[\bf(ii)]
Differentiating the lower bound
$\cM_{-,\Ao}(\varphi;\ti{x},g)$ with respect to $\varphi$
shows that a possible maximizer $\hat{\varphi}\in D(\Ao)$ 
of the maximum in Theorem V (i) solves the variational formulation
\begin{align}
\mylabel{varformMmAointro}
\forall\,\varphi\in D(\Ao)\qquad
\scp{\Ao\hat{\varphi}}{\Ao\varphi}_{\Hit}
=\scp{g}{\varphi}_{\Hio}
-\scp{\ti{x}}{\Ao\varphi}_{\Hit},
\end{align}
which implies $\Ao\hat{\varphi}+\ti{x}\in D(\Aos)$
with $\Aos(\Ao\hat{\varphi}+\ti{x})=g$
and presents a weak formulation\footnote{Thus
$\Ao\hat{\varphi}-e_{\Ao}\in N(\Aos)\cap R(\Ao)=N(\Aos)\cap N(\Aos)^{\bot_{\Hit}}=\{0\}$.} of
$$\Aos\Ao\hat{\varphi}
=g-\Aos\ti{x}
=\Aos e
=\Aos e_{\Ao}.$$
By Remark II (ii) $\Ao$ is strictly positive over $D(\Ao)\cap N(\Ao)^{\bot_{\Hio}}$
and hence \eqref{varformMmAointro} admits a unique solution
$\hat{\varphi}\in D(\Ao)\cap N(\Ao)^{\bot_{\Hio}}$.
A particularly simple case is again given if $N(\Ao)$ is finite dimensional or even $N(\Ao)=\{0\}$,
which occurs in many applications. 
\item[\bf(ii')]
On the other hand, considering the minimum in Theorem V (i)
we can roughly estimate the upper bound by, e.g.,
$$\cM_{+,\Ao}^2(\zeta;\ti{x},g)
\leq2c_{1}^2\norm{\Aos\zeta-g}_{\Hio}^2
+2\norm{\zeta-\ti{x}}_{\Hit}^2.$$
Differentiating the right hand side with respect to $\zeta$ 
shows that the minimizer $\hat{\zeta}\in D(\Aos)$ solves the variational formulation
\begin{align}
\mylabel{varformMpAointro}
\begin{split}
\forall\,\zeta\in D(\Aos)\qquad
c_{1}^2\scp{\Aos\hat{\zeta}}{\Aos\zeta}_{\Hio}
+\scp{\hat{\zeta}}{\zeta}_{\Hit}
=c_{1}^2\scp{g}{\Aos\zeta}_{\Hio}
+\scp{\ti{x}}{\zeta}_{\Hit},
\end{split}
\end{align}
which implies $\Aos\hat{\zeta}-g\in D(\Ao)$
and $c_{1}^2\Ao(\Aos\hat{\zeta}-g)=(\ti{x}-\hat{\zeta})$
and presents a weak formulation of
$$c_{1}^2\Ao\Aos\hat{\zeta}+\hat{\zeta}
=c_{1}^2\Ao g+\ti{x}.$$
Unique solvability of \eqref{varformMpAointro} in $D(\Aos)$ is trivial
as the variational formulation reproduces a graph inner product of $D(\Aos)$. 
An optimized minimization process using a more careful estimate
is explained in some detail in Section \ref{seccomperrfunc}.
\item[\bf(iii)]
Similar arguments and formulations hold for Theorem V (ii) and (iii) as well.\\
\end{itemize}
}

We shall also present a full theory, in particular functional a posteriori error estimates,
for linear second order systems such as 
\begin{align}
\mylabel{AsAprob}
\begin{split}
\Ats\At x&=f,\\
\Aos x&=g,\\
\pit x&=k
\end{split}
\end{align}
with $x\in D_{2}$ such that $\At x\in D(\Ats)$, i.e.,
$x\in D(\Aos)\cap D(\Ats\At)$.
This will follow immediately by the theory developed 
for the first order system \eqref{Aprob},
since the solution pair 
$$(x,y)\in\big(D(\At)\cap D(\Aos)\big)\times\big(D(\Ath)\cap D(\Ats)\big)$$
defined by $y:=\At x\in D(\Ats)\cap R(\At)$
solves the system of first order systems
\begin{align*}
\At x&=y,
&
\Ath y&=0,\\
\Aos x&=g,
&
\Ats y&=f,\\
\pit x&=k,
&
\pith y&=0.
\end{align*}

Analogously, we can treat problems such as 
\begin{align}
\mylabel{AsAAsAprob}
\begin{split}
\Ats\At x&=f,\\
\Ao\Aos x&=g,\\
\pit x&=k
\end{split}
\end{align}
as well, which are strongly related to the generalized Hodge-Helmholtz decomposition
of $f+g+k\in\Hit$.

\subsection{Applications}

Our main applications will be the linear first order systems
of electro-magneto statics as well as related second order $\rot\rot$ systems
and, as a very simple example, the Laplacian, see Section \ref{secappl},
especially Theorem \ref{apostestemsys}.
In this paper, we only discuss homogeneous boundary conditions,
noting that the canonical extension to inhomogeneous boundary conditions is straight forward.
As we shall give a detailed description of more applications fitting our general theory 
for the linear systems \eqref{Aprob}, \eqref{AsAprob}, \eqref{AsAAsAprob}
and for the general complexes \eqref{seqAl}, \eqref{seqAls}
in Section \ref{moreappsec}, we just indicate a few applications by
listing some interesting and important underlying complexes 
arising in, e.g., general electro-magneto statics, for differential forms on Riemannian manifolds,
in problems of linear elasticity, Stokes equations, biharmonic theory, 
general relativity, $\rot\rot\rot\rot$-operators, to mention just a few examples.
Although all these systems are allowed to have mixed generalized tangential and normal boundary conditions
and inhomogeneous and anisotropic material properties,
see Section \ref{moreappsec}, we will just present the cases of full boundary conditions
and homogeneous and isotropic material parameters here in this introductory part.
For this let $\om\subset\rt$ or $\om\subset\rN$, $N\geq2$,
be a bounded weak Lipschitz domain.
\begin{itemize}
\item
electro-magnetics
$$\begin{CD}
\{0\} @> \Az=\iota_{\{0\}} >>
\hogaom @> \Ao=\grad_{\ga} >>
\rgaom @> \At=\rot_{\ga} >>
\dgaom @> \Ath=\div_{\ga} >>
\ltom @> \A_{4}=\pi_{\reals} >>
\reals
\end{CD}$$
$$\begin{CD}
\{0\} @< \Azs=\pi_{\{0\}} <<
\ltom @< \Aos=-\div <<
\dom @< \Ats=\rot <<
\rom @< \Aths=-\grad <<
\hoom @< \A_{4}^{*}=\iota_{\reals} <<
\reals
\end{CD}$$
A typical system for a vector field $E$ is
$$\rot_{\ga}E=F,\qquad
-\div E=g.$$
\item
generalized electro-magnetics (differential forms)
{\tiny$$\begin{CD}
\{0\} @> \Az=\iota_{\{0\}} >>
\dgen{}{0}{\ga}(\om) @> \A_{1}=\ed_{\ga} >>
\cdots @> \A_{q-1}=\ed_{\ga} >>
\dgen{}{q-1}{\ga}(\om) @> \A_{q}=\ed_{\ga} >>
\dgen{}{q}{\ga}(\om) @> \A_{q+1}=\ed_{\ga} >>
\cdots @> \A_{N}=\ed_{\ga} >>
\lgen{}{2,N}{}(\om) @> \A_{N+1}=\pi_{\reals} >>
\reals
\end{CD}$$
$$\begin{CD}
\{0\} @< \A_{0}^{*}=\pi_{\{0\}} <<
\lgen{}{2,0}{}(\om) @< \A_{1}^{*}=-\cd <<
\cdots @< \A_{q-1}^{*}=-\cd <<
\Delta^{q-1}(\om) @< \A_{q}^{*}=-\cd <<
\Delta^{q}(\om) @< \A_{q+1}^{*}=-\cd <<
\cdots @< \A_{N}^{*}=-\cd <<
\dgen{}{N}{}(\om) @< \A_{N+1}^{*}=\iota_{\reals} <<
\reals
\end{CD}$$}
A typical system for a differential form $E$ is
$$\ed_{\ga}E=F,\qquad
-\cd E=G.$$
\item
biharmonic problems, Stokes problems, and general relativity
{\small$$\begin{CD}
\{0\} @> \Az=\iota_{\{0\}} >>
\hgen{}{2}{\ga}(\om) @> \Ao=\mathrm{Grad}\grad_{\ga} >>
\rgen{}{}{\ga}(\om;\mathbb{S}) @> \At=\mathrm{Rot}_{\mathbb{S},\ga} >>
\dgen{}{}{\ga}(\om;\mathbb{T}) @> \Ath=\mathrm{Div}_{\mathbb{T},\ga} >>
\ltom @> \A_{4}=\pi_{\mathsf{RT}} >>
\mathsf{RT}
\end{CD}$$
$$\begin{CD}
\{0\} @< \Azs=\pi_{\{0\}}<<
\ltom @< \Aos=\div\mathrm{Div}_{\mathbb{S}} <<
\dgen{}{}{}\dgen{}{}{}(\om;\mathbb{S}) @< \Ats=\sym\mathrm{Rot}_{\mathbb{T}} <<
\rgen{}{}{\sym}(\om;\mathbb{T}) @< \Aths=-\dev\mathrm{Grad} <<
\hoom @< \A_{4}^{*}=\iota_{\mathsf{RT}} <<
\mathsf{RT}
\end{CD}$$}
A typical system for a symmetric tensor field $S$ resp. a deviatoric (trace free) tensor field $T$ is
$$\mathrm{Rot}_{\mathbb{S},\ga}S=F,\qquad
\div\mathrm{Div}_{\mathbb{S}}S=g
\qquad\text{resp.}\qquad
\mathrm{Div}_{\mathbb{T},\ga}T=F,\qquad
\sym\mathrm{Rot}_{\mathbb{T}}T=G.$$
\item
linear elasticity
{\small$$\begin{CD}
\{0\} @> \Az=\iota_{\{0\}} >>
\hogaom @> \Ao=\sym\mathrm{Grad}_{\ga} >>
\rgen{}{}{}\rgen{}{\top}{\ga}(\om;\mathbb{S}) @> \At=\mathrm{Rot}\mathrm{Rot}^{\top}_{\mathbb{S},\ga} >>
\dgen{}{}{\ga}(\om;\mathbb{S}) @> \Ath=\mathrm{Div}_{\mathbb{S},\ga} >>
\ltom @> \A_{4}=\pi_{\mathsf{RM}} >>
\mathsf{RM}
\end{CD}$$
$$\begin{CD}
\{0\} @< \Azs=\pi_{\{0\}} <<
\ltom @< \Aos=-\mathrm{Div}_{\mathbb{S}} <<
\dgen{}{}{}(\om;\mathbb{S}) @< \Ats=\mathrm{Rot}\mathrm{Rot}^{\top}_{\mathbb{S}} <<
\rgen{}{}{}\rgen{}{\top}{}(\om;\mathbb{S}) @< \Aths=-\sym\mathrm{Grad} <<
\hoom @< \A_{4}^{*}=\iota_{\mathsf{RM}} <<
\mathsf{RM}
\end{CD}$$}
A typical system for a symmetric tensor field $S$ is
$$\mathrm{Rot}\mathrm{Rot}^{\top}_{\mathbb{S},\ga}S=F,\qquad
-\mathrm{Div}_{\mathbb{S}}S=G.$$
\end{itemize}
Here we denote the rigid motions and the global Raviart-Thomas fields of $\om$ by
\begin{align*}
\mathsf{RM}&:=\set{P|_{\om}}{P(x)=Qx+b,\,Q\in\rttt\text{ skew-symmetric},\,b\in\rt},\\
\mathsf{RT}&:=\set{P|_{\om}}{P(x)=a\,x+b,\,a\in\reals,\,b\in\rt}.
\end{align*}

\section{Functional Analysis Tool Box}
\label{sectoolbox}

Let $\ell\in\mathbb{Z}$. 
By the projection theorem the Helmholtz type decompositions
\begin{align}
\mylabel{helm}
\Hil&=N(\Al)\oplus_{\Hil}\ovl{R(\Als)},
&
\Hilpo&=N(\Als)\oplus_{\Hilpo}\ovl{R(\Al)}
\end{align}
hold 
and define in a natural way the reduced operators
\begin{align*}
\cAl&:=\Al|_{\ovl{R(\Als)}}:D(\cAl)\subset\ovl{R(\Als)}\to\ovl{R(\Al)},&
D(\cAl)&:=D(\Al)\cap\ovl{R(\Als)}=D(\Al)\cap N(\Al)^{\bot_{\Hil}},\\
\cAsl&:=\Als|_{\ovl{R(\Al)}}:D(\cAsl)\subset\ovl{R(\Al)}\to\ovl{R(\Als)},&
D(\cAsl)&:=D(\Als)\cap\ovl{R(\Al)}=D(\Als)\cap N(\Als)^{\bot_{\Hilpo}},
\end{align*}
which are also densely defined and closed linear operators.
We note that $\cAl$ and $\cAsl$ are indeed adjoint to each other, i.e.,
$(\cAl,\cAsl)$ is a dual pair as well. Now the inverse operators 
$$\cA_\ell^{-1}:R(\Al)\to D(\cAl),\qquad
(\cAsl)^{-1}:R(\Als)\to D(\cAsl)$$
exist, since $\cAl$ and $\cAsl$ are injective by definition,
and they are bijective, as, e.g., for $x\in D(\cAl)$
and $y:=\Al x\in R(\Al)$ we get $\cA_\ell^{-1}y=x$
by the injectivity of $\cAl$.
Furthermore, by the Helmholtz type decompositions \eqref{helm} we have
\begin{align}
\label{DacA}
D(\Al)&=N(\Al)\oplus_{\Hil}D(\cAl),&
D(\Als)&=N(\Als)\oplus_{\Hil}D(\cAsl)
\intertext{and thus we obtain for the ranges}
\label{RacA}
R(\Al)&=R(\cAl),&
R(\Als)&=R(\cAsl).
\end{align}

By the closed range and closed graph theorem we get immediately the following lemma.

\begin{lem}
\label{poincarerange}
The following assertions are equivalent:
\begin{itemize}
\item[\bf(i)] 
$\exists\,c_{\ell}\in(0,\infty)$ \quad 
$\forall\,x\in D(\cAl)$ \qquad 
$\norm{x}_{\Hil}\leq c_{\ell}\norm{\Al x}_{\Hilpo}$
\item[\bf(i${}^{*}$)] 
$\exists\,c^{*}_{\ell}\in(0,\infty)$ \quad 
$\forall\,y\in D(\cAsl)$ \qquad 
$\norm{y}_{\Hilpo}\leq c^{*}_{\ell}\norm{\Als y}_{\Hil}$
\item[\bf(ii)] 
$R(\Al)=R(\cAl)$ is closed in $\Hilpo$.
\item[\bf(ii${}^{*}$)] 
$R(\Als)=R(\cAsl)$ is closed in $\Hil$.
\item[\bf(iii)] 
$\cA_\ell^{-1}:R(\Al)\to D(\cAl)$ is continuous and bijective
with norm bounded by $(1+c_{\ell}^2)^{\oh}$.
\item[\bf(iii${}^{*}$)] 
$(\cAsl)^{-1}:R(\Als)\to D(\cAsl)$ is continuous and bijective
with norm bounded by $(1+c_{\ell}^{*}{}^2)^{\oh}$.
\end{itemize}
\end{lem}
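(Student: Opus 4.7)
The plan is to establish the chain \textbf{(i)} $\Leftrightarrow$ \textbf{(iii)} $\Leftrightarrow$ \textbf{(ii)}, then deduce \textbf{(ii)} $\Leftrightarrow$ \textbf{(ii${}^{*}$)} from the Banach closed range theorem applied to the dual pair $(\Al,\Als)$, and finally obtain the starred analogues \textbf{(i${}^{*}$)} and \textbf{(iii${}^{*}$)} by running the exact same unstarred arguments with $\cAsl$ in place of $\cAl$. This symmetry is legitimate because the paper has already recorded that $(\cAl,\cAsl)$ is itself a dual pair of densely defined, closed, injective linear operators and that $R(\Al)=R(\cAl)$, $R(\Als)=R(\cAsl)$.

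For \textbf{(i)} $\Rightarrow$ \textbf{(iii)} I would pick $y\in R(\Al)=R(\cAl)$, use injectivity of $\cAl$ (which is automatic since $D(\cAl)\subset N(\Al)^{\bot_{\Hil}}$) to write $x=(\cAl)^{-1}y\in D(\cAl)$ uniquely, and combine the Poincar\'e estimate $\norm{x}_{\Hil}^2\leq c_\ell^2\norm{\cAl x}_{\Hilpo}^2$ with the identity $\norm{\cAl x}_{\Hilpo}=\norm{y}_{\Hilpo}$ to bound the graph norm of $x$ by $(1+c_\ell^2)^{1/2}\norm{y}_{\Hilpo}$. Bijectivity of $(\cAl)^{-1}$ onto $D(\cAl)$ is then automatic from the construction of $\cAl$.

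For \textbf{(iii)} $\Rightarrow$ \textbf{(ii)} I would take any sequence $y_n=\cAl x_n\to y$ in $\Hilpo$ with $x_n\in D(\cAl)$. Continuity of $(\cAl)^{-1}$ forces $(x_n)$ to be Cauchy in the graph norm, hence $x_n\to x$ in $\Hil$ with $x\in\ovl{R(\Als)}$ (since this subspace is closed), and $\cAl x_n\to y$. Closedness of $\cAl$, inherited from $\Al$ via restriction to a closed subspace, then yields $x\in D(\cAl)$ and $y=\cAl x\in R(\cAl)$, so the range is closed. Conversely, for \textbf{(ii)} $\Rightarrow$ \textbf{(iii)} I would apply the closed graph theorem: once $R(\cAl)$ is closed, $\cAl$ is a closed bijection from the Hilbert space $D(\cAl)$ (equipped with graph norm) onto the Hilbert space $R(\cAl)$; its inverse is closed, hence bounded, and the resulting norm bound provides the Poincar\'e constant $c_\ell$ needed for \textbf{(i)}.

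The main obstacle is really only bookkeeping: tracking the explicit constant $(1+c_\ell^2)^{1/2}$ cleanly through the graph norm computation in the step \textbf{(i)} $\Rightarrow$ \textbf{(iii)}, and verifying in \textbf{(iii)} $\Rightarrow$ \textbf{(ii)} that the limit $x$ genuinely lands back inside the closed subspace $\ovl{R(\Als)}$ so that the closedness of $\cAl$ (not just $\Al$) can be invoked. The horizontal equivalence \textbf{(ii)} $\Leftrightarrow$ \textbf{(ii${}^{*}$)} requires no new argument and is simply Banach's closed range theorem applied to the dual pair $(\Al,\Als)$, so that \textbf{(i${}^{*}$)} and \textbf{(iii${}^{*}$)} then follow by the symmetry already described.
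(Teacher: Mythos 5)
Your proposal is correct and follows essentially the same route as the paper: the unstarred equivalences are handled by a cycle through the closed graph theorem (for the bounded inverse), the closedness of $\cAl$ (for closedness of the range), and the graph-norm computation yielding the constant $(1+c_{\ell}^2)^{\oh}$, while the horizontal link to the starred statements is Banach's closed range theorem plus symmetry of the dual pair $(\cAl,\cAsl)$. The only difference is the orientation of the implication cycle (the paper runs (i)$\Rightarrow$(ii)$\Rightarrow$(iii)$\Rightarrow$(i), you run (i)$\Rightarrow$(iii)$\Rightarrow$(ii) and (ii)$\Rightarrow$(iii)$\Rightarrow$(i)), which is immaterial.
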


\begin{proof}
Note that by the closed range theorem (ii) $\equi$ (ii${}^{*}$) holds.
Hence, by symmetry it is sufficient to show (i) $\equi$ (ii) $\equi$ (iii).
\begin{itemize}
\item[(i)$\impl$(ii)]
Pick a sequence $(y_{n})\subset R(\Al)$ converging to $y\in\Hilpo$ in $\Hilpo$.
By \eqref{RacA} there exists a sequence $(x_{n})\subset D(\cAl)$ with $y_{n}=\Al x_{n}$.
(i) implies that $(x_{n})$ is a Cauchy sequence in $\Hil$
and hence there exists some $x\in\Hil$ with $x_{n}\to x$ in $\Hil$.
As $\Al$ is closed, we get $x\in D(\Al)$ and $\Al x=y\in R(\Al)$.
\item[(ii)$\impl$(iii)]
Note that $\cA_\ell^{-1}:R(\Al)\to D(\cAl)$ is a densely defined and closed linear operator.
By (ii), $R(\Al)$ is closed and hence itself a Hilbert space.
By the closed graph theorem $\cA_\ell^{-1}$ is continuous.
\item[(iii)$\impl$(i)]
For $x\in D(\cAl)$ let $y:=\Al x\in R(\Al)$.
Then $x=\cA_\ell^{-1}y$ as $\cAl$ is injective\footnote{It holds 
$\Al\big(x-\cA_\ell^{-1}y\big)=0$ and thus $x=\cA_\ell^{-1}y$.}.
Therefore, 
$$\norm{x}_{\Hil}
=\bnorm{\cA_\ell^{-1}y}_{\Hil}
\leq\bnorm{\cA_\ell^{-1}}_{R(\Al),R(\Als)}\norm{y}_{\Hilpo}
=c_{\ell}\norm{\Al x}_{\Hilpo}$$
with $c_{\ell}:=\bnorm{\cA_\ell^{-1}}_{R(\Al),R(\Als)}$.
\end{itemize}
If (i) holds we have for $y\in R(\Al)$ and $x:=\cA_\ell^{-1}y\in D(\cAl)$
$$\bnorm{\cA_\ell^{-1}y}_{\Hil}
\leq c_{\ell}\norm{\Al x}_{\Hilpo}
=c_{\ell}\norm{y}_{\Hilpo}$$
and hence
\begin{align*}
\bnorm{\cA_\ell^{-1}}_{R(\Al),R(\Als)}
&=\sup_{0\neq y\in R(\Al)}\frac{\bnorm{\cA_\ell^{-1}y}_{\Hil}}{\norm{y}_{\Hilpo}}
\leq c_{\ell},\\
\bnorm{\cA_\ell^{-1}}_{R(\Al),D(\cAl)}^2
&=\sup_{0\neq y\in R(\Al)}\frac{\bnorm{\cA_\ell^{-1}y}_{D(\Al)}^2}{\norm{y}_{\Hilpo}^2}
=\sup_{0\neq y\in R(\Al)}\frac{\bnorm{\cA_\ell^{-1}y}_{\Hil}^2+\norm{y}_{\Hilpo}^2}{\norm{y}_{\Hilpo}^2}
\leq c_{\ell}^2+1,
\end{align*}
finishing the proof.
\end{proof}

From now on we assume that we always choose the best 
Friedrichs/Poincar\'e type constants $c_{\ell},c^{*}_{\ell}$, if they exist in $(0,\infty)$, i.e.,
$c_{\ell}$ and $c^{*}_{\ell}$ are given by the Rayleigh quotients
$$\frac{1}{c_{\ell}}
:=\inf_{0\neq x\in D(\cAl)}\frac{\norm{\Al x}_{\Hilpo}}{\norm{x}_{\Hil}},\qquad
\frac{1}{c^{*}_{\ell}}
:=\inf_{0\neq y\in D(\cAsl)}\frac{\norm{\Als y}_{\Hil}}{\norm{y}_{\Hilpo}}.$$
Moreover, we see
\begin{align}
\mylabel{classup}
c_{\ell}
&=\sup_{0\neq x\in D(\cAl)}\frac{\norm{x}_{\Hil}}{\norm{\Al x}_{\Hilpo}}
=\sup_{0\neq y\in R(\Al)}\frac{\bnorm{\cA_\ell^{-1}y}_{\Hil}}{\norm{y}_{\Hilpo}}
=\bnorm{\cA_\ell^{-1}}_{R(\Al),R(\Als)},
\intertext{as $0\neq x\in D(\cAl)$ implies $0\neq\Al x$
and for $y:=\Al x$ with $x\in D(\cAl)$ we have
$\cA_\ell^{-1}y=x$, both by the injectivity of $\cAl$.
Analogously, we get}
\mylabel{clsassup}
c^{*}_{\ell}
&=\sup_{0\neq y\in D(\cAsl)}\frac{\norm{y}_{\Hilpo}}{\norm{\Als y}_{\Hil}}
=\sup_{0\neq x\in R(\Als)}\frac{\bnorm{(\cAsl)^{-1}x}_{\Hilpo}}{\norm{x}_{\Hil}}
=\bnorm{(\cAsl)^{-1}}_{R(\Als),R(\Al)}.
\end{align}

\begin{lem}
\label{consteq}
Assume that $c_{\ell}\in(0,\infty)$ or $c^{*}_{\ell}\in(0,\infty)$ exists. Then $c_{\ell}=c_{\ell}^{*}$.
\end{lem}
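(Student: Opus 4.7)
The plan is to exploit the duality between $\cA_{\ell}$ and $\cA_{\ell}^{*}$ together with the equivalences already collected in Lemma \ref{poincarerange}.

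First, I would observe that by Lemma \ref{poincarerange} the assumption ``$c_{\ell}\in(0,\infty)$ or $c_{\ell}^{*}\in(0,\infty)$ exists'' implies that \emph{both} constants exist, since the existence of either is equivalent to the closedness of $R(\A_{\ell})$ (equivalently of $R(\A_{\ell}^{*})$). So I may assume both $c_{\ell}$ and $c_{\ell}^{*}$ are finite and positive.

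Next, I would pick an arbitrary $x\in D(\cA_{\ell})$ and estimate $\norm{x}_{\Hil}$ from above by a quantity involving $c_{\ell}^{*}$ and $\norm{\A_{\ell}x}_{\Hilpo}$. The key fact is that $D(\cA_{\ell})\subset\ovl{R(\A_{\ell}^{*})}=R(\A_{\ell}^{*})=R(\cA_{\ell}^{*})$, where closedness comes from (ii${}^{*}$) in Lemma \ref{poincarerange} (which is assumed to hold) and the range identity from \eqref{RacA}. Consequently, writing the norm via duality,
\begin{align*}
\norm{x}_{\Hil}
=\sup_{0\neq z\in\ovl{R(\A_{\ell}^{*})}}\frac{|\scp{x}{z}_{\Hil}|}{\norm{z}_{\Hil}}
=\sup_{0\neq y\in D(\cA_{\ell}^{*})}\frac{|\scp{x}{\A_{\ell}^{*}y}_{\Hil}|}{\norm{\A_{\ell}^{*}y}_{\Hil}},
\end{align*}
where in the second equality I parametrize $z=\A_{\ell}^{*}y$ using the bijection $\cA_{\ell}^{*}:D(\cA_{\ell}^{*})\to R(\A_{\ell}^{*})$. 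Now I invoke the defining adjoint relation $\scp{x}{\A_{\ell}^{*}y}_{\Hil}=\scp{\A_{\ell}x}{y}_{\Hilpo}$ (valid since $x\in D(\A_{\ell})$ and $y\in D(\A_{\ell}^{*})$), apply Cauchy--Schwarz, and use (i${}^{*}$) in the form $\norm{y}_{\Hilpo}\leq c_{\ell}^{*}\norm{\A_{\ell}^{*}y}_{\Hil}$ to obtain $\norm{x}_{\Hil}\leq c_{\ell}^{*}\norm{\A_{\ell}x}_{\Hilpo}$. Taking the supremum over $0\neq x\in D(\cA_{\ell})$ and using the Rayleigh quotient characterization of $c_{\ell}$ yields $c_{\ell}\leq c_{\ell}^{*}$.

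Finally, the reverse inequality $c_{\ell}^{*}\leq c_{\ell}$ follows by exactly the same argument with the roles of $\cA_{\ell}$ and $\cA_{\ell}^{*}$ interchanged, since $(\cA_{\ell},\cA_{\ell}^{*})$ is a dual pair, i.e., $(\cA_{\ell}^{*})^{*}=\cA_{\ell}$. Combining both estimates gives $c_{\ell}=c_{\ell}^{*}$. The only subtle point in the argument is the duality representation of $\norm{x}_{\Hil}$: one must be sure that $x\in\ovl{R(\A_{\ell}^{*})}$ (so that the sup is really the norm), and that one can parametrize $z\in R(\A_{\ell}^{*})$ as $\A_{\ell}^{*}y$ with $y\in D(\cA_{\ell}^{*})$---both of which are guaranteed by \eqref{RacA} and the closedness of the ranges that is already built into the hypothesis.
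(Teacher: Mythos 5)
Your proposal is correct and follows essentially the same route as the paper: both arguments rest on the fact that any $x\in D(\cAl)$ lies in $R(\Als)=R(\cAls)$, the adjoint identity $\scp{x}{\Als y}_{\Hil}=\scp{\Al x}{y}_{\Hilpo}$, Cauchy--Schwarz, and the estimate (i${}^{*}$), followed by symmetry. The paper simply plugs in the optimal choice $y=(\cAsl)^{-1}x$ and computes $\norm{x}_{\Hil}^2=\scp{x}{\Als y}_{\Hil}$ directly, whereas you phrase the same computation as a supremum over all $y\in D(\cAls)$; the two are equivalent.
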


We note that also in the case $c_{\ell}=\infty$ or $c_{\ell}^{*}=\infty$
we have $c_{\ell}=c_{\ell}^{*}=\infty$.

\begin{proof}
Let, e.g., $c^{*}_{\ell}$ exist in $(0,\infty)$. 
By Lemma \ref{poincarerange} also $c_{\ell}$ exists in $(0,\infty)$
and the ranges $R(\Al)=R(\cAl)$ and $R(\Als)=R(\cAls)$ are closed. 
Then for $x\in D(\cAl)=D(\Al)\cap R(\Als)$ 
there is $y\in D(\cAsl)$ with $x=\Als y$. 
More precisely, $y:=(\cAsl)^{-1}x\in D(\cAsl)$ is uniquely determined and we have
$\norm{y}_{\Hilpo}\leq c^{*}_{\ell}\norm{\Als y}_{\Hil}$. But then
$$\norm{x}_{\Hil}^2
=\scp{x}{\Als y}_{\Hil}
=\scp{\Al x}{y}_{\Hilpo}
\leq \norm{\Al x}_{\Hilpo}\norm{y}_{\Hilpo}
\leq c^{*}_{\ell}\norm{\Al x}_{\Hilpo}\norm{\Als y}_{\Hil},$$
yielding $\norm{x}_{\Hil}\leq c^{*}_{\ell}\norm{\Al x}_{\Hilpo}$.
Therefore, $c_{\ell}\leq c^{*}_{\ell}$ 
and by symmetry we obtain $c_{\ell}=c^{*}_{\ell}$.
\end{proof}

A standard indirect argument shows the following lemma.

\begin{lem}
\label{cptembtoolbox}
Let $D(\cAl)=D(\Al)\cap\ovl{R(\Als)}\dhookrightarrow\Hil$ be compact. 
Then the assertions of Lemma \ref{poincarerange} and Lemma \ref{consteq} hold.
Moreover, the inverse operators 
$$\cA_\ell^{-1}:R(\Al)\to R(\Als),\quad
(\cAls)^{-1}:R(\Als)\to R(\Al)$$
are compact with norms 
$\bnorm{\cA_\ell^{-1}}_{R(\Al),R(\Als)}
=\bnorm{(\cAls)^{-1}}_{R(\Als),R(\Al)}
=c_{\ell}$.
\end{lem}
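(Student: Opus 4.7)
The plan is to establish assertion \textbf{(i)} of Lemma \ref{poincarerange} by the standard indirect compactness argument, after which Lemma \ref{poincarerange} and Lemma \ref{consteq} automatically supply all the remaining equivalent statements as well as the equality $c_{\ell}=c^{*}_{\ell}$. Compactness of $\cAl^{-1}$ then drops out of a factorisation through the compact embedding, and compactness of $(\cAsl)^{-1}$ follows by Schauder's theorem once it is identified as the Hilbert space adjoint of $\cAl^{-1}$. The norm identities are already recorded in the computations following Lemma \ref{poincarerange}.

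For \textbf{(i)} I would argue indirectly: if no $c_{\ell}\in(0,\infty)$ existed, there would be a sequence $(x_{n})\subset D(\cAl)$ with $\norm{x_{n}}_{\Hil}=1$ and $\norm{\Al x_{n}}_{\Hilpo}\to 0$. Then $(x_{n})$ is bounded in the graph norm on $D(\cAl)$, and the compact embedding $D(\cAl)\dhookrightarrow\Hil$ yields a subsequence (not relabelled) converging in $\Hil$ to some $x\in\Hil$. Closedness of $\Al$ gives $x\in D(\Al)$ and $\Al x=0$, so $x\in N(\Al)$. Since each $x_{n}$ lies in the closed subspace $\ovl{R(\Als)}$, so does the limit $x$, and by the Helmholtz decomposition \eqref{helm} we have $\ovl{R(\Als)}=N(\Al)^{\bot_{\Hil}}$. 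Therefore $x\in N(\Al)\cap N(\Al)^{\bot_{\Hil}}=\{0\}$, contradicting $\norm{x}_{\Hil}=\lim\norm{x_{n}}_{\Hil}=1$. This settles \textbf{(i)}, and Lemmas \ref{poincarerange} and \ref{consteq} supply the rest.

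For the second part I would factorise
\begin{align*}
\cAl^{-1}:R(\Al)\xrightarrow{\,(\cAl)^{-1}\,}D(\cAl)\hookrightarrow\ovl{R(\Als)}=R(\Als),
\end{align*}
where the first arrow is bounded by \textbf{(iii)} of Lemma \ref{poincarerange}, $R(\Als)$ is now closed by \textbf{(ii${}^{*}$)}, and the embedding into $\Hil$ is compact by hypothesis; restricting its codomain to the closed subspace $R(\Als)$ preserves compactness. Hence $\cAl^{-1}:R(\Al)\to R(\Als)$ is compact. A direct computation, using that $(\cAl,\cAsl)$ is a dual pair, gives for $y\in R(\Al)$ and $x\in R(\Als)$ with $v:=(\cAsl)^{-1}x\in D(\cAsl)$ the identity
\begin{align*}
\scp{\cAl^{-1}y}{x}_{\Hil}
=\scp{\cAl^{-1}y}{\Als v}_{\Hil}
=\scp{\Al(\cAl^{-1}y)}{v}_{\Hilpo}
=\scp{y}{(\cAsl)^{-1}x}_{\Hilpo},
\end{align*}
so that $(\cAl^{-1})^{*}=(\cAsl)^{-1}$ as bounded operators between the Hilbert spaces $R(\Al)$ and $R(\Als)$; Schauder's theorem then yields compactness of $(\cAsl)^{-1}$. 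The norm identities $\bnorm{\cAl^{-1}}_{R(\Al),R(\Als)}=\bnorm{(\cAsl)^{-1}}_{R(\Als),R(\Al)}=c_{\ell}$ are already contained in the supremum computations preceding the lemma combined with Lemma \ref{consteq}.

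The only genuinely non-routine step is the contradiction, and there one must be careful to verify both that the $\Hil$-limit lies in $N(\Al)$ (closedness of $\Al$) and that it lies in the closed subspace $\ovl{R(\Als)}$, so that the two orthogonal conditions can be played off against each other. Everything else is bookkeeping.
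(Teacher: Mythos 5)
Your proof is correct and the core argument — the indirect compactness argument showing that a normalized sequence with $\Al x_{n}\to 0$ would converge to an element of $N(\Al)\cap N(\Al)^{\bot_{\Hil}}=\{0\}$ — is exactly the paper's proof. The paper leaves the compactness of the inverse operators unproved; your factorisation through the compact embedding plus the Schauder/adjoint argument for $(\cAsl)^{-1}$ is a correct way to fill in that omitted detail.
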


\begin{proof}
If, e.g., Lemma \ref{poincarerange} (i) was wrong,
there exists a sequence $(x_{n})\subset D(\cAl)$ with $\norm{x_{n}}_{\Hil}=1$
and $\Al x_{n}\to0$. As $(x_{n})$ is bounded in $D(\cAl)$
we can extract a subsequence, again denoted by $(x_{n})$,
with $x_{n}\to x\in\Hil$ in $\Hil$. Since $\Al$ is closed,
we have $x\in D(\Al)$ and $\Al x=0$. Hence $x\in N(\Al)$.
On the other hand, $(x_{n})\subset D(\cAl)\subset\ovl{R(\Als)}=N(\Al)^{\bot}$
implies $x\in N(\Al)^{\bot}$. Thus $x=0$, in contradiction to $1=\norm{x_{n}}_{\Hil}\to\norm{x}_{\Hil}=0$.
\end{proof}

\begin{lem}
\label{cptemb}
The embedding $D(\cAl)\dhookrightarrow\Hil$ is compact,
if and only if the embedding $D(\cAsl)\dhookrightarrow\Hilpo$ is compact.
In this case all assertions of Lemma \ref{poincarerange} and Lemma \ref{consteq} are valid.
\end{lem}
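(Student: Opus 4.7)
The plan is to reduce the biconditional to a single implication via the symmetry of the dual pair $(\cAl,\cAsl)$, and then to transport compactness of one inverse to the other by Schauder's theorem. Because the roles of $\cAl$ and $\cAsl$ are entirely symmetric, it is enough to prove one direction; I will argue that if $D(\cAl)\dhookrightarrow\Hil$ is compact, then so is $D(\cAsl)\dhookrightarrow\Hilpo$.

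Assume $D(\cAl)\dhookrightarrow\Hil$ is compact. Lemma \ref{cptembtoolbox} then delivers $c_{\ell}=c_{\ell}^{*}\in(0,\infty)$, the closedness of $R(\Al)=R(\cAl)$ and $R(\Als)=R(\cAsl)$ (so both are Hilbert spaces in their own right), and the compactness of the bounded inverse $\cAl^{-1}:R(\Al)\to R(\Als)$. The next step, which I view as the heart of the argument, is to identify the Hilbert space adjoint $(\cAl^{-1})^{*}:R(\Als)\to R(\Al)$ with $(\cAsl)^{-1}$. For $y=\cAl x\in R(\cAl)$ with $x\in D(\cAl)$ and $z\in D(\cAsl)$, the adjointness of $\cAl$ and $\cAsl$ gives
$$\scp{\cAl^{-1}y}{\cAsl z}_{\Hil}=\scp{x}{\cAsl z}_{\Hil}=\scp{\cAl x}{z}_{\Hilpo}=\scp{y}{z}_{\Hilpo},$$
and since $\cAsl:D(\cAsl)\to R(\Als)$ is a bijection, this identifies $(\cAl^{-1})^{*}$ with $(\cAsl)^{-1}$. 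Schauder's theorem then promotes compactness of $\cAl^{-1}$ to compactness of $(\cAsl)^{-1}:R(\Als)\to R(\Al)$.

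Finally, to deduce compactness of $D(\cAsl)\dhookrightarrow\Hilpo$, I would pick an arbitrary bounded sequence $(y_{n})\subset D(\cAsl)$, observe that $\cAsl y_{n}=\Als y_{n}$ is bounded in $R(\Als)$, and use the identity $y_{n}=(\cAsl)^{-1}\Als y_{n}$ (valid because $D(\cAsl)\subset\ovl{R(\Al)}=R(\Al)$ and $\cAsl$ is injective) together with compactness of $(\cAsl)^{-1}$ to extract a subsequence converging in $\Hilpo$. The closing sentence of the lemma then follows by invoking Lemma \ref{cptembtoolbox} with $\cAsl$ in place of $\cAl$. The main obstacle I anticipate is the adjoint identification: the displayed computation is short, but one has to be careful that the Hilbert space adjoint of $\cAl^{-1}$ is formed between the closed range subspaces $R(\Al)\subset\Hilpo$ and $R(\Als)\subset\Hil$, which is precisely the point at which closedness of the ranges (and hence Lemma \ref{cptembtoolbox}) is used.
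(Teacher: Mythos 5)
Your proof is correct, but it takes a genuinely different route from the paper. The paper argues directly: given a graph-norm bounded sequence $(y_{n})\subset D(\cAsl)$, it sets $x_{n}:=(\cAl)^{-1}y_{n}\in D(\cAl)$, extracts an $\Hil$-convergent subsequence of $(x_{n})$ using the assumed compact embedding, and then shows $(y_{n})$ is Cauchy in $\Hilpo$ via the duality identity $\norm{y_{n}-y_{m}}_{\Hilpo}^2=\bscp{y_{n}-y_{m}}{\Al(x_{n}-x_{m})}_{\Hilpo}=\bscp{\Als(y_{n}-y_{m})}{x_{n}-x_{m}}_{\Hil}\leq c\,\norm{x_{n}-x_{m}}_{\Hil}$ — an elementary, self-contained "Cauchy trick" that never names an adjoint operator theorem. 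You instead identify $(\cAl^{-1})^{*}=(\cAsl)^{-1}$ as bounded operators between the closed ranges $R(\Al)$ and $R(\Als)$ and invoke Schauder's theorem; your adjoint computation is correct, and your care about where the adjoint is formed (which is exactly where closedness of the ranges enters) is the right point to flag. Your final passage from compactness of $(\cAsl)^{-1}$ to compactness of the embedding $D(\cAsl)\dhookrightarrow\Hilpo$ via $y_{n}=(\cAsl)^{-1}\Als y_{n}$ is also sound. What each approach buys: the paper's argument is more elementary and avoids any appeal to the Schauder/Riesz theory, while yours isolates the structurally useful fact that the reduced inverses are mutual Hilbert-space adjoints (which the paper implicitly uses elsewhere, e.g.\ in equating their norms) and then gets the equivalence of compactness for free. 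One small caveat: you credit Lemma \ref{cptembtoolbox} with the compactness of $\cAl^{-1}:R(\Al)\to R(\Als)$; that lemma's statement asserts it, but its printed proof only covers the Poincar\'e estimate, so strictly you should note that this compactness is the one-line consequence of the compact embedding composed with the bounded map $(\cAl)^{-1}:R(\Al)\to D(\cAl)$ — after which your argument is complete and non-circular.
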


\begin{proof}
By symmetry it is enough to show one direction.
Let, e.g., the embedding $D(\cAl)\dhookrightarrow\Hil$ be compact.
By Lemma \ref{poincarerange} and Lemma \ref{cptembtoolbox}, 
especially $R(\Al)=R(\cAl)$ and $R(\Als)=R(\cAsl)$ are closed.
Let $(y_{n})\subset D(\cAsl)=D(\Als)\cap R(\Al)$ be a $D(\Als)$-bounded sequence.
We pick a sequence $(x_{n})\subset D(\cAl)$ with $y_{n}=\Al x_{n}$, i.e., 
$x_{n}=\cA_\ell^{-1}y_{n}$. As $\cA_\ell^{-1}:R(\Al)\to D(\cAl)$ is continuous,
$(x_{n})$ is bounded in $D(\cAl)$ and thus contains a subsequence, again denoted by $(x_{n})$,
converging in $\Hil$ to some $x\in\Hil$. Now
\begin{align*}
\norm{y_{n}-y_{m}}_{\Hilpo}^2
=\bscp{y_{n}-y_{m}}{\Al(x_{n}-x_{m})}_{\Hilpo}
=\bscp{\Als(y_{n}-y_{m})}{x_{n}-x_{m}}_{\Hil}
\leq c\,\norm{x_{n}-x_{m}}_{\Hil}
\end{align*}
as $(y_{n})$ is $D(\Als)$-bounded.
Finally, we see that $(y_{n})$ is a Cauchy sequence in $\Hilpo$.
\end{proof}

Let us summarize:

\begin{cor}
\label{cortoolboxone}
Let $R(\Al)$ be closed. Then
$$\frac{1}{c_{\ell}}
=\inf_{0\neq x\in D(\cAl)}\frac{\norm{\Al x}_{\Hilpo}}{\norm{x}_{\Hil}}
=\inf_{y\in D(\cAsl)}\frac{\norm{\Als y}_{\Hil}}{\norm{y}_{\Hilpo}}$$
exists in $(0,\infty)$. Furthermore:
\begin{itemize}
\item[\bf(i)]
The Poincar\'e type estimates
\begin{align*}
\forall\,x&\in D(\cAl)&\norm{x}_{\Hil}&\leq c_{\ell}\norm{\Al x}_{\Hilpo},\\
\forall\,y&\in D(\cAsl)&\norm{y}_{\Hilpo}&\leq c_{\ell}\norm{\Als y}_{\Hil}
\end{align*}
hold.
\item[\bf(ii)]
The ranges $R(\Al)=R(\cAl)$ and $R(\Als)=R(\cAsl)$ are closed.
Moreover,
$D(\cAl)=D(\Al)\cap R(\Als)$ and $D(\cAsl)=D(\Als)\cap R(\Al)$ with
$$\cAl:D(\cAl)\subset R(\Als)\to R(\Al),\quad
\cAsl:D(\cAsl)\subset R(\Al)\to R(\Als).$$
\item[\bf(iii)]
The Helmholtz type decompositions
\begin{align*}
\Hil
&=N(\Al)\oplus_{\Hil}R(\Als),
&
\Hilpo
&=N(\Als)\oplus_{\Hilpo}R(\Al),\\
D(\Al)
&=N(\Al)\oplus_{\Hil}D(\cAl),
&
D(\Als)
&=N(\Als)\oplus_{\Hilpo}D(\cAsl)
\end{align*}
hold.
\item[\bf(iv)]
The inverse operators 
$$\cA_\ell^{-1}:R(\Al)\to D(\cAl),\quad
(\cAsl)^{-1}:R(\Als)\to D(\cAsl)$$
are continuous and bijective with norms 
$\bnorm{\cA_\ell^{-1}}_{R(\Al),D(\cAl)}
=\bnorm{(\cAsl)^{-1}}_{R(\Als),D(\cAsl)}
=(1+c_{\ell}^2)^{\oh}$
and 
$\bnorm{\cA_\ell^{-1}}_{R(\Al),R(\Als)}
=\bnorm{(\cAsl)^{-1}}_{R(\Als),R(\Al)}
=c_{\ell}$.
\end{itemize} 
\end{cor}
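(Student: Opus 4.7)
The plan is to treat this corollary as a consolidation of the preceding lemmas, so the proof will essentially be a careful bookkeeping argument with the closedness of $R(\Al)$ as the single input.

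First I would observe that by the closed range theorem (invoked already in the equivalence (ii) $\equi$ (ii${}^{*}$) of Lemma \ref{poincarerange}), closedness of $R(\Al)$ immediately gives closedness of $R(\Als)$. This is the trigger that activates all of Lemma \ref{poincarerange}, so in particular $c_{\ell}\in(0,\infty)$ and $c^{*}_{\ell}\in(0,\infty)$ both exist and yield the Poincar\'e estimates in (i). Lemma \ref{consteq} then hands me $c_{\ell}=c^{*}_{\ell}$, which reconciles the two infima in the displayed equation for $1/c_{\ell}$; those infima being equal to the reciprocals of the respective best constants is just the definition recalled immediately after the proof of Lemma \ref{poincarerange}.

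Next I would prove (ii) and (iii). Since $R(\Als)$ is now closed, $\ovl{R(\Als)}=R(\Als)$, so the defining identity $D(\cAl)=D(\Al)\cap\ovl{R(\Als)}$ collapses to $D(\cAl)=D(\Al)\cap R(\Als)$, and symmetrically for $D(\cAsl)$; together with \eqref{RacA} this gives the mapping statement in (ii). For (iii), the Helmholtz decompositions \eqref{helm} and the refinement \eqref{DacA} of $D(\Al)$ and $D(\Als)$ were derived under no assumption other than density and closedness of $\Al$, and closedness of the ranges removes the closure bars, yielding exactly the four decompositions stated.

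For (iv), Lemma \ref{poincarerange}(iii) and (iii${}^{*}$) already give continuity and bijectivity of $(\cAl)^{-1}$ and $(\cAsl)^{-1}$ with bound $(1+c_{\ell}^{2})^{\oh}$ (using $c_{\ell}=c_{\ell}^{*}$). To pin the norms down to equalities rather than inequalities, I would redo the brief $\sup$-computation at the end of the proof of Lemma \ref{poincarerange}: for the graph norm one gets $\bnorm{(\cAl)^{-1}y}_{D(\Al)}^{2}=\bnorm{(\cAl)^{-1}y}_{\Hil}^{2}+\norm{y}_{\Hilpo}^{2}$, and the supremum over $0\ne y\in R(\Al)$ is attained in the limit along a minimizing Rayleigh sequence, so the bound $(1+c_{\ell}^{2})^{\oh}$ is sharp, and similarly $\bnorm{(\cAl)^{-1}}_{R(\Al),R(\Als)}=c_{\ell}$. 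The adjoint counterparts are obtained by symmetry using $c_{\ell}=c_{\ell}^{*}$.

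There is no real obstacle here: every nontrivial fact has been proved above, so the main task is to cite the correct earlier item for each clause and to verify the sharpness of the two operator-norm equalities in (iv), which is the only place where a direct (but short) supremum computation is genuinely required.
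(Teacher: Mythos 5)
Your proposal is correct and follows exactly the route the paper intends: the corollary is introduced with ``Let us summarize:'' and carries no separate proof, being precisely the consolidation of the closed range theorem, Lemma \ref{poincarerange}, Lemma \ref{consteq}, the Rayleigh-quotient identities stated after Lemma \ref{poincarerange}, and the decompositions \eqref{helm}, \eqref{DacA}, \eqref{RacA} with closure bars removed. Your extra remark on the sharpness of the operator norms in (iv) matches the supremum computation the paper already carries out at the end of the proof of Lemma \ref{poincarerange} and in the displayed Rayleigh-quotient identities following it.
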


\begin{cor}
\label{cortoolboxtwo}
Let $D(\cAl)\dhookrightarrow\Hil$ be compact. 
Then $R(\Al)$ is closed and the assertions of Corollary \ref{cortoolboxone} hold.
Moreover, the inverse operators 
$$\cA_\ell^{-1}:R(\Al)\to R(\Als),\quad
(\cAsl)^{-1}:R(\Als)\to R(\Al)$$
are compact. 
\end{cor}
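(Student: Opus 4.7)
The plan is to assemble Corollary \ref{cortoolboxtwo} from the three lemmas proved just above it. First, I would invoke Lemma \ref{cptembtoolbox}: compactness of $D(\cAl)\dhookrightarrow\Hil$ directly yields the Poincar\'e estimate of Lemma \ref{poincarerange}(i), hence by the equivalences there $R(\Al)=R(\cAl)$ is closed. By the closed range theorem (or by Lemma \ref{poincarerange} again) $R(\Als)=R(\cAsl)$ is closed as well. At this point the hypotheses of Corollary \ref{cortoolboxone} are satisfied, so all four items (i)--(iv) there apply verbatim, which takes care of the first two assertions of the corollary.

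Next, for the compactness claims on the inverses, I would factor $(\cAl)^{-1}$ through the graph space. By Corollary \ref{cortoolboxone}(iv) the map $(\cAl)^{-1}:R(\Al)\to D(\cAl)$ is continuous, and by hypothesis the inclusion $D(\cAl)\dhookrightarrow\Hil$ is compact; composing and restricting the codomain to $R(\Als)\subset\Hil$ (which is legitimate since $R((\cAl)^{-1})=D(\cAl)\subset\ovl{R(\Als)}=R(\Als)$ after closedness) yields compactness of $(\cAl)^{-1}:R(\Al)\to R(\Als)$ as a composition of a bounded map with a compact one.

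For the adjoint inverse $(\cAsl)^{-1}:R(\Als)\to R(\Al)$, I would first apply Lemma \ref{cptemb}: the compact embedding $D(\cAl)\dhookrightarrow\Hil$ is equivalent to $D(\cAsl)\dhookrightarrow\Hilpo$ being compact. Then exactly the same factorization argument as before, with Corollary \ref{cortoolboxone}(iv) giving continuity of $(\cAsl)^{-1}:R(\Als)\to D(\cAsl)$, shows that $(\cAsl)^{-1}:R(\Als)\to R(\Al)$ is compact. Equivalently one could invoke Schauder's theorem since $\cAl$ and $\cAsl$ (and hence their bounded inverses on the closed ranges) are adjoint to each other.

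There is no real obstacle here; the statement is essentially a packaging of Lemma \ref{poincarerange}, Lemma \ref{consteq}, Lemma \ref{cptembtoolbox}, and Lemma \ref{cptemb}, together with the trivial observation that a bounded operator followed by a compact inclusion is compact. The only minor subtlety worth stating carefully is identifying the correct codomains after closedness so that the compositions make sense; once $R(\Al)$ and $R(\Als)$ are known to be closed this is automatic from the characterizations $D(\cAl)\subset R(\Als)$ and $D(\cAsl)\subset R(\Al)$ in Corollary \ref{cortoolboxone}(ii).
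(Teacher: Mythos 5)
Your proposal is correct and matches the paper's intent: Corollary \ref{cortoolboxtwo} is presented there as a summary assembled from Lemma \ref{cptembtoolbox} (Poincar\'e estimate, hence closed ranges, hence Corollary \ref{cortoolboxone}) and Lemma \ref{cptemb}, with the compactness of the inverses obtained exactly by your factorization of the continuous map $(\cAl)^{-1}:R(\Al)\to D(\cAl)$ through the compact embedding $D(\cAl)\dhookrightarrow\Hil$ (and symmetrically for the adjoint). No gaps.
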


So far, we did not use the complex property \eqref{exseqAl}.
Hence Lemma \ref{poincarerange}, Lemma \ref{consteq}, Lemma \ref{cptembtoolbox}, Lemma \ref{cptemb},
and Corollary \ref{cortoolboxone}, Corollary \ref{cortoolboxtwo}
hold without the complex property \eqref{exseqAl}.
Now the complex property \eqref{exseqAl} enters the theory.
Recall the Helmholtz type decompositions \eqref{helm} in the form
\begin{align*}
\Hil=N(\Al)\oplus_{\Hil}\ovl{R(\Als)}
=\ovl{R(\Almo)}\oplus_{\Hil}N(\Almos)
\end{align*}
hold. Then the complex properties \eqref{exseqAl}-\eqref{exseqAls} yield
$$N(\Al)=\ovl{R(\Almo)}\oplus_{\Hil}K_{\ell},\quad
N(\Almos)=K_{\ell}\oplus_{\Hil}\ovl{R(\Als)},\qquad
K_{\ell}=N(\Al)\cap N(\Almos).$$
Therefore, we get the refined Helmholtz type decomposition
\begin{align}
\mylabel{helmref}
\Hil=\ovl{R(\Almo)}\oplus_{\Hil}K_{\ell}\oplus_{\Hil}\ovl{R(\Als)}.
\end{align}

\begin{lem}
\label{lemtoolboxexseq}
The refined Helmholtz type decompositions
\begin{align*}
\Hil
&=\ovl{R(\Almo)}\oplus_{\Hil}K_{\ell}\oplus_{\Hil}\ovl{R(\Als)},
&
K_{\ell}
&=N(\Al)\cap N(\Almos),\\
N(\Al)
&=\ovl{R(\Almo)}\oplus_{\Hil}K_{\ell},
&
N(\Almos)
&=K_{\ell}\oplus_{\Hil}\ovl{R(\Als)},\\
\ovl{R(\cAlmo)}=\ovl{R(\Almo)}
&=N(\Al)\ominus_{\Hil}K_{\ell},
&
\ovl{R(\cAsl)}=\ovl{R(\Als)}
&=N(\Almos)\ominus_{\Hil}K_{\ell},\\
D(\Al)
&=\ovl{R(\Almo)}\oplus_{\Hil}K_{\ell}\oplus_{\Hil}D(\cAl),
&
D(\Almos)
&=D(\cAslmo)\oplus_{\Hil}K_{\ell}\oplus_{\Hil}\ovl{R(\Als)},\\
D_{\ell}
&=D(\cAslmo)\oplus_{\Hil}K_{\ell}\oplus_{\Hil}D(\cAl),
&
D_{\ell}
&=D(\Al)\cap D(\Almos)
\end{align*}
hold. If the range $R(\Almo)$ or $R(\Al)$ is closed, 
the respective closure bars can be dropped
and the assertions of Corollary \ref{cortoolboxone} are valid.
Especially, if $R(\Almo)$ and $R(\Al)$ are closed, 
the assertions of Corollary \ref{cortoolboxone} and
the refined Helmholtz type decompositions
\begin{align*}
\Hil
&=R(\Almo)\oplus_{\Hil}K_{\ell}\oplus_{\Hil}R(\Als),
&
K_{\ell}
&=N(\Al)\cap N(\Almos),\\
N(\Al)
&=R(\Almo)\oplus_{\Hil}K_{\ell},
&
N(\Almos)
&=K_{\ell}\oplus_{\Hil}R(\Als),\\
R(\cAlmo)=R(\Almo)
&=N(\Al)\ominus_{\Hil}K_{\ell},
&
R(\cAsl)=R(\Als)
&=N(\Almos)\ominus_{\Hil}K_{\ell},\\
D(\Al)
&=R(\Almo)\oplus_{\Hil}K_{\ell}\oplus_{\Hil}D(\cAl),
&
D(\Almos)
&=D(\cAslmo)\oplus_{\Hil}K_{\ell}\oplus_{\Hil}R(\Als),\\
D_{\ell}
&=D(\cAslmo)\oplus_{\Hil}K_{\ell}\oplus_{\Hil}D(\cAl),
&
D_{\ell}
&=D(\Al)\cap D(\Almos)
\end{align*}
hold.
\end{lem}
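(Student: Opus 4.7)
The plan is to derive every listed identity by combining the two Helmholtz decompositions in \eqref{helm} (already established) with the complex property \eqref{exseqAl}–\eqref{exseqAls}, and then handle the closed-range version via Lemma \ref{poincarerange}. Nothing new is needed beyond orthogonal projection arguments once the three-term split of $\Hil$ is in hand.

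First I would refine the splits of $N(\Al)$ and $N(\Almos)$. The complex property gives $\ovl{R(\Almo)}\subset N(\Al)$, so inside the Hilbert space $N(\Al)$ we may take the orthogonal complement of $\ovl{R(\Almo)}$ and obtain
$$N(\Al)=\ovl{R(\Almo)}\oplus_{\Hil}\bigl(N(\Al)\cap\ovl{R(\Almo)}^{\bot_{\Hil}}\bigr).$$
The second identity in \eqref{helm} (for index $\ell-1$), namely $\Hil=\ovl{R(\Almo)}\oplus_{\Hil}N(\Almos)$, identifies $\ovl{R(\Almo)}^{\bot_{\Hil}}=N(\Almos)$, and hence the bracket equals $N(\Al)\cap N(\Almos)=K_{\ell}$. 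This yields $N(\Al)=\ovl{R(\Almo)}\oplus_{\Hil}K_{\ell}$ and in particular $\ovl{R(\Almo)}=N(\Al)\ominus_{\Hil}K_{\ell}$. The symmetric argument using $\ovl{R(\Als)}\subset N(\Almos)$ together with $\Hil=N(\Al)\oplus_{\Hil}\ovl{R(\Als)}$ produces $N(\Almos)=K_{\ell}\oplus_{\Hil}\ovl{R(\Als)}$ and $\ovl{R(\Als)}=N(\Almos)\ominus_{\Hil}K_{\ell}$. Substituting either of these back into the corresponding decomposition of \eqref{helm} gives the triple decomposition $\Hil=\ovl{R(\Almo)}\oplus_{\Hil}K_{\ell}\oplus_{\Hil}\ovl{R(\Als)}$, and the equalities $\ovl{R(\cAlmo)}=\ovl{R(\Almo)}$ and $\ovl{R(\cAsl)}=\ovl{R(\Als)}$ are immediate from \eqref{RacA}.

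Next I would pass to the domain splittings. From \eqref{DacA} we already have $D(\Al)=N(\Al)\oplus_{\Hil}D(\cAl)$, and with $\ell$ replaced by $\ell-1$ also $D(\Almos)=N(\Almos)\oplus_{\Hil}D(\cAslmo)$; inserting the refined decompositions of $N(\Al)$ and $N(\Almos)$ produced above yields the two stated four-term expressions. The main technical point is the decomposition of $D_{\ell}=D(\Al)\cap D(\Almos)$. Given $x\in D_{\ell}$, write $x=a+k+b$ with $a\in\ovl{R(\Almo)}$, $k\in K_{\ell}$, $b\in\ovl{R(\Als)}$ via \eqref{helmref}. Since $\ovl{R(\Almo)}\subset N(\Al)\subset D(\Al)$ and $K_{\ell}\subset D(\Al)$, the inclusion $x\in D(\Al)$ forces $b=x-a-k\in D(\Al)\cap\ovl{R(\Als)}=D(\cAl)$. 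Symmetrically $\ovl{R(\Als)}\subset N(\Almos)\subset D(\Almos)$ and $K_{\ell}\subset D(\Almos)$, so $x\in D(\Almos)$ forces $a=x-k-b\in D(\Almos)\cap\ovl{R(\Almo)}=D(\cAslmo)$. The reverse inclusion is trivial, uniqueness and orthogonality are inherited from \eqref{helmref}, and this yields $D_{\ell}=D(\cAslmo)\oplus_{\Hil}K_{\ell}\oplus_{\Hil}D(\cAl)$.

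Finally, the closed-range addendum is a clean application of Lemma \ref{poincarerange}: closedness of $R(\Almo)$ is equivalent to closedness of $R(\Almos)$, and closedness of $R(\Al)$ is equivalent to closedness of $R(\Als)$, so the corresponding closure bars may be dropped directly in the statements above, and the remaining assertions of Corollary \ref{cortoolboxone} apply to the relevant index. The only step that requires genuine care is the $D_{\ell}$-decomposition: the rest are bookkeeping on top of \eqref{helm}, \eqref{DacA}, \eqref{RacA}, and the complex property.
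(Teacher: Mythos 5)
Your proof is correct and follows essentially the same route the paper takes: the lemma is stated without a separate proof precisely because it is assembled, as you do, from \eqref{helm}, the complex property, \eqref{DacA}--\eqref{RacA}, and Lemma \ref{poincarerange} for the closed-range addendum. Your explicit treatment of the $D_{\ell}$-decomposition (forcing $b\in D(\cAl)$ and $a\in D(\cAslmo)$ by subtracting the kernel components) is exactly the intended argument.
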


Observe that
\begin{align}
\begin{split}
\label{DAlAlmosDl}
D(\cAl)
&=D(\Al)\cap\ovl{R(\Als)}
\subset D(\Al)\cap N(\Almos)
\subset D(\Al)\cap D(\Almos)
=D_{\ell},\\
D(\cAlmos)
&=D(\Almos)\cap\ovl{R(\Almo)}
\subset D(\Almos)\cap N(\Al)
\subset D(\Almos)\cap D(\Al)
=D_{\ell}.
\end{split}
\end{align}

\begin{lem}
\label{lemcptAlAslmo}
The embeddings $D(\cAl)\dhookrightarrow\Hil$, $D(\cAlmo)\dhookrightarrow\Hilmo$,
and $K_{\ell}\dhookrightarrow\Hil$ are compact,
if and only if the embedding $D_{\ell}\dhookrightarrow\Hil$ is compact.
In this case, $K_{\ell}$ has finite dimension. 
\end{lem}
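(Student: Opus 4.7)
The plan is to transfer between the compactness statements by means of the orthogonal refined Helmholtz decomposition
$$D_{\ell}=D(\cAslmo)\oplus_{\Hil}K_{\ell}\oplus_{\Hil}D(\cAl)$$
furnished by Lemma \ref{lemtoolboxexseq}, together with the equivalence (Lemma \ref{cptemb}) between compactness of $D(\cAlmo)\dhookrightarrow\Hilmo$ and compactness of $D(\cAslmo)\dhookrightarrow\Hil$.

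For the ``only if'' direction, suppose $D_{\ell}\dhookrightarrow\Hil$ is compact. By \eqref{DAlAlmosDl}, each of $D(\cAl)$, $D(\cAslmo)$, and $K_{\ell}$ sits inside $D_{\ell}$; moreover on each of these subspaces the graph norm of $D_{\ell}$ collapses to the intrinsic graph norm (respectively, to the $\Hil$-norm on $K_{\ell}$), because the complex properties \eqref{exseqAl} and \eqref{exseqAls} force the ``other'' operator to vanish on $\ovl{R(\Almo)}\supset D(\cAslmo)\cup K_{\ell}$ and on $\ovl{R(\Als)}\supset D(\cAl)\cup K_{\ell}$. Hence the three inclusions are continuous isometries into $D_{\ell}$, and compactness of $D_{\ell}\dhookrightarrow\Hil$ descends to each. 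Lemma \ref{cptemb} then transfers compactness of $D(\cAslmo)\dhookrightarrow\Hil$ to $D(\cAlmo)\dhookrightarrow\Hilmo$.

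For the ``if'' direction, suppose the three embeddings are compact; Lemma \ref{cptemb} again provides compactness of $D(\cAslmo)\dhookrightarrow\Hil$. Given $(x_n)\subset D_{\ell}$ bounded in the graph norm, decompose $x_n=u_n+k_n+v_n$ along the Helmholtz decomposition with $u_n\in D(\cAslmo)$, $k_n\in K_{\ell}$, $v_n\in D(\cAl)$. Orthogonality in $\Hil$ gives $\norm{u_n}_{\Hil},\norm{k_n}_{\Hil},\norm{v_n}_{\Hil}\leq\norm{x_n}_{\Hil}$; the complex properties yield $\Al x_n=\Al v_n$ and $\Almos x_n=\Almos u_n$, so $(v_n)$ and $(u_n)$ are bounded in the graph norms of $D(\cAl)$ and $D(\cAslmo)$, while $(k_n)$ is bounded in $K_{\ell}$. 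Extracting a common subsequence via the three compact embeddings produces an $\Hil$-convergent subsequence of $(x_n)$.

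Finally, under these (equivalent) compactness assumptions $K_{\ell}$ is a closed subspace of $\Hil$ whose identity embedding is compact, so its unit ball is relatively compact in $\Hil$; Riesz's theorem then forces $\dim K_{\ell}<\infty$. The only bookkeeping point, and the mildest possible obstacle, is checking that the ``other'' operator vanishes on each of the three summands of the Helmholtz decomposition so that the $D_{\ell}$-graph norm restricts correctly; this is immediate from \eqref{exseqAl} and \eqref{exseqAls} together with closedness of $N(\Al)$ and $N(\Almos)$.
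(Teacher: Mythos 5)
Your proof is correct and follows essentially the same route as the paper's: the refined Helmholtz decomposition of Lemma \ref{lemtoolboxexseq} for one direction, the inclusions \eqref{DAlAlmosDl} for the other, Lemma \ref{cptemb} to pass between $D(\cAlmo)\dhookrightarrow\Hilmo$ and $D(\cAslmo)\dhookrightarrow\Hil$, and the Riesz argument for $\dim K_{\ell}<\infty$. One small slip in your justification: $K_{\ell}$ is \emph{not} contained in $\ovl{R(\Almo)}$ or in $\ovl{R(\Als)}$ (it is orthogonal to both); the fact you actually need, that $\Al$ and $\Almos$ vanish on $K_{\ell}$, holds directly because $K_{\ell}=N(\Al)\cap N(\Almos)$.
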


\begin{proof}
Note that, by Lemma \ref{cptemb}, $D(\cAlmo)\dhookrightarrow\Hilmo$ is compact, if and only if
$D(\cAslmo)\dhookrightarrow\Hil$ is compact.
\begin{itemize}
\item[$\impl$:]
Let $(x_{n})\subset D_{\ell}$ be a $D_{\ell}$-bounded sequence. 
By the refined Helmholtz type decomposition 
of Lemma \ref{lemtoolboxexseq} we decompose
$$x_{n}=a^{*}_{n}+k_{n}+a_{n}\in D(\cAslmo)\oplus_{\Hil}K_{\ell}\oplus_{\Hil}D(\cAl).$$
with $\Al x_{n}=\Al a_{n}$ and $\Almos x_{n}=\Almos a^{*}_{n}$.
Hence $(a_{n})$ is bounded in $D(\cAl)$ and $(a^{*}_{n})$ is bounded in $D(\cAslmo)$
and we can extract $\Hil$-converging subsequences of $(a_{n})$, $(a^{*}_{n})$, and $(k_{n})$.
\item[$\lpmi$:]
If $D_{\ell}\dhookrightarrow\Hil$ is compact, so is $K_{\ell}\dhookrightarrow\Hil$.
Moreover, by \eqref{DAlAlmosDl}
$$D(\cAl)\subset D_{\ell}\dhookrightarrow\Hil,\qquad
D(\cAslmo)\subset D_{\ell}\dhookrightarrow\Hil.$$
\end{itemize}
Finally, if $K_{\ell}\dhookrightarrow\Hil$ is compact,
the unit ball in $K_{\ell}$ is compact, showing that $K_{\ell}$ has finite dimension.
\end{proof}

Lemma \ref{lemcptAlAslmo} implies immediately the following result.

\begin{cor}
\label{cortoolboxexseq}
Let $D_{\ell}\dhookrightarrow\Hil$ be compact.
Then $R(\Almo)$ and $R(\Al)$ are closed, and, besides the assertions of Corollary \ref{cortoolboxtwo},
the refined Helmholtz type decompositions of Lemma \ref{lemtoolboxexseq} hold
and the cohomology group $K_{\ell}$ is finite dimensional.
\end{cor}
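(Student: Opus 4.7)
The proof is essentially an assembly of the previous lemmas, with no new analytic input required. The plan is to trace the implications in a natural order, starting from the single hypothesis that $D_{\ell}\dhookrightarrow\Hil$ is compact.

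First I would invoke Lemma \ref{lemcptAlAslmo} in the direction already established ($D_{\ell}\dhookrightarrow\Hil$ compact $\Longrightarrow$ compactness of $D(\cAl)\dhookrightarrow\Hil$, $D(\cAlmo)\dhookrightarrow\Hilmo$, and $K_{\ell}\dhookrightarrow\Hil$), which simultaneously delivers the finite dimensionality of $K_{\ell}$. This is the key step where the hypothesis is actually used, and it is the only place that requires \eqref{exseqAl}, since the refined decomposition of Lemma \ref{lemtoolboxexseq} is what splits a $D_{\ell}$-bounded sequence into the three pieces living in $D(\cAslmo)$, $K_{\ell}$, and $D(\cAl)$.

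Second, I would apply Corollary \ref{cortoolboxtwo} twice: once to the operator $\Al$, using compactness of $D(\cAl)\dhookrightarrow\Hil$, and once to the operator $\Almo$, using compactness of $D(\cAlmo)\dhookrightarrow\Hilmo$. The first application yields that $R(\Al)$ is closed and provides the full list of conclusions of Corollary \ref{cortoolboxone} for the pair $(\Al,\Als)$, including the Poincar\'e constant $c_{\ell}$, the Helmholtz decompositions at level $\ell$, and the compactness of the inverse reduced operators. The second application yields closedness of $R(\Almo)$ (and the analogous assertions for the pair $(\Almo,\Almos)$), which is the remaining ingredient Corollary \ref{cortoolboxexseq} explicitly asserts.

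Finally, with $R(\Almo)$ and $R(\Al)$ both closed, the hypotheses of the second half of Lemma \ref{lemtoolboxexseq} are in force, so the refined Helmholtz type decompositions (without closure bars) follow immediately, and the list of identities for $N(\Al)$, $N(\Almos)$, $D(\Al)$, $D(\Almos)$, and $D_{\ell}$ is handed over verbatim. I do not anticipate any real obstacle: the whole argument is a bookkeeping exercise, and the only point where one has to pay attention is to notice that the compactness hypothesis on $D_{\ell}$ must be pushed through Lemma \ref{lemcptAlAslmo} \emph{before} invoking Corollary \ref{cortoolboxtwo}, since the latter requires compactness of the reduced domains $D(\cAl)$ and $D(\cAlmo)$ rather than of $D_{\ell}$ itself.
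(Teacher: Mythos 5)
Your proposal is correct and follows essentially the route the paper intends: the paper states that the corollary "follows immediately" from Lemma \ref{lemcptAlAslmo}, and your argument simply writes out that immediate deduction (compactness of $D(\cAl)$, $D(\cAlmo)$, $K_{\ell}$ via Lemma \ref{lemcptAlAslmo}, then Corollary \ref{cortoolboxtwo} applied to $\Al$ and $\Almo$, then the closed-range version of Lemma \ref{lemtoolboxexseq}). One tiny imprecision: the direction of Lemma \ref{lemcptAlAslmo} you invoke rests on the inclusions \eqref{DAlAlmosDl} rather than on the three-way splitting of a $D_{\ell}$-bounded sequence (that splitting is the argument for the converse direction), but this does not affect the validity of your proof.
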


\begin{rem}
\label{remDl}
Under the assumption that the embedding $D_{\ell}\dhookrightarrow\Hil$ is compact,
all the assertions of this section hold. Especially, the (short) complex
$$\begin{CD}
D(\Almo) @> \Almo >>
D(\Al) @> \Al >>
\Hilpo
\end{CD}$$
together with its adjoint complex
$$\begin{CD}
\Hilmo @< \Almos <<
D(\Almos) @< \Als <<
D(\Als)
\end{CD}$$
is closed. These complexes are even exact, if additionally $K_{\ell}=\{0\}$.
\end{rem}

Defining and recalling the orthonormal projectors
\begin{align}
\mylabel{orthoporjdef}
\pi_{\Almo}:=\pi_{\ovl{R(\Almo)}}:\Hil
&\to\ovl{R(\Almo)},
&
\pi_{\Als}:=\pi_{\ovl{R(\Als)}}:\Hil
&\to\ovl{R(\Als)},
&
\pil:\Hil
&\to K_{\ell},
\end{align}
we have $\pil=1-\pi_{\Almo}-\pi_{\Als}$ as well as
\begin{align*}
\pi_{\Almo}\Hil
&=\pi_{\Almo}D(\Al)
=\pi_{\Almo}N(\Al)
=\ovl{R(\Almo)}
=\ovl{R(\cAlmo)},\\
\pi_{\Als}\Hil
&=\pi_{\Als}D(\Almos)
=\pi_{\Als}N(\Almos)
=\ovl{R(\Als)}
=\ovl{R(\cAsl)}
\end{align*}
and
\begin{align*}
\pi_{\Almo}D(\Almos)
&=\pi_{\Almo}D_{\ell}
=D(\cAslmo),
&
\pi_{\Als}D(\Al)
&=\pi_{\Als}D_{\ell}
=D(\cAl).
\end{align*}
Moreover
\begin{align*}
\forall\;\xi
&\in D(\Almos)
&
\pi_{\Almo}\xi
&\in D(\cAslmo)
&
&\wedge
&
\Almos\pi_{\Almo}\xi
&=\Almos\xi,\\
\forall\;\zeta
&\in D(\Al)
&
\pi_{\Als}\zeta
&\in D(\cAl)
&
&\wedge
&
\Al\pi_{\Als}\zeta
&=\Al\zeta.
\end{align*}
We also introduce the orthogonal projectors onto the kernels
\begin{align*}
\pi_{N(\Almos)}:=1-\pi_{\Almo}:\Hil
&\to N(\Almos),
&
\pi_{N(\Al)}:=1-\pi_{\Als}:\Hil
&\to N(\Al).
\end{align*}

\section{Solution Theory and Variational Formulations}
\mylabel{secsoltheovarform}

From now on and throughout this paper we suppose the following.

\begin{genass}
\label{genass}
$R(\Ao)$ and $R(\At)$ are closed and $K_{2}$ is finite dimensional.
\end{genass}

\begin{rem}
\label{genassrem}
The General Assumption \ref{genass} is satisfied, 
if, e.g., $D_{2}\dhookrightarrow\Hit$ is compact.
The finite dimension of the cohomology group $K_{2}$ may be dropped.
\end{rem}

\subsection{First Order Systems}

We recall the linear first order system \eqref{Aprob} from the introduction:
Find $x\in D_{2}=D(\At)\cap D(\Aos)$ such that
\begin{align}
\begin{split}
\label{Aprobsoltheo}
\At x&=f,\\
\Aos x&=g,\\
\pit x&=k.
\end{split}
\end{align}

\begin{theo}
\label{soltheofos}
\eqref{Aprobsoltheo} is uniquely solvable in $D_{2}$, if and only if
$f\in R(\At)$, $g\in R(\Aos)$, and $k\in K_{2}$. 
The unique solution $x\in D_{2}$ is given by 
\begin{align*}
x:=x_{f}+x_{g}+k
&\in D(\cAt)\oplus_{\Hit}D(\cAos)\oplus_{\Hit}K_{2}=D_{2},\\
x_{f}:=\cA_{2}^{-1}f
&\in D(\cAt)=D(\cAt)\cap D_{2},\\
x_{g}:=(\cAos)^{-1}g
&\in D(\cAos)=D(\cAos)\cap D_{2}
\end{align*}
and depends continuously on the data, i.e.,
$\norm{x}_{\Hit}
\leq c_{2}\norm{f}_{\Hith}
+c_{1}\norm{g}_{\Hio}
+\norm{k}_{\Hit}$,
as 
$$\norm{x_{f}}_{\Hit}\leq c_{2}\norm{f}_{\Hith},\qquad
\norm{x_{g}}_{\Hit}\leq c_{1}\norm{g}_{\Hio}.$$
It holds 
$$\pi_{\Ats}x=x_{f},\qquad
\pi_{\Ao}x=x_{g},\qquad
\pit x=k,\qquad
\norm{x}_{\Hit}^2=\norm{x_{f}}_{\Hit}^2+\norm{x_{g}}_{\Hit}^2+\norm{k}_{\Hit}^2.$$
\end{theo}

\begin{proof}
As pointed out in the introduction, we just need to show existence.
We use the results of Section \ref{sectoolbox}.
Let $f\in R(\At)$, $g\in R(\Aos)$, $k\in K_{2}$ and define
$x$, $x_{f}$, and $x_{g}$ according to the theorem.
For the orthogonality we refer to Lemma \ref{lemtoolboxexseq}.
Moreover, $x_{f}$, $x_{g}$, and $k$ solve the linear systems
\begin{align*}
\At x_{f}&=f,
&
\At x_{g}&=0,
&
\At k&=0,\\
\Aos x_{f}&=0,
&
\Aos x_{g}&=g,
&
\Aos k&=0,\\
\pit x_{f}&=0,
&
\pit x_{g}&=0,
&
\pit k&=k.
\end{align*}
Thus $x$ solves \eqref{Aprobsoltheo}
and we have by Corollary \ref{cortoolboxone} 
$\norm{x_{f}}_{\Hit}\leq c_{2}\norm{f}_{\Hith}$
and $\norm{x_{g}}_{\Hit}\leq c_{1}\norm{g}_{\Hio}$,
which completes the proof of the solution theory.
\end{proof} 

\begin{rem}
\label{remsoltheofos}
By orthogonality 
and with $\At x=\At x_{f}=f$
and $\Aos x=\Aos x_{g}=g$
we even have
\begin{align*}
\norm{x}_{\Hit}^2
&=\bnorm{x_{f}}_{\Hit}^2
+\bnorm{x_{g}}_{\Hit}^2
+\norm{k}_{\Hit}^2
\leq c_{2}^2\norm{f}_{\Hith}^2
+c_{1}^2{}\norm{g}_{\Hio}^2
+\norm{k}_{\Hit}^2,\\
\norm{x}_{D_{2}}^2
&=\bnorm{x_{f}}_{\Hit}^2
+\norm{f}_{\Hith}^2
+\bnorm{x_{g}}_{\Hit}^2
+\norm{g}_{\Hio}^2
+\norm{k}_{\Hit}^2
\leq(1+c_{2}^2)\norm{f}_{\Hith}^2
+(1+c_{1}^2)\norm{g}_{\Hio}^2
+\norm{k}_{\Hit}^2.
\end{align*}
\end{rem}

\subsubsection{Variational Formulations}

Recall the partial solutions
\begin{align}
\label{recalldefxfxg}
\begin{split}
x_{f}:=\cA_{2}^{-1}f
&\in D(\cAt)=D(\At)\cap R(\Ats)=D(\At)\cap N(\Aos)\cap K_{2}^{\bot_{\Hit}},\\
x_{g}:=(\cAos)^{-1}g
&\in D(\cAos)=D(\Aos)\cap R(\Ao)=D(\Aos)\cap N(\At)\cap K_{2}^{\bot_{\Hit}}.
\end{split}
\end{align}
There are at least two obvious ways to get variational formulations for finding 
each of the partial solutions $x_{f}$ and $x_{g}$.
Looking at $x_{f}\in D(\cAt)$ the first idea is to multiply 
the equation $\At x_{f}=f$
by $\At\xi$ with some $\xi\in D(\cAt)$ leading to
\begin{align*}
\forall\,\xi\in D(\cAt)\qquad
\scp{\At x_{f}}{\At\xi}_{\Hith}
=\scp{f}{\At\xi}_{\Hith},
\end{align*}
which is a weak formulation of the second order equation
$$\Ats\At x_{f}=\Ats f,$$
more precisely of $\Ats(\At x_{f}-f)=0$.
While the latter choice was straight forward to find $x_{f}$ itself, 
the next choice searches for a potential $y_{f}$,
e.g., $y_{f}:=(\cAts)^{-1}x_{f}\in D(\cAts)$, of 
$$x_{f}=\Ats y_{f}\in D(\cAt)=D(\At)\cap R(\Ats)=D(\At)\cap R(\cAts),$$
see Remark \ref{remsoltheofos}. Multiplying by $\Ats\phi$ with some $\phi\in D(\cAts)$ gives
\begin{align*}
\forall\,\phi\in D(\cAts)\qquad
\scp{\Ats y_{f}}{\Ats\phi}_{\Hit}
=\scp{x_{f}}{\Ats\phi}_{\Hit}
=\scp{\At x_{f}}{\phi}_{\Hith}
=\scp{f}{\phi}_{\Hith},
\end{align*}
which is a weak formulation of the second order equation
$$\At\Ats y_{f}=f.$$

Similar ideas apply to find corresponding variational formulations for $x_{g}$ as well.

\begin{theo}
\label{soltheofosvarform}
The partial solutions $x_{f}$ and $x_{g}$ in Theorem \ref{soltheofos} 
can be found by the following four variational formulations: 
\begin{itemize}
\item[\bf(i)]
There exists a unique $\ti{x}_{f}\in D(\cAt)$ such that
\begin{align}
\mylabel{varxf}
\forall\,\xi\in D(\cAt)\qquad
\scp{\At\ti{x}_{f}}{\At\xi}_{\Hith}
=\scp{f}{\At\xi}_{\Hith}.
\end{align}
\eqref{varxf} is even satisfied for all $\xi\in D(\At)$.
Moreover, $\At\ti{x}_{f}=f$ holds if and only if $f\in R(\At)$.
In this case $\ti{x}_{f}=x_{f}$.
\item[\bf(i')]
There exists a unique potential $y_{f}\in D(\cAts)$ such that
\begin{align}
\mylabel{varyf}
\forall\,\phi\in D(\cAts)\qquad
\scp{\Ats y_{f}}{\Ats\phi}_{\Hit}
=\scp{f}{\phi}_{\Hith}.
\end{align}
\eqref{varyf} even holds for all $\phi\in D(\Ats)$ 
if and only if $f\in R(\At)$.
In this case we have 
$$\Ats y_{f}\in D(\At)\cap R(\Ats)=D(\cAt)$$ 
with $\At\Ats y_{f}=f$ and hence $\Ats y_{f}=x_{f}$.
\item[\bf(ii)]
There exists a unique $\ti{x}_{g}\in D(\cAos)$ such that
\begin{align}
\mylabel{varxg}
\forall\,\zeta\in D(\cAos)\qquad
\scp{\Aos\ti{x}_{g}}{\Aos\zeta}_{\Hio}
=\scp{g}{\Aos\zeta}_{\Hio}.
\end{align}
\eqref{varxg} is even satisfied for all $\zeta\in D(\Aos)$.
Moreover, $\Aos\ti{x}_{g}=g$ holds if and only if $g\in R(\Aos)$.
In this case $\ti{x}_{g}=x_{g}$.
\item[\bf(ii')]
There exists a unique potential $z_{g}\in D(\cAo)$ such that
\begin{align}
\mylabel{varzg}
\forall\,\varphi\in D(\cAo)\qquad
\scp{\Ao z_{g}}{\Ao\varphi}_{\Hit}
=\scp{g}{\varphi}_{\Hio}.
\end{align}
\eqref{varzg} even holds for all $\varphi\in D(\Ao)$
if and only if $g\in R(\Aos)$.
In this case we have 
$$\Ao z_{g}\in D(\Aos)\cap R(\Ao)=D(\cAos)$$ 
with $\Aos\Ao z_{g}=g$ and thus $\Ao z_{g}=x_{g}$.
\end{itemize}
\end{theo}

\begin{proof}
\eqref{varxf} is strictly positive (or coercive) over $D(\cAt)$ 
by the Friedrichs/Poincar\'e type estimates of Corollary \ref{cortoolboxone} (i)
and hence a unique $\ti{x}_{f}\in D(\cAt)$ exists
by Riesz' representation theorem (or Lax-Milgram's lemma) solving \eqref{varxf}.
By \eqref{RacA}, i.e., $R(\cAt)=R(\At)$, \eqref{varxf} holds for all $\xi\in D(\At)$.
Hence 
$$\forall\,\xi\in D(\At)\qquad
\scp{\At\ti{x}_{f}-f}{\At\xi}_{\Hith}=0,$$
yielding $\At\ti{x}_{f}-f\in R(\At)^{\bot_{\Hith}}$.
Thus, if $f\in R(\At)$, we see 
$\At\ti{x}_{f}-f\in R(\At)\cap R(\At)^{\bot_{\Hith}}=\{0\}$, i.e., $\At\ti{x}_{f}=f$.
As $\ti{x}_{f}\in D(\cAt)$ conclude $\ti{x}_{f}=x_{f}$ by the injectivity of $\cAt$,
which completes the proof of (i).

\eqref{varyf} is strictly positive over $D(\cAts)$ 
by Corollary \ref{cortoolboxone} (i)
and thus a unique $y_{f}\in D(\cAts)$ exists
by Riesz' representation theorem solving \eqref{varyf}.
Using Corollary \ref{cortoolboxone} (iii) or Lemma \ref{lemtoolboxexseq}
we can split any $\phi\in D(\Ats)=N(\Ats)\oplus_{\Hith}D(\cAts)$
into $\phi=\phi_{N}+\phi_{R}$ (null space and range) 
with $\phi_{N}\in N(\Ats)$, $\phi_{R}\in D(\cAts)$,
and $\Ats\phi=\Ats\phi_{R}$. 
Let $f\in R(\At)$. Utilizing \eqref{varyf} for $\phi_{R}$
and orthogonality, i.e., $f\in R(\At)=N(\Ats)^{\bot_{\Hith}}$, we get
$$\scp{\Ats y_{f}}{\Ats\phi}_{\Hit}
=\scp{\Ats y_{f}}{\Ats\phi_{R}}_{\Hit}
=\scp{f}{\phi_{R}}_{\Hith}
=\scp{f}{\phi}_{\Hith}.$$
Therefore, \eqref{varyf} holds for all $\phi\in D(\Ats)$.
On the other hand, if \eqref{varyf} holds for all $\phi\in D(\Ats)$,
then $\Ats y_{f}\in D(\At)$ with $\At\Ats y_{f}=f$.
Hence\footnote{Another proof is the following: 
Pick $\phi\in N(\Ats)$ and get by \eqref{varyf} directly $\scp{f}{\phi}_{\Hith}=0$.
Thus $f\in N(\Ats)^{\bot_{\Hith}}=R(\At)$.} 
$f\in R(\At)$.
Therefore, if $f$ belongs to $R(\At)$, we obtain 
$\Ats y_{f}\in D(\At)\cap R(\Ats)=D(\cAt)$
with $\At\Ats y_{f}=f$ and hence $\Ats y_{f}=x_{f}$,
again by the injectivity of $\cAt$.


Analogously, we prove (ii) and (ii').
\end{proof}

\begin{rem}
\label{soltheofosvarformremmatrix}
Note that
\begin{align*}
x_{f}=\cA_{2}^{-1}f&\in D(\cAt),
&
x_{g}=(\cAos)^{-1}g&\in D(\cAos),\\
y_{f}=(\cAts)^{-1}x_{f}=(\cAts)^{-1}\cA_{2}^{-1}f&\in D(\cAts),
&
z_{g}=\cA_{1}^{-1}x_{g}=\cA_{1}^{-1}(\cAos)^{-1}g&\in D(\cAo)
\end{align*}
hold with $\At\Ats y_{f}=f$ and $\Aos\Ao z_{g}=g$.
Hence $x_{f}$, $x_{g}$, $k$, and $y_{f}$, $z_{g}$ solve 
the first resp. second order systems
\begin{align*}
\At x_{f}&=f,
&
\At x_{g}&=0,
&
\At k&=0,
&
\Ats y_{f}&=x_{f},
&
\At\Ats y_{f}&=f,
&
\Ao z_{g}&=x_{g},
&
\Aos\Ao z_{g}&=g,\\
\Aos x_{f}&=0,
&
\Aos x_{g}&=g,
&
\Aos k&=0,
&
\Ath y_{f}&=0,
&
\Ath y_{f}&=0,
&
\Azs z_{g}&=0,
&
\Azs z_{g}&=0,\\
\pit x_{f}&=0,
&
\pit x_{g}&=0,
&
\pit k&=k,
&
\pith y_{f}&=0,
&
\pith y_{f}&=0,
&
\pio z_{g}&=0,
&
\pio z_{g}&=0.
\end{align*}
Moreover:
\begin{itemize}
\item[\bf(i)]
\eqref{varxf} is a weak formulation of 
$$\Ats\At\ti{x}_{f}=\Ats f,\qquad
\Aos\ti{x}_{f}=0,\qquad
\pit\ti{x}_{f}=0,$$
i.e., in formal matrix notation
$$\begin{bmatrix}
\Ats\At\\
\Aos\\
\pit
\end{bmatrix}
\begin{bmatrix}
\ti{x}_{f}
\end{bmatrix}
=\begin{bmatrix}
\Ats f\\
0\\
0
\end{bmatrix}.$$
\item[\bf(i')]
\eqref{varyf} is a weak formulation of
$$\At\Ats y_{f}=f,\qquad
\Ath y_{f}=0,\qquad
\pith y_{f}=0,$$
i.e., in formal matrix notation
$$\begin{bmatrix}
\At\Ats\\
\Ath\\
\pith
\end{bmatrix}
\begin{bmatrix}
y_{f}
\end{bmatrix}
=\begin{bmatrix}
f\\
0\\
0
\end{bmatrix}.$$
\item[\bf(ii)]
\eqref{varxg} is a weak formulation of
$$\Ao\Aos\ti{x}_{g}=\Ao g,\qquad
\At\ti{x}_{g}=0,\qquad
\pit\ti{x}_{g}=0,$$
i.e., in formal matrix notation
$$\begin{bmatrix}
\Ao\Aos\\
\At\\
\pit
\end{bmatrix}
\begin{bmatrix}
\ti{x}_{g}
\end{bmatrix}
=\begin{bmatrix}
\Ao g\\
0\\
0
\end{bmatrix}.$$
\item[\bf(ii')]
\eqref{varzg} is a weak formulation of
$$\Aos\Ao z_{g}=g,\qquad
\Azs z_{g}=0,\qquad
\pio z_{g}=0,$$
i.e., in formal matrix notation
$$\begin{bmatrix}
\Aos\Ao\\
\Azs\\
\pio
\end{bmatrix}
\begin{bmatrix}
z_{g}
\end{bmatrix}
=\begin{bmatrix}
g\\
0\\
0
\end{bmatrix}.$$
\end{itemize}
\end{rem}

We also emphasize that the variational formulations \eqref{varxf}-\eqref{varzg}
have a saddle point structure. 
We have already seen that, provided $f\in R(\At)$ and $g\in R(\Aos)$,
the formulations \eqref{varxf}-\eqref{varzg}
are equivalent to the following four problems:
Find $\ti{x}_{f}\in D(\cAt)$, $y_{f}\in D(\cAts)$, 
$\ti{x}_{g}\in D(\cAos)$, and $z_{g}\in D(\cAo)$, such that
\begin{align}
\mylabel{varxftwo}
\forall\,\xi&\in D(\At)
&
\scp{\At\ti{x}_{f}}{\At\xi}_{\Hith}
&=\scp{f}{\At\xi}_{\Hith},\\
\mylabel{varyftwo}
\forall\,\phi&\in D(\Ats)
&
\scp{\Ats y_{f}}{\Ats\phi}_{\Hit}
&=\scp{f}{\phi}_{\Hith},\\
\mylabel{varxgtwo}
\forall\,\zeta&\in D(\Aos)
&
\scp{\Aos\ti{x}_{g}}{\Aos\zeta}_{\Hio}
&=\scp{g}{\Aos\zeta}_{\Hio},\\
\mylabel{varzgtwo}
\forall\,\varphi&\in D(\Ao)
&
\scp{\Ao z_{g}}{\Ao\varphi}_{\Hit}
&=\scp{g}{\varphi}_{\Hio}.
\end{align} 
Note that in the end one needs only two out of these four formulations for computing
$$x_{f}=\ti{x}_{f}=\Ats y_{f},\qquad
x_{g}=\ti{x}_{g}=\Ao z_{g}.$$
Moreover, 
\begin{align*}
\ti{x}_{f}&\in D(\cAt)=D(\At)\cap R(\Ats)
&
&\qqequi
&
\ti{x}_{f}&\in D(\At)
&
&\wedge
&
\ti{x}_{f}&\in R(\Ats)=N(\At)^{\bot_{\Hit}},\\
y_{f}&\in D(\cAts)=D(\Ats)\cap R(\At)
&
&\qqequi
&
y_{f}&\in D(\Ats)
&
&\wedge
&
y_{f}&\in R(\At)=N(\Ats)^{\bot_{\Hith}},\\
\ti{x}_{g}&\in D(\cAos)=D(\Aos)\cap R(\Ao)
&
&\qqequi
&
\ti{x}_{g}&\in D(\Aos)
&
&\wedge
&
\ti{x}_{g}&\in R(\Ao)=N(\Aos)^{\bot_{\Hit}},\\
z_{g}&\in D(\cAo)=D(\Ao)\cap R(\Aos)
&
&\qqequi
&
z_{g}&\in D(\Ao)
&
&\wedge
&
z_{g}&\in R(\Aos)=N(\Ao)^{\bot_{\Hio}}.
\end{align*}
Therefore, the variational formulations \eqref{varxftwo}-\eqref{varzgtwo}
are equivalent to the following four saddle point problems:
Find $\ti{x}_{f}\in D(\At)$, $y_{f}\in D(\Ats)$, 
$\ti{x}_{g}\in D(\Aos)$, and $z_{g}\in D(\Ao)$, such that
\begin{align}
\mylabel{saddlexfone}
\forall\,\xi&\in D(\At)
&
\scp{\At\ti{x}_{f}}{\At\xi}_{\Hith}
&=\scp{f}{\At\xi}_{\Hith}
&
&\wedge
&
\forall\,\kappa&\in N(\At)
&
\scp{\ti{x}_{f}}{\kappa}_{\Hit}
&=0,\\
\mylabel{saddleyfone}
\forall\,\phi&\in D(\Ats)
&
\scp{\Ats y_{f}}{\Ats\phi}_{\Hit}
&=\scp{f}{\phi}_{\Hith}
&
&\wedge
&
\forall\,\theta&\in N(\Ats)
&
\scp{y_{f}}{\theta}_{\Hith}
&=0,\\
\mylabel{saddlexgone}
\forall\,\zeta&\in D(\Aos)
&
\scp{\Aos\ti{x}_{g}}{\Aos\zeta}_{\Hio}
&=\scp{g}{\Aos\zeta}_{\Hio}
&
&\wedge
&
\forall\,\lambda&\in N(\Aos)
&
\scp{\ti{x}_{g}}{\lambda}_{\Hit}
&=0,\\
\mylabel{saddlezgone}
\forall\,\varphi&\in D(\Ao)
&
\scp{\Ao z_{g}}{\Ao\varphi}_{\Hit}
&=\scp{g}{\varphi}_{\Hio}
&
&\wedge
&
\forall\,\psi&\in N(\Ao)
&
\scp{z_{g}}{\psi}_{\Hio}
&=0.
\end{align} 
Let us additionally assume that $R(\Az)$ and $R(\Ath)$ are closed.
Using Lemma \ref{lemtoolboxexseq}, i.e.,
\begin{align}
\mylabel{NAoNaos}
N(\Ao)
&=R(\Az)\oplus_{\Hio}K_{1},
&
N(\Aos)
&=R(\Ats)\oplus_{\Hit}K_{2},\\
\mylabel{NAtNats}
N(\At)
&=R(\Ao)\oplus_{\Hit}K_{2},
&
N(\Ats)
&=R(\Aths)\oplus_{\Hith}K_{3},
\end{align}
the systems \eqref{saddlexfone}-\eqref{saddlezgone} may be further refined to the following 
four double saddle point formulations:
Find $\ti{x}_{f}\in D(\At)$, $y_{f}\in D(\Ats)$, 
$\ti{x}_{g}\in D(\Aos)$, and $z_{g}\in D(\Ao)$, such that
\begin{align}
\mylabel{dsaddlexfone}
\forall\,\xi&\in D(\At)
&
\scp{\At\ti{x}_{f}}{\At\xi}_{\Hith}
&=\scp{f}{\At\xi}_{\Hith}
&
&\wedge
&
\forall\,\alpha&\in D(\Ao)
&
\scp{\ti{x}_{f}}{\Ao\alpha}_{\Hit}
&=0\\
\nonumber
&&&&
&\wedge
&
\forall\,\kappa&\in K_{2}
&
\scp{\ti{x}_{f}}{\kappa}_{\Hit}
&=0,\\
\mylabel{dsaddleyfone}
\forall\,\phi&\in D(\Ats)
&
\scp{\Ats y_{f}}{\Ats\phi}_{\Hit}
&=\scp{f}{\phi}_{\Hith}
&
&\wedge
&
\forall\,\beta&\in D(\Aths)
&
\scp{y_{f}}{\Aths\beta}_{\Hith}
&=0\\
\nonumber
&&&&
&\wedge
&
\forall\,\theta&\in K_{3}
&
\scp{y_{f}}{\theta}_{\Hith}
&=0,\\
\mylabel{dsaddlexgone}
\forall\,\zeta&\in D(\Aos)
&
\scp{\Aos\ti{x}_{g}}{\Aos\zeta}_{\Hio}
&=\scp{g}{\Aos\zeta}_{\Hio}
&
&\wedge
&
\forall\,\gamma&\in D(\Ats)
&
\scp{\ti{x}_{g}}{\Ats\gamma}_{\Hit}
&=0\\
\nonumber
&&&&
&\wedge
&
\forall\,\lambda&\in K_{2}
&
\scp{\ti{x}_{g}}{\lambda}_{\Hit}
&=0,\\
\mylabel{dsaddlezgone}
\forall\,\varphi&\in D(\Ao)
&
\scp{\Ao z_{g}}{\Ao\varphi}_{\Hit}
&=\scp{g}{\varphi}_{\Hio}
&
&\wedge
&
\forall\,\delta&\in D(\Az)
&
\scp{z_{g}}{\Az\delta}_{\Hio}
&=0\\
\nonumber
&&&&
&\wedge
&
\forall\,\psi&\in K_{1}
&
\scp{z_{g}}{\psi}_{\Hio}
&=0.
\end{align} 

\begin{rem}
\label{solremfosvarform}
For possible numerical purposes and applications
let us mention a few observations:
\begin{itemize}
\item[\bf(i)]
Using the variational formulation \eqref{saddlexfone} or \eqref{dsaddlexfone}
corresponding to $x_{f}=\ti{x}_{f}\in D(\At)$
for finding a numerical (discrete) approximation $x_{f,h}$ of
$x_{f}$ proposes a $D(\At)$-conforming method 
in some finite dimensional (discrete) subspace $D_{h}(\At)$ of $D(\At)$
giving also a $D(\At)$-conforming discrete solution $x_{f,h}\in D_{h}(\At)\subset D(\At)$.
\item[\bf(ii)]
Using the variational formulation \eqref{saddleyfone} or \eqref{dsaddleyfone}
corresponding to $x_{f}=\Ats y_{f}\in R(\Ats)$ 
for finding a discrete approximation $x_{f,h}=\Ats y_{f,h}$ of $x_{f}$
proposes a $D(\Ats)$-conforming method
in some discrete subspace $D_{h}(\Ats)$ of $D(\Ats)$
giving a $D(\Ats)$-conforming discrete potential $y_{f,h}\in D_{h}(\Ats)\subset D(\Ats)$,
but yielding a $D(\Aos)$-conforming solution as
$$x_{f,h}=\Ats y_{f,h}\in R(\Ats)=N(\Aos)\cap K_{2}^{\bot_{\Hit}}\subset D(\Aos).$$
\item[\bf(ii')]
A possible discrete solution $x_{f,h}=\Ats y_{f,h}$ from (ii) satisfies 
automatically the side conditions 
$$\Aos x_{f,h}=0,\qquad
\pit x_{f,h}=0,$$
i.e., even on the discrete level there is no error in the side conditions.
The other option from (i) yields a discrete solution $x_{f,h}$, which
most probably has got errors in the side conditions.
\item[\bf(iii)]
Similar observations hold for \eqref{saddlexgone} or \eqref{dsaddlexgone} with $D(\Aos)$-conforming discrete solutions
and \eqref{saddlezgone} or \eqref{dsaddlezgone} with $D(\Ao)$- resp. $D(\At)$-conforming discrete solutions.
\end{itemize}
\end{rem}

\begin{rem}
\label{solremfos}
The finite dimensionality of $K_{2}$ may be dropped.
Then all assertions of Theorem \ref{soltheofos} 
and Theorem \ref{soltheofosvarform} and all
variational and saddle point formulations remain valid.
Note that $R(\Ao)$ and $R(\At)$ are closed, 
if $D(\cAo)\dhookrightarrow\Hio$ and $D(\cAt)\dhookrightarrow\Hit$ are compact.
Moreover, by Lemma \ref{lemcptAlAslmo}
$D(\cAo)\dhookrightarrow\Hio$ and $D(\cAt)\dhookrightarrow\Hit$ are compact
and $K_{2}$ is finite dimensional if and only if $D_{2}\dhookrightarrow\Hit$ is compact. 
\end{rem}

\subsubsection{Trivial Cohomology Groups}

The double saddle point formulations \eqref{dsaddlexfone}-\eqref{dsaddlezgone} 
can be simplified if some assumptions on the cohomology groups are imposed.
For this, let additionally to our General Assumption \ref{genass}
the two ranges $R(\Az)$ and $R(\Ath)$ be closed as well
and let the cohomology groups $K_{1}$ and $K_{3}$ be trivial.
Thus all ranges $R(\Az)$, $R(\Ao)$, $R(\At)$, and $R(\Ath)$ are assumed to be closed
and we have
$$K_{1}=\{0\},\qquad
K_{3}=\{0\}.$$
Recalling \eqref{NAoNaos} and \eqref{NAtNats} we see
$$N(\Ao)=R(\Az),\qquad
N(\Ats)=R(\Aths).$$
If we now focus on the two double saddle point problems
\eqref{dsaddleyfone} and \eqref{dsaddlezgone} we get the following simplified versions:
Find $y_{f}\in D(\Ats)$ and $z_{g}\in D(\Ao)$, such that
\begin{align}
\mylabel{saddlexftwo}
\forall\,\phi&\in D(\Ats)
&
\scp{\Ats y_{f}}{\Ats\phi}_{\Hit}
&=\scp{f}{\phi}_{\Hith}
&
&\wedge
&
\forall\,\beta&\in D(\Aths)
&
\scp{y_{f}}{\Aths\beta}_{\Hith}
&=0,\\
\mylabel{saddlexgtwo}
\forall\,\varphi&\in D(\Ao)
&
\scp{\Ao z_{g}}{\Ao\varphi}_{\Hit}
&=\scp{g}{\varphi}_{\Hio}
&
&\wedge
&
\forall\,\delta&\in D(\Az)
&
\scp{z_{g}}{\Az\delta}_{\Hio}
&=0.
\end{align} 
Let us consider the following modified system: Find 
$$(y_{f},v_{f})\in D(\Ats)\times D(\cAths),\qquad
(z_{g},w_{g})\in D(\Ao)\times D(\cAz),$$ 
such that
\begin{align}
\mylabel{saddlexfthree}
\forall\,(\phi,\beta)&\in D(\Ats)\times D(\cAths)
&
\scp{\Ats y_{f}}{\Ats\phi}_{\Hit}
+\scp{\Aths v_{f}}{\phi}_{\Hith}
&=\scp{f}{\phi}_{\Hith}
&
&\wedge
&
\scp{y_{f}}{\Aths\beta}_{\Hith}
&=0,\\
\mylabel{saddlexgthree}
\forall\,(\varphi,\delta)&\in D(\Ao)\times D(\cAz)
&
\scp{\Ao z_{g}}{\Ao\varphi}_{\Hit}
+\scp{\Az w_{g}}{\varphi}_{\Hio}
&=\scp{g}{\varphi}_{\Hio}
&
&\wedge
&
\scp{z_{g}}{\Az\delta}_{\Hio}
&=0.
\end{align} 
The unique solutions $y_{f}$, $z_{g}$ of \eqref{saddlexftwo}-\eqref{saddlexgtwo} yield solutions
$(y_{f},0)$, $(z_{g},0)$ of \eqref{saddlexfthree}-\eqref{saddlexgthree}.
On the other hand, for any solutions $(y_{f},v_{f})$, $(z_{g},w_{g})$
of \eqref{saddlexfthree}-\eqref{saddlexgthree} we get 
$\Aths v_{f}=0$ and $\Az w_{g}=0$
by testing with $\phi:=\Aths v_{f}\in R(\Aths)=N(\Ats)\subset D(\Ats)$
and $\varphi:=\Az w_{g}\in R(\Az)=N(\Ao)\subset D(\Ao)$
since $f\in R(\At)\bot_{\Hith}N(\Ats)$ and $g\in R(\Aos)\bot_{\Hio}N(\Ao)$, respectively.
Hence, as $v_{f}\in D(\cAths)$ and $w_{g}\in D(\cAz)$ we see $v_{f}=0$ and $w_{g}=0$.
Thus, $y_{f}$, $z_{g}$ are the unique solutions of \eqref{saddlexftwo}-\eqref{saddlexgtwo}.
The latter arguments show that \eqref{saddlexftwo}-\eqref{saddlexgtwo} 
and \eqref{saddlexfthree}-\eqref{saddlexgthree}
are equivalent and both are uniquely solvable.

\begin{rem}
\mylabel{infsuprem}
The saddle point formulations \eqref{saddlexfthree}-\eqref{saddlexgthree} 
are also accessible by the standard inf-sup-theory, 
which is widely used in the numerical community.
For this, let us note that
the bilinear forms $\scp{\Ats\,\cdot\,}{\Ats\,\cdot\,}_{\Hit}$
and $\scp{\Ao\,\cdot\,}{\Ao\,\cdot\,}_{\Hit}$ are strictly positive (coercive) 
over the respective kernels given by each of the second forms, 
which are by assumption
\begin{align*}
N(\Ath)
&=\big(N(\Ath)^{\bot}\big)^{\bot}
=R(\Aths)^{\bot}
=N(\Ats)^{\bot}
=R(\At),\\
N(\Azs)
&=\big(N(\Azs)^{\bot}\big)^{\bot}
=R(\Az)^{\bot}
=N(\Ao)^{\bot}
=R(\Aos),
\end{align*}
i.e., over $D(\cAts)$ and $D(\cAo)$.
Moreover, the inf-sup-conditions are satisfied.
For this, we compute by choosing $\phi:=\Aths\beta\in R(\Aths)=N(\Ats)$
and $\varphi:=\Az\delta\in R(\Az)=N(\Ao)$ for some given 
$0\neq\beta\in D(\cAths)$ and $0\neq\delta\in D(\cAz)$
\begin{align*}
\frac{\norm{\Aths\beta}_{\Hith}}{\norm{\beta}_{D(\Aths)}}
&\leq\sup_{0\neq\phi\in D(\Ats)}
\frac{\scp{\Aths\beta}{\phi}_{\Hith}}{\norm{\beta}_{D(\Aths)}\norm{\phi}_{D(\Ats)}}
\leq\frac{\norm{\Aths\beta}_{\Hith}}{\norm{\beta}_{D(\Aths)}}
\leq1,\\
\frac{\norm{\Az\delta}_{\Hio}}{\norm{\delta}_{D(\Az)}}
&\leq\sup_{0\neq\varphi\in D(\Ao)}
\frac{\scp{\Az\delta}{\varphi}_{\Hio}}{\norm{\delta}_{D(\Az)}\norm{\varphi}_{D(\Ao)}}
\leq\frac{\norm{\Az\delta}_{\Hio}}{\norm{\delta}_{D(\Az)}}
\leq1,
\end{align*}
which shows that actually equality holds.
Thus, the inf-sup-conditions follow\footnote{Note that
by \eqref{classup}, \eqref{clsassup}, Lemma \ref{classup}, and Corollary \ref{cortoolboxone} (iv)
\begin{align*}
\inf_{0\neq\beta\in D(\cAths)}
\frac{\norm{\Aths\beta}_{\Hith}^2}{\norm{\beta}_{D(\Aths)}^2}
&=\Big(\sup_{0\neq\beta\in D(\cAths)}
\frac{\norm{\beta}_{\Hif}^2+\norm{\Aths\beta}_{\Hith}^2}{\norm{\Aths\beta}_{\Hith}^2}\Big)^{-1}
=\Big(1+\sup_{0\neq\beta\in D(\cAths)}
\frac{\norm{\beta}_{\Hif}^2}{\norm{\Aths\beta}_{\Hith}^2}\Big)^{-1}
=\frac{1}{1+c_{3}^2}
=\bnorm{\cA_{3}^{-1}}_{R(\Ath),D(\cAth)}^{-2},\\
\inf_{0\neq\delta\in D(\cAz)}
\frac{\norm{\Az\delta}_{\Hio}^2}{\norm{\delta}_{D(\Az)}^2}
&=\Big(\sup_{0\neq\delta\in D(\cAz)}
\frac{\norm{\delta}_{\Hiz}^2+\norm{\Az\delta}_{\Hio}^2}{\norm{\Az\delta}_{\Hio}^2}\Big)^{-1}
=\Big(1+\sup_{0\neq\delta\in D(\cAz)}
\frac{\norm{\delta}_{\Hiz}^2}{\norm{\Az\delta}_{\Hio}^2}\Big)^{-1}
=\frac{1}{c_{0}^2+1}
=\bnorm{\cA_{0}^{-1}}_{R(\Az),D(\cAz)}^{-2}
\end{align*}
hold.}
\begin{align*}
1\geq\inf_{0\neq\beta\in D(\cAths)}
\sup_{0\neq\phi\in D(\Ats)}
\frac{\scp{\Aths\beta}{\phi}_{\Hith}}{\norm{\beta}_{D(\Aths)}\norm{\phi}_{D(\Ats)}}
&=\inf_{0\neq\beta\in D(\cAths)}
\frac{\norm{\Aths\beta}_{\Hith}}{\norm{\beta}_{D(\Aths)}}
=(c_{3}^2+1)^{\moh}
=\bnorm{(\cAths)^{-1}}_{R(\Aths),D(\cAths)}^{-1},\\
1\geq\inf_{0\neq\delta\in D(\cAz)}
\sup_{0\neq\varphi\in D(\Ao)}
\frac{\scp{\Az\delta}{\varphi}_{\Hio}}{\norm{\delta}_{D(\Az)}\norm{\varphi}_{D(\Ao)}}
&=\inf_{0\neq\delta\in D(\cAz)}
\frac{\norm{\Az\delta}_{\Hio}}{\norm{\delta}_{D(\Az)}}
=(c_{0}^2+1)^{\moh}
=\bnorm{\cA_{0}^{-1}}_{R(\Az),D(\cAz)}^{-1},
\end{align*}
which are actually nothing else than the boundedness of the norms 
of the respective inverse operators
$\bnorm{(\cAths)^{-1}}_{R(\Aths),D(\cAths)}=\bnorm{\cA_{3}^{-1}}_{R(\Ath),D(\cAth)}$
and $\bnorm{\cA_{0}^{-1}}_{R(\Az),D(\cAz)}=\bnorm{(\cAzs)^{-1}}_{R(\Azs),D(\cAzs)}$, 
i.e., the boundedness of the respective inverse operators
$\cA_{3}^{-1}$, $(\cAths)^{-1}$, $\cA_{0}^{-1}$, $(\cAzs)^{-1}$, itself.
\end{rem}

Now, if $D(\cAths)$ and $D(\cAz)$ are still not suitable
and provided that the respective cohomology groups are trivial,
we can repeat the procedure to obtain additional saddle point formulations 
for $v_{f}$ and $w_{g}$. Note that \eqref{saddlexfthree}-\eqref{saddlexgthree} is equivalent to find 
$(y_{f},v_{f},z_{g},w_{g})\in D(\Ats)\times D(\cAths)\times D(\Ao)\times D(\cAz)$, such that
for all $(\phi,\beta,\varphi,\delta)\in D(\Ats)\times D(\cAths)\times D(\Ao)\times D(\cAz)$
\begin{align}
\mylabel{saddlexffour}
\begin{split}
&\scp{\Ats y_{f}}{\Ats\phi}_{\Hit}
+\scp{\Aths v_{f}}{\phi}_{\Hith}
+\scp{y_{f}}{\Aths\beta}_{\Hith}
+\scp{\Ao z_{g}}{\Ao\varphi}_{\Hit}
+\scp{\Az w_{g}}{\varphi}_{\Hio}
+\scp{z_{g}}{\Az\delta}_{\Hio}\\
&\qquad\qquad=\scp{f}{\phi}_{\Hith}
+\scp{g}{\varphi}_{\Hio}.
\end{split}
\end{align}

\subsubsection{More Variational Formulations}

Another idea is to compute the two partial solutions $x_{f}$ and $x_{g}$ 
from Theorem \ref{soltheofos} together in just one
variational formulation for the sum $x_{f}+x_{g}$. 
For this, let $f\in R(\At)$ and $g\in R(\Aos)$. 
Recall that $x_{f}\in D(\cAt)$
and $x_{g}\in D(\cAos)$ are given by the variational formulations
in Theorem \ref{soltheofosvarform} (i) and (ii), i.e.,
\begin{align}
\mylabel{varxftogetherone}
\forall\,\xi&\in D(\At)
&
\scp{\At x_{f}}{\At\xi}_{\Hith}
&=\scp{f}{\At\xi}_{\Hith},\\
\mylabel{varxgtogetherone}
\forall\,\zeta&\in D(\Aos)
&
\scp{\Aos x_{g}}{\Aos\zeta}_{\Hio}
&=\scp{g}{\Aos\zeta}_{\Hio},
\end{align}
respectively, compare also to the variational formulations 
\eqref{dsaddlexfone} and \eqref{dsaddlexgone}.
As $\Aos x_{f}=\Aos k=0$ and $\At x_{g}=\At k=0$,
these latter two formulations hold for $x=x_{f}+x_{g}+k$ as well, i.e.,
\begin{align}
\mylabel{varxftogethertwo}
\forall\,\xi&\in D(\At)
&
\scp{\At x}{\At\xi}_{\Hith}
&=\scp{f}{\At\xi}_{\Hith},\\
\mylabel{varxgtogethertwo}
\forall\,\zeta&\in D(\Aos)
&
\scp{\Aos x}{\Aos\zeta}_{\Hio}
&=\scp{g}{\Aos\zeta}_{\Hio}.
\end{align}
The first option is to use \eqref{varxftogethertwo} together 
with a weak version of $\Aos x=g$, i.e.,
\begin{align*}
\forall\,\varphi&\in D(\Ao)
&
\scp{g}{\varphi}_{\Hio}
&=\scp{\Aos x}{\varphi}_{\Hio}
=\scp{x}{\Ao\varphi}_{\Hit}.
\intertext{The second option is to use \eqref{varxgtogethertwo} together 
with a weak version of $\At x=f$, i.e.,}
\forall\,\phi&\in D(\Ats)
&
\scp{f}{\phi}_{\Hith}
&=\scp{\At x}{\phi}_{\Hith}
=\scp{x}{\Ats\phi}_{\Hit}.
\end{align*}

For simplicity, let us assume that the cohomology group $K_{2}$ is trivial.

\begin{theo}
\label{soltheofosvarformtogether}
Let $K_{2}=\{0\}$.
The unique solution $x=x_{f}+x_{g}\in D_{2}$ in Theorem \ref{soltheofos} 
can be found by the following two variational saddle point formulations: 
\begin{itemize}
\item[\bf(i)]
There exists a unique pair $(\ti{x},z)\in D(\At)\times D(\cAo)$ such that
\begin{align}
\mylabel{varxzoneone}
\forall\,(\xi,\varphi)&\in D(\At)\times D(\cAo)
&
\scp{\At\ti{x}}{\At\xi}_{\Hith}
+\scp{\Ao z}{\xi}_{\Hit}
&=\scp{f}{\At\xi}_{\Hith},\\
\mylabel{varxzonetwo}
&&
\scp{\ti{x}}{\Ao\varphi}_{\Hit}
&=\scp{g}{\varphi}_{\Hio}.
\intertext{It holds $z=0$ as well as}
\mylabel{varxztwoone}
\forall\,(\xi,\varphi)&\in D(\At)\times D(\cAo)
&
\scp{\At\ti{x}}{\At\xi}_{\Hith}
&=\scp{f}{\At\xi}_{\Hith},\\
\mylabel{varxztwotwo}
&&
\scp{\ti{x}}{\Ao\varphi}_{\Hit}
&=\scp{g}{\varphi}_{\Hio}.
\end{align}
Moreover, $\At\ti{x}=f$ if and only if $f\in R(\At)$.
\eqref{varxzonetwo}, \eqref{varxztwotwo} hold for all $\varphi\in D(\Ao)$
if and only if $g\in R(\Aos)$ if and only if $\ti{x}\in D(\Aos)$ and $\Aos\ti{x}=g$.
In this case, i.e., $f\in R(\At)$ and $g\in R(\Aos)$, we have $\ti{x}=x$ 
from Theorem \ref{soltheofos}.
\item[\bf(ii)]
There exists a unique pair $(\hat{x},y)\in D(\Aos)\times D(\cAts)$ such that
\begin{align}
\mylabel{varxyoneone}
\forall\,(\zeta,\phi)&\in D(\Aos)\times D(\cAts)
&
\scp{\Aos\hat{x}}{\Aos\zeta}_{\Hio}
+\scp{\Ats y}{\zeta}_{\Hit}
&=\scp{g}{\Aos\zeta}_{\Hio},\\
\mylabel{varxyonetwo}
&&
\scp{\hat{x}}{\Ats\phi}_{\Hit}
&=\scp{f}{\phi}_{\Hith}.
\intertext{It holds $y=0$ as well as}
\mylabel{varxytwoone}
\forall\,(\zeta,\phi)&\in D(\Aos)\times D(\cAts)
&
\scp{\Aos\hat{x}}{\Aos\zeta}_{\Hio}
&=\scp{g}{\Aos\zeta}_{\Hio},\\
\mylabel{varxytwotwo}
&&
\scp{\hat{x}}{\Ats\phi}_{\Hit}
&=\scp{f}{\phi}_{\Hith}.
\end{align}
Moreover, $\Aos\hat{x}=g$ if and only if $g\in R(\Aos)$.
\eqref{varxyonetwo}, \eqref{varxytwotwo} hold for all $\phi\in D(\Ats)$
if and only if $f\in R(\At)$ if and only if $\hat{x}\in D(\At)$ and $\At\hat{x}=f$.
In this case, i.e., $f\in R(\At)$ and $g\in R(\Aos)$, we have $\hat{x}=x$ 
from Theorem \ref{soltheofos}.
\end{itemize}
\end{theo}

\begin{proof}
We prove unique solvability by standard saddle point theory.
By Corollary \ref{cortoolboxone} (i) 
the principal part of \eqref{varxzoneone} is strictly positive over the kernel
of \eqref{varxzonetwo}, which is
$$D(\At)\cap N(\Aos)=D(\At)\cap R(\Ats)=D(\cAt),$$
as $K_{2}=\{0\}$. Moreover, we have for $0\neq\varphi\in D(\cAo)$
\begin{align*}
\frac{\norm{\Ao\varphi}_{\Hit}}{\norm{\varphi}_{D(\Ao)}}
\leq\sup_{0\neq\xi\in D(\At)}
\frac{\scp{\Ao\varphi}{\xi}_{\Hit}}{\norm{\varphi}_{D(\Ao)}\norm{\xi}_{D(\At)}}
\leq\frac{\norm{\Ao\varphi}_{\Hit}}{\norm{\varphi}_{D(\Ao)}}
\leq1
\end{align*}
by choosing $\xi:=\Ao\varphi\in R(\Ao)=N(\At)$,
which shows that actually equality holds. Hence
$$1\geq\inf_{0\neq\varphi\in D(\cAo)}\sup_{0\neq\xi\in D(\At)}
\frac{\scp{\Ao\varphi}{\xi}_{\Hit}}{\norm{\varphi}_{D(\Ao)}\norm{\xi}_{D(\At)}}
=\inf_{0\neq\varphi\in D(\cAo)}\frac{\norm{\Ao\varphi}_{\Hit}}{\norm{\varphi}_{D(\Ao)}}
\geq(c_{1}^2+1)^{\moh}
=\bnorm{\cA_{1}^{-1}}_{R(\Ao),D(\cAo)}^{-1},$$
which shows that the inf-sup-condition is satisfied.
Therefore, \eqref{varxzoneone}-\eqref{varxzonetwo} admits a unique solution.
Picking $\xi:=\Ao z\in R(\Ao)=N(\At)$ in \eqref{varxzoneone} yields
$\norm{\Ao z}_{\Hit}^2=0$ and hence $z=0$ as $z\in D(\cAo)$. 
Since $\Ao z=0$ even \eqref{varxztwoone}-\eqref{varxztwotwo} are valid.
By \eqref{varxztwoone} we see $\At\ti{x}-f\in R(\At)^{\bot_{\Hith}}$,
showing $\At\ti{x}=f$ if and only if $f\in R(\At)$.
Using the orthonormal projector $\pi_{\Aos}$ and
by \eqref{varxztwotwo} we see for all $\varphi\in D(\Ao)$ as
$\pi_{\Aos}\varphi\in D(\cAo)$
$$\scp{\ti{x}}{\Ao\varphi}_{\Hit}
=\scp{\ti{x}}{\Ao\pi_{\Aos}\varphi}_{\Hit}
=\scp{g}{\pi_{\Aos}\varphi}_{\Hio}
=\scp{\pi_{\Aos}g}{\varphi}_{\Hio}
=\scp{g}{\varphi}_{\Hio},$$
if $g\in R(\Aos)$. On the other hand, if \eqref{varxztwotwo} 
holds for all $\varphi\in D(\Ao)$,
then $\ti{x}\in D(\Aos)$ with $\Aos\ti{x}=g$, especially $g\in R(\Aos)$.
Therefore, if $f\in R(\At)$ and $g\in R(\Aos)$, we have 
$\ti{x}\in D(\At)\cap D(\Aos)=D_{2}$ with
$\At\ti{x}=f$ and $\Aos\ti{x}=g$, finally showing $\ti{x}=x$ 
by the unique solvability of \eqref{Aprobsoltheo} from Theorem \ref{soltheofos}.
Analogously we prove (ii).
\end{proof}

\begin{rem}
Let us note the following:
\begin{itemize}
\item[\bf(i)]
\eqref{varxzoneone}-\eqref{varxzonetwo} is a weak formulation of 
$$\Ats\At\ti{x}+\Ao z=\Ats f,\qquad
\Aos\ti{x}=g,$$
i.e., in formal matrix notation
$$\begin{bmatrix}
\Ats\At & \Ao\\
\Aos & 0
\end{bmatrix}
\begin{bmatrix}
\ti{x}\\
z
\end{bmatrix}
=\begin{bmatrix}
\Ats f\\
g
\end{bmatrix}.$$
Note $z=0$.
\item[\bf(ii)]
\eqref{varxyoneone}-\eqref{varxyonetwo} is a weak formulation of
$$\Ao\Aos\hat{x}+\Ats y=\Ao g,\qquad
\At\hat{x}=f,$$
i.e., in formal matrix notation
$$\begin{bmatrix}
\Ao\Aos & \Ats\\
\At & 0
\end{bmatrix}
\begin{bmatrix}
\hat{x}\\
y
\end{bmatrix}
=\begin{bmatrix}
\Ao g\\
f
\end{bmatrix}.$$
Note $y=0$.
\end{itemize}
\end{rem}

The restriction $K_{2}=\{0\}$ can easily be removed from Theorem \ref{soltheofosvarformtogether}
leading to double saddle point formulations as in \eqref{dsaddlexfone}-\eqref{dsaddlezgone}.

\begin{theo}
\label{soltheofosvarformtogetherwithK2}
The unique solution $x=x_{f}+x_{g}+k\in D_{2}$ in Theorem \ref{soltheofos} 
can be found by the following two variational double saddle point formulations: 
\begin{itemize}
\item[\bf(i)]
There exists a unique tripple $(\ti{x},z,h)\in D(\At)\times D(\cAo)\times K_{2}$ such that
\begin{align}
\nonumber
\forall\,(\xi,\varphi,\kappa)&\in D(\At)\times D(\cAo)\times K_{2}
&
\scp{\At\ti{x}}{\At\xi}_{\Hith}
+\scp{\Ao z}{\xi}_{\Hit}
+\scp{h}{\xi}_{\Hit}
&=\scp{f}{\At\xi}_{\Hith},\\
\mylabel{doublesaddlepointfullsystemoneone}
&&
\scp{\ti{x}}{\Ao\varphi}_{\Hit}
&=\scp{g}{\varphi}_{\Hio},\\
\nonumber
&&
\scp{\ti{x}}{\kappa}_{\Hit}
&=\scp{k}{\kappa}_{\Hit}.
\intertext{It holds $z=0$ and $h=0$ as well as}
\nonumber
\forall\,(\xi,\varphi,\kappa)&\in D(\At)\times D(\cAo)\times K_{2}
&
\scp{\At\ti{x}}{\At\xi}_{\Hith}
&=\scp{f}{\At\xi}_{\Hith},\\
\mylabel{doublesaddlepointfullsystemonetwo}
&&
\scp{\ti{x}}{\Ao\varphi}_{\Hit}
&=\scp{g}{\varphi}_{\Hio},\\
\nonumber
&&
\scp{\ti{x}}{\kappa}_{\Hit}
&=\scp{k}{\kappa}_{\Hit}.
\end{align}
Moreover, $\At\ti{x}=f$ if and only if $f\in R(\At)$.
The second equations of 
\eqref{doublesaddlepointfullsystemoneone},
\eqref{doublesaddlepointfullsystemonetwo} hold for all $\varphi\in D(\Ao)$
if and only if $g\in R(\Aos)$ if and only if $\ti{x}\in D(\Aos)$ and $\Aos\ti{x}=g$.
Furthermore, $\pit\ti{x}=k$. 
In this case, i.e., $f\in R(\At)$ and $g\in R(\Aos)$, we have $\ti{x}=x$ 
from Theorem \ref{soltheofos}.
\item[\bf(ii)]
There exists a unique triple $(\hat{x},y,h)\in D(\Aos)\times D(\cAts)\times K_{2}$ such that
\begin{align}
\nonumber
\forall\,(\zeta,\phi,\kappa)&\in D(\Aos)\times D(\cAts)\times K_{2}
&
\scp{\Aos\hat{x}}{\Aos\zeta}_{\Hio}
+\scp{\Ats y}{\zeta}_{\Hit}
+\scp{h}{\zeta}_{\Hit}
&=\scp{g}{\Aos\zeta}_{\Hio},\\
\mylabel{doublesaddlepointfullsystemtwoone}
&&
\scp{\hat{x}}{\Ats\phi}_{\Hit}
&=\scp{f}{\phi}_{\Hith},\\
\nonumber
&&
\scp{\hat{x}}{\kappa}_{\Hit}
&=\scp{k}{\kappa}_{\Hit}.
\intertext{It holds $y=0$ and $h=0$ as well as}
\nonumber
\forall\,(\zeta,\phi,\kappa)&\in D(\Aos)\times D(\cAts)\times K_{2}
&
\scp{\Aos\hat{x}}{\Aos\zeta}_{\Hio}
&=\scp{g}{\Aos\zeta}_{\Hio},\\
\mylabel{doublesaddlepointfullsystemtwotwo}
&&
\scp{\hat{x}}{\Ats\phi}_{\Hit}
&=\scp{f}{\phi}_{\Hith},\\
\nonumber
&&
\scp{\hat{x}}{\kappa}_{\Hit}
&=\scp{k}{\kappa}_{\Hit}.
\end{align}
Moreover, $\Aos\hat{x}=g$ if and only if $g\in R(\Aos)$.
The second equations of \eqref{doublesaddlepointfullsystemtwoone}, 
\eqref{doublesaddlepointfullsystemtwotwo} hold for all $\phi\in D(\Ats)$
if and only if $f\in R(\At)$ if and only if $\hat{x}\in D(\At)$ and $\At\hat{x}=f$.
Furthermore, $\pit\hat{x}=k$. 
In this case, i.e., $f\in R(\At)$ and $g\in R(\Aos)$, we have $\hat{x}=x$ 
from Theorem \ref{soltheofos}.
\end{itemize}
\end{theo}

\begin{proof}
Again we prove unique solvability by standard (double) saddle point theory.
The kernels of the operators encoded in the last two equations 
of \eqref{doublesaddlepointfullsystemoneone}
are $N(\Aos)$ and $K_{2}^{\bot_{\Hit}}$. Hence by Corollary \ref{cortoolboxone} (i)  
the principal part of the first equation in \eqref{doublesaddlepointfullsystemoneone}
is strictly positive over the latter kernels, i.e., over
$$D(\At)\cap N(\Aos)\cap K_{2}^{\bot_{\Hit}}=D(\At)\cap R(\Ats)=D(\cAt).$$
Moreover, we have for $\varphi\in D(\cAo)$ and $\kappa\in K_{2}$
with $(\varphi,\kappa)\neq0$
\begin{align*}
\frac{(\norm{\Ao\varphi}_{\Hit}^2+\norm{\kappa}_{\Hit}^2)^{\oh}}
{(\norm{\varphi}_{D(\Ao)}^2+\norm{\kappa}_{\Hit}^2)^{\oh}}
\leq\sup_{0\neq\xi\in D(\At)}
\frac{\scp{\Ao\varphi}{\xi}_{\Hit}+\scp{\kappa}{\xi}_{\Hit}}
{(\norm{\varphi}_{D(\Ao)}^2+\norm{\kappa}_{\Hit}^2)^{\oh}\norm{\xi}_{D(\At)}}
\leq\frac{\norm{\Ao\varphi}_{\Hit}+\norm{\kappa}_{\Hit}}
{(\norm{\varphi}_{D(\Ao)}^2+\norm{\kappa}_{\Hit}^2)^{\oh}}
\leq\sqrt{2}
\end{align*}
by choosing $\xi:=\Ao\varphi+\kappa\in R(\Ao)\oplus K_{2}=N(\At)$. 
Hence by Corollary \ref{cortoolboxone} (i)  
\begin{align*}
\sqrt{2}&\geq\inf_{\substack{\varphi\in D(\cAo)\\\kappa\in K_{2}\\(\varphi,\kappa)\neq0}}
\sup_{0\neq\xi\in D(\At)}
\frac{\scp{\Ao\varphi}{\xi}_{\Hit}+\scp{\kappa}{\xi}_{\Hit}}
{(\norm{\varphi}_{D(\Ao)}^2+\norm{\kappa}_{\Hit}^2)^{\oh}\norm{\xi}_{D(\At)}}
\geq\inf_{\substack{\varphi\in D(\cAo)\\\kappa\in K_{2}\\(\varphi,\kappa)\neq0}}
\frac{(\norm{\Ao\varphi}_{\Hit}^2+\norm{\kappa}_{\Hit}^2)^{\oh}}
{(\norm{\varphi}_{D(\Ao)}^2+\norm{\kappa}_{\Hit}^2)^{\oh}}\\
&\geq\inf_{\substack{\varphi\in D(\cAo)\\\kappa\in K_{2}\\(\varphi,\kappa)\neq0}}
\frac{(\norm{\Ao\varphi}_{\Hit}^2+\norm{\kappa}_{\Hit}^2)^{\oh}}
{((c_{1}^2+1)\norm{\Ao\varphi}_{\Hit}^2+\norm{\kappa}_{\Hit}^2)^{\oh}}
\geq(c_{1}^2+1)^{\moh}
=\bnorm{\cA_{1}^{-1}}_{R(\Ao),D(\cAo)}^{-1},
\end{align*}
which shows that the inf-sup-condition is satisfied.
Therefore, \eqref{doublesaddlepointfullsystemoneone} admits a unique solution.
Picking\footnote{We can test directly by
$\xi:=\Ao z+h\in R(\Ao)+K_{2}=N(\At)$ in \eqref{doublesaddlepointfullsystemoneone} as well,
since orthogonality shows immediately
$0=\scp{\Ao z}{\xi}_{\Hit}+\scp{h}{\xi}_{\Hit}
=\norm{\Ao z}_{\Hit}^2+\norm{h}_{\Hit}^2$.}
$\xi:=\Ao z\in R(\Ao)=N(\At)\cap K_{2}^{\bot_{\Hit}}$ 
in \eqref{doublesaddlepointfullsystemoneone} yields
$\norm{\Ao z}_{\Hit}^2=0$ and hence $z=0$ as $z\in D(\cAo)$. 
Choosing $\xi:=h\in K_{2}=N(\At)\cap R(\Ao)^{\bot_{\Hit}}$ 
in \eqref{doublesaddlepointfullsystemoneone} shows
$\norm{h}_{\Hit}^2=0$. Since $\Ao z=h=0$ 
even \eqref{doublesaddlepointfullsystemonetwo} is valid.
By the first equation of \eqref{doublesaddlepointfullsystemonetwo} 
we see $\At\ti{x}-f\in R(\At)^{\bot_{\Hith}}$,
showing $\At\ti{x}=f$ if and only if $f\in R(\At)$.
Using the orthonormal projector $\pi_{\Aos}$ and
by the second equation of \eqref{doublesaddlepointfullsystemonetwo}
we get for all $\varphi\in D(\Ao)$ as
$\pi_{\Aos}\varphi\in D(\cAo)$
$$\scp{\ti{x}}{\Ao\varphi}_{\Hit}
=\scp{\ti{x}}{\Ao\pi_{\Aos}\varphi}_{\Hit}
=\scp{g}{\pi_{\Aos}\varphi}_{\Hio}
=\scp{\pi_{\Aos}g}{\varphi}_{\Hio}
=\scp{g}{\varphi}_{\Hio},$$
if $g\in R(\Aos)$. On the other hand, 
if the second equation of \eqref{doublesaddlepointfullsystemonetwo}
holds for all $\varphi\in D(\Ao)$,
then $\ti{x}\in D(\Aos)$ with $\Aos\ti{x}=g$, especially $g\in R(\Aos)$.
Therefore, if $f\in R(\At)$ and $g\in R(\Aos)$, we have 
$\ti{x}\in D(\At)\cap D(\Aos)=D_{2}$ with
$\At\ti{x}=f$ and $\Aos\ti{x}=g$.
The third equation of \eqref{doublesaddlepointfullsystemonetwo} implies
for all $\kappa\in K_{2}$
$$0
=\scp{\ti{x}-k}{\kappa}_{\Hit}
=\scp{\ti{x}-k}{\pit\kappa}_{\Hit}
=\scp{\pit\ti{x}-k}{\kappa}_{\Hit},$$
i.e., $\pit\ti{x}=k$. 
Therefore, $\ti{x}=x$ 
by the unique solvability of \eqref{Aprobsoltheo} from Theorem \ref{soltheofos},
which completes the proof of (i). Analogously we prove (ii).
\end{proof}

\begin{rem}
\label{soltheofosvarformtogetherrem}
Let us note the following:
\begin{itemize}
\item[\bf(i)]
Using the saddle point formulation in Theorem \ref{soltheofosvarformtogether} (i) 
or Theorem \ref{soltheofosvarformtogetherwithK2} (i) 
for finding a numerical approximation $x_{h}$ of $x$
provides a $D(\At)$-conforming approximation $x_{h}\in D(\At)$ of \eqref{Aprobsoltheo}, 
whereas using the saddle point formulation in Theorem \ref{soltheofosvarformtogether} (ii) 
or Theorem \ref{soltheofosvarformtogetherwithK2} (ii) 
for finding a numerical approximation $x_{h}$ of $x$
provides a $D(\Aos)$-conforming approximation $x_{h}\in D(\Aos)$ of \eqref{Aprobsoltheo}.
\item[\bf(ii)]
The variational formulations in Theorem \ref{soltheofosvarformtogether} (i), (ii) 
or Theorem \ref{soltheofosvarformtogetherwithK2} (i), (ii) 
are exactly those from \eqref{dsaddlexfone} and \eqref{dsaddlexgone}
for the special right hand sides $g=0$, $f=0$, and $k=0$, respectively.
\item[\bf(iii)]
\eqref{doublesaddlepointfullsystemoneone} is a weak formulation of 
$$\Ats\At\ti{x}+\Ao z+h=\Ats f,\qquad
\Aos\ti{x}=g,\qquad
\pit\ti{x}=k,$$
i.e., in formal matrix notation
$$\begin{bmatrix}
\Ats\At & \Ao & \iota_{K_{2}}\\
\Aos & 0 & 0\\
\pit=\iota_{K_{2}}^{*} & 0 & 0
\end{bmatrix}
\begin{bmatrix}
\ti{x}\\
z\\
h
\end{bmatrix}
=\begin{bmatrix}
\Ats f\\
g\\
k
\end{bmatrix},$$
where $\iota_{K_{2}}$ is the canonical embedding of $K_{2}$ into $\Hit$.
Note $z=0$ and $h=0$.
\item[\bf(iii')]
\eqref{doublesaddlepointfullsystemtwoone} is a weak formulation of
$$\Ao\Aos\hat{x}+\Ats y+h=\Ao g,\qquad
\At\hat{x}=f,\qquad
\pit\hat{x}=k,$$
i.e., in formal matrix notation
$$\begin{bmatrix}
\Ao\Aos & \Ats & \iota_{K_{2}}\\
\At & 0 & 0\\
\pit=\iota_{K_{2}}^{*} & 0 & 0
\end{bmatrix}
\begin{bmatrix}
\hat{x}\\
y\\
h
\end{bmatrix}
=\begin{bmatrix}
\Ao g\\
f\\
k
\end{bmatrix}.$$
Note $y=0$ and $h=0$.
\end{itemize}
\end{rem}

Finally, we present double saddle point variational formulations for finding the partial solutions
in \eqref{dsaddlexfone}-\eqref{dsaddlezgone} as well.

\begin{theo}
\label{soltheofosvarformpartsolfull}
Let additionally $R(\Az)$ and $R(\Ath)$ be closed.
The partial solutions $x_{f}=\ti{x}_{f}\in D(\cAt)$, $x_{g}=\ti{x}_{g}\in D(\cAos)$,
and their potentials $y_{f}\in D(\cAts)$, $z_{g}\in D(\cAo)$
from Theorem \ref{soltheofos}, Theorem \ref{soltheofosvarform}, \eqref{varxftwo}-\eqref{varzgtwo},
\eqref{saddlexfone}-\eqref{saddlezgone}, and \eqref{dsaddlexfone}-\eqref{dsaddlezgone}
can be found by the following four variational double saddle point formulations: 
\begin{itemize}
\item[\bf(i)]
There exists a unique tripple $(\ti{x}_{f},u,h)\in D(\At)\times D(\cAo)\times K_{2}$ such that
\begin{align}
\nonumber
\forall\,(\xi,\varphi,\kappa)&\in D(\At)\times D(\cAo)\times K_{2}
&
\scp{\At\ti{x}_{f}}{\At\xi}_{\Hith}
+\scp{\Ao u}{\xi}_{\Hit}
+\scp{h}{\xi}_{\Hit}
&=\scp{f}{\At\xi}_{\Hith},\\
\mylabel{doublesaddlepointpartsystemxfoneone}
&&
\scp{\ti{x}_{f}}{\Ao\varphi}_{\Hit}
&=0,\\
\nonumber
&&
\scp{\ti{x}_{f}}{\kappa}_{\Hit}
&=0.
\end{align}
It holds $u=0$ and $h=0$ as well as \eqref{dsaddlexfone}.
Moreover, $\At\ti{x}_{f}=f$ if and only if $f\in R(\At)$.
The second equation of 
\eqref{doublesaddlepointpartsystemxfoneone} holds for all $\varphi\in D(\Ao)$
and thus $\ti{x}_{f}\in N(\Aos)$.
Furthermore, $\pit\ti{x}_{f}=0$. 
Finally, if $f\in R(\At)$, we have $\ti{x}_{f}=x_{f}$ from Theorem \ref{soltheofos},
see Theorem \ref{soltheofosvarform} (i).
\item[\bf(i')]
There exists a unique tripple $(y_{f},v,h)\in D(\Ats)\times D(\cAths)\times K_{3}$ such that
\begin{align}
\nonumber
\forall\,(\phi,\theta,\kappa)&\in D(\Ats)\times D(\cAths)\times K_{3}
&
\scp{\Ats y_{f}}{\Ats\phi}_{\Hit}
+\scp{\Aths v}{\phi}_{\Hith}
+\scp{h}{\phi}_{\Hith}
&=\scp{f}{\phi}_{\Hith},\\
\mylabel{doublesaddlepointpartsystemyfoneone}
&&
\scp{y_{f}}{\Aths\theta}_{\Hith}
&=0,\\
\nonumber
&&
\scp{y_{f}}{\kappa}_{\Hith}
&=0.
\end{align}
It holds $v=0$ if and only if $f\bot_{\Hith}R(\Aths)$ if and only if\footnote{$v=0$ implies
$f-h=\At\Ats y_{f}\in R(\At)\subset N(\Ath)$ and hence $f\in N(\Ath)$.}
$f\in N(\Ath)$.
$h=0$ if and only if\footnote{$h=0$ implies
$f-\Aths v=\At\Ats y_{f}\in R(\At)\,\bot_{\Hith}K_{3}$ and hence $f\,\bot_{\Hith}K_{3}$.}
$f\bot_{\Hith}K_{3}$.
Thus $v=0$ and $h=0$ if and only if $f\in N(\Ath)\cap K_{3}^{\bot_{\Hith}}=R(\At)$.
Furthermore, \eqref{dsaddleyfone} holds.
Moreover, $\Ats y_{f}\in D(\At)$ and $\At\Ats y_{f}=f$ if and only if $f\in R(\At)$.
The second equation of 
\eqref{doublesaddlepointpartsystemyfoneone} holds for all $\theta\in D(\Aths)$
and hence $y_{f}\in N(\Ath)$.
Furthermore, $\pith y_{f}=0$. 
Finally, if $f\in R(\At)$, we have $\Ats y_{f}=x_{f}$ from Theorem \ref{soltheofos},
see Theorem \ref{soltheofosvarform} (i').
\item[\bf(ii)]
There exists a unique triple $(\ti{x}_{g},p,h)\in D(\Aos)\times D(\cAts)\times K_{2}$ such that
\begin{align}
\nonumber
\forall\,(\zeta,\phi,\kappa)&\in D(\Aos)\times D(\cAts)\times K_{2}
&
\scp{\Aos\ti{x}_{g}}{\Aos\zeta}_{\Hio}
+\scp{\Ats p}{\zeta}_{\Hit}
+\scp{h}{\zeta}_{\Hit}
&=\scp{g}{\Aos\zeta}_{\Hio},\\
\mylabel{doublesaddlepointpartsystemxgtwoone}
&&
\scp{\ti{x}_{g}}{\Ats\phi}_{\Hit}
&=0,\\
\nonumber
&&
\scp{\ti{x}_{g}}{\kappa}_{\Hit}
&=0.
\end{align}
It holds $p=0$ and $h=0$ as well as \eqref{dsaddlexgone}.
Moreover, $\Aos\ti{x}_{g}=g$ if and only if $g\in R(\Aos)$.
The second equation of \eqref{doublesaddlepointpartsystemxgtwoone}
holds for all $\phi\in D(\Ats)$ and thus $\ti{x}_{g}\in N(\At)$.
Furthermore, $\pit\ti{x}_{g}=0$. 
Finally, if $g\in R(\Aos)$, we have $\ti{x}_{g}=x_{g}$ from Theorem \ref{soltheofos},
see Theorem \ref{soltheofosvarform} (ii).
\item[\bf(ii')]
There exists a unique triple $(z_{g},q,h)\in D(\Ao)\times D(\cAz)\times K_{1}$ such that
\begin{align}
\nonumber
\forall\,(\varphi,\vartheta,\kappa)&\in D(\Ao)\times D(\cAz)\times K_{1}
&
\scp{\Ao z_{g}}{\Ao\varphi}_{\Hit}
+\scp{\Az q}{\varphi}_{\Hio}
+\scp{h}{\varphi}_{\Hio}
&=\scp{g}{\varphi}_{\Hio},\\
\mylabel{doublesaddlepointpartsystemzgtwoone}
&&
\scp{z_{g}}{\Az\vartheta}_{\Hio}
&=0,\\
\nonumber
&&
\scp{z_{g}}{\kappa}_{\Hio}
&=0.
\end{align}
It holds $q=0$ if and only if $g\bot_{\Hio}R(\Az)$ if and only if\footnote{$q=0$ implies
$g-h=\Aos\Ao z_{g}\in R(\Aos)\subset N(\Azs)$ and hence $g\in N(\Azs)$.}
$g\in N(\Azs)$.
$h=0$ if and only if\footnote{$h=0$ implies
$g-\Az q=\Aos\Ao z_{g}\in R(\Aos)\,\bot_{\Hio}K_{1}$ and hence $g\,\bot_{\Hio}K_{1}$.}
$g\bot_{\Hio}K_{1}$.
Thus $q=0$ and $h=0$ if and only if $g\in N(\Azs)\cap K_{1}^{\bot_{\Hio}}=R(\Aos)$.
Furthermore, \eqref{dsaddlezgone} holds.
Moreover, $\Ao z_{g}\in D(\Aos)$ and $\Aos\Ao z_{g}=g$ if and only if $g\in R(\Aos)$.
The second equation of 
\eqref{doublesaddlepointpartsystemzgtwoone} holds for all $\vartheta\in D(\Az)$
and hence $z_{g}\in N(\Azs)$.
Furthermore, $\pio z_{g}=0$. 
Finally, if $g\in R(\Aos)$, we have $\Ao z_{g}=x_{g}$ from Theorem \ref{soltheofos},
see Theorem \ref{soltheofosvarform} (ii').
\end{itemize}
\end{theo}

\begin{proof}
The proof follows closely the lines of the proof of Theorem \ref{soltheofosvarformtogetherwithK2}.
\end{proof}

\begin{rem}
\label{soltheofosvarformpartsolfullremmatrix}
Again we have formal matrix representations:
\begin{itemize}
\item[\bf(i)]
\eqref{doublesaddlepointpartsystemxfoneone} is a weak formulation of 
$$\Ats\At\ti{x}_{f}+\Ao u+h=\Ats f,\qquad
\Aos\ti{x}_{f}=0,\qquad
\pit\ti{x}_{f}=0,$$
i.e., in formal matrix notation
$$\begin{bmatrix}
\Ats\At & \Ao & \iota_{K_{2}}\\
\Aos & 0 & 0\\
\pit=\iota_{K_{2}}^{*} & 0 & 0
\end{bmatrix}
\begin{bmatrix}
\ti{x}_{f}\\
u\\
h
\end{bmatrix}
=\begin{bmatrix}
\Ats f\\
0\\
0
\end{bmatrix}.$$
Note $u=0$ and $h=0$.
\item[\bf(i')]
\eqref{doublesaddlepointpartsystemyfoneone} is a weak formulation of
$$\At\Ats y_{f}+\Aths v+h=f,\qquad
\Ath y_{f}=0,\qquad
\pith y_{f}=0,$$
i.e., in formal matrix notation
$$\begin{bmatrix}
\At\Ats & \Aths & \iota_{K_{3}}\\
\Ath & 0 & 0\\
\pith=\iota_{K_{3}}^{*} & 0 & 0
\end{bmatrix}
\begin{bmatrix}
y_{f}\\
v\\
h
\end{bmatrix}
=\begin{bmatrix}
f\\
0\\
0
\end{bmatrix}.$$
Note $v=0$ and $h=0$.
\item[\bf(ii)]
\eqref{doublesaddlepointpartsystemxgtwoone} is a weak formulation of
$$\Ao\Aos\ti{x}_{g}+\Ats p+h=\Ao g,\qquad
\At\ti{x}_{g}=0,\qquad
\pit\ti{x}_{g}=0,$$
i.e., in formal matrix notation
$$\begin{bmatrix}
\Ao\Aos & \Ats & \iota_{K_{2}}\\
\At & 0 & 0\\
\pit=\iota_{K_{2}}^{*} & 0 & 0
\end{bmatrix}
\begin{bmatrix}
\ti{x}_{g}\\
p\\
h
\end{bmatrix}
=\begin{bmatrix}
\Ao g\\
0\\
0
\end{bmatrix}.$$
Note $p=0$ and $h=0$.
\item[\bf(ii')]
\eqref{doublesaddlepointpartsystemzgtwoone} is a weak formulation of
$$\Aos\Ao z_{g}+\Az q+h=g,\qquad
\Azs z_{g}=0,\qquad
\pio z_{g}=0,$$
i.e., in formal matrix notation
$$\begin{bmatrix}
\Aos\Ao & \Az & \iota_{K_{1}}\\
\Azs & 0 & 0\\
\pio=\iota_{K_{1}}^{*} & 0 & 0
\end{bmatrix}
\begin{bmatrix}
z_{g}\\
q\\
h
\end{bmatrix}
=\begin{bmatrix}
g\\
0\\
0
\end{bmatrix}.$$
Note $q=0$ and $h=0$.
\end{itemize}
\end{rem}

\subsubsection{Even More Variational Formulations}

In our variational formulations still the unpleasant spaces $D(\cAl)$ and $D(\cAls)$ occur
in the side conditions, see, e.g., Theorem \ref{soltheofosvarformtogetherwithK2}, where 
$$z\in D(\cAo)=D(\Ao)\cap R(\Aos),\qquad
y\in D(\cAts)=D(\Ats)\cap R(\At).$$
We can even go one step further and remove these restrictions
just by applying the same ideas as before. E.g., in Theorem \ref{soltheofosvarformtogetherwithK2}
\begin{align*}
z\in R(\Aos)
&=N(\Azs)\cap K_{1}^{\bot_{\Hio}}
=R(\Az)^{\bot_{\Hio}}\cap K_{1}^{\bot_{\Hio}},\\
y\in R(\At)
&=N(\Ath)\cap K_{3}^{\bot_{\Hith}}
=R(\Aths)^{\bot_{\Hith}}\cap K_{3}^{\bot_{\Hith}}
\end{align*}
can easily be formulated as additional side conditions.
Of course, this procedure can be prolongated ad infinitum depending on the length of the underlying complex.

\begin{rem}
\mylabel{3DAzAfrem}
In 3D applications the cohomology groups $K_{0}$, $K_{1}$ and $K_{4}$, $K_{5}$ 
are typically already trivial, see, e.g., the applications section \ref{emssec}. 
Also the kernels $N(\Az)$ and $N(\A_{4})$ are always trivial.
Moreover, the kernels $N(\Ao)$ and $N(\Aths)$ are typically trivial
or at least finite dimensional. The same applies to the orthogonal complements
of the kernels $N(\Azs)$ and $N(\A_{4})$.
In particular, $D(\cAo)=D(\Ao)$ resp. $D(\cAths)=D(\Aths)$ or at least 
$$D(\cAo)=D(\Ao)\cap R(\Aos)=D(\Ao)\cap N(\Ao)^{\bot_{\Hio}},\qquad
D(\cAths)=D(\Aths)\cap R(\Ath)=D(\Aths)\cap N(\Aths)^{\bot_{\Hif}},$$
respectively, with $N(\Ao)$ resp. $N(\Aths)$ being finite dimensional.
We always have $D(\cAz)=D(\Az)$ and $D(\cA_{4}^{*})=D(\A_{4}^{*})$.
\end{rem}

For example Theorem \ref{soltheofosvarformtogetherwithK2} can be modified as follows:

\begin{theo}
\label{soltheofosvarformtogetherwithK2andmore}
Let additionally $R(\Az)$, $R(\Ath)$, and $R(\A_{4})$ be closed.
Moreover, let $f\in R(\At)$ and $g\in R(\Aos)$.
The unique solution $x=x_{f}+x_{g}+k\in D_{2}$ in Theorem \ref{soltheofos} 
can be found by the following three variational quadruple resp. sextuple saddle point formulations: 
\begin{itemize}
\item[\bf(i)]
There exists a unique five tuple $(\ti{x},z,u,h_{2},h_{1})\in D(\At)\times D(\Ao)\times D(\cAz)\times K_{2}\times K_{1}$ 
such that for all $(\xi,\varphi,\vartheta,\kappa,\lambda)\in D(\At)\times D(\Ao)\times D(\cAz)\times K_{2}\times K_{1}$
\begin{align}
\nonumber
\scp{\At\ti{x}}{\At\xi}_{\Hith}
+\scp{\Ao z}{\xi}_{\Hit}
+\scp{h_{2}}{\xi}_{\Hit}
&=\scp{f}{\At\xi}_{\Hith},\\
\nonumber
\scp{\ti{x}}{\Ao\varphi}_{\Hit}
+\scp{\Az u}{\varphi}_{\Hio}
+\scp{h_{1}}{\varphi}_{\Hio}
&=\scp{g}{\varphi}_{\Hio},\\
\mylabel{doublesaddlepointfullsystemoneoneandmore}
\scp{z}{\Az\vartheta}_{\Hio}
&=0,\\
\nonumber
\scp{\ti{x}}{\kappa}_{\Hit}
&=\scp{k}{\kappa}_{\Hit},\\
\nonumber
\scp{z}{\lambda}_{\Hio}
&=0.
\end{align}
The third equation of 
\eqref{doublesaddlepointfullsystemoneoneandmore} is valid for all $\vartheta\in D(\Az)$.
It holds $z=0$ and $h_{2}=0$ as well as $u=0$ and $h_{1}=0$.
Moreover, $\At\ti{x}=f$ and $\ti{x}\in D(\Aos)$ with $\Aos\ti{x}=g$
as well as $\pit\ti{x}=k$.
Finally, $\ti{x}=x$ from Theorem \ref{soltheofos}.
\item[\bf(ii)]
There exists a unique five tuple $(\hat{x},y,v,h_{2},h_{3})\in D(\Aos)\times D(\Ats)\times D(\cAths)\times K_{2}\times K_{3}$ 
such that for all $(\zeta,\phi,\theta,\kappa,\lambda)\in D(\Aos)\times D(\Ats)\times D(\cAths)\times K_{2}\times K_{3}$ 
\begin{align}
\nonumber
\scp{\Aos\hat{x}}{\Aos\zeta}_{\Hio}
+\scp{\Ats y}{\zeta}_{\Hit}
+\scp{h_{2}}{\zeta}_{\Hit}
&=\scp{g}{\Aos\zeta}_{\Hio},\\
\nonumber
\scp{\hat{x}}{\Ats\phi}_{\Hit}
+\scp{\Aths v}{\phi}_{\Hith}
+\scp{h_{3}}{\phi}_{\Hith}
&=\scp{f}{\phi}_{\Hith},\\
\mylabel{doublesaddlepointfullsystemtwooneandmore}
\scp{y}{\Aths\theta}_{\Hith}
&=0,\\
\nonumber
\scp{\hat{x}}{\kappa}_{\Hit}
&=\scp{k}{\kappa}_{\Hit},\\
\nonumber
\scp{y}{\lambda}_{\Hith}
&=0.
\end{align}
The third equation of 
\eqref{doublesaddlepointfullsystemtwooneandmore} is valid for all $\theta\in D(\Aths)$.
It holds $y=0$ and $h_{2}=0$ as well as $v=0$ and $h_{3}=0$.
Moreover, $\Aos\hat{x}=g$ and $\hat{x}\in D(\At)$ with $\At\hat{x}=f$
as well as $\pit\hat{x}=k$.
Finally, $\hat{x}=x$ from Theorem \ref{soltheofos}.
\item[\bf(ii')]
There exists
$(\hat{x},y,v,w,h_{2},h_{3},h_{4})\in D(\Aos)\times D(\Ats)\times D(\Aths)\times D(\cA_{4}^{*})\times K_{2}\times K_{3}\times K_{4}$,
a unique seven tuple, such that for all 
$(\zeta,\phi,\theta,\sigma,\kappa,\lambda,\nu)\in D(\Aos)\times D(\Ats)\times D(\Aths)\times D(\cA_{4}^{*})\times K_{2}\times K_{3}\times K_{4}$ 
\begin{align}
\nonumber
\scp{\Aos\hat{x}}{\Aos\zeta}_{\Hio}
+\scp{\Ats y}{\zeta}_{\Hit}
+\scp{h_{2}}{\zeta}_{\Hit}
&=\scp{g}{\Aos\zeta}_{\Hio},\\
\nonumber
\scp{\hat{x}}{\Ats\phi}_{\Hit}
+\scp{\Aths v}{\phi}_{\Hith}
+\scp{h_{3}}{\phi}_{\Hith}
&=\scp{f}{\phi}_{\Hith},\\
\nonumber
\scp{y}{\Aths\theta}_{\Hith}
+\scp{\A_{4}^{*}w}{\theta}_{\Hif}
+\scp{h_{4}}{\theta}_{\Hif}
&=0,\\
\mylabel{doublesaddlepointfullsystemthreeoneandmore}
\scp{v}{\A_{4}^{*}\sigma}_{\Hif}
&=0,\\
\nonumber
\scp{\hat{x}}{\kappa}_{\Hit}
&=\scp{k}{\kappa}_{\Hit},\\
\nonumber
\scp{y}{\lambda}_{\Hith}
&=0,\\
\nonumber
\scp{v}{\nu}_{\Hif}
&=0.
\end{align}
The fourth equation of 
\eqref{doublesaddlepointfullsystemthreeoneandmore} is valid for all $\sigma\in D(\A_{4}^{*})$.
It holds $y=0$, $h_{2}=0$ and $v=0$, $h_{3}=0$ as well as $w=0$, $h_{4}=0$.
Moreover, $\Aos\hat{x}=g$ and $\hat{x}\in D(\At)$ with $\At\hat{x}=f$
as well as $\pit\hat{x}=k$.
Finally, $\hat{x}=x$ from Theorem \ref{soltheofos}.
\end{itemize}
\end{theo}

Theorem \ref{soltheofosvarformpartsolfull} can be extended in the same way.

\begin{rem}
\mylabel{soltheofosvarformtogetherwithK2andmorerem}
For (ii') recall $R(\Ath)=N(\A_{4})\cap K_{4}^{\bot_{\Hif}}=R(\A_{4}^{*})^{\bot_{\Hif}}\cap K_{4}^{\bot_{\Hif}}$.
Let us also note that generally the solution and test spaces look like
\begin{align*}
D(\Al)\times D(\Almo)&\times\cdots\times D(\A_{\ell-n+1})\times D(\cA_{\ell-n})
\times K_{\ell}\times K_{\ell-1}\times\cdots\times K_{\ell-n+1},\\
D(\Als)\times D(\Alpos)&\times\cdots\times D(\A_{\ell+n-1}^{*})\times D(\cA_{\ell+n}^{*})
\times K_{\ell+1}\times K_{\ell+2}\times\cdots\times K_{\ell+n}.
\end{align*}
Moreover:
\begin{itemize}
\item[\bf(i)]
\eqref{doublesaddlepointfullsystemoneoneandmore} is a weak formulation of 
$$\Ats\At\ti{x}+\Ao z+h_{2}=\Ats f,\quad
\Aos\ti{x}+\Az u+h_{1}=g,\quad
\Azs z=0,\quad
\pit\ti{x}=k,\quad
\pio z=0,$$
i.e., in formal matrix notation
$$\begin{bmatrix}
\Ats\At & \Ao & 0 & \iota_{K_{2}} & 0\\
\Aos & 0 & \Az & 0 & \iota_{K_{1}}\\
0 & \Azs & 0 & 0 & 0\\
\pit=\iota_{K_{2}}^{*} & 0 & 0 & 0 & 0\\
0 & \pio=\iota_{K_{1}}^{*} & 0 & 0 & 0
\end{bmatrix}
\begin{bmatrix}
\ti{x}\\
z\\
u\\
h_{2}\\
h_{1}
\end{bmatrix}
=\begin{bmatrix}
\Ats f\\
g\\
0\\
k\\
0
\end{bmatrix}.$$
Note $z=0$, $u=0$ and $h_{2}=0$, $h_{1}=0$.
\item[\bf(ii)]
\eqref{doublesaddlepointfullsystemtwooneandmore} is a weak formulation of
$$\Ao\Aos\hat{x}+\Ats y+h_{2}=\Ao g,\quad
\At\hat{x}+\Aths v+h_{3}=f,\quad
\Ath y=0,\quad
\pit\hat{x}=k,\quad
\pith y=0,$$
i.e., in formal matrix notation
$$\begin{bmatrix}
\Ao\Aos & \Ats & 0 & \iota_{K_{2}} & 0\\
\At & 0 & \Aths & 0 & \iota_{K_{3}}\\
0 & \Ath & 0 & 0 & 0\\
\pit=\iota_{K_{2}}^{*} & 0 & 0 & 0 & 0\\
0 & \pith=\iota_{K_{3}}^{*} & 0 & 0 & 0
\end{bmatrix}
\begin{bmatrix}
\hat{x}\\
y\\
v\\
h_{2}\\
h_{3}
\end{bmatrix}
=\begin{bmatrix}
\Ao g\\
f\\
0\\
k\\
0
\end{bmatrix}.$$
Note $y=0$, $v=0$ and $h_{2}=0$, $h_{3}=0$.
\item[\bf(ii')]
\eqref{doublesaddlepointfullsystemthreeoneandmore} is a weak formulation of
$$\Ao\Aos\hat{x}+\Ats y+h_{2}=\Ao g,\quad
\At\hat{x}+\Aths v+h_{3}=f,\quad
\Ath y+\A_{4}^{*}w+h_{4}=0,\quad
\A_{4}v=0,$$
and $\pit\hat{x}=k$, $\pith y=0$, $\pi_{4}v=0$, 
i.e., in formal matrix notation
$$\begin{bmatrix}
\Ao\Aos & \Ats & 0 & 0 & \iota_{K_{2}} & 0 & 0\\
\At & 0 & \Aths & 0 & 0 & \iota_{K_{3}}& 0 \\
0 & \Ath & 0 & \A_{4}^{*} & 0 & 0 & \iota_{K_{4}}\\
0 & 0 & \A_{4} & 0 & 0 & 0 & 0\\
\pit=\iota_{K_{2}}^{*} & 0 & 0 & 0 & 0 & 0 & 0\\
0 & \pith=\iota_{K_{3}}^{*} & 0 & 0 & 0 & 0 & 0 \\
0 & 0 & \pi_{4}=\iota_{K_{4}}^{*} & 0 & 0 & 0 & 0
\end{bmatrix}
\begin{bmatrix}
\hat{x}\\
y\\
v\\
w\\
h_{2}\\
h_{3}\\
h_{4}
\end{bmatrix}
=\begin{bmatrix}
\Ao g\\
f\\
0\\
0\\
k\\
0\\
0
\end{bmatrix}.$$
Note $y=0$, $v=0$, $w=0$ and $h_{2}=0$, $h_{3}=0$, $h_{4}=0$.
\end{itemize}
\end{rem}

\subsection{Second Order Systems}

We recall the linear second order system \eqref{AsAprob}, i.e., find\footnote{We generally 
define $\ti{D}_{\ell}:=\setb{\xi\in D_{\ell}}{\Al\xi\in D(\Als)}=D(\Almos)\cap D(\Als\Al)$ for $\ell=1,\dots,3$.} 
$$x\in\ti{D}_{2}
:=\setb{\xi\in D_{2}}{\At\xi\in D(\Ats)}
=\setb{\xi\in D(\At)\cap D(\Aos)}{\At\xi\in D(\Ats)}
=D(\Aos)\cap D(\Ats\At)$$
such that
\begin{align}
\begin{split}
\label{AsAprobsoltheo}
\Ats\At x&=f,\\
\Aos x&=g,\\
\pit x&=k.
\end{split}
\end{align}

\begin{theo}
\label{soltheosos}
\eqref{AsAprobsoltheo} is uniquely solvable in $\ti{D}_{2}$,
if and only if $f\in R(\Ats)$, $g\in R(\Aos)$, and $k\in K_{2}$. 
The unique solution $x\in\ti{D}_{2}$ is given by 
\begin{align*}
x:=x_{f}+x_{g}+k
&\in\big(D(\cAt)\oplus_{\Hit}D(\cAos)\oplus_{\Hit}K_{2}\big)
\cap\ti{D}_{2}=\ti{D}_{2},\\
x_{f}:=\cA_{2}^{-1}(\cAts)^{-1}f
&\in D(\cAts\cAt)=D(\cAt)\cap\ti{D}_{2},\\
x_{g}:=(\cAos)^{-1}g
&\in D(\cAos)=D(\cAos)\cap\ti{D}_{2}
\end{align*}
and depends continuously on the data, i.e.,
$\norm{x}_{\Hit}
\leq c_{2}^2\norm{f}_{\Hit}
+c_{1}\norm{g}_{\Hio}
+\norm{k}_{\Hit}$, as
$$\norm{x_{f}}_{\Hit}\leq c_{2}^2\norm{f}_{\Hit},\qquad
\norm{x_{g}}_{\Hit}\leq c_{1}\norm{g}_{\Hio}.$$
It holds 
$$\pi_{\Ats}x=x_{f},\qquad
\pi_{\Ao}x=x_{g},\qquad
\pit x=k,\qquad
\norm{x}_{\Hit}^2=\norm{x_{f}}_{\Hit}^2+\norm{x_{g}}_{\Hit}^2+\norm{k}_{\Hit}^2.$$
\end{theo}

\begin{proof}
The necessary conditions are clear.
To show uniqueness, let $x\in\ti{D}_{2}$ solve
\begin{align*}
\Ats\At x&=0,&
\Aos x&=0,&
\pit x&=0.
\end{align*}
Hence $x\in N(\Aos)\cap K_{2}^{\bot_{\Hit}}$ 
and also $x\in N(\At)$ as $\At x\in D(\Ats)$ and
$$\norm{\At x}_{\Hith}^2=\scp{x}{\Ats\At x}_{\Hit}=0,$$
yielding $x\in K_{2}\cap K_{2}^{\bot_{\Hit}}=\{0\}$.
To prove existence, let $f\in R(\Ats)$, $g\in R(\Aos)$, $k\in K_{2}$
and define $x$, $x_{f}$, and $x_{g}$ according to the theorem.
Again the orthogonality follows directly by Lemma \ref{lemtoolboxexseq}.
Moreover, $x_{f}$, $x_{g}$, and $k$ solve the linear systems
\begin{align*}
\Ats\At x_{f}&=f,
&
\At x_{g}&=0,
&
\At k&=0,\\
\Aos x_{f}&=0,
&
\Aos x_{g}&=g,
&
\Aos k&=0,\\
\pit x_{f}&=0,
&
\pit x_{g}&=0,
&
\pit k&=k.
\end{align*}
Thus $x$ solves \eqref{AsAprobsoltheo}
and we have by Corollary \ref{cortoolboxone} 
$\norm{x_{f}}_{\Hit}\leq c_{2}\norm{\At x_{f}}_{\Hith}\leq c_{2}^2\norm{f}_{\Hit}$
and $\norm{x_{g}}_{\Hit}\leq c_{1}\norm{g}_{\Hio}$,
completing the proof of the solution theory.
\end{proof} 

\begin{rem}
\label{remsoltheosos}
By orthogonality 
and with $\At x=(\cAts)^{-1}f$,
$\Ats\At x=f$,
and $\Aos x=g$
we even have
\begin{align*}
\norm{x}_{\Hit}^2
&=\bnorm{x_{f}}_{\Hit}^2
+\bnorm{x_{g}}_{\Hit}^2
+\norm{k}_{\Hit}^2
\leq c_{2}^4\norm{f}_{\Hit}^2
+c_{1}^2{}\norm{g}_{\Hio}^2
+\norm{k}_{\Hit}^2,\\
\norm{x}_{\ti{D}_{2}}^2
&=\bnorm{x_{f}}_{\Hit}^2
+\bnorm{\At x}_{\Hith}^2
+\norm{f}_{\Hit}^2
+\bnorm{x_{g}}_{\Hit}^2
+\norm{g}_{\Hio}^2
+\norm{k}_{\Hit}^2\\
&\leq(1+c_{2}^2+c_{2}^4)\norm{f}_{\Hith}^2
+(1+c_{1}^2)\norm{g}_{\Hio}^2
+\norm{k}_{\Hit}^2.
\end{align*}
\end{rem}

\begin{rem}
\label{soltheosecorderrem}
Since the second order system \eqref{AsAprobsoltheo} decomposes
into the two first order systems of shape \eqref{Aprob} resp. \eqref{Aprobsoltheo}, i.e.,
\begin{align*}
\At x&=y,
&
\Ath y&=0,\\
\Aos x&=g,
&
\Ats y&=f,\\
\pit x&=k,
&
\pith y&=0
\end{align*}
for the pair $(x,y)\in D_{2}\times D_{3}$ 
with $y:=\At x\in D(\Ats)\cap R(\At)=D(\cAts)$,
the solution theory follows directly by Theorem \ref{soltheofos} as well.
One just has to solve and set
\begin{align*}
y&:=(\cAts)^{-1}f\in D(\cAts)\subset R(\At),\\
x&:=\cA_{2}^{-1}y+(\cAos)^{-1}g+k
\in\big(D(\cAt)\oplus_{\Hit}D(\cAos)\oplus_{\Hit}K_{2}\big)
\cap\ti{D}_{2}=\ti{D}_{2}.
\end{align*}
\end{rem}

\subsubsection{Variational Formulations}

We note
\begin{align}
\label{recalldefxfxgsos}
\begin{split}
D(\cAts\cAt)
&=D(\Ats\cAt)
=D(\Ats\At)\cap D(\cAt)
=D(\Ats\At)\cap R(\Ats)
=D(\Ats\At)\cap N(\Aos)\cap K_{2}^{\bot_{\Hit}}\\
&=\ti{D}_{2}\cap D(\cAt)
=\ti{D}_{2}\cap R(\Ats)
=\ti{D}_{2}\cap N(\Aos)\cap K_{2}^{\bot_{\Hit}},\\
D(\cAos)
&=D(\Aos)\cap R(\Ao)
=D(\Aos)\cap N(\At)\cap K_{2}^{\bot_{\Hit}}\\
&=\ti{D}_{2}\cap D(\cAos)
=\ti{D}_{2}\cap R(\cAo)
=\ti{D}_{2}\cap N(\At)\cap K_{2}^{\bot_{\Hit}}
\end{split}
\end{align}
and recall
\begin{align*}
x_{f}=\cA_{2}^{-1}(\cAts)^{-1}f
&\in D(\cAts\cAt)
=D(\Ats\At)\cap R(\Ats)
=D(\Ats\At)\cap N(\Aos)\cap K_{2}^{\bot_{\Hit}},\\
x_{g}=(\cAos)^{-1}g
&\in D(\cAos)
=D(\Aos)\cap R(\Ao)
=D(\Aos)\cap N(\At)\cap K_{2}^{\bot_{\Hit}}.
\end{align*}
As in the corresponding section for the first order systems,
there are several options for variational formulations
for finding each of the partial solutions $x_{f}$ and $x_{g}$,
which all make sense from a functional analytical point of view.
Looking at Remark \ref{soltheosecorderrem} it is clear that all
variational formulations proposed for the first order systems
from the earlier sections are applicable here as well.
Especially for $x_{g}$ we do not observe anything new.
On the other hand, for the second order system related to $x_{f}$
we can do as follows: The first option is to multiply 
the equation $\Ats\At x_{f}=f$ by $\Ats\At\phi$ with some 
$\phi\in D(\cAts\cAt)$
giving the variational formulation
$$\forall\,\phi\in D(\cAts\cAt)\qquad
\scp{\Ats\At x_{f}}{\Ats\At\phi}_{\Hit}
=\scp{f}{\Ats\At\phi}_{\Hit},$$
which is a weak formulation of the fourth order equation 
$$(\Ats\At)^2x_{f}=\Ats\At f,$$
more precisely of $\Ats\At(\Ats\At x_{f}-f)=0$.
Perhaps a more convenient choice is to multiply $\Ats\At x_{f}=f$ 
by some $\xi\in D(\cAt)$
giving the variational formulation
$$\forall\,\xi\in D(\cAt)\qquad
\scp{\At x_{f}}{\At\xi}_{\Hith}
=\scp{f}{\xi}_{\Hit},$$
which is a weak formulation of the second order equation 
$$\Ats\At x_{f}=f.$$
The latter choices are finding straight forward $x_{f}$ itself.
As a third option, we propose a formulation
to find a potential $y_{f}$ for $x_{f}$.
For this we go for a second order potential $y_{f}$
with $\Ats\At y_{f}=x_{f}$,
e.g., $y_{f}:=\cA_{2}^{-1}(\cAts)^{-1}x_{f}\in D(\cAts\cAt)$, of 
$$x_{f}=\Ats\At y_{f}\in 
D(\cAts\cAt)
=D(\Ats\At)\cap R(\Ats)
=D(\Ats\At)\cap R(\cAts).$$
Multiplying by $\Ats\At\tau$ with some $\tau\in D(\cAts\cAt)$ gives
\begin{align*}
\forall\,\tau\in D(\cAts\cAt)\qquad
\scp{\Ats\At y_{f}}{\Ats\At\tau}_{\Hit}
=\scp{x_{f}}{\Ats\At\tau}_{\Hit}
=\scp{\Ats\At x_{f}}{\tau}_{\Hit}
=\scp{f}{\tau}_{\Hit},
\end{align*}
which is a weak formulation of the fourth order equation
$$(\Ats\At)^2y_{f}=f.$$

\begin{theo}
\label{soltheososvarform}
The partial solutions $x_{f}$ and $x_{g}$ in Theorem \ref{soltheosos} 
can be found by the following variational formulations: 
\begin{itemize}
\item[\bf(i)]
There exists a unique $\hat{x}_{f}\in D(\cAts\cAt)$, such that
\begin{align}
\mylabel{varxfsosone}
\forall\,\phi\in D(\cAts\cAt)\qquad
\scp{\Ats\At\hat{x}_{f}}{\Ats\At\phi}_{\Hit}
=\scp{f}{\Ats\At\phi}_{\Hit}.
\end{align}
\eqref{varxfsosone} even holds for all $\phi\in D(\Ats\At)$. 
Moreover, $\Ats\At\hat{x}_{f}=f$ if and only if $f\in R(\Ats)$.
In this case $\hat{x}_{f}=x_{f}$.
\item[\bf(i')]
There exists a unique $\ti{x}_{f}\in D(\cAt)$, such that
\begin{align}
\mylabel{varxfsostwo}
\forall\,\xi\in D(\cAt)\qquad
\scp{\At\ti{x}_{f}}{\At\xi}_{\Hith}
=\scp{f}{\xi}_{\Hit}.
\end{align}
\eqref{varxfsostwo} even holds for all $\xi\in D(\At)$
if and only if $f\in R(\Ats)$. In this case we have
$$\At\ti{x}_{f}\in D(\Ats)\cap R(\At)=D(\cAts)$$ 
with $\Ats\At\ti{x}_{f}=f$ and thus $\ti{x}_{f}=x_{f}$.
\item[\bf(i'')]
There exists a unique potential $y_{f}\in D(\cAts\cAt)$, such that
\begin{align}
\mylabel{varyfsos}
\forall\,\tau\in D(\cAts\cAt)\qquad
\scp{\Ats\At y_{f}}{\Ats\At\tau}_{\Hit}
=\scp{f}{\tau}_{\Hit}.
\end{align}
\eqref{varyfsos} even holds for all $\tau\in D(\Ats\At)$
if and only if $f\in R(\Ats)$. In this case we have
$$\Ats\At y_{f}\in D(\cAts\cAt)$$ 
with $(\Ats\At)^2y_{f}=f$ and hence $\Ats\At y_{f}=x_{f}$.
\item[\bf(ii)]
There exists a unique $\ti{x}_{g}\in D(\cAos)$ such that
\begin{align}
\mylabel{varxgsos}
\forall\,\zeta\in D(\cAos)\qquad
\scp{\Aos\ti{x}_{g}}{\Aos\zeta}_{\Hio}
=\scp{g}{\Aos\zeta}_{\Hio}.
\end{align}
\eqref{varxgsos} is even satisfied for all $\zeta\in D(\Aos)$.
Moreover, $\Aos\ti{x}_{g}=g$ holds if and only if $g\in R(\Aos)$.
In this case $\ti{x}_{g}=x_{g}$.
\item[\bf(ii')]
There exists a unique potential $z_{g}\in D(\cAo)$, such that
\begin{align}
\mylabel{varzgsos}
\forall\,\varphi\in D(\cAo)\qquad
\scp{\Ao z_{g}}{\Ao\varphi}_{\Hit}
=\scp{g}{\varphi}_{\Hio}.
\end{align}
\eqref{varzgsos} even holds for all $\varphi\in D(\Ao)$
if and only if $g\in R(\Aos)$. In this case we have
$$\Ao z_{g}\in D(\Aos)\cap R(\Ao)=D(\cAos)$$ 
with $\Aos\Ao z_{g}=g$ and hence $\Ao z_{g}=x_{g}$.
\end{itemize}
\end{theo}

\begin{proof}
To show (i), let $\phi\in D(\cAts\cAt)$. Then
$\At\phi$ belongs to $D(\cAts)$ and by Corollary \ref{cortoolboxone} (i) we see
$$\norm{\Ats\At\phi}_{\Hit}
\geq\frac{1}{c_{2}}\norm{\At\phi}_{\Hith}
\geq\frac{1}{c_{2}^2}\norm{\phi}_{\Hit}.$$
Hence, the bilinear form in \eqref{varxfsosone} is strictly positive 
over $D(\cAts\cAt)$ and thus 
Riesz' representation theorem yields the unique solvability of \eqref{varxfsosone}.
From $D(\At)=N(\At)\oplus_{\Hit}D(\cAt)$, 
see Corollary \ref{cortoolboxone} (iii) or Lemma \ref{lemtoolboxexseq},
and \eqref{recalldefxfxgsos} we get 
$$D(\Ats\At)=N(\At)\oplus_{\Hit}D(\Ats\cAt)=N(\At)\oplus_{\Hit}D(\cAts\cAt).$$
Therefore $R(\Ats\At)=R(\cAts\cAt)$
and thus \eqref{varxfsosone} holds for all $\phi\in D(\Ats\At)$ as well.
Let $\psi\in D(\Ats)$ and decompose it according to
Corollary \ref{cortoolboxone} (iii) or Lemma \ref{lemtoolboxexseq}
into $\psi=\psi_{N}+\psi_{R}\in D(\Ats)=N(\Ats)\oplus_{\Hith}D(\cAts)$ 
(null space and range) and, as $D(\cAts)=D(\Ats)\cap R(\At)=D(\cAts)\cap R(\cAt)$,
further into\footnote{Here it would be enough to decompose
$$\psi=\psi_{N}+\At\phi_{R}\in D(\Ats)
=N(\Ats)\oplus_{\Hith}\big(D(\Ats)\cap R(\At)\big),\qquad
\phi_{R}\in D(\Ats\At).$$}
$$\psi=\psi_{N}+\At\phi_{R}\in D(\Ats)
=N(\Ats)\oplus_{\Hith}\big(D(\cAts)\cap R(\cAt)\big),\qquad
\phi_{R}\in D(\cAts\cAt).$$
Utilizing the latter decomposition and \eqref{varxfsosone} we obtain
for all $\psi\in D(\Ats)$
\begin{align*}
\scp{\Ats\At\hat{x}_{f}}{\Ats\psi}_{\Hit}
&=\scp{\Ats\At\hat{x}_{f}}{\Ats\At\phi_{R}}_{\Hit}
=\scp{f}{\Ats\At\phi_{R}}_{\Hit}
=\scp{f}{\Ats\psi}_{\Hit},
\end{align*}
which shows $\Ats\At\hat{x}_{f}-f\in N(\At)=R(\Ats)^{\bot_{\Hit}}$.
Thus, $\Ats\At\hat{x}_{f}-f=0$, if and only if $f\in R(\Ats)$.
In this case we have $\Ats\At(\hat{x}_{f}-x_{f})=0$
and the injectivity of $\cAts$ and $\cAt$ shows $\hat{x}_{f}=x_{f}$,
which finishes the proof of (i).

The left hand side of \eqref{varxfsostwo} is strictly positive over $D(\cAt)$ and thus 
Riesz' representation theorem yields the unique solvability of \eqref{varxfsostwo}.
Let us recall that the orthonormal projector $\pi_{\Ats}$ onto $R(\Ats)$ satisfies
$\At\pi_{\Ats}\xi=\At\xi$ and $\pi_{\Ats}\xi\in D(\cAt)$ for $\xi\in D(\At)$ 
and $\pi_{\Ats}f=f$ for $f\in R(\Ats)$.
Therefore, if $f\in R(\Ats)$, then \eqref{varxfsostwo} yields for $\xi\in D(\At)$
\begin{align*}
\scp{\At\ti{x}_{f}}{\At\xi}_{\Hith}
&=\scp{\At\ti{x}_{f}}{\At\pi_{\Ats}\xi}_{\Hith}
=\scp{f}{\pi_{\Ats}\xi}_{\Hit}
=\scp{\pi_{\Ats}f}{\xi}_{\Hit}
=\scp{f}{\xi}_{\Hit},
\end{align*}
i.e., \eqref{varxfsostwo} holds for $\xi\in D(\At)$.
On the other hand, if \eqref{varxfsostwo} holds for $\xi\in D(\At)$, 
then $\At\ti{x}_{f}\in D(\Ats)$ and $\Ats\At\ti{x}_{f}=f$,
especially\footnote{Another proof is the following: 
Pick $\xi\in N(\At)$ and get by \eqref{varxfsostwo} directly $\scp{f}{\xi}_{\Hit}=0$.
Thus $f\in N(\At)^{\bot_{\Hit}}=R(\Ats)$.}  
$f\in R(\Ats)$. As in this case $\ti{x}_{f}\in D(\cAt)$
and $\At\ti{x}_{f}\in D(\cAts)$ with $\Ats\At\ti{x}_{f}=f$, 
we get $\ti{x}_{f}=x_{f}$ by the injectivity of $\cAts$ and $\cAt$,
which shows (i').

In (i'') the unique solvability follows as in (i).
Let $f\in R(\Ats)$. Using the same arguments with the same projector $\pi_{\Ats}$ 
as in (i') we obtain by \eqref{varyfsos} for all $\tau\in D(\Ats\At)$
\begin{align*}
\scp{\Ats\At y_{f}}{\Ats\At\tau}_{\Hit}
&=\scp{\Ats\At y_{f}}{\Ats\At\pi_{\Ats}\tau}_{\Hit}
=\scp{f}{\pi_{\Ats}\tau}_{\Hit}
=\scp{\pi_{\Ats}f}{\tau}_{\Hit}
=\scp{f}{\tau}_{\Hit},
\end{align*}
as $\pi_{\Ats}\tau\in D(\Ats\cAt)=D(\cAts\cAt)$ by \eqref{recalldefxfxgsos}.
Thus \eqref{varyfsos} holds for all $\tau\in D(\Ats\At)$.
On the other hand, if \eqref{varyfsos} holds for all $\tau\in D(\Ats\At)$, 
then we obtain $\scp{f}{\tau}_{\Hit}=0$ for all $\tau\in N(\At)$,
showing $f\in N(\At)^{\bot_{\Hit}}=R(\Ats)$. Now, in this case of $f\in R(\Ats)=R(\cAts)$,
we define $h:=(\cAts)^{-1}f\in D(\cAts)$ and observe with $\Ats h=f$ that
by \eqref{varyfsos} for all $\tau\in D(\Ats\At)$
\begin{align}
\mylabel{fgcAtsmo}
\scp{\Ats\At y_{f}}{\Ats\At\tau}_{\Hit}
&=\scp{f}{\tau}_{\Hit}
=\scp{h}{\At\tau}_{\Hith}
=\scp{h}{\pi_{\At}\At\tau}_{\Hith}.
\end{align}
As in the proof of (i), let $\psi\in D(\Ats)$ and let it be decomposed into
$$\psi=\psi_{N}+\At\tau\in D(\Ats)
=N(\Ats)\oplus_{\Hith}\big(D(\Ats)\cap R(\At)\big),\qquad
\tau\in D(\Ats\At).$$
Using \eqref{fgcAtsmo} and the latter decomposition we see for all $\psi\in D(\Ats)$
$$\scp{\Ats\At y_{f}}{\Ats\psi}_{\Hit}
=\scp{\Ats\At y_{f}}{\Ats\At\tau}_{\Hit}
=\scp{h}{\pi_{\At}\At\tau}_{\Hith}
=\scp{h}{\pi_{\At}\psi}_{\Hith}
=\scp{h}{\psi}_{\Hith},$$
since $h\in D(\cAts)\subset R(\At)$.
Thus $\Ats\At y_{f}\in D(\At)$ and $\At\Ats\At y_{f}=h\in D(\cAts)$, showing 
$$\Ats\At\Ats\At y_{f}=\Ats h=f.$$
Since $(\Ats\At)^2y_{f}=(\cAts\cAt)^2y_{f}$ we get 
$\cAts\cAt(\cAts\cAt y_{f}-x_{f})=0$
and injectivity yields $\cAts\cAt y_{f}=x_{f}$.

(ii) and (ii') are clear from Theorem \ref{soltheofosvarform} (ii), (ii').
\end{proof} 

\begin{rem}
\label{soltheososvarformremmatrix}
Note that
\begin{align*}
\hat{x}_{f}=\ti{x}_{f}=x_{f}=\cA_{2}^{-1}(\cAts)^{-1}f
&\in D(\cAts\cAt),
&
\ti{x}_{g}=x_{g}=(\cAos)^{-1}g
&\in D(\cAos),\\
y_{f}=\cA_{2}^{-1}(\cAts)^{-1}x_{f}=\big(\cA_{2}^{-1}(\cAts)^{-1}\big)^2f
&\in D\big((\cAts\cAt)^2\big),
&
z_{g}=\cA_{1}^{-1}x_{g}=\cA_{1}^{-1}(\cAos)^{-1}g&\in D(\cAos\cAo)
\end{align*}
holds with $\Ats\At x_{f}=f$, $\Ats\At y_{f}=x_{f}$ 
and $\Aos x_{g}=g$, $\Ao z_{g}=x_{g}$.
Hence $x_{f}$, $x_{g}$, and $y_{f}$, $z_{g}$ solve 
the first resp. second order systems
\begin{align*}
\Ats\At x_{f}&=f,
&
\At x_{g}&=0,
&
\Ats\At y_{f}&=x_{f},
&
(\Ats\At)^2y_{f}&=f,
&
\Ao z_{g}&=x_{g},
&
\Aos\Ao z_{g}&=g,\\
\Aos x_{f}&=0,
&
\Aos x_{g}&=g,
&
\Aos y_{f}&=0,
&
\Aos y_{f}&=0,
&
\Azs z_{g}&=0,
&
\Azs z_{g}&=0,\\
\pit x_{f}&=0,
&
\pit x_{g}&=0,
&
\pit y_{f}&=0,
&
\pit y_{f}&=0,
&
\pio z_{g}&=0,
&
\pio z_{g}&=0.
\end{align*}
Moreover:
\begin{itemize}
\item[\bf(i)]
\eqref{varxfsosone} is a weak formulation of 
$$(\Ats\At)^2\hat{x}_{f}=\Ats\At f,\qquad
\Aos\hat{x}_{f}=0,\qquad
\pit\hat{x}_{f}=0,$$
i.e., in formal matrix notation
$$\begin{bmatrix}
(\Ats\At)^2\\
\Aos\\
\pit
\end{bmatrix}
\begin{bmatrix}
\hat{x}_{f}
\end{bmatrix}
=\begin{bmatrix}
\Ats\At f\\
0\\
0
\end{bmatrix}.$$
\item[\bf(i')]
\eqref{varxfsostwo} is a weak formulation of 
$$\Ats\At\ti{x}_{f}=f,\qquad
\Aos\ti{x}_{f}=0,\qquad
\pit\ti{x}_{f}=0,$$
i.e., in formal matrix notation
$$\begin{bmatrix}
\Ats\At\\
\Aos\\
\pit
\end{bmatrix}
\begin{bmatrix}
\ti{x}_{f}
\end{bmatrix}
=\begin{bmatrix}
f\\
0\\
0
\end{bmatrix}.$$
\item[\bf(i'')]
\eqref{varyfsos} is a weak formulation of
$$(\At\Ats)^2y_{f}=f,\qquad
\Aos y_{f}=0,\qquad
\pit y_{f}=0,$$
i.e., in formal matrix notation
$$\begin{bmatrix}
(\At\Ats)^2\\
\Aos\\
\pit
\end{bmatrix}
\begin{bmatrix}
y_{f}
\end{bmatrix}
=\begin{bmatrix}
f\\
0\\
0
\end{bmatrix}.$$
\item[\bf(ii)]
\eqref{varxgsos} is a weak formulation of
$$\Ao\Aos\ti{x}_{g}=\Ao g,\qquad
\At\ti{x}_{g}=0,\qquad
\pit\ti{x}_{g}=0,$$
i.e., in formal matrix notation
$$\begin{bmatrix}
\Ao\Aos\\
\At\\
\pit
\end{bmatrix}
\begin{bmatrix}
\ti{x}_{g}
\end{bmatrix}
=\begin{bmatrix}
\Ao g\\
0\\
0
\end{bmatrix}.$$
\item[\bf(ii')]
\eqref{varzgsos} is a weak formulation of
$$\Aos\Ao z_{g}=g,\qquad
\Azs z_{g}=0,\qquad
\pio z_{g}=0,$$
i.e., in formal matrix notation
$$\begin{bmatrix}
\Aos\Ao\\
\Azs\\
\pio
\end{bmatrix}
\begin{bmatrix}
z_{g}
\end{bmatrix}
=\begin{bmatrix}
g\\
0\\
0
\end{bmatrix}.$$
\end{itemize}
\end{rem}

As before we emphasize that the variational formulations 
\eqref{varxfsosone}-\eqref{varzgsos}
have again saddle point structure. 
Provided $f\in R(\Ats)$ and $g\in R(\Aos)$
the formulations \eqref{varxfsosone}-\eqref{varzgsos}
are equivalent to the following five problems: Find
\begin{align*}
\hat{x}_{f},y_{f}&\in D(\cAts\cAt)=D(\Ats\cAt)
=D(\Ats\At)\cap R(\Ats)=D(\Ats\At)\cap N(\At)^{\bot_{\Hit}},\\
\ti{x}_{f}&\in D(\cAt)=D(\At)\cap R(\Ats)=D(\At)\cap N(\At)^{\bot_{\Hit}},\\
\ti{x}_{g}&\in D(\cAos)=D(\Aos)\cap R(\Ao)=D(\Aos)\cap N(\Aos)^{\bot_{\Hit}},\\
z_{g}&\in D(\cAo)=D(\Ao)\cap R(\Aos)=D(\Ao)\cap N(\Ao)^{\bot_{\Hio}},
\end{align*}
such that
\begin{align}
\mylabel{varxfsostwoone}
\forall\,\phi&\in D(\Ats\At)
&
\scp{\Ats\At\hat{x}_{f}}{\Ats\At\phi}_{\Hit}
&=\scp{f}{\Ats\At\phi}_{\Hit},\\
\mylabel{varxfsostwotwo}
\forall\,\xi&\in D(\At)
&
\scp{\At\ti{x}_{f}}{\At\xi}_{\Hith}
&=\scp{f}{\xi}_{\Hit},\\
\mylabel{varyfsostwo}
\forall\,\tau&\in D(\Ats\At)
&
\scp{\Ats\At y_{f}}{\Ats\At\tau}_{\Hit}
&=\scp{f}{\tau}_{\Hit},\\
\mylabel{varxgsostwo}
\forall\,\zeta&\in D(\Aos)
&
\scp{\Aos\ti{x}_{g}}{\Aos\zeta}_{\Hio}
&=\scp{g}{\Aos\zeta}_{\Hio},\\
\mylabel{varzgsostwo}
\forall\,\varphi&\in D(\Ao)
&
\scp{\Ao z_{g}}{\Ao\varphi}_{\Hit}
&=\scp{g}{\varphi}_{\Hio}.
\end{align}
Similar to the first order case, 
the variational formulations \eqref{varxfsostwoone}-\eqref{varzgsostwo}
are equivalent to the following five saddle point problems:
Find $\hat{x}_{f},y_{f}\in D(\Ats\At)$, $\ti{x}_{f}\in D(\At)$,
$\ti{x}_{g}\in D(\Aos)$, $z_{g}\in D(\Ao)$, such that
\begin{align*}
\forall\,\phi&\in D(\Ats\At)
&
\scp{\Ats\At\hat{x}_{f}}{\Ats\At\phi}_{\Hit}
&=\scp{f}{\Ats\At\phi}_{\Hit}
&
&\wedge
&
\forall\,\theta&\in N(\At)
&
\scp{\hat{x}_{f}}{\theta}_{\Hit}
&=0,\\
\forall\,\xi&\in D(\At)
&
\scp{\At\ti{x}_{f}}{\At\xi}_{\Hith}
&=\scp{f}{\xi}_{\Hit}
&
&\wedge
&
\forall\,\kappa&\in N(\At)
&
\scp{\ti{x}_{f}}{\kappa}_{\Hit}
&=0,\\
\forall\,\tau&\in D(\Ats\At)
&
\scp{\Ats\At y_{f}}{\Ats\At\tau}_{\Hit}
&=\scp{f}{\tau}_{\Hit}
&
&\wedge
&
\forall\,\sigma&\in N(\At)
&
\scp{y_{f}}{\sigma}_{\Hit}
&=0,\\
\forall\,\zeta&\in D(\Aos)
&
\scp{\Aos\ti{x}_{g}}{\Aos\zeta}_{\Hio}
&=\scp{g}{\Aos\zeta}_{\Hio}
&
&\wedge
&
\forall\,\lambda&\in N(\Aos)
&
\scp{\ti{x}_{g}}{\lambda}_{\Hit}
&=0,\\
\forall\,\varphi&\in D(\Ao)
&
\scp{\Ao z_{g}}{\Ao\varphi}_{\Hit}
&=\scp{g}{\varphi}_{\Hio}
&
&\wedge
&
\forall\,\psi&\in N(\Ao)
&
\scp{z_{g}}{\psi}_{\Hio}
&=0.
\end{align*}
At this point, we have followed the corresponding section for the first order problems
up to \eqref{saddlexfone}-\eqref{saddlezgone}.
We emphasize that all considerations after \eqref{saddlexfone}-\eqref{saddlezgone}
can be repeated here, giving similar saddle point formulations 
for the second order problem as well.
As an example we present a corresponding result to Theorem \ref{soltheofosvarformtogether}
for finding the solution $x=x_{f}+x_{g}$ in just one variational saddle point formulation.
For this, let us pick, e.g., the two formulations \eqref{varxfsostwotwo}
and \eqref{varxgsostwo} together 
with the (very) weak versions of $\Aos x=g$ resp. $\Ats\At x=f$.

\begin{theo}
\label{soltheososvarformtogether}
Let $K_{2}=\{0\}$.
The unique solution $x=x_{f}+x_{g}\in\ti{D}_{2}$ in Theorem \ref{soltheosos} 
can be found by the following two variational saddle point formulations: 
\begin{itemize}
\item[\bf(i)]
There exists a unique pair $(\ti{x},z)\in D(\At)\times D(\cAo)$ such that
\begin{align}
\mylabel{varxzthreeone}
\forall\,(\xi,\varphi)&\in D(\At)\times D(\cAo)
&
\scp{\At\ti{x}}{\At\xi}_{\Hith}
+\scp{\Ao z}{\xi}_{\Hit}
&=\scp{f}{\xi}_{\Hit},\\
\mylabel{varxzthreetwo}
&&
\scp{\ti{x}}{\Ao\varphi}_{\Hit}
&=\scp{g}{\varphi}_{\Hio}.
\intertext{It holds $z=0$, if and only if $f\in R(\Ats)$, if and only if
$\At\ti{x}\in D(\Ats)$ with $\Ats\At\ti{x}=f$.
In this case}
\mylabel{varxzfourone}
\forall\,(\xi,\varphi)&\in D(\At)\times D(\cAo)
&
\scp{\At\ti{x}}{\At\xi}_{\Hith}
&=\scp{f}{\xi}_{\Hit},\\
\mylabel{varxzfourtwo}
&&
\scp{\ti{x}}{\Ao\varphi}_{\Hit}
&=\scp{g}{\varphi}_{\Hio}.
\end{align}
\eqref{varxzthreetwo}, \eqref{varxzfourtwo} hold for all $\varphi\in D(\Ao)$
if and only if $g\in R(\Aos)$ if and only if $\ti{x}\in D(\Aos)$ with $\Aos\ti{x}=g$.
Moreover, if $f\in R(\Ats)$ and $g\in R(\Aos)$, we have $\ti{x}=x$ 
from Theorem \ref{soltheosos}.
\item[\bf(ii)]
There exists a unique pair $(\hat{x},y)\in D(\Aos)\times D(\Ats\cAt)$ such that
\begin{align}
\mylabel{varxythreeone}
\forall\,(\zeta,\phi)&\in D(\Aos)\times D(\Ats\cAt)
&
\scp{\Aos\hat{x}}{\Aos\zeta}_{\Hio}
+\scp{\Ats\At y}{\zeta}_{\Hit}
&=\scp{g}{\Aos\zeta}_{\Hio},\\
\mylabel{varxythreetwo}
&&
\scp{\hat{x}}{\Ats\At\phi}_{\Hit}
&=\scp{f}{\phi}_{\Hit}.
\intertext{It holds $y=0$ as well as}
\mylabel{varxyfourone}
\forall\,(\zeta,\phi)&\in D(\Aos)\times D(\Ats\cAt)
&
\scp{\Aos\hat{x}}{\Aos\zeta}_{\Hio}
&=\scp{g}{\Aos\zeta}_{\Hio},\\
\mylabel{varxyfourtwo}
&&
\scp{\hat{x}}{\Ats\At\phi}_{\Hit}
&=\scp{f}{\phi}_{\Hit}.
\end{align}
Moreover, $\Aos\hat{x}=g$ if and only if $g\in R(\Aos)$.
\eqref{varxythreetwo}, \eqref{varxyfourtwo} hold for all $\phi\in D(\Ats\At)$
if and only if $f\in R(\Ats)$ if and only if $\hat{x}\in D(\Ats\At)$ with $\Ats\At\hat{x}=f$.
In this case, i.e., $f\in R(\Ats)$ and $g\in R(\Aos)$, we have $\hat{x}=x$ 
from Theorem \ref{soltheosos}.
\end{itemize}
\end{theo}

\begin{proof}
Unique solvability of (i) follows again by standard saddle point theory
as in Theorem \ref{soltheofosvarformtogether} (i).
Inserting $\xi:=\Ao z\in R(\Ao)=N(\At)=R(\Ats)^{\bot_{\Hit}}$ in \eqref{varxzthreeone} yields
$\norm{\Ao z}_{\Hit}^2=\scp{f}{\Ao z}_{\Hit}$ and hence $\Ao z=0$,
even $z=0$ as $z\in D(\cAo)$,
if $f\in R(\Ats)$. On the other hand, if $\Ao z=0$ then \eqref{varxzthreeone} shows 
$\scp{f}{\xi}_{\Hit}=0$ for all $\xi\in N(\At)$, 
i.e., $f\in N(\At)^{\bot_{\Hit}}=R(\Ats)$. Moreover, if $f\in R(\Ats)$,
then \eqref{varxzfourone}-\eqref{varxzfourtwo} hold. 
Especially \eqref{varxzfourone} yields $\At\ti{x}\in D(\Ats)$ and $\Ats\At\ti{x}=f$.
The assertions related to $g$ follow as in the proof of 
Theorem \ref{soltheofosvarformtogether} (i).
Theorem \ref{soltheosos} yields $\ti{x}=x$, which completes the proof of (i).

For (ii), we pick $\psi\in D(\Ats)$ and decompose it 
as in the proof of Theorem \ref{soltheososvarform} (i) into
$$\psi=\psi_{N}+\At\phi_{R}\in D(\Ats)
=N(\Ats)\oplus_{\Hith}\big(D(\cAts)\cap R(\cAt)\big),\qquad
\phi_{R}\in D(\cAts\cAt).$$
If $f=0$, then using the latter decomposition we see for all $\psi\in D(\Ats)$
\begin{align*}
\scp{\hat{x}}{\Ats\psi}_{\Hit}
&=\scp{\hat{x}}{\Ats\At\phi_{R}}_{\Hit}
=0,
\end{align*}
which holds if and only if $\hat{x}\in N(\At)$. Thus
the kernel of \eqref{varxythreetwo} equals $N(\At)$.
By Corollary \ref{cortoolboxone} (i) 
the principal part of \eqref{varxythreeone} is strictly positive over the kernel
of \eqref{varxythreetwo}, which is 
$$D(\Aos)\cap N(\At)=D(\Aos)\cap R(\Ao)=R(\cAo)$$
as we just have derived and since $K_{2}=\{0\}$. 
Moreover, we have for $0\neq\phi\in D(\Ats\cAt)$
\begin{align*}
\frac{\norm{\Ats\At\phi}_{\Hit}}{\norm{\phi}_{D(\Ats\cAt)}}
\leq\sup_{0\neq\zeta\in D(\Aos)}
\frac{\scp{\Ats\At\phi}{\zeta}_{\Hit}}{\norm{\phi}_{D(\Ats\cAt)}\norm{\zeta}_{D(\Aos)}}
\leq\frac{\norm{\Ats\At\phi}_{\Hit}}{\norm{\phi}_{D(\Ats\cAt)}}
\leq1
\end{align*}
by choosing\footnote{Indeed we can easily see $R(\Ats\cAt)=R(\Ats)$,
since $R(\Ats\cAt)\subset R(\Ats)$ holds and for $\zeta\in R(\Ats)=R(\cAts)$ 
there is $\phi:=\cA_{2}^{-1}(\cAts)^{-1}\zeta\in D(\cAts\cAt)$ with $\zeta=\Ats\At\phi\in R(\cAts\cAt)=R(\Ats\cAt)$.}
$\zeta:=\Ats\At\phi\in R(\Ats\cAt)=R(\Ats)=N(\Aos)$,
which shows that actually equality holds. Hence
\begin{align*}
1
&\geq\inf_{0\neq\phi\in D(\Ats\cAt)}\sup_{0\neq\zeta\in D(\Aos)}
\frac{\scp{\Ats\At\phi}{\zeta}_{\Hit}}{\norm{\phi}_{D(\Ats\cAt)}\norm{\zeta}_{D(\Aos)}}\\
&=\inf_{0\neq\phi\in D(\Ats\cAt)}\frac{\norm{\Ats\At\phi}_{\Hit}}{\norm{\phi}_{D(\cAts\cAt)}}
\geq(c_{2}^4+c_{2}^2+1)^{\moh}
=\bnorm{\cA_{2}^{-1}(\cAts)^{-1}}_{R(\Ats),D(\cAts\cAt)}^{-1},
\end{align*}
which shows that the inf-sup-condition is satisfied.
Therefore, \eqref{varxythreeone}-\eqref{varxythreetwo} admits a unique solution
by the saddle point theory.
Picking $\zeta:=\Ats\At y\in R(\Ats)=N(\Aos)$ in \eqref{varxythreeone} yields
$\norm{\Ats\At y}_{\Hit}^2=0$ and hence $y=0$ as $y\in D(\Ats\cAt)=D(\cAts\cAt)$. 
Since $\Ats\At y=0$ even \eqref{varxyfourone}-\eqref{varxyfourtwo} are valid.
By \eqref{varxyfourone} we see $\Aos\hat{x}-g\in R(\Aos)^{\bot_{\Hio}}$,
showing $\Aos\hat{x}=g$ if and only if $g\in R(\Aos)$.
Using the orthonormal projector $\pi_{\Ats}$ and
by \eqref{varxyfourtwo} we see for all $\phi\in D(\Ats\At)$ as
$\pi_{\Ats}\phi\in D(\Ats\cAt)=D(\cAts\cAt)$
$$\scp{\hat{x}}{\Ats\At\phi}_{\Hit}
=\scp{\hat{x}}{\Ats\At\pi_{\Ats}\phi}_{\Hit}
=\scp{f}{\pi_{\Ats}\phi}_{\Hit}
=\scp{\pi_{\Ats}f}{\phi}_{\Hit}
=\scp{f}{\phi}_{\Hit},$$
if $f\in R(\Ats)$. On the other hand, 
if \eqref{varxyfourtwo} holds for all $\phi\in D(\Ats\At)$,
then $\scp{f}{\phi}_{\Hit}=0$ for all $\phi\in N(\At)$ and hence
$f\in N(\At)^{\bot_{\Hit}}=R(\Ats)$.
Now, following the proof of Theorem \ref{soltheososvarform} (i''), 
let $f\in R(\Ats)=R(\cAts)$ as well as
define $h:=(\cAts)^{-1}f\in D(\cAts)$ and observe with $\Ats h=f$ that
by \eqref{varxyfourtwo} for all $\phi\in D(\Ats\At)$
\begin{align}
\mylabel{fgcAtsmotwo}
\scp{\hat{x}}{\Ats\At\phi}_{\Hit}
&=\scp{f}{\phi}_{\Hit}
=\scp{\Ats h}{\phi}_{\Hit}
=\scp{h}{\At\phi}_{\Hith}
=\scp{h}{\pi_{\At}\At\phi}_{\Hith}.
\end{align}
As before, let $\psi\in D(\Ats)$ and let it be decomposed into
$$\psi=\psi_{N}+\At\phi\in D(\Ats)
=N(\Ats)\oplus_{\Hith}\big(D(\Ats)\cap R(\At)\big),\qquad
\phi\in D(\Ats\At).$$
Using \eqref{fgcAtsmotwo} and the latter decomposition we see for all $\psi\in D(\Ats)$
$$\scp{\hat{x}}{\Ats\psi}_{\Hit}
=\scp{\hat{x}}{\Ats\At\phi}_{\Hit}
=\scp{h}{\pi_{\At}\At\phi}_{\Hith}
=\scp{h}{\pi_{\At}\psi}_{\Hith}
=\scp{h}{\psi}_{\Hith},$$
since $h\in D(\cAts)\subset R(\At)$.
Thus $\hat{x}\in D(\At)$ and $\At\hat{x}=h\in D(\cAts)$, showing $\hat{x}\in D(\Ats\At)$ with
$$\Ats\At\hat{x}=\Ats h=f.$$
Finally, if $f\in R(\Ats)$ and $g\in R(\Aos)$, we have 
$\hat{x}\in D(\Ats\At)\cap D(\Aos)=\ti{D}_{2}$ 
with $\Ats\At\hat{x}=f$ and $\Aos\hat{x}=g$ and thus $\hat{x}=x$
by Theorem \ref{soltheosos}, completing the proof.
\end{proof}


\begin{rem}
Let us note the following:
\begin{itemize}
\item[\bf(i)]
\eqref{varxzthreeone}-\eqref{varxzthreetwo} is a weak formulation of 
$$\Ats\At\ti{x}+\Ao z=f,\qquad
\Aos\ti{x}=g,$$
i.e., in formal matrix notation
$$\begin{bmatrix}
\Ats\At & \Ao\\
\Aos & 0
\end{bmatrix}
\begin{bmatrix}
\ti{x}\\
z
\end{bmatrix}
=\begin{bmatrix}
f\\
g
\end{bmatrix}.$$
Note $z=0$.
\item[\bf(ii)]
\eqref{varxythreeone}-\eqref{varxythreetwo} is a weak formulation of
$$\Ao\Aos\hat{x}+\Ats\At y=\Ao g,\qquad
\Ats\At\hat{x}=f,$$
i.e., in formal matrix notation
$$\begin{bmatrix}
\Ao\Aos & \Ats\At\\
\Ats\At & 0
\end{bmatrix}
\begin{bmatrix}
\hat{x}\\
y
\end{bmatrix}
=\begin{bmatrix}
\Ao g\\
f
\end{bmatrix}.$$
Note $y=0$.
\end{itemize}
\end{rem}

A corresponding result to Theorem \ref{soltheofosvarformtogetherwithK2} 
can be formulated as well, skipping the assumption $K_{2}=\{0\}$ 
in Theorem \ref{soltheososvarformtogether}.

\begin{theo}
\label{soltheososvarformtogetherwithK2}
The unique solution $x=x_{f}+x_{g}+k\in\ti{D}_{2}$ in Theorem \ref{soltheosos} 
can be found by the following two variational double saddle point formulations: 
\begin{itemize}
\item[\bf(i)]
There exists a unique tripple $(\ti{x},z,h)\in D(\At)\times D(\cAo)\times K_{2}$ such that
\begin{align}
\nonumber
\forall\,(\xi,\varphi,\kappa)&\in D(\At)\times D(\cAo)\times K_{2}
&
\scp{\At\ti{x}}{\At\xi}_{\Hith}
+\scp{\Ao z}{\xi}_{\Hit}
+\scp{h}{\xi}_{\Hit}
&=\scp{f}{\xi}_{\Hit},\\
\mylabel{doublesaddlepointfullsystemsosoneone}
&&
\scp{\ti{x}}{\Ao\varphi}_{\Hit}
&=\scp{g}{\varphi}_{\Hio},\\
\nonumber
&&
\scp{\ti{x}}{\kappa}_{\Hit}
&=\scp{k}{\kappa}_{\Hit}.
\intertext{It holds $z=0$ and $h=0$, if and only if $f\in R(\Ats)$, if and only if
$\At\ti{x}\in D(\Ats)$ with $\Ats\At\ti{x}=f$.
In this case}
\nonumber
\forall\,(\xi,\varphi,\kappa)&\in D(\At)\times D(\cAo)\times K_{2}
&
\scp{\At\ti{x}}{\At\xi}_{\Hith}
&=\scp{f}{\xi}_{\Hit},\\
\mylabel{doublesaddlepointfullsystemsosonetwo}
&&
\scp{\ti{x}}{\Ao\varphi}_{\Hit}
&=\scp{g}{\varphi}_{\Hio},\\
\nonumber
&&
\scp{\ti{x}}{\kappa}_{\Hit}
&=\scp{k}{\kappa}_{\Hit}.
\end{align}
\eqref{doublesaddlepointfullsystemsosoneone}, \eqref{doublesaddlepointfullsystemsosonetwo} 
hold for all $\varphi\in D(\Ao)$
if and only if $g\in R(\Aos)$ if and only if $\ti{x}\in D(\Aos)$ with $\Aos\ti{x}=g$.
Furthermore, $\pit\ti{x}=k$.
Moreover, if $f\in R(\Ats)$ and $g\in R(\Aos)$, we have $\ti{x}=x$ 
from Theorem \ref{soltheosos}.
\item[\bf(ii)]
There exists a unique tripple $(\hat{x},y,h)\in D(\Aos)\times D(\Ats\cAt)\times K_{2}$ such that
\begin{align}
\nonumber
\forall\,(\zeta,\phi,\kappa)&\in D(\Aos)\times D(\Ats\cAt)\times K_{2}
&
\scp{\Aos\hat{x}}{\Aos\zeta}_{\Hio}
+\scp{\Ats\At y}{\zeta}_{\Hit}
+\scp{h}{\zeta}_{\Hit}
&=\scp{g}{\Aos\zeta}_{\Hio},\\
\mylabel{doublesaddlepointfullsystemsostwoone}
&&
\scp{\hat{x}}{\Ats\At\phi}_{\Hit}
&=\scp{f}{\phi}_{\Hit},\\
\nonumber
&&
\scp{\hat{x}}{\kappa}_{\Hit}
&=\scp{k}{\kappa}_{\Hit}.
\intertext{It holds $y=0$ and $h=0$ as well as}
\nonumber
\forall\,(\zeta,\phi,\kappa)&\in D(\Aos)\times D(\Ats\cAt)\times K_{2}
&
\scp{\Aos\hat{x}}{\Aos\zeta}_{\Hio}
&=\scp{g}{\Aos\zeta}_{\Hio},\\
\mylabel{doublesaddlepointfullsystemsostwotwo}
&&
\scp{\hat{x}}{\Ats\At\phi}_{\Hit}
&=\scp{f}{\phi}_{\Hit},\\
\nonumber
&&
\scp{\hat{x}}{\kappa}_{\Hit}
&=\scp{k}{\kappa}_{\Hit}.
\end{align}
Moreover, $\Aos\hat{x}=g$ if and only if $g\in R(\Aos)$.
\eqref{doublesaddlepointfullsystemsostwoone}, \eqref{doublesaddlepointfullsystemsostwotwo} 
hold for all $\phi\in D(\Ats\At)$
if and only if $f\in R(\Ats)$ if and only if $\hat{x}\in D(\Ats\At)$ with $\Ats\At\hat{x}=f$.
Furthermore, $\pit\hat{x}=k$.
In this case, i.e., $f\in R(\Ats)$ and $g\in R(\Aos)$, we have $\hat{x}=x$ 
from Theorem \ref{soltheosos}.
\end{itemize}
\end{theo}

\begin{rem}
\label{soltheososvarformtogetherrem}
Let us note the following:
\begin{itemize}
\item[\bf(i)]
Literally, Remark \ref{soltheofosvarformtogetherrem} (i) holds here as well.
\item[\bf(ii)]
\eqref{doublesaddlepointfullsystemsosoneone} is a weak formulation of 
$$\Ats\At\ti{x}+\Ao z+h=f,\qquad
\Aos\ti{x}=g,\qquad
\pit\ti{x}=k,$$
i.e., in formal matrix notation
$$\begin{bmatrix}
\Ats\At & \Ao & \iota_{K_{2}}\\
\Aos & 0 & 0\\
\pit=\iota_{K_{2}}^{*} & 0 & 0
\end{bmatrix}
\begin{bmatrix}
\ti{x}\\
z\\
h
\end{bmatrix}
=\begin{bmatrix}
f\\
g\\
k
\end{bmatrix}.$$
Note $z=0$ and $h=0$.
\item[\bf(ii')]
\eqref{doublesaddlepointfullsystemsostwoone} is a weak formulation of
$$\Ao\Aos\hat{x}+\Ats\At y+h=\Ao g,\qquad
\Ats\At\hat{x}=f,\qquad
\pit\hat{x}=k,$$
i.e., in formal matrix notation
$$\begin{bmatrix}
\Ao\Aos & \Ats\At & \iota_{K_{2}}\\
\Ats\At & 0 & 0\\
\pit=\iota_{K_{2}}^{*} & 0 & 0
\end{bmatrix}
\begin{bmatrix}
\hat{x}\\
y\\
h
\end{bmatrix}
=\begin{bmatrix}
\Ao g\\
f\\
k
\end{bmatrix}.$$
Note $y=0$ and $h=0$.
\end{itemize}
\end{rem}

For the partial solutions and potentials in 
Theorem \ref{soltheosos} and Theorem \ref{soltheososvarform} 
a corresponding result to Theorem \ref{soltheofosvarformpartsolfull}
can be proved as well. It reads as follows:

\begin{theo}
\label{soltheososvarformpartsolfull}
Let additionally $R(\Az)$ be closed.
The partial solutions $x_{f}=\hat{x}_{f}=\ti{x}_{f}\in D(\cAts\cAt)$, $x_{g}=\ti{x}_{g}\in D(\cAos)$,
and their potentials $y_{f}\in D(\cAts\cAt)$, $z_{g}\in D(\cAo)$
from Theorem \ref{soltheosos}, Theorem \ref{soltheososvarform}, and \eqref{varxfsostwoone}-\eqref{varzgsostwo}
can be found by the following six variational double saddle point formulations: 
\begin{itemize}
\item[\bf(i)]
There exists a unique tripple $(\hat{x}_{f},w,h)\in D(\Ats\At)\times D(\cAo)\times K_{2}$ such that
\begin{align}
\nonumber
\forall\,(\psi,\varphi,\kappa)&\in D(\Ats\At)\times D(\cAo)\times K_{2}
&
\scp{\Ats\At\hat{x}_{f}}{\Ats\At\psi}_{\Hit}
+\scp{\Ao w}{\psi}_{\Hit}
+\scp{h}{\psi}_{\Hit}
&=\scp{f}{\Ats\At\psi}_{\Hit},\\
\mylabel{doublesaddlepointpartsystemsoshatxfoneone}
&&
\scp{\hat{x}_{f}}{\Ao\varphi}_{\Hit}
&=0,\\
\nonumber
&&
\scp{\hat{x}_{f}}{\kappa}_{\Hit}
&=0.
\end{align}
It holds $w=0$ and $h=0$.
Moreover, $\At\hat{x}_{f}\in D(\Ats)$ and $\Ats\At\hat{x}_{f}=f$ if and only if $f\in R(\Ats)$.
The second equation of 
\eqref{doublesaddlepointpartsystemsoshatxfoneone} holds for all $\varphi\in D(\Ao)$
and thus $\hat{x}_{f}\in N(\Aos)$.
Furthermore, $\pit\hat{x}_{f}=0$. 
Finally, if $f\in R(\Ats)$, we have $\hat{x}_{f}=x_{f}$ from Theorem \ref{soltheosos},
see Theorem \ref{soltheososvarform} (i).
\item[\bf(i')]
There exists a unique tripple $(\ti{x}_{f},u,h)\in D(\At)\times D(\cAo)\times K_{2}$ such that
\begin{align}
\nonumber
\forall\,(\xi,\varphi,\kappa)&\in D(\At)\times D(\cAo)\times K_{2}
&
\scp{\At\ti{x}_{f}}{\At\xi}_{\Hith}
+\scp{\Ao u}{\xi}_{\Hit}
+\scp{h}{\xi}_{\Hit}
&=\scp{f}{\xi}_{\Hit},\\
\mylabel{doublesaddlepointpartsystemsostixfoneone}
&&
\scp{\ti{x}_{f}}{\Ao\varphi}_{\Hit}
&=0,\\
\nonumber
&&
\scp{\ti{x}_{f}}{\kappa}_{\Hit}
&=0.
\end{align}
It holds $u=0$ if and only if $f\bot_{\Hit}R(\Ao)$ if and only if $f\in N(\Aos)$.
$h=0$ if and only if $f\bot_{\Hit}K_{2}$.
Thus $u=0$ and $h=0$ if and only if $f\in N(\Aos)\cap K_{2}^{\bot_{\Hit}}=R(\Ats)$.
Moreover, $\At\hat{x}_{f}\in D(\Ats)$ and $\Ats\At\ti{x}_{f}=f$ if and only if $f\in R(\Ats)$.
The second equation of 
\eqref{doublesaddlepointpartsystemsostixfoneone} holds for all $\varphi\in D(\Ao)$
and hence $\ti{x}_{f}\in N(\Aos)$.
Furthermore, $\pit\ti{x}_{f}=0$. 
Finally, if $f\in R(\Ats)$, we have $\ti{x}_{f}=x_{f}$ from Theorem \ref{soltheosos},
see Theorem \ref{soltheososvarform} (i').
\item[\bf(i'')]
There exists a unique tripple $(y_{f},v,h)\in D(\Ats\At)\times D(\cAo)\times K_{2}$ such that
\begin{align}
\nonumber
\forall\,(\psi,\varphi,\kappa)&\in D(\Ats\At)\times D(\cAo)\times K_{2}
&
\scp{\Ats\At y_{f}}{\Ats\At\psi}_{\Hit}
+\scp{\Ao v}{\psi}_{\Hit}
+\scp{h}{\psi}_{\Hit}
&=\scp{f}{\psi}_{\Hit},\\
\mylabel{doublesaddlepointpartsystemsosyfoneone}
&&
\scp{y_{f}}{\Ao\varphi}_{\Hit}
&=0,\\
\nonumber
&&
\scp{y_{f}}{\kappa}_{\Hit}
&=0.
\end{align}
It holds $v=0$ if and only if $f\bot_{\Hit}R(\Ao)$ if and only if $f\in N(\Aos)$.
$h=0$ if and only if $f\bot_{\Hit}K_{2}$.
Thus $v=0$ and $h=0$ if and only if $f\in N(\Aos)\cap K_{2}^{\bot_{\Hit}}=R(\Ats)$.
Moreover, $\Ats\At y_{f}\in D(\Ats\At)$ and $(\Ats\At)^2y_{f}=f$ if and only if $f\in R(\Ats)$.
The second equation of 
\eqref{doublesaddlepointpartsystemsosyfoneone} holds for all $\varphi\in D(\Ao)$
and thus $y_{f}\in N(\Aos)$.
Furthermore, $\pit y_{f}=0$. 
Finally, if $f\in R(\Ats)$, we have $\Ats\At y_{f}=x_{f}$ from Theorem \ref{soltheosos},
see Theorem \ref{soltheososvarform} (i'').
\item[\bf(ii)]
There exists a unique triple $(\ti{x}_{g},p,h)\in D(\Aos)\times D(\cAts)\times K_{2}$ such that
\begin{align}
\nonumber
\forall\,(\zeta,\phi,\kappa)&\in D(\Aos)\times D(\cAts)\times K_{2}
&
\scp{\Aos\ti{x}_{g}}{\Aos\zeta}_{\Hio}
+\scp{\Ats p}{\zeta}_{\Hit}
+\scp{h}{\zeta}_{\Hit}
&=\scp{g}{\Aos\zeta}_{\Hio},\\
\mylabel{doublesaddlepointpartsystemsostixgtwoone}
&&
\scp{\ti{x}_{g}}{\Ats\phi}_{\Hit}
&=0,\\
\nonumber
&&
\scp{\ti{x}_{g}}{\kappa}_{\Hit}
&=0.
\end{align}
It holds $p=0$ and $h=0$.
Moreover, $\Aos\ti{x}_{g}=g$ if and only if $g\in R(\Aos)$.
The second equation of \eqref{doublesaddlepointpartsystemsostixgtwoone}
holds for all $\phi\in D(\Ats)$ and thus $\ti{x}_{g}\in N(\At)$.
Furthermore, $\pit\ti{x}_{g}=0$. 
Finally, if $g\in R(\Aos)$, we have $\ti{x}_{g}=x_{g}$ from Theorem \ref{soltheosos},
see Theorem \ref{soltheososvarform} (ii).
\item[\bf(ii')]
There exists a unique triple $(\hat{x}_{g},q,h)\in D(\Aos)\times D(\Ats\cAt)\times K_{2}$ such that
\begin{align}
\nonumber
\forall\,(\zeta,\psi,\kappa)&\in D(\Aos)\times D(\Ats\cAt)\times K_{2}
&
\scp{\Aos\hat{x}_{g}}{\Aos\zeta}_{\Hio}
+\scp{\Ats\At q}{\zeta}_{\Hit}
+\scp{h}{\zeta}_{\Hit}
&=\scp{g}{\Aos\zeta}_{\Hio},\\
\mylabel{doublesaddlepointpartsystemsoshatxgtwoone}
&&
\scp{\hat{x}_{g}}{\Ats\At\psi}_{\Hit}
&=0,\\
\nonumber
&&
\scp{\hat{x}_{g}}{\kappa}_{\Hit}
&=0.
\end{align}
It holds $q=0$ and $h=0$.
Moreover, $\Aos\hat{x}_{g}=g$ if and only if $g\in R(\Aos)$.
The second equation of \eqref{doublesaddlepointpartsystemsoshatxgtwoone}
holds for all $\psi\in D(\Ats\At)$ and thus $\hat{x}_{g}\in N(\At)$
as $\hat{x}_{g}\,\bot_{\Hit}\,R(\Ats\At)=R(\Ats)$.
Furthermore, $\pit\hat{x}_{g}=0$. 
Finally, if $g\in R(\Aos)$, we have $\hat{x}_{g}=x_{g}$ from Theorem \ref{soltheosos},
see Theorem \ref{soltheososvarform} (ii).
\item[\bf(ii'')]
There exists a unique triple $(z_{g},r,h)\in D(\Ao)\times D(\cAz)\times K_{1}$ such that
\begin{align}
\nonumber
\forall\,(\varphi,\vartheta,\kappa)&\in D(\Ao)\times D(\cAz)\times K_{1}
&
\scp{\Ao z_{g}}{\Ao\varphi}_{\Hit}
+\scp{\Az r}{\varphi}_{\Hio}
+\scp{h}{\varphi}_{\Hio}
&=\scp{g}{\varphi}_{\Hio},\\
\mylabel{doublesaddlepointpartsystemsoszgtwoone}
&&
\scp{z_{g}}{\Az\vartheta}_{\Hio}
&=0,\\
\nonumber
&&
\scp{z_{g}}{\kappa}_{\Hio}
&=0.
\end{align}
It holds $r=0$ if and only if $g\bot_{\Hio}R(\Az)$ if and only if $g\in N(\Azs)$.
$h=0$ if and only if $g\bot_{\Hio}K_{1}$.
Thus $r=0$ and $h=0$ if and only if $g\in N(\Azs)\cap K_{1}^{\bot_{\Hio}}=R(\Aos)$.
Moreover, $\Ao z_{g}\in D(\Aos)$ and $\Aos\Ao z_{g}=g$ if and only if $g\in R(\Aos)$.
The second equation of 
\eqref{doublesaddlepointpartsystemsoszgtwoone} holds for all $\vartheta\in D(\Az)$
and hence $z_{g}\in N(\Azs)$.
Furthermore, $\pio z_{g}=0$. 
Finally, if $g\in R(\Aos)$, we have $\Ao z_{g}=x_{g}$ from Theorem \ref{soltheosos},
see Theorem \ref{soltheososvarform} (ii').
\end{itemize}
\end{theo}

\begin{proof}
The proof utilizes the same techniques as used before.
\end{proof}

\begin{rem}
\label{soltheososvarformpartsolfullrem}
The formulations in Theorem \ref{soltheososvarformpartsolfull} (i') resp.
Theorem \ref{soltheososvarformpartsolfull} (ii') are the same
as in Theorem \ref{soltheososvarformtogetherwithK2} (i) resp. Theorem \ref{soltheososvarformtogetherwithK2} (ii)
except of the right hand sides. We note that $\ti{x}=x$
can also be found by the formulation presented in Theorem \ref{soltheososvarformpartsolfull} (i).
\end{rem}

\begin{rem}
\label{soltheososvarformpartsolfullremmatrix}
Again we have formal matrix representations:
\begin{itemize}
\item[\bf(i)]
\eqref{doublesaddlepointpartsystemsoshatxfoneone} is a weak formulation of 
$$(\Ats\At)^2\hat{x}_{f}+\Ao w+h=\Ats\At f,\qquad
\Aos\hat{x}_{f}=0,\qquad
\pit\hat{x}_{f}=0,$$
i.e., in formal matrix notation
$$\begin{bmatrix}
(\Ats\At)^2 & \Ao & \iota_{K_{2}}\\
\Aos & 0 & 0\\
\pit=\iota_{K_{2}}^{*} & 0 & 0
\end{bmatrix}
\begin{bmatrix}
\hat{x}_{f}\\
w\\
h
\end{bmatrix}
=\begin{bmatrix}
\Ats\At f\\
0\\
0
\end{bmatrix}.$$
Note $w=0$ and $h=0$.
\item[\bf(i')]
\eqref{doublesaddlepointpartsystemsostixfoneone} is a weak formulation of 
$$\Ats\At\ti{x}_{f}+\Ao u+h=f,\qquad
\Aos\ti{x}_{f}=0,\qquad
\pit\ti{x}_{f}=0,$$
i.e., in formal matrix notation
$$\begin{bmatrix}
\Ats\At & \Ao & \iota_{K_{2}}\\
\Aos & 0 & 0\\
\pit=\iota_{K_{2}}^{*} & 0 & 0
\end{bmatrix}
\begin{bmatrix}
\ti{x}_{f}\\
u\\
h
\end{bmatrix}
=\begin{bmatrix}
f\\
0\\
0
\end{bmatrix}.$$
Note $u=0$ and $h=0$.
\item[\bf(i'')]
\eqref{doublesaddlepointpartsystemsosyfoneone} is a weak formulation of
$$(\At\Ats)^2y_{f}+\Ao v+h=f,\qquad
\Aos y_{f}=0,\qquad
\pit y_{f}=0,$$
i.e., in formal matrix notation
$$\begin{bmatrix}
(\At\Ats)^2 & \Ao & \iota_{K_{2}}\\
\Aos & 0 & 0\\
\pit=\iota_{K_{2}}^{*} & 0 & 0
\end{bmatrix}
\begin{bmatrix}
y_{f}\\
v\\
h
\end{bmatrix}
=\begin{bmatrix}
f\\
0\\
0
\end{bmatrix}.$$
Note $v=0$ and $h=0$.
\item[\bf(ii)]
\eqref{doublesaddlepointpartsystemsostixgtwoone} is a weak formulation of
$$\Ao\Aos\ti{x}_{g}+\Ats p+h=\Ao g,\qquad
\At\ti{x}_{g}=0,\qquad
\pit\ti{x}_{g}=0,$$
i.e., in formal matrix notation
$$\begin{bmatrix}
\Ao\Aos & \Ats & \iota_{K_{2}}\\
\At & 0 & 0\\
\pit=\iota_{K_{2}}^{*} & 0 & 0
\end{bmatrix}
\begin{bmatrix}
\ti{x}_{g}\\
p\\
h
\end{bmatrix}
=\begin{bmatrix}
\Ao g\\
0\\
0
\end{bmatrix}.$$
Note $p=0$ and $h=0$.
\item[\bf(ii')]
\eqref{doublesaddlepointpartsystemsoshatxgtwoone} is a weak formulation of
$$\Ao\Aos\hat{x}_{g}+\Ats\At q+h=\Ao g,\qquad
\Ats\At\hat{x}_{g}=0,\qquad
\pit\hat{x}_{g}=0,$$
i.e., in formal matrix notation
$$\begin{bmatrix}
\Ao\Aos & \Ats\At & \iota_{K_{2}}\\
\Ats\At & 0 & 0\\
\pit=\iota_{K_{2}}^{*} & 0 & 0
\end{bmatrix}
\begin{bmatrix}
\hat{x}_{g}\\
q\\
h
\end{bmatrix}
=\begin{bmatrix}
\Ao g\\
0\\
0
\end{bmatrix}.$$
Note $q=0$ and $h=0$.
\item[\bf(ii'')]
\eqref{doublesaddlepointpartsystemsoszgtwoone} is a weak formulation of
$$\Aos\Ao z_{g}+\Az r+h=g,\qquad
\Azs z_{g}=0,\qquad
\pio z_{g}=0,$$
i.e., in formal matrix notation
$$\begin{bmatrix}
\Aos\Ao & \Az & \iota_{K_{1}}\\
\Azs & 0 & 0\\
\pio=\iota_{K_{1}}^{*} & 0 & 0
\end{bmatrix}
\begin{bmatrix}
z_{g}\\
r\\
h
\end{bmatrix}
=\begin{bmatrix}
g\\
0\\
0
\end{bmatrix}.$$
Note $r=0$ and $h=0$.
\end{itemize}
\end{rem}

There is also an analogon for the quadruple saddle point formulations
presented in Theorem \ref{soltheofosvarformtogetherwithK2andmore}.
Let us recall from Theorem \ref{soltheososvarformtogetherwithK2}
$z\in D(\cAo)$ and $y\in D(\Ats\cAt)$, i.e.,
\begin{align*}
z\in R(\Aos)
&=N(\Azs)\cap K_{1}^{\bot_{\Hio}}
=R(\Az)^{\bot_{\Hio}}\cap K_{1}^{\bot_{\Hio}},\\
y\in R(\Ats)
&=N(\Aos)\cap K_{2}^{\bot_{\Hit}}
=R(\Ao)^{\bot_{\Hit}}\cap K_{2}^{\bot_{\Hit}}.
\end{align*}

\begin{theo}
\label{soltheososvarformtogetherwithK2andmore}
Let additionally $R(\Az)$ be closed.
Moreover, let $f\in R(\Ats)$ and $g\in R(\Aos)$.
The unique solution $x=x_{f}+x_{g}+k\in\ti{D}_{2}$ in Theorem \ref{soltheosos} 
can be found by the following two variational quadruple saddle point formulations: 
\begin{itemize}
\item[\bf(i)]
There exists a unique five tuple $(\ti{x},z,u,h_{2},h_{1})\in D(\At)\times D(\Ao)\times D(\cAz)\times K_{2}\times K_{1}$ 
such that for all $(\xi,\varphi,\vartheta,\kappa,\lambda)\in D(\At)\times D(\Ao)\times D(\cAz)\times K_{2}\times K_{1}$
\begin{align}
\nonumber
\scp{\At\ti{x}}{\At\xi}_{\Hith}
+\scp{\Ao z}{\xi}_{\Hit}
+\scp{h_{2}}{\xi}_{\Hit}
&=\scp{f}{\xi}_{\Hit},\\
\nonumber
\scp{\ti{x}}{\Ao\varphi}_{\Hit}
+\scp{\Az u}{\varphi}_{\Hio}
+\scp{h_{1}}{\varphi}_{\Hio}
&=\scp{g}{\varphi}_{\Hio},\\
\mylabel{doublesaddlepointfullsystemsosoneoneandmore}
\scp{z}{\Az\vartheta}_{\Hio}
&=0,\\
\nonumber
\scp{\ti{x}}{\kappa}_{\Hit}
&=\scp{k}{\kappa}_{\Hit},\\
\nonumber
\scp{z}{\lambda}_{\Hio}
&=0.
\end{align}
The third equation of 
\eqref{doublesaddlepointfullsystemsosoneoneandmore} is valid for all $\vartheta\in D(\Az)$.
It holds $z=0$ and $h_{2}=0$ as well as $u=0$ and $h_{1}=0$.
Moreover, $\At\ti{x}\in D(\Ats)$ with $\Ats\At\ti{x}=f$ and $\ti{x}\in D(\Aos)$ with $\Aos\ti{x}=g$
as well as $\pit\ti{x}=k$.
Finally, $\ti{x}=x$ from Theorem \ref{soltheosos}.
\item[\bf(ii)]
There exists a unique five tuple $(\hat{x},y,v,h_{2},\hat{h}_{2})\in D(\Aos)\times D(\Ats\At)\times D(\cAo)\times K_{2}\times K_{2}$ 
such that for all $(\zeta,\phi,\psi,\kappa,\lambda)\in D(\Aos)\times D(\Ats\At)\times D(\cAo)\times K_{2}\times K_{2}$ 
\begin{align}
\nonumber
\scp{\Aos\hat{x}}{\Aos\zeta}_{\Hio}
+\scp{\Ats\At y}{\zeta}_{\Hit}
+\scp{h_{2}}{\zeta}_{\Hit}
&=\scp{g}{\Aos\zeta}_{\Hio},\\
\nonumber
\scp{\hat{x}}{\Ats\At\phi}_{\Hit}
+\scp{\Ao v}{\phi}_{\Hit}
+\scp{\hat{h}_{2}}{\phi}_{\Hit}
&=\scp{f}{\phi}_{\Hit},\\
\mylabel{doublesaddlepointfullsystemsostwooneandmore}
\scp{y}{\Ao\psi}_{\Hit}
&=0,\\
\nonumber
\scp{\hat{x}}{\kappa}_{\Hit}
&=\scp{k}{\kappa}_{\Hit},\\
\nonumber
\scp{y}{\lambda}_{\Hit}
&=0.
\end{align}
The third equation of 
\eqref{doublesaddlepointfullsystemsostwooneandmore} is valid for all $\psi\in D(\Ao)$.
It holds $y=0$ and $h_{2}=0$ as well as $v=0$ and $\hat{h}_{2}=0$.
Moreover, $\Aos\hat{x}=g$ and $\hat{x}\in D(\Ats\At)$ with $\Ats\At\hat{x}=f$
as well as $\pit\hat{x}=k$.
Finally, $\hat{x}=x$ from Theorem \ref{soltheosos}.
\item[\bf(ii')]
There is 
$(\hat{x},y,v,u,h_{2},\hat{h}_{2},h_{1})
\in D(\Aos)\times D(\Ats\At)\times D(\Ao)\times D(\cAz)\times K_{2}\times K_{2}\times K_{1}$,
a unique seven tuple, such that for all 
$(\zeta,\phi,\psi,\vartheta,\kappa,\lambda,\nu)
\in D(\Aos)\times D(\Ats\At)\times D(\Ao)\times D(\cAz)\times K_{2}\times K_{2}\times K_{1}$
\begin{align}
\nonumber
\scp{\Aos\hat{x}}{\Aos\zeta}_{\Hio}
+\scp{\Ats\At y}{\zeta}_{\Hit}
+\scp{h_{2}}{\zeta}_{\Hit}
&=\scp{g}{\Aos\zeta}_{\Hio},\\
\nonumber
\scp{\hat{x}}{\Ats\At\phi}_{\Hit}
+\scp{\Ao v}{\phi}_{\Hit}
+\scp{\hat{h}_{2}}{\phi}_{\Hit}
&=\scp{f}{\phi}_{\Hit},\\
\nonumber
\scp{y}{\Ao\psi}_{\Hit}
+\scp{\Az u}{\psi}_{\Hio}
+\scp{h_{1}}{\psi}_{\Hio}
&=0,\\
\mylabel{doublesaddlepointfullsystemsosthreeoneandmore}
\scp{v}{\Az\vartheta}_{\Hio}
&=0,\\
\nonumber
\scp{\hat{x}}{\kappa}_{\Hit}
&=\scp{k}{\kappa}_{\Hit},\\
\nonumber
\scp{y}{\lambda}_{\Hit}
&=0,\\
\nonumber
\scp{v}{\nu}_{\Hio}
&=0.
\end{align}
The fourth equation of 
\eqref{doublesaddlepointfullsystemsosthreeoneandmore} is valid for all $\vartheta\in D(\Az)$.
It holds $y=0$, $h_{2}=0$ and $v=0$, $\hat{h}_{2}=0$ as well as $u=0$ and $h_{1}=0$.
Moreover, $\Aos\hat{x}=g$ and $\hat{x}\in D(\Ats\At)$ with $\Ats\At\hat{x}=f$
as well as $\pit\hat{x}=k$.
Finally, $\hat{x}=x$ from Theorem \ref{soltheosos}.
\end{itemize}
\end{theo}

Theorem \ref{soltheososvarformpartsolfull} can be extended in the same way.

\begin{rem}
\mylabel{soltheososvarformtogetherwithK2andmorerem}
Let us note that generally the solution and test spaces look like
\begin{align*}
D(\Al)\times D(\Almo)&\times\cdots\times D(\A_{\ell-n+1})\times D(\cA_{\ell-n})
\times K_{\ell}\times K_{\ell-1}\times\cdots\times K_{\ell-n+1},\\
D(\Als)\times D(\Alpos\Alpo)&\times D(\Al)\times D(\Almo)\times\cdots\\
\cdots&\times D(\A_{\ell-n+1})\times D(\cA_{\ell-n})
\times K_{\ell+1}\times K_{\ell+1}\times K_{\ell}\times K_{\ell-1}\times\cdots\times K_{\ell-n+1}.
\end{align*}
Moreover:
\begin{itemize}
\item[\bf(i)]
\eqref{doublesaddlepointfullsystemsosoneoneandmore} is a weak formulation of 
$$\Ats\At\ti{x}+\Ao z+h_{2}=f,\quad
\Aos\ti{x}+\Az u+h_{1}=g,\quad
\Azs z=0,\quad
\pit\ti{x}=k,\quad
\pio z=0,$$
i.e., in formal matrix notation
$$\begin{bmatrix}
\Ats\At & \Ao & 0 & \iota_{K_{2}} & 0\\
\Aos & 0 & \Az & 0 & \iota_{K_{1}}\\
0 & \Azs & 0 & 0 & 0\\
\pit=\iota_{K_{2}}^{*} & 0 & 0 & 0 & 0\\
0 & \pio=\iota_{K_{1}}^{*} & 0 & 0 & 0
\end{bmatrix}
\begin{bmatrix}
\ti{x}\\
z\\
u\\
h_{2}\\
h_{1}
\end{bmatrix}
=\begin{bmatrix}
f\\
g\\
0\\
k\\
0
\end{bmatrix}.$$
Note $z=0$, $u=0$ and $h_{2}=0$, $h_{1}=0$.
\item[\bf(ii)]
\eqref{doublesaddlepointfullsystemsostwooneandmore} is a weak formulation of
$$\Ao\Aos\hat{x}+\Ats\At y+h_{2}=\Ao g,\quad
\Ats\At\hat{x}+\Ao v+\hat{h}_{2}=f,\quad
\Aos y=0,\quad
\pit\hat{x}=k,\quad
\pit y=0,$$
i.e., in formal matrix notation
$$\begin{bmatrix}
\Ao\Aos & \Ats\At & 0 & \iota_{K_{2}} & 0\\
\Ats\At & 0 & \Ao & 0 & \iota_{K_{2}}\\
0 & \Aos & 0 & 0 & 0\\
\pit=\iota_{K_{2}}^{*} & 0 & 0 & 0 & 0\\
0 & \pit=\iota_{K_{2}}^{*} & 0 & 0 & 0
\end{bmatrix}
\begin{bmatrix}
\hat{x}\\
y\\
v\\
h_{2}\\
\hat{h}_{2}
\end{bmatrix}
=\begin{bmatrix}
\Ao g\\
f\\
0\\
k\\
0
\end{bmatrix}.$$
Note $y=0$, $v=0$ and $h_{2}=0$, $\hat{h}_{2}=0$.
\item[\bf(ii')]
\eqref{doublesaddlepointfullsystemsosthreeoneandmore} is a weak formulation of
$$\Ao\Aos\hat{x}+\Ats\At y+h_{2}=\Ao g,\quad
\Ats\At\hat{x}+\Ao v+\hat{h}_{2}=f,\quad
\Aos y+\Az u+h_{1}=0,\quad
\Azs v=0,$$
and $\pit\hat{x}=k$, $\pit y=0$, $\pio v=0$,
i.e., in formal matrix notation
$$\begin{bmatrix}
\Ao\Aos & \Ats\At & 0 & 0 & \iota_{K_{2}} & 0 & 0\\
\Ats\At & 0 & \Ao & 0 & 0 & \iota_{K_{2}} & 0\\
0 & \Aos & 0 & \Az & 0 & 0 & \iota_{K_{1}}\\
0 & 0 & \Azs & 0 & 0 & 0 & 0\\
\pit=\iota_{K_{2}}^{*} & 0 & 0 & 0 & 0 & 0 & 0\\
0 & \pit=\iota_{K_{2}}^{*} & 0 & 0 & 0 & 0 & 0\\
0 & 0 & \pio=\iota_{K_{1}}^{*} & 0 & 0 & 0 & 0
\end{bmatrix}
\begin{bmatrix}
\hat{x}\\
y\\
v\\
u\\
h_{2}\\
\hat{h}_{2}\\
h_{1}
\end{bmatrix}
=\begin{bmatrix}
\Ao g\\
f\\
0\\
0\\
k\\
0\\
0
\end{bmatrix}.$$
Note $y=0$, $v=0$, $u=0$ and $h_{2}=0$, $\hat{h}_{2}=0$, $h_{1}=0$.
\end{itemize}
\end{rem}

\section{Functional A Posteriori Error Estimates}
\mylabel{secfuncapostest}

Having establishes a solution theory including suitable variational formulations,
we now turn to the so-called functional a posteriori error estimates.
Note that General Assumption \ref{genass} is supposed to hold.

\subsection{First Order Systems}

Let $x\in D_{2}$ be the exact solution of \eqref{Aprobsoltheo}
and $\ti{x}\in\Hit$, which may be considered as a non-conforming
approximation of $x$. Utilizing the notations from Theorem \ref{soltheofos} 
we define and decompose the error
\begin{align}
\label{edef}
\begin{split}
\Hit\ni e
&:=x-\ti{x}
=e_{\Ao}+e_{K_{2}}+e_{\Ats}
\in R(\Ao)\oplus_{\Hit}K_{2}\oplus_{\Hit}R(\Ats),\\
e_{\Ao}
&:=\pi_{\Ao}e
=x_{g}-\pi_{\Ao}\ti{x}
\in R(\Ao),\\
e_{\Ats}
&:=\pi_{\Ats}e
=x_{f}-\pi_{\Ats}\ti{x}
\in R(\Ats),\\
e_{K_{2}}
&:=\pit e
=k-\pit\ti{x}
\in K_{2}
\end{split}
\end{align}
using the Helmholtz type decompositions of Lemma \ref{lemtoolboxexseq}.
By orthogonality it holds
\begin{align}
\mylabel{eortho}
\norm{e}_{\Hit}^2
&=\norm{e_{\Ao}}_{\Hit}^2
+\norm{e_{K_{2}}}_{\Hit}^2
+\norm{e_{\Ats}}_{\Hit}^2.
\end{align}

\subsubsection{Upper Bounds}

Testing \eqref{edef} with $\Ao\varphi$ for $\varphi\in D(\cAo)$ we get 
for all $\zeta\in D(\Aos)$
by orthogonality and Corollary \ref{cortoolboxone} (i)
\begin{align}
\label{apostestone}
\begin{split}
\scp{e_{\Ao}}{\Ao\varphi}_{\Hit}
&=\scp{e}{\Ao\varphi}_{\Hit}
=\scp{\Aos x}{\varphi}_{\Hio}
-\scp{\ti{x}-\zeta+\zeta}{\Ao\varphi}_{\Hit}\\
&=\scp{g-\Aos\zeta}{\varphi}_{\Hio}
-\bscp{\pi_{\Ao}(\ti{x}-\zeta)}{\Ao\varphi}_{\Hit}\\
&\leq\norm{g-\Aos\zeta}_{\Hio}\norm{\varphi}_{\Hio}
+\bnorm{\pi_{\Ao}(\ti{x}-\zeta)}_{\Hit}\norm{\Ao\varphi}_{\Hit}\\
&\leq\Big(c_{1}\norm{g-\Aos\zeta}_{\Hio}
+\bnorm{\pi_{\Ao}(\ti{x}-\zeta)}_{\Hit}\Big)
\norm{\Ao\varphi}_{\Hit}.
\end{split}
\end{align}
As $e_{\Ao}\in R(\Ao)=R(\cAo)$, 
we have $e_{\Ao}=\Ao\varphi_{e}$ 
with $\varphi_{e}:=\cA_{1}^{-1}e_{\Ao}\in D(\cAo)$.
Choosing $\varphi:=\varphi_{e}$ in \eqref{apostestone} we obtain
\begin{align}
\label{apostesttwo}
\begin{split}
\forall\,\zeta\in D(\Aos)\qquad
\norm{e_{\Ao}}_{\Hit}
\leq c_{1}\norm{g-\Aos\zeta}_{\Hio}
+\bnorm{\pi_{\Ao}(\ti{x}-\zeta)}_{\Hit}
\leq c_{1}\norm{g-\Aos\zeta}_{\Hio}
+\norm{\ti{x}-\zeta}_{\Hit}.
\end{split}
\end{align}
Analogously, testing with $\Ats\phi$ for $\phi\in D(\cAts)$ we get 
for all $\xi\in D(\At)$
by orthogonality and Corollary \ref{cortoolboxone} (i)
\begin{align}
\label{apostestthree}
\begin{split}
\scp{e_{\Ats}}{\Ats\phi}_{\Hit}
&=\scp{e}{\Ats\phi}_{\Hit}
=\scp{\At x}{\phi}_{\Hith}
-\scp{\ti{x}-\xi+\xi}{\Ats\phi}_{\Hit}\\
&=\scp{f-\At\xi}{\phi}_{\Hith}
-\bscp{\pi_{\Ats}(\ti{x}-\xi)}{\Ats\phi}_{\Hit}\\
&\leq\norm{f-\At\xi}_{\Hith}\norm{\phi}_{\Hith}
+\bnorm{\pi_{\Ats}(\ti{x}-\xi)}_{\Hit}\norm{\Ats\phi}_{\Hit}\\
&\leq\Big(c_{2}\norm{f-\At\xi}_{\Hith}
+\bnorm{\pi_{\Ats}(\ti{x}-\xi)}_{\Hit}\Big)
\norm{\Ats\phi}_{\Hit}.
\end{split}
\end{align}
As $e_{\Ats}\in R(\Ats)=R(\cAts)$, 
we have $e_{\Ats}=\Ats\phi_{e}$ 
with $\phi_{e}:=(\cAts)^{-1}e_{\Ats}\in D(\cAts)$.
Choosing $\phi:=\phi_{e}$ in \eqref{apostestthree} we obtain
\begin{align}
\label{apostestfour}
\begin{split}
\forall\,\xi\in D(\At)\qquad
\norm{e_{\Ats}}_{\Hit}
\leq c_{2}\norm{f-\At\xi}_{\Hith}
+\bnorm{\pi_{\Ats}(\ti{x}-\xi)}_{\Hit}
\leq c_{2}\norm{f-\At\xi}_{\Hith}
+\norm{\ti{x}-\xi}_{\Hit}.
\end{split}
\end{align}
Finally, for all $\varphi\in D(\Ao)$
and all $\phi\in D(\Ats)$ we get by orthogonality
\begin{align}
\mylabel{apostestfive}
\norm{e_{K_{2}}}_{\Hit}^2
&=\scp{e_{K_{2}}}{k-\pit\ti{x}+\Ao\varphi+\Ats\phi}_{\Hit}
=\scp{e_{K_{2}}}{k-\ti{x}+\Ao\varphi+\Ats\phi}_{\Hit}
\end{align}
and thus
\begin{align}
\mylabel{apostestsix}
\forall\,\varphi\in D(\Ao)\quad
\forall\,\phi\in D(\Ats)\qquad
\norm{e_{K_{2}}}_{\Hit}
&\leq\norm{k-\ti{x}+\Ao\varphi+\Ats\phi}_{\Hit}.
\end{align}

Let us summarize:

\begin{theo}
\label{apostestfos}
Let $x\in D_{2}$ be the exact solution of \eqref{Aprobsoltheo}
and $\ti{x}\in\Hit$. Then the following estimates hold for the error 
$e=x-\ti{x}$ defined in \eqref{edef}:
\begin{itemize}
\item[\bf(i)]
The error decomposes according to \eqref{edef}-\eqref{eortho}, i.e.,
$$e=e_{\Ao}+e_{K_{2}}+e_{\Ats}
\in R(\Ao)\oplus_{\Hit}K_{2}\oplus_{\Hit}R(\Ats),\qquad
\norm{e}_{\Hit}^2
=\norm{e_{\Ao}}_{\Hit}^2
+\norm{e_{K_{2}}}_{\Hit}^2
+\norm{e_{\Ats}}_{\Hit}^2.$$
\item[\bf(ii)]
The projection $e_{\Ao}=\pi_{\Ao}e=x_{g}-\pi_{\Ao}\ti{x}\in R(\Ao)$ satisfies
$$\norm{e_{\Ao}}_{\Hit}
=\min_{\zeta\in D(\Aos)}
\big(c_{1}\norm{\Aos\zeta-g}_{\Hio}
+\norm{\zeta-\ti{x}}_{\Hit}\big)$$
and the minimum is attained at 
$$\hat{\zeta}
:=e_{\Ao}+\ti{x}
=\pi_{\Ao}e+\ti{x}
=-(1-\pi_{\Ao})e+x
=-\pi_{N(\Aos)}e+x
\in D(\Aos)$$
since $\Aos\hat{\zeta}=\Aos x=g$.
\item[\bf(iii)]
The projection $e_{\Ats}=\pi_{\Ats}e=x_{f}-\pi_{\Ats}\ti{x}\in R(\Ats)$ satisfies
$$\norm{e_{\Ats}}_{\Hit}
=\min_{\xi\in D(\At)}
\big(c_{2}\norm{\At\xi-f}_{\Hith}
+\norm{\xi-\ti{x}}_{\Hit}\big)$$
and the minimum is attained at 
$$\hat{\xi}
:=e_{\Ats}+\ti{x}
=\pi_{\Ats}e+\ti{x}
=-(1-\pi_{\Ats})e+x
=-\pi_{N(\At)}e+x
\in D(\At)$$
since $\At\hat{\xi}=\At x=f$.
\item[\bf(iv)]
The projection $e_{K_{2}}=\pit e=k-\pit\ti{x}\in K_{2}$ satisfies
$$\norm{e_{K_{2}}}_{\Hit}
=\min_{\varphi\in D(\Ao)}
\min_{\phi\in D(\Ats)}
\norm{k-\ti{x}+\Ao\varphi+\Ats\phi}_{\Hit}$$
and the minimum is attained at any
$\hat{\varphi}\in D(\Ao)$ and $\hat{\phi}\in D(\Ats)$
solving $\Ao\hat{\varphi}=\pi_{\Ao}\ti{x}$
and $\Ats\hat{\phi}=\pi_{\Ats}\ti{x}$ since 
$(\pi_{\Ao}+\pi_{\Ats})\ti{x}
=(1-\pit)\ti{x}$, especially at
$$\hat{\varphi}
:=\cA_{1}^{-1}\pi_{\Ao}\ti{x}
\in D(\cAo),\qquad
\hat{\phi}
:=(\cAts)^{-1}\pi_{\Ats}\ti{x}
\in D(\cAts).$$
\end{itemize}
\end{theo}

For conforming approximations we get:

\begin{cor}
\label{apostestfosconfcor}
Let the assumptions of Theorem \ref{apostestfos} be satisfied.
\begin{itemize}
\item[\bf(i)]
If $\ti{x}\in D(\Aos)$, then $e\in D(\Aos)$ 
and hence $e_{\Ao}=\pi_{\Ao}e\in D(\cAos)$ with
$\Aos e_{\Ao}=\Aos e$ and
$$\norm{e_{\Ao}}_{\Hit}
\leq c_{1}\norm{\Aos\ti{x}-g}_{\Hio}
=c_{1}\norm{\Aos e}_{\Hio}$$
by setting $\zeta:=\ti{x}$, which also follows directly by the Friedrichs/Poincar\'e type estimate.
\item[\bf(ii)]
If $\ti{x}\in D(\At)$, then $e\in D(\At)$ 
and hence $e_{\Ats}=\pi_{\Ats}e\in D(\cAt)$ with
$\At e_{\Ats}=\At e$ and
$$\norm{e_{\Ats}}_{\Hit}
\leq c_{2}\norm{\At\ti{x}-f}_{\Hith}
=c_{2}\norm{\At e}_{\Hith}$$
by setting $\xi:=\ti{x}$, which also follows directly by the Friedrichs/Poincar\'e type estimate.
\item[\bf(iii)]
If $\ti{x}\in D_{2}$, then $e\in D_{2}$ and 
\begin{align*}
\norm{e}_{D_{2}}^2
&=\norm{e_{\Ao}}_{\Hit}^2
+\norm{e_{K_{2}}}_{\Hit}^2
+\norm{e_{\Ats}}_{\Hit}^2
+\norm{\At e}_{\Hith}^2
+\norm{\Aos e}_{\Hio}^2\\
&\leq\norm{e_{K_{2}}}_{\Hit}^2
+(1+c_{2}^2)\norm{\At e}_{\Hith}^2
+(1+c_{1}^2)\norm{\Aos e}_{\Hio}^2
\end{align*}
with
$$e_{K_{2}}=k-\pit\ti{x},\qquad
\At e=f-\At\ti{x},\qquad
\Aos e=g-\Aos\ti{x},$$
which again also follows immediately by the Friedrichs/Poincar\'e type estimates.
\end{itemize}
\end{cor}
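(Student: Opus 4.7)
The plan is to derive all three items by specializing the upper bounds of Theorem \ref{apostestfos} to conforming test vectors and invoking the projection properties collected after Lemma \ref{lemtoolboxexseq}. Throughout, I will use that $x\in D_{2}=D(\At)\cap D(\Aos)$ solves \eqref{Aprobsoltheo}, so $\At x=f$, $\Aos x=g$, $\pit x=k$, and hence the conforming hypothesis transfers directly to the error, e.g. $\Aos e=\Aos x-\Aos\ti{x}=g-\Aos\ti{x}$.

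For part (i), I would first note that $\ti{x}\in D(\Aos)$ together with $x\in D_{2}\subset D(\Aos)$ yields $e\in D(\Aos)$. Next, invoking the projection identity from Lemma \ref{lemtoolboxexseq} (the one stating $\pi_{\Almo}\xi\in D(\cAslmo)$ with $\Almos\pi_{\Almo}\xi=\Almos\xi$ for $\xi\in D(\Almos)$, at the level $\ell=2$), I get $e_{\Ao}=\pi_{\Ao}e\in D(\cAos)$ and $\Aos e_{\Ao}=\Aos e$. Then the Friedrichs/Poincar\'e estimate from Corollary \ref{cortoolboxone} (i), applied to $\cAos$ with constant $c_{1}$, gives $\norm{e_{\Ao}}_{\Hit}\leq c_{1}\norm{\Aos e_{\Ao}}_{\Hio}=c_{1}\norm{\Aos e}_{\Hio}=c_{1}\norm{\Aos\ti{x}-g}_{\Hio}$. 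As a sanity check, the same bound is recovered by setting $\zeta:=\ti{x}\in D(\Aos)$ in the minimization from Theorem \ref{apostestfos} (ii), since the $\norm{\ti{x}-\zeta}_{\Hit}$ term collapses.

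Part (ii) is entirely symmetric: from $\ti{x}\in D(\At)$ and $x\in D_{2}\subset D(\At)$ I obtain $e\in D(\At)$, then the dual projection identity (again from Lemma \ref{lemtoolboxexseq}, with roles swapped) gives $e_{\Ats}=\pi_{\Ats}e\in D(\cAt)$ with $\At e_{\Ats}=\At e$, and Corollary \ref{cortoolboxone} (i) applied to $\cAt$ with constant $c_{2}$ yields $\norm{e_{\Ats}}_{\Hit}\leq c_{2}\norm{\At e}_{\Hith}=c_{2}\norm{\At\ti{x}-f}_{\Hith}$. Equivalently, pick $\xi:=\ti{x}$ in Theorem \ref{apostestfos} (iii).

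For part (iii), the hypothesis $\ti{x}\in D_{2}$ combines the two previous cases, so $e\in D_{2}$ and both $\At e=f-\At\ti{x}$ and $\Aos e=g-\Aos\ti{x}$ are legal. The equality in the displayed formula is simply the definition of the graph norm $\norm{e}_{D_{2}}^{2}=\norm{e}_{\Hit}^{2}+\norm{\At e}_{\Hith}^{2}+\norm{\Aos e}_{\Hio}^{2}$ together with the orthogonal decomposition \eqref{eortho}; the inequality follows by substituting the bounds from (i) and (ii), i.e., $\norm{e_{\Ao}}_{\Hit}^{2}\leq c_{1}^{2}\norm{\Aos e}_{\Hio}^{2}$ and $\norm{e_{\Ats}}_{\Hit}^{2}\leq c_{2}^{2}\norm{\At e}_{\Hith}^{2}$, and grouping the two contributions on each of the $\At e$- and $\Aos e$-terms. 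No genuine obstacle is expected; the only point that deserves care is the transfer of domain membership to the Helmholtz components, which is exactly what the projection identities of Lemma \ref{lemtoolboxexseq} are designed to provide.
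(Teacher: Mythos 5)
Your proposal is correct and follows exactly the route the paper intends: the corollary is stated as an immediate consequence of Theorem \ref{apostestfos} by choosing $\zeta:=\ti{x}$ resp. $\xi:=\ti{x}$, and your alternative direct derivation via the projection identities following Lemma \ref{lemtoolboxexseq} together with the Friedrichs/Poincar\'e estimates of Corollary \ref{cortoolboxone} (i) is precisely the "also follows directly" remark in the statement. All constants, domain memberships, and the graph-norm identity in (iii) are handled correctly.
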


\begin{rem}
\label{apostestfosconfcorrem}
Corollary \ref{apostestfosconfcor} (iii) shows,
that for very conforming $\ti{x}\in D_{2}$ 
the weighted least squares functional
$$\calF(\ti{x})
:=\norm{k-\pit\ti{x}}_{\Hit}^2
+(1+c_{2}^2)\norm{\At\ti{x}-f}_{\Hith}^2
+(1+c_{1}^2)\norm{\Aos\ti{x}-g}_{\Hio}^2$$
is equivalent to the conforming error, i.e.,
$$\norm{e}_{D_{2}}^2
\leq\calF(\ti{x})
\leq(1+\max\{c_{1},c_{2}\}^2)\norm{e}_{D_{2}}^2.$$
\end{rem}

Recalling the variational resp. saddle point formulations
\eqref{varyftwo}-\eqref{varzgtwo} resp.
\eqref{saddleyfone}-\eqref{saddlezgone} 
and that the partial solutions are given by
$$x_{f}=\Ats y_{f}\in D(\cAt),\qquad
x_{g}=\Ao z_{g}\in D(\cAos),$$
a possible numerical method, 
using these variational formulations
in some finite dimensional subspaces
to find $\ti{y}_{f}\in D(\Ats)$ and $\ti{z}_{g}\in D(\Ao)$, 
such as the finite element method,
will always ensure
$$\ti{x}_{f}:=\Ats\ti{y}_{f}\in R(\Ats)=N(\At)^{\bot_{\Hit}}\subset N(\Aos),\quad
\ti{x}_{g}:=\Ao\ti{z}_{g}\in R(\Ao)=N(\Aos)^{\bot_{\Hit}}\subset N(\At)$$
and thus 
$$\ti{x}_{\bot}:=\ti{x}_{f}+\ti{x}_{g}\in R(\Ats)\oplus_{\Hit}R(\Ao)=K_{2}^{\bot_{\Hit}},$$
but maybe not $\ti{x}_{f}\in D(\At)$ or $\ti{x}_{g}\in D(\Aos)$.
Therefore, a reasonable assumption for our non-conforming approximations is
$$\ti{x}=\ti{x}_{\bot}+k,\qquad
\ti{x}_{\bot}\in K_{2}^{\bot_{\Hit}},$$
with $e_{K_{2}}=\pit e=\pit(x-\ti{x})=-\pit\ti{x}_{\bot}=0$.

\begin{cor}
\label{apostestfosKbot}
Let $x\in D_{2}$ be the exact solution of \eqref{Aprobsoltheo}
and $\ti{x}:=k+\ti{x}_{\bot}$ with some 
$\ti{x}_{\bot}\in K_{2}^{\bot_{\Hit}}$. 
Then for the error $e$ defined in \eqref{edef} it holds:
\begin{itemize}
\item[\bf(i)]
According to \eqref{edef}-\eqref{eortho} the error decomposes, i.e.,
$$e
=x-\ti{x}
=x_{f}+x_{g}-\ti{x}_{\bot}
=e_{\Ao}+e_{\Ats}
\in R(\Ao)\oplus_{\Hit}R(\Ats)
=K_{2}^{\bot_{\Hit}},\qquad
e_{K_{2}}=0,$$
and
$\norm{e}_{\Hit}^2
=\norm{e_{\Ao}}_{\Hit}^2
+\norm{e_{\Ats}}_{\Hit}^2$.
Hence there is no error in the ``kernel'' part.
\item[\bf(ii)]
The projection $e_{\Ao}=\pi_{\Ao}e=x_{g}-\pi_{\Ao}\ti{x}=x_{g}-\pi_{\Ao}\ti{x}_{\bot}\in R(\Ao)$ satisfies
\begin{align*}
\norm{e_{\Ao}}_{\Hit}
&=\min_{\zeta\in D(\Aos)}
\big(c_{1}\norm{\Aos\zeta-g}_{\Hio}
+\norm{\zeta-\ti{x}}_{\Hit}\big)\\
&=\min_{\zeta\in D(\Aos)}
\big(c_{1}\norm{\Aos\zeta-g}_{\Hio}
+\norm{\zeta-\ti{x}_{\bot}}_{\Hit}\big)
\end{align*}
(exchanging $\zeta$ by $\zeta+k$) and the minima are attained at 
\begin{align*}
\hat{\zeta}
&:=e_{\Ao}+\ti{x}
=\pi_{\Ao}e+\ti{x}
=-(1-\pi_{\Ao})e+x
=-\pi_{N(\Aos)}e+x
\in D(\Aos),\\
\hat{\zeta}_{\bot}
&:=e_{\Ao}+\ti{x}_{\bot}
=\pi_{\Ao}e+\ti{x}_{\bot}
=-(1-\pi_{\Ao})e+x-k
=-\pi_{N(\Aos)}e+x-k
\in D(\Aos)
\end{align*}
since $\Aos\hat{\zeta}_{\bot}=\Aos\hat{\zeta}=\Aos x=g$.
\item[\bf(iii)]
The projection $e_{\Ats}=\pi_{\Ats}e=x_{f}-\pi_{\Ats}\ti{x}=x_{f}-\pi_{\Ats}\ti{x}_{\bot}\in R(\Ats)$ satisfies
\begin{align*}
\norm{e_{\Ats}}_{\Hit}
&=\min_{\xi\in D(\At)}
\big(c_{2}\norm{\At\xi-f}_{\Hith}
+\norm{\xi-\ti{x}}_{\Hit}\big)\\
&=\min_{\xi\in D(\At)}
\big(c_{2}\norm{\At\xi-f}_{\Hith}
+\norm{\xi-\ti{x}_{\bot}}_{\Hit}\big)
\end{align*}
(exchanging $\xi$ by $\xi+k$) and the minima are attained at 
\begin{align*}
\hat{\xi}
&:=e_{\Ats}+\ti{x}
=\pi_{\Ats}e+\ti{x}
=-(1-\pi_{\Ats})e+x
=-\pi_{N(\At)}e+x
\in D(\At),\\
\hat{\xi}_{\bot}
&:=e_{\Ats}+\ti{x}_{\bot}
=\pi_{\Ats}e+\ti{x}_{\bot}
=-(1-\pi_{\Ats})e+x-k
=-\pi_{N(\At)}e+x-k
\in D(\At)
\end{align*}
since $\At\hat{\xi}_{\bot}=\At\hat{\xi}=\At x=f$.
\end{itemize}
\end{cor}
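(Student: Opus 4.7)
The plan is to derive Corollary \ref{apostestfosKbot} as a direct specialization of Theorem \ref{apostestfos} to the ansatz $\ti{x} = k + \ti{x}_{\bot}$. The key observation is that $k \in K_{2} = N(\At) \cap N(\Aos) \subset D(\At) \cap D(\Aos)$ with $\At k = 0$ and $\Aos k = 0$, and moreover $k \bot_{\Hit} R(\Ao)$ and $k \bot_{\Hit} R(\Ats)$ by the refined Helmholtz decomposition of Lemma \ref{lemtoolboxexseq}. Symmetrically, $\ti{x}_{\bot} \in K_{2}^{\bot_{\Hit}} = R(\Ao) \oplus_{\Hit} R(\Ats)$ is orthogonal to $K_{2}$.

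For (i), I would first compute $\pit\ti{x} = \pit k + \pit\ti{x}_{\bot} = k + 0 = k$, so that $e_{K_{2}} = k - \pit\ti{x} = 0$. Substituting the representation $x = x_{f} + x_{g} + k$ from Theorem \ref{soltheofos} yields $e = x_{f} + x_{g} - \ti{x}_{\bot}$, which lies in $R(\Ats) \oplus_{\Hit} R(\Ao) = K_{2}^{\bot_{\Hit}}$. Applying $\pi_{\Ao}$ and $\pi_{\Ats}$ and using $\pi_{\Ao} k = \pi_{\Ats} k = 0$ gives $e_{\Ao} = x_{g} - \pi_{\Ao}\ti{x}_{\bot}$ and $e_{\Ats} = x_{f} - \pi_{\Ats}\ti{x}_{\bot}$, and the Pythagorean identity for $\norm{e}_{\Hit}^{2}$ is inherited from \eqref{eortho}.

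For (ii) and (iii), the first equalities (with $\ti{x}$) are exactly Theorem \ref{apostestfos} (ii) and (iii). The second equalities (with $\ti{x}_{\bot}$) follow from the affine change of variables $\zeta \mapsto \zeta + k$ on $D(\Aos)$ (respectively $\xi \mapsto \xi + k$ on $D(\At)$): since $k \in D(\Aos) \cap D(\At)$, the map is a bijection of each domain onto itself; since $\Aos k = \At k = 0$, the residual terms $\norm{\Aos\zeta - g}_{\Hio}$ and $\norm{\At\xi - f}_{\Hith}$ are left invariant; and since $\ti{x} - k = \ti{x}_{\bot}$, the approximation terms transform as $\norm{(\zeta+k) - \ti{x}}_{\Hit} = \norm{\zeta - \ti{x}_{\bot}}_{\Hit}$, analogously for $\xi$. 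The optimizers $\hat{\zeta}_{\bot} = \hat{\zeta} - k$ and $\hat{\xi}_{\bot} = \hat{\xi} - k$ then inherit their optimality from Theorem \ref{apostestfos}, and the identities $\Aos\hat{\zeta}_{\bot} = g$ and $\At\hat{\xi}_{\bot} = f$ follow from the corresponding statements for $\hat{\zeta}, \hat{\xi}$ together with $\Aos k = \At k = 0$.

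There is no real obstacle here: the statement amounts to bookkeeping once one recognizes that the kernel part $k$ of the exact solution and the kernel part $k$ of the approximation cancel, reducing everything to a problem on the orthogonal complement $K_{2}^{\bot_{\Hit}}$ where the only remaining error components are $e_{\Ao}$ and $e_{\Ats}$.
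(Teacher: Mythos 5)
Your proposal is correct and follows exactly the route the paper intends: the corollary is a direct specialization of Theorem \ref{apostestfos} once one notes $\pit\ti{x}=k$ (so $e_{K_{2}}=0$) and performs the substitution $\zeta\mapsto\zeta+k$, $\xi\mapsto\xi+k$, which is legitimate precisely because $k\in K_{2}\subset D(\Aos)\cap D(\At)$ with $\Aos k=\At k=0$ and $\ti{x}-k=\ti{x}_{\bot}$. All the supporting facts you invoke (closedness of the ranges via General Assumption \ref{genass}, the identification $K_{2}^{\bot_{\Hit}}=R(\Ao)\oplus_{\Hit}R(\Ats)$ from Lemma \ref{lemtoolboxexseq}) are correctly sourced, so there is nothing to add.
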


\subsubsection{Lower Bounds}

In any Hilbert space $\Hilbert$ we have
\begin{align}
\mylabel{hilbertest}
\forall\,\hat{h}\in\Hilbert\qquad
\norm{\hat{h}}_{\Hilbert}^2
=\max_{h\in\Hilbert}
\big(2\scp{\hat{h}}{h}_{\Hilbert}
-\norm{h}_{\Hilbert}^2\big)
\end{align}
and the maximum is attained at $\hat{h}$.
We recall \eqref{edef} and \eqref{eortho}, especially 
$$\norm{e}_{\Hit}^2
=\norm{e_{\Ao}}_{\Hit}^2
+\norm{e_{K_{2}}}_{\Hit}^2
+\norm{e_{\Ats}}_{\Hit}^2.$$
Using \eqref{hilbertest} for $\Hilbert=R(\Ao)$ and orthogonality we get
\begin{align*}
\norm{e_{\Ao}}_{\Hit}^2
&=\max_{\varphi\in D(\Ao)}
\big(2\scp{e_{\Ao}}{\Ao\varphi}_{\Hit}
-\norm{\Ao\varphi}_{\Hit}^2\big)\\
&=\max_{\varphi\in D(\Ao)}
\big(2\scp{e}{\Ao\varphi}_{\Hit}
-\norm{\Ao\varphi}_{\Hit}^2\big)\\
&=\max_{\varphi\in D(\Ao)}
\big(2\scp{g}{\varphi}_{\Hio}
-\scp{2\ti{x}+\Ao\varphi}{\Ao\varphi}_{\Hit}\big)
\intertext{and the maximum is attained at any $\hat{\varphi}\in D(\Ao)$ with $\Ao\hat{\varphi}=e_{\Ao}$. 
Analogously for $\Hilbert=R(\Ats)$}
\norm{e_{\Ats}}_{\Hit}^2
&=\max_{\phi\in D(\Ats)}
\big(2\scp{f}{\phi}_{\Hith}
-\scp{2\ti{x}+\Ats\phi}{\Ats\phi}_{\Hit}\big)
\intertext{and the maximum is attained at any $\hat{\phi}\in D(\Ats)$ with $\Ats\hat{\phi}=e_{\Ats}$. 
Finally for $\Hilbert=K_{2}$ and by orthogonality}
\norm{e_{K_{2}}}_{\Hit}^2
&=\max_{\theta\in K_{2}}
\big(2\scp{e_{K_{2}}}{\theta}_{\Hit}
-\norm{\theta}_{\Hit}^2\big)
=\max_{\theta\in K_{2}}
\bscp{2(k-\ti{x})-\theta}{\theta}_{\Hit}
\end{align*}
and the maximum is attained at $\hat{\theta}=e_{K_{2}}$. 

\begin{theo}
\label{apostestfoslowbd}
Let $x\in D_{2}$ be the exact solution of \eqref{Aprobsoltheo}
and $\ti{x}\in\Hit$. Then the following estimates hold for the error 
$e=x-\ti{x}$ defined in \eqref{edef}:
\begin{itemize}
\item[\bf(i)]
The error decomposes according to \eqref{edef}-\eqref{eortho}, i.e.,
$$e=e_{\Ao}+e_{K_{2}}+e_{\Ats}
\in R(\Ao)\oplus_{\Hit}K_{2}\oplus_{\Hit}R(\Ats),\qquad
\norm{e}_{\Hit}^2
=\norm{e_{\Ao}}_{\Hit}^2
+\norm{e_{K_{2}}}_{\Hit}^2
+\norm{e_{\Ats}}_{\Hit}^2.$$
\item[\bf(ii)]
The projection $e_{\Ao}=\pi_{\Ao}e=x_{g}-\pi_{\Ao}\ti{x}\in R(\Ao)$ satisfies
$$\norm{e_{\Ao}}_{\Hit}^2
=\max_{\varphi\in D(\Ao)}
\big(2\scp{g}{\varphi}_{\Hio}
-\scp{2\ti{x}+\Ao\varphi}{\Ao\varphi}_{\Hit}\big)$$
and the maximum is attained at any $\hat{\varphi}\in D(\Ao)$ with $\Ao\hat{\varphi}=e_{\Ao}$, 
e.g., at $\hat{\varphi}:=\cA_{1}^{-1}e_{\Ao}\in D(\cAo)$.
\item[\bf(iii)]
The projection $e_{\Ats}=\pi_{\Ats}e=x_{f}-\pi_{\Ats}\ti{x}\in R(\Ats)$ satisfies
$$\norm{e_{\Ats}}_{\Hit}^2
=\max_{\phi\in D(\Ats)}
\big(2\scp{f}{\phi}_{\Hith}
-\scp{2\ti{x}+\Ats\phi}{\Ats\phi}_{\Hit}\big)$$
and the maximum is attained at any $\hat{\phi}\in D(\Ats)$ with $\Ats\hat{\phi}=e_{\Ats}$, 
e.g.,  $\hat{\phi}:=(\cAts)^{-1}e_{\Ats}\in D(\cAts)$.
\item[\bf(iv)]
The projection $e_{K_{2}}=\pit e=k-\pit\ti{x}\in K_{2}$ satisfies
$$\norm{e_{K_{2}}}_{\Hit}^2
=\max_{\theta\in K_{2}}
\bscp{2(k-\ti{x})-\theta}{\theta}_{\Hit}$$
and the maximum is attained at 
$\hat{\theta}
:=e_{K_{2}}
\in K_{2}$.
\end{itemize}
If $\ti{x}:=k+\ti{x}_{\bot}$ with some $\ti{x}_{\bot}\in K_{2}^{\bot_{\Hit}}$,
see Corollary \ref{apostestfosKbot}, then $e_{K_{2}}=0$, and 
in (ii) and (iii) $\ti{x}$ can be replaced by $\ti{x}_{\bot}$ as 
$k\,\bot_{\Hit}\,R(\Ao)\oplus_{\Hit}R(\Ats)$.
\end{theo}

\subsubsection{Two-Sided Bounds}

We summarize our results from the latter sections.

\begin{cor}
\label{apostestfoscortsb}
Let $x\in D_{2}$ be the exact solution of \eqref{Aprobsoltheo}
and $\ti{x}\in\Hit$. Then the following estimates hold for the error 
$e=x-\ti{x}$ defined in \eqref{edef}:
\begin{itemize}
\item[\bf(i)]
The error decomposes according to \eqref{edef}-\eqref{eortho}, i.e.,
$$e=e_{\Ao}+e_{K_{2}}+e_{\Ats}
\in R(\Ao)\oplus_{\Hit}K_{2}\oplus_{\Hit}R(\Ats),\qquad
\norm{e}_{\Hit}^2
=\norm{e_{\Ao}}_{\Hit}^2
+\norm{e_{K_{2}}}_{\Hit}^2
+\norm{e_{\Ats}}_{\Hit}^2.$$
\item[\bf(ii)]
The projection $e_{\Ao}=\pi_{\Ao}e=x_{g}-\pi_{\Ao}\ti{x}\in R(\Ao)$ satisfies
\begin{align*}
\norm{e_{\Ao}}_{\Hit}^2
&=\min_{\zeta\in D(\Aos)}
\big(c_{1}\norm{\Aos\zeta-g}_{\Hio}
+\norm{\zeta-\ti{x}}_{\Hit}\big)^2\\
&=\max_{\varphi\in D(\Ao)}
\big(2\scp{g}{\varphi}_{\Hio}
-\scp{2\ti{x}+\Ao\varphi}{\Ao\varphi}_{\Hit}\big)
\end{align*}
and the minimum resp. maximum is attained at 
$$\hat{\zeta}
:=e_{\Ao}+\ti{x}
\in D(\Aos),\qquad
\hat{\varphi}
:=\cA_{1}^{-1}e_{\Ao}
\in D(\cAo)$$
with $\Aos\hat{\zeta}=\Aos x=g$,
and at any $\hat{\varphi}\in D(\Ao)$ with $\Ao\hat{\varphi}=e_{\Ao}$.
\item[\bf(iii)]
The projection $e_{\Ats}=\pi_{\Ats}e=x_{f}-\pi_{\Ats}\ti{x}\in R(\Ats)$ satisfies
\begin{align*}
\norm{e_{\Ats}}_{\Hit}^2
&=\min_{\xi\in D(\At)}
\big(c_{2}\norm{\At\xi-f}_{\Hith}
+\norm{\xi-\ti{x}}_{\Hit}\big)^2\\
&=\max_{\phi\in D(\Ats)}
\big(2\scp{f}{\phi}_{\Hith}
-\scp{2\ti{x}+\Ats\phi}{\Ats\phi}_{\Hit}\big)
\end{align*}
and the minimum resp. maximum is attained at 
$$\hat{\xi}
:=e_{\Ats}+\ti{x}
\in D(\At),\qquad
\hat{\phi}
:=(\cAts)^{-1}e_{\Ats}
\in D(\cAts)$$
with $\At\hat{\xi}=\At x=f$,
and at any $\hat{\phi}\in D(\Ats)$ with $\Ats\hat{\phi}=e_{\Ats}$.
\item[\bf(iv)]
The projection $e_{K_{2}}=\pit e=k-\pit\ti{x}\in K_{2}$ satisfies
\begin{align*}
\norm{e_{K_{2}}}_{\Hit}^2
&=\min_{\varphi\in D(\Ao)}
\min_{\phi\in D(\Ats)}
\norm{k-\ti{x}+\Ao\varphi+\Ats\phi}_{\Hit}^2\\
&=\max_{\theta\in K_{2}}
\bscp{2(k-\ti{x})-\theta}{\theta}_{\Hit}
\end{align*}
and the minimum resp. maximum is attained at 
$$\hat{\varphi}
:=\cA_{1}^{-1}\pi_{\Ao}\ti{x}
\in D(\cAo),\qquad
\hat{\phi}
:=(\cAts)^{-1}\pi_{\Ats}\ti{x}
\in D(\cAts),\qquad
\hat{\theta}
:=e_{K_{2}}
\in K_{2},$$
and at any
$\hat{\varphi}\in D(\Ao)$ and $\hat{\phi}\in D(\Ats)$
with $\Ao\hat{\varphi}=\pi_{\Ao}\ti{x}$
and $\Ats\hat{\phi}=\pi_{\Ats}\ti{x}$.
\end{itemize}
If $\ti{x}:=k+\ti{x}_{\bot}$ with some $\ti{x}_{\bot}\in K_{2}^{\bot_{\Hit}}$,
see Corollary \ref{apostestfosKbot}, then $e_{K_{2}}=0$, and 
in (ii) and (iii) $\ti{x}$ can be replaced by $\ti{x}_{\bot}$.
In this case, for the attaining minima it holds
$$\hat{\zeta}_{\bot}:=e_{\Ao}+\ti{x}_{\bot}\in D(\Aos),\qquad
\hat{\xi}_{\bot}:=e_{\Ats}+\ti{x}_{\bot}\in D(\At).$$
\end{cor}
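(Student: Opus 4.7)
The plan is to observe that Corollary \ref{apostestfoscortsb} is purely a summary/assembly statement: every piece of it has been established in the preceding two theorems plus Corollary \ref{apostestfosKbot}. So the proof is a one-paragraph combination, and no new idea is required.

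First, part (i) (the Helmholtz decomposition of $e$ together with the Pythagorean identity) is just the statement of \eqref{edef}--\eqref{eortho}, which was justified from Lemma \ref{lemtoolboxexseq}; so I would simply cite it. For parts (ii), (iii), (iv), each is a two-sided identity of the form
\begin{align*}
\norm{e_{\bullet}}_{\Hit}^2
=\min_{\cdots}\bigl(\cdots\bigr)^{2}
=\max_{\cdots}\bigl(\cdots\bigr),
\end{align*}
where the min-formulation, together with the minimizer, was established in Theorem \ref{apostestfos}(ii)--(iv), and the max-formulation, together with the maximizer, was established in Theorem \ref{apostestfoslowbd}(ii)--(iv). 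Squaring the equalities from Theorem \ref{apostestfos} (which gave $\norm{e_{\bullet}}_{\Hit}$ rather than $\norm{e_{\bullet}}_{\Hit}^{2}$) makes the two formulas formally compatible, and the claimed minimizers/maximizers are exactly the points $\hat\zeta,\hat\xi,\hat\varphi,\hat\phi,\hat\theta$ already identified there.

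For the final addendum concerning the special case $\ti{x}=k+\ti{x}_{\bot}$ with $\ti{x}_{\bot}\in K_{2}^{\bot_{\Hit}}$, I would invoke Corollary \ref{apostestfosKbot} directly: it already gives $e_{K_{2}}=0$ and records that in (ii) and (iii) one may substitute $\ti{x}$ by $\ti{x}_{\bot}$ (since $k$ is orthogonal to both $R(\Ao)$ and $R(\Ats)$, so the inner products against $\Ao\varphi$ and $\Ats\phi$ are unchanged and a shift of $\zeta,\xi$ by $k$ absorbs the kernel part). The corresponding attaining minimizers $\hat\zeta_{\bot},\hat\xi_{\bot}$ are the ones already spelled out in Corollary \ref{apostestfosKbot}(ii)--(iii).

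There is no real obstacle here: the only thing to watch is the mild bookkeeping inconsistency that Theorem \ref{apostestfos} states its minima for $\norm{e_{\bullet}}_{\Hit}$ while Theorem \ref{apostestfoslowbd} states its maxima for $\norm{e_{\bullet}}_{\Hit}^{2}$, so one must square the former before juxtaposing. Everything else, including the orthogonality relation $K_{2}\,\bot_{\Hit}\,R(\Ao)\oplus_{\Hit}R(\Ats)$ used in the $\ti{x}_{\bot}$-reduction, has already been fully proved above.
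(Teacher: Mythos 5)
Your proposal is correct and coincides with the paper's treatment: the corollary is introduced there with the words ``We summarize our results from the latter sections'' and carries no separate proof, being exactly the juxtaposition of Theorem \ref{apostestfos} (minima, squared), Theorem \ref{apostestfoslowbd} (maxima), the decomposition \eqref{edef}--\eqref{eortho}, and Corollary \ref{apostestfosKbot} together with the closing remark of Theorem \ref{apostestfoslowbd} for the $\ti{x}_{\bot}$ case. Nothing is missing.
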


\subsection{Second Order Systems}

Let $x\in\ti{D}_{2}$ be the exact solution of \eqref{AsAprobsoltheo}.
Recalling Remark \ref{soltheosecorderrem} we introduce the additional quantity $y:=\At x\in D(\cAts)$.
Then \eqref{AsAprobsoltheo} decomposes
into two first order systems of shape \eqref{Aprob} resp. \eqref{Aprobsoltheo}, i.e.,
\begin{align*}
\At x&=y,
&
\Ath y&=0,\\
\Aos x&=g,
&
\Ats y&=f,\\
\pit x&=k,
&
\pith y&=0
\end{align*}
for the pair $(x,y)\in D_{2}\times D_{3}$.
Hence, we can immediately apply our results for the first order systems.
Let $\ti{x}\in\Hit$ and $\ti{y}\in\Hith$, which may be considered as non-conforming
approximations of $x$ and $y$, respectively. 
Utilizing the notations from Theorem \ref{soltheosos} 
we define and decompose the errors
\begin{align}
\label{ehdef}
\begin{split}
\Hit\ni e
&:=x-\ti{x}
=e_{\Ao}+e_{K_{2}}+e_{\Ats}
\in R(\Ao)\oplus_{\Hit}K_{2}\oplus_{\Hit}R(\Ats),\\
\Hith\ni h
&:=y-\ti{y}
=h_{\At}+h_{K_{3}}+h_{\Aths}
\in R(\At)\oplus_{\Hith}K_{3}\oplus_{\Hith}R(\Aths),
\end{split}
\end{align}
\begin{align*}
e_{\Ao}
&:=\pi_{\Ao}e
=x_{g}-\pi_{\Ao}\ti{x}
\in R(\Ao),
&
h_{\At}
&:=\pi_{\At}h
=y-\pi_{\At}\ti{y}
\in R(\At),\\
e_{\Ats}
&:=\pi_{\Ats}e
=x_{y}-\pi_{\Ats}\ti{x}
\in R(\Ats),
&
h_{\Aths}
&:=\pi_{\Aths}h
=-\pi_{\Aths}\ti{y}
\in R(\Aths),\\
e_{K_{2}}
&:=\pit e
=k-\pit\ti{x}
\in K_{2},
&
h_{K_{3}}
&:=\pith e
=-\pith\ti{y}
\in K_{3}
\end{align*}
using the Helmholtz type decompositions of Lemma \ref{lemtoolboxexseq}
and noting $\pi_{\At}y=y$ as $y\in R(\At)$.
By orthogonality it holds
\begin{align}
\mylabel{ehortho}
\norm{e}_{\Hit}^2
&=\norm{e_{\Ao}}_{\Hit}^2
+\norm{e_{K_{2}}}_{\Hit}^2
+\norm{e_{\Ats}}_{\Hit}^2,
&
\norm{h}_{\Hith}^2
&=\norm{h_{\At}}_{\Hith}^2
+\norm{h_{K_{3}}}_{\Hith}^2
+\norm{h_{\Aths}}_{\Hith}^2.
\end{align}
Therefore, the results of the latter section can be applied to
$e_{\Ao}$, $e_{K_{2}}$, $e_{\Ats}$, $h_{\At}$, $h_{K_{3}}$, $h_{\Aths}$. 
Especially, by Corollary \ref{apostestfoscortsb} we obtain
\begin{align}
\label{secordeAoest}
\norm{e_{\Ao}}_{\Hit}^2
=\min_{\zeta\in D(\Aos)}
\big(c_{1}\norm{\Aos\zeta-g}_{\Hio}
+\norm{\zeta-\ti{x}}_{\Hit}\big)^2
=\max_{\varphi\in D(\Ao)}
\big(2\scp{g}{\varphi}_{\Hio}
-\scp{2\ti{x}+\Ao\varphi}{\Ao\varphi}_{\Hit}\big)
\end{align}
and the minimum resp. maximum is attained at 
$\hat{\zeta}
=e_{\Ao}+\ti{x}
\in D(\Aos)$ and
$\hat{\varphi}
=\cA_{1}^{-1}e_{\Ao}
\in D(\cAo)$
with $\Aos\hat{\zeta}=\Aos x=g$,
\begin{align}
\label{secordeAtsest}
\norm{e_{\Ats}}_{\Hit}^2
=\min_{\xi\in D(\At)}
\big(c_{2}\norm{\At\xi-y}_{\Hith}
+\norm{\xi-\ti{x}}_{\Hit}\big)^2
=\max_{\phi\in D(\Ats)}
\big(2\scp{y}{\phi}_{\Hith}
-\scp{2\ti{x}+\Ats\phi}{\Ats\phi}_{\Hit}\big)
\end{align}
and the minimum resp. maximum is attained at 
$\hat{\xi}
=e_{\Ats}+\ti{x}
\in D(\At)$ and
$\hat{\phi}
=(\cAts)^{-1}e_{\Ats}
\in D(\cAts)$
with $\At\hat{\xi}=\At x=y$,
\begin{align}
\label{secordeKest}
\norm{e_{K_{2}}}_{\Hit}^2
=\min_{\varphi\in D(\Ao)}
\min_{\phi\in D(\Ats)}
\norm{k-\ti{x}+\Ao\varphi+\Ats\phi}_{\Hit}^2
=\max_{\theta\in K_{2}}
\bscp{2(k-\ti{x})-\theta}{\theta}_{\Hit}
\end{align}
and the minimum resp. maximum is attained at 
$\hat{\varphi}
=\cA_{1}^{-1}\pi_{\Ao}\ti{x}
\in D(\cAo)$,
$\hat{\phi}
=(\cAts)^{-1}\pi_{\Ats}\ti{x}
\in D(\cAts)$, and
$\hat{\theta}
=e_{K_{2}}
\in K_{2}$
with $\Ao\hat{\varphi}+\Ats\hat{\phi}=(\pi_{\Ao}+\pi_{\Ats})\ti{x}=(1-\pit)\ti{x}$.
If $\ti{x}=k+\ti{x}_{\bot}$ with some $\ti{x}_{\bot}\in K_{2}^{\bot_{\Hit}}$,
then $e_{K_{2}}=0$, and $\ti{x}$ can be replaced by $\ti{x}_{\bot}$.
If the General Assumption \ref{genass} holds also for $\Ath$, i.e., 
$R(\Ath)$ is closed and (not neccessarily) $K_{3}$ is finite dimensional,
we get the corresponding results for $h_{\At}$, $h_{K_{3}}$, $h_{\Aths}$ as well. 
Replacing $\Ao$ by $\At$ and $\At$ by $\Ath$, Corollary \ref{apostestfoscortsb} yields
\begin{align}
\label{secordhAtest}
\norm{h_{\At}}_{\Hith}^2
=\min_{\zeta\in D(\Ats)}
\big(c_{2}\norm{\Ats\zeta-f}_{\Hit}
+\norm{\zeta-\ti{y}}_{\Hith}\big)^2
=\max_{\varphi\in D(\At)}
\big(2\scp{f}{\varphi}_{\Hit}
-\scp{2\ti{y}+\At\varphi}{\At\varphi}_{\Hith}\big)
\end{align}
and the minimum resp. maximum is attained at 
$\hat{\zeta}
=h_{\At}+\ti{y}
\in D(\Ats)$ and
$\hat{\varphi}
=\cA_{2}^{-1}h_{\At}
\in D(\cAt)$
with $\Ats\hat{\zeta}=\Ats y=f$,
\begin{align}
\label{secordhAthsest}
\norm{h_{\Aths}}_{\Hith}^2
=\min_{\xi\in D(\Ath)}
\big(c_{3}\norm{\Ath\xi}_{\Hif}
+\norm{\xi-\ti{y}}_{\Hith}\big)^2
=\max_{\phi\in D(\Aths)}
\big(-\scp{2\ti{y}+\Aths\phi}{\Aths\phi}_{\Hith}\big)
\end{align}
and the minimum resp. maximum is attained at 
$\hat{\xi}
=h_{\Aths}+\ti{y}
\in D(\Ath)$ and
$\hat{\phi}
=(\cAths)^{-1}h_{\Aths}
\in D(\cAths)$
with $\Ath\hat{\xi}=\Ath y=0$, i.e., $\hat{\xi}\in N(\Ath)$,
\begin{align}
\label{secordhKest}
\norm{h_{K_{3}}}_{\Hith}^2
=\min_{\varphi\in D(\At)}
\min_{\phi\in D(\Aths)}
\norm{-\ti{y}+\At\varphi+\Aths\phi}_{\Hith}^2
=\max_{\theta\in K_{3}}
\big(-\scp{2\ti{y}+\theta}{\theta}_{\Hith}\big)
\end{align}
and the minimum resp. maximum is attained at 
$\hat{\varphi}
=\cA_{2}^{-1}\pi_{\At}\ti{y}
\in D(\cAt)$,
$\hat{\phi}
=(\cAths)^{-1}\pi_{\Aths}\ti{y}
\in D(\cAths)$, and
$\hat{\theta}
=h_{K_{3}}
\in K_{3}$
with $\At\hat{\varphi}+\Aths\hat{\phi}=(\pi_{\At}+\pi_{\Aths})\ti{y}=(1-\pith)\ti{y}$.
If $\ti{y}=\ti{y}_{\bot}\in K_{3}^{\bot_{\Hith}}$,
then $h_{K_{3}}=0$, and $\ti{y}$ can be replaced by $\ti{y}_{\bot}$.
The upper bound for $\norm{h_{\Aths}}_{\Hith}$ in \eqref{secordhAthsest} equals
$$\norm{h_{\Aths}}_{\Hith}
=\min_{\xi\in N(\Ath)}
\norm{\xi-\ti{y}}_{\Hith}
=\norm{\hat{\xi}-\ti{y}}_{\Hith},\qquad
\hat{\xi}=h_{\Aths}+\ti{y}\in N(\Ath),$$
and so the constant $c_{3}$ does not play a role.
In \eqref{secordeAtsest} the unknown exact solution $y$ still appears 
in the upper and in the lower bound.
The term $\At\xi-y\in R(\At)$ of the upper bound in \eqref{secordeAtsest}
can be handled as an error $h_{\xi}=y-\ti{y}_{\xi}$ with $\ti{y}_{\xi}=\At\xi$.
As $h_{\xi}=\pi_{\At}h_{\xi}=h_{\xi,\At}$ we get by \eqref{secordhAtest}
\begin{align*}
\norm{\At\xi-y}_{\Hith}
=\norm{h_{\xi}}_{\Hith}
=\min_{\zeta\in D(\Ats)}
\big(c_{2}\norm{\Ats\zeta-f}_{\Hit}
+\norm{\zeta-\At\xi}_{\Hith}\big).
\end{align*}
Another option to compute an upper bound in \eqref{secordeAtsest} is the following one:
As $y\in D(\cAts)$ we observe $\At\xi-y\in D(\cAts)$ if $\xi\in D(\Ats\At)$.
The minimum in \eqref{secordeAtsest} is attained at 
$\hat{\xi}=e_{\Ats}+\ti{x}\in D(\At)$ with $\At\hat{\xi}=\At x=y$.
Since $\hat{\xi}\in D(\Ats\At)$ and $\Ats\At\hat{\xi}=\Ats y=f$ we obtain
$$\norm{e_{\Ats}}_{\Hit}
=\min_{\xi\in D(\Ats\At)}
\big(c_{2}\norm{\At\xi-y}_{\Hith}
+\norm{\xi-\ti{x}}_{\Hit}\big)
=\min_{\xi\in D(\Ats\At)}
\big(c_{2}^2\norm{\Ats\At\xi-f}_{\Hit}
+\norm{\xi-\ti{x}}_{\Hit}\big),$$
where the latter equality follows by the Friedrichs/Poincar\'e inequality.
To get a lower bound for $\norm{e_{\Ats}}_{\Hit}^2$ in \eqref{secordeAtsest} we
observe $e_{\Ats}\in R(\Ats)=R(\Ats\At)$ and derive
\begin{align*}
\norm{e_{\Ats}}_{\Hit}^2
&=\max_{\phi\in D(\Ats\At)}
\big(2\scp{e_{\Ats}}{\Ats\At\phi}_{\Hit}
-\norm{\Ats\At\phi}_{\Hit}^2\big)\\
&=\max_{\phi\in D(\Ats\At)}
\big(2\scp{e}{\Ats\At\phi}_{\Hit}
-\norm{\Ats\At\phi}_{\Hit}^2\big)\\
&=\max_{\phi\in D(\Ats\At)}
\big(2\scp{f}{\phi}_{\Hit}
-\scp{2\ti{x}+\Ats\At\phi}{\Ats\At\phi}_{\Hit}\big).
\end{align*}

We summarize the two sided bounds:

\begin{theo}
\label{apostestsostheo}
Additionally to the General Assumption \ref{genass}, suppose that $R(\Ath)$ is closed.
Let $x\in\ti{D}_{2}$ be the exact solution of \eqref{AsAprobsoltheo}, $y:=\At x$,
and let $(\ti{x},\ti{y})\in\Hit\times\Hith$. Then the following estimates hold for the errors 
$e=x-\ti{x}$ and $h=y-\ti{y}$ defined in \eqref{ehdef}:
\begin{itemize}
\item[\bf(i)]
The errors decompose, i.e.,
\begin{align*}
e=e_{\Ao}+e_{K_{2}}+e_{\Ats}
&\in R(\Ao)\oplus_{\Hit}K_{2}\oplus_{\Hit}R(\Ats),
&
\norm{e}_{\Hit}^2
&=\norm{e_{\Ao}}_{\Hit}^2
+\norm{e_{K_{2}}}_{\Hit}^2
+\norm{e_{\Ats}}_{\Hit}^2,\\
h=h_{\At}+h_{K_{3}}+h_{\Aths}
&\in R(\At)\oplus_{\Hith}K_{3}\oplus_{\Hith}R(\Aths),
&
\norm{h}_{\Hith}^2
&=\norm{h_{\At}}_{\Hith}^2
+\norm{h_{K_{3}}}_{\Hith}^2
+\norm{h_{\Aths}}_{\Hith}^2.
\end{align*}
\item[\bf(ii)]
The projection $e_{\Ao}=\pi_{\Ao}e=x_{g}-\pi_{\Ao}\ti{x}\in R(\Ao)$ satisfies
\begin{align*}
\norm{e_{\Ao}}_{\Hit}^2
&=\min_{\zeta\in D(\Aos)}
\big(c_{1}\norm{\Aos\zeta-g}_{\Hio}
+\norm{\zeta-\ti{x}}_{\Hit}\big)^2\\
&=\max_{\varphi\in D(\Ao)}
\big(2\scp{g}{\varphi}_{\Hio}
-\scp{2\ti{x}+\Ao\varphi}{\Ao\varphi}_{\Hit}\big)
\end{align*}
and the minimum resp. maximum is attained at 
$$\hat{\zeta}
:=e_{\Ao}+\ti{x}
\in D(\Aos),\qquad
\hat{\varphi}
:=\cA_{1}^{-1}e_{\Ao}
\in D(\cAo)$$
with $\Aos\hat{\zeta}=\Aos x=g$.
\item[\bf(iii)]
The projection $e_{\Ats}=\pi_{\Ats}e=x_{y}-\pi_{\Ats}\ti{x}\in R(\Ats)$ satisfies
\begin{align*}
\norm{e_{\Ats}}_{\Hit}^2
&=\min_{\xi\in D(\At)}
\min_{\zeta\in D(\Ats)}
\big(c_{2}^2\norm{\Ats\zeta-f}_{\Hit}
+c_{2}\norm{\zeta-\At\xi}_{\Hith}
+\norm{\xi-\ti{x}}_{\Hit}\big)^2\\
&=\min_{\xi\in D(\Ats\At)}
\big(c_{2}^2\norm{\Ats\At\xi-f}_{\Hit}
+\norm{\xi-\ti{x}}_{\Hit}\big)^2\\
&=\max_{\phi\in D(\Ats\At)}
\big(2\scp{f}{\phi}_{\Hit}
-\scp{2\ti{x}+\Ats\At\phi}{\Ats\At\phi}_{\Hit}\big)
\end{align*}
and the minima resp. maximum are attained at 
$$\hat{\xi}
:=e_{\Ats}+\ti{x}
\in D(\Ats\At),\quad
\hat{\zeta}
:=h_{\xi}+\At\xi=y
\in D(\Ats),\quad
\hat{\phi}
:=\cA_{2}^{-1}(\cAts)^{-1}e_{\Ats}
\in D(\cAts\cAt)$$
with $\At\hat{\xi}=\At x=y$ and $\Ats\At\hat{\xi}=\Ats y=f$
as well as $\Ats\hat{\zeta}=\Ats y=f$.
\item[\bf(iv)]
The projection $e_{K_{2}}=\pit e=k-\pit\ti{x}\in K_{2}$ satisfies
\begin{align*}
\norm{e_{K_{2}}}_{\Hit}^2
&=\min_{\varphi\in D(\Ao)}
\min_{\phi\in D(\Ats)}
\norm{k-\ti{x}+\Ao\varphi+\Ats\phi}_{\Hit}^2\\
&=\max_{\theta\in K_{2}}
\bscp{2(k-\ti{x})-\theta}{\theta}_{\Hit}
\end{align*}
and the minimum resp. maximum is attained at 
$$\hat{\varphi}
:=\cA_{1}^{-1}\pi_{\Ao}\ti{x}
\in D(\cAo),\qquad
\hat{\phi}
:=(\cAts)^{-1}\pi_{\Ats}\ti{x}
\in D(\cAts),\qquad
\hat{\theta}
:=e_{K_{2}}
\in K_{2}$$
with $\Ao\hat{\varphi}+\Ats\hat{\phi}=(\pi_{\Ao}+\pi_{\Ats})\ti{x}=(1-\pit)\ti{x}$.
\item[\bf(v)]
The projection $h_{\At}=\pi_{\At}h=y-\pi_{\At}\ti{y}\in R(\At)$ satisfies
\begin{align*}
\norm{h_{\At}}_{\Hith}^2
&=\min_{\zeta\in D(\Ats)}
\big(c_{2}\norm{\Ats\zeta-f}_{\Hit}
+\norm{\zeta-\ti{y}}_{\Hith}\big)^2\\
&=\max_{\varphi\in D(\At)}
\big(2\scp{f}{\varphi}_{\Hit}
-\scp{2\ti{y}+\At\varphi}{\At\varphi}_{\Hith}\big)
\end{align*}
and the minimum resp. maximum is attained at 
$$\hat{\zeta}
:=h_{\At}+\ti{y}
\in D(\Ats),\qquad
\hat{\varphi}
:=\cA_{2}^{-1}h_{\At}
\in D(\cAt)$$
with $\Ats\hat{\zeta}=\Ats y=f$.
\item[\bf(vi)]
The projection $h_{\Aths}=\pi_{\Aths}h=-\pi_{\Aths}\ti{y}\in R(\Aths)$ satisfies
\begin{align*}
\norm{h_{\Aths}}_{\Hith}^2
&=\min_{\xi\in D(\Ath)}
\big(c_{3}\norm{\Ath\xi}_{\Hif}
+\norm{\xi-\ti{y}}_{\Hith}\big)^2
=\min_{\xi\in N(\Ath)}
\norm{\xi-\ti{y}}_{\Hith}^2\\
&=\max_{\phi\in D(\Aths)}
\big(-\scp{2\ti{y}+\Aths\phi}{\Aths\phi}_{\Hith}\big)\end{align*}
and the minimum resp. maximum is attained at 
$$\hat{\xi}
:=h_{\Aths}+\ti{y}
\in N(\Ath),\qquad
\hat{\phi}
:=(\cAths)^{-1}h_{\Aths}
\in D(\cAths)$$
with $\Ath\hat{\xi}=\Ath y=0$.
\item[\bf(vii)]
The projection $h_{K_{3}}=\pith e=-\pith\ti{y}\in K_{3}$ satisfies
\begin{align*}
\norm{h_{K_{3}}}_{\Hith}^2
&=\min_{\varphi\in D(\At)}
\min_{\phi\in D(\Aths)}
\norm{-\ti{y}+\At\varphi+\Aths\phi}_{\Hith}^2\\
&=\max_{\theta\in K_{3}}
\big(-\scp{2\ti{y}+\theta}{\theta}_{\Hith}\big)
\end{align*}
and the minimum resp. maximum is attained at 
$$\hat{\varphi}
:=\cA_{2}^{-1}\pi_{\At}\ti{y}
\in D(\cAt),\qquad
\hat{\phi}
:=(\cAths)^{-1}\pi_{\Aths}\ti{y}
\in D(\cAths),\qquad
\hat{\theta}
:=h_{K_{3}}
\in K_{3}$$
with $\At\hat{\varphi}+\Aths\hat{\phi}=(\pi_{\At}+\pi_{\Aths})\ti{y}=(1-\pith)\ti{y}$.
\end{itemize}
If $\ti{x}=k+\ti{x}_{\bot}$ with some $\ti{x}_{\bot}\in K_{2}^{\bot_{\Hit}}$,
then $e_{K_{2}}=0$, and in (ii) and (iii) $\ti{x}$ can be replaced by $\ti{x}_{\bot}$.
If $\ti{y}=\ti{y}_{\bot}\in K_{3}^{\bot_{\Hith}}$,
then $h_{K_{3}}=0$, and in (v) and (vi) $\ti{y}$ can be replaced by $\ti{y}_{\bot}$.
\end{theo}

\begin{rem}
A reasonable assumption provided by standard numerical methods is $\ti{y}\in R(\At)$.
Hence it often holds $h_{\Aths}=h_{K_{3}}=0$.
\end{rem}

\subsection{Computing the Error Functionals}
\mylabel{seccomperrfunc}

We propose suitable ways to compute the most important error functionals
in Theorem \ref{apostestfos}, Corollary \ref{apostestfosKbot}, and Corollary \ref{apostestfoscortsb}.
For example, let us focus on Corollary \ref{apostestfoscortsb} (ii), i.e.,
for $\ti{x}\in\Hit$ on the error estimates
\begin{align}
\mylabel{comperrorfuncmaxmin}
\max_{\varphi\in D(\Ao)}
\big(2\scp{g}{\varphi}_{\Hio}
-\scp{2\ti{x}+\Ao\varphi}{\Ao\varphi}_{\Hit}\big)
=\norm{e_{\Ao}}_{\Hit}^2
=\min_{\zeta\in D(\Aos)}
\big(c_{1}\norm{\Aos\zeta-g}_{\Hio}
+\norm{\zeta-\ti{x}}_{\Hit}\big)^2.
\end{align}
Before proceeding, let us note that instead of computing the maximum resp. minimum of the lower resp. upper bound
we can simply und cheaply choose any $\varphi\in D(\Ao)$ and any $\zeta\in D(\Aos)$
given by any method or guess and we obtain the guaranteed error bounds
$$2\scp{g}{\varphi}_{\Hio}
-\scp{2\ti{x}+\Ao\varphi}{\Ao\varphi}_{\Hit}
\leq\norm{e_{\Ao}}_{\Hit}^2
\leq\big(c_{1}\norm{\Aos\zeta-g}_{\Hio}
+\norm{\zeta-\ti{x}}_{\Hit}\big)^2.$$

\subsubsection{Lower Bounds}

Considering the maximum on the left hand side of \eqref{comperrorfuncmaxmin} we differentiate the lower bound
$\Phi(\varphi):=2\scp{g}{\varphi}_{\Hio}-\scp{2\ti{x}+\Ao\varphi}{\Ao\varphi}_{\Hit}$ 
with respect to $\varphi$.
Hence a maximizer $\hat{\varphi}\in D(\Ao)$ solves the variational formulation
\begin{align}
\mylabel{varformMmAo}
\forall\,\varphi\in D(\Ao)\qquad
0=
-\foh\Phi'(\hat{\varphi})\varphi=
\scp{\Ao\hat{\varphi}}{\Ao\varphi}_{\Hit}
+\scp{\ti{x}}{\Ao\varphi}_{\Hit}
-\scp{g}{\varphi}_{\Hio},
\end{align}
which implies $\Ao\hat{\varphi}+\ti{x}\in D(\Aos)$ with 
$$\Aos(\Ao\hat{\varphi}+\ti{x})=g=\Aos x$$
and presents a weak formulation of
$\Aos\Ao\hat{\varphi}
=g-\Aos\ti{x}
=\Aos e
=\Aos e_{\Ao}$. By 
$$0=\Aos(\Ao\hat{\varphi}+\ti{x}-x)=\Aos(\Ao\hat{\varphi}-e)=\Aos(\Ao\hat{\varphi}-e_{\Ao})$$
we observe $\Ao\hat{\varphi}-e_{\Ao}\in N(\Aos)\cap R(\Ao)=N(\Aos)\cap N(\Aos)^{\bot_{\Hit}}=\{0\}$,
i.e., $\hat{\varphi}$ solves $\Ao\hat{\varphi}=e_{\Ao}$,
see Corollary \ref{apostestfoscortsb} (ii).
As $\Ao$ is strictly positive over $D(\cAo)=D(\Ao)\cap R(\Aos)=D(\Ao)\cap N(\Ao)^{\bot_{\Hio}}$,
\eqref{varformMmAo} admits a unique solution $\hat{\varphi}\in D(\cAo)$.
A particularly simple case is given if $N(\Ao)$ is finite dimensional or even trivial,
which occurs in many applications. Otherwise one has to work with the
saddle point or double saddle point formulations as we have discussed earlier. 
The previous considerations show that the unique maximizer $\hat{\varphi}\in D(\cAo)$ is given by
$$\hat{\varphi}=\cA_{1}^{-1}e_{\Ao},$$ 
which is already written down in
Corollary \ref{apostestfoscortsb} (ii). Moroever, we finally note
$$\hat{\varphi}
=\cA_{1}^{-1}e_{\Ao}
=\cA_{1}^{-1}\pi_{\Ao}e
=\cA_{1}^{-1}\pi_{\Ao}(x-\ti{x})
=\cA_{1}^{-1}(x_{g}-\pi_{\Ao}\ti{x})
=\cA_{1}^{-1}\big((\cAos)^{-1}g-\pi_{\Ao}\ti{x}\big).$$
If $\ti{x}\in D(\Aos)$ then $\pi_{\Ao}\ti{x}\in D(\cAos)$ 
with $\Aos\pi_{\Ao}\ti{x}=\Aos\ti{x}$ and 
$\hat{\varphi}=\cA_{1}^{-1}(\cAos)^{-1}(g-\Aos\ti{x})$.

\begin{rem}
\label{remlowbdcomprem}
The maximum in \eqref{comperrorfuncmaxmin} is attained at any 
$\hat{\varphi}\in D(\Ao)$ with $\Ao\hat{\varphi}=e_{\Ao}$,
especially at $\hat{\varphi}=\cA_{1}^{-1}e_{\Ao}\in D(\cAo)$.
$\hat{\varphi}\in D(\Ao)$ can be found by the variational formulation
$$\forall\,\varphi\in D(\Ao)\qquad
\scp{\Ao\hat{\varphi}}{\Ao\varphi}_{\Hit}
=\scp{g}{\varphi}_{\Hio}
-\scp{\ti{x}}{\Ao\varphi}_{\Hit},$$
which is coercive (positive) over $D(\cAo)$.
\end{rem}

\subsubsection{Upper Bounds}

For the minimum on the right hand side of \eqref{comperrorfuncmaxmin} we can roughly estimate the upper bound by
$\Psi(\zeta):=2c_{1}^2\norm{\Aos\zeta-g}_{\Hio}^2+2\norm{\zeta-\ti{x}}_{\Hit}^2$.
Differentiating $\Psi$ shows that the minimizer $\check{\zeta}\in D(\Aos)$
of $\min_{\zeta\in D(\Aos)}\Psi(\zeta)$ solves the variational formulation
\begin{align}
\mylabel{varformMpAo}
\begin{split}
\forall\,\zeta\in D(\Aos)\qquad
0=
\frac{1}{4}\Psi'(\check{\zeta})\zeta
&=c_{1}^2\scp{\Aos\check{\zeta}-g}{\Aos\zeta}_{\Hio}
+\scp{\check{\zeta}-\ti{x}}{\zeta}_{\Hit}\\
&=c_{1}^2\scp{\Aos\check{\zeta}}{\Aos\zeta}_{\Hio}
+\scp{\check{\zeta}}{\zeta}_{\Hit}
-c_{1}^2\scp{g}{\Aos\zeta}_{\Hio}
-\scp{\ti{x}}{\zeta}_{\Hit},
\end{split}
\end{align}
which implies $\Aos\check{\zeta}-g\in D(\Ao)$
and $c_{1}^2\Ao(\Aos\check{\zeta}-g)=(\ti{x}-\check{\zeta})$
and presents a weak formulation of
$$c_{1}^2\Ao\Aos\check{\zeta}+\check{\zeta}
=c_{1}^2\Ao g+\ti{x}.$$
Unique solvability of \eqref{varformMpAo} in $D(\Aos)$ is trivial,
as the variational formulation reproduces a graph inner product of $D(\Aos)$,
and we have $\check{\zeta}=(c_{1}^2\Ao\Aos+1)^{-1}(c_{1}^2\Ao g+\ti{x})$.
Moreover, as $g\in R(\Aos)$ it even holds $\Aos\check{\zeta}-g\in D(\cAo)$
and hence by the Friedrichs/Poincar\'e estimate, the equation for $\check{\zeta}$,
and inserting $\zeta=\check{\zeta}$ into Corollary \ref{apostestfoscortsb} (ii)
\begin{align}
\label{ubdsimple}
\begin{split}
\norm{e_{\Ao}}_{\Hit}
\leq c_{1}\norm{\Aos\check{\zeta}-g}_{\Hio}
+\norm{\check{\zeta}-\ti{x}}_{\Hit}
&\leq c_{1}^2\bnorm{\Ao(\Aos\check{\zeta}-g)}_{\Hio}
+\norm{\check{\zeta}-\ti{x}}_{\Hit}\\
&=2\begin{cases}
\norm{\check{\zeta}-\ti{x}}_{\Hit},\\
c_{1}^2\bnorm{\Ao(\Aos\check{\zeta}-g)}_{\Hio}.
\end{cases}
\end{split}
\end{align}

This rough minimization process can be improved by using a bit more careful estimate
for the square term in \eqref{comperrorfuncmaxmin}.
For this we observe for all $\zeta\in D(\Aos)$ and all $t>0$
$$\norm{e_{\Ao}}_{\Hit}^2
\leq(1+t^{-1})\,c_{1}^2\,\norm{\Aos\zeta-g}_{\Hio}^2
+(1+t)\norm{\zeta-\ti{x}}_{\Hit}^2
=:\Psi(\ti{x};\zeta,t)$$
and obtain by choosing $\zeta=\hat{\zeta}=e_{\Ao}+\ti{x}\in D(\Aos)$ from Theorem \ref{apostestfos}, 
Corollary \ref{apostestfosKbot} or Corollary \ref{apostestfoscortsb}
$$\norm{e_{\Ats}}_{\Hit}^2
\leq\inf_{t\in(0,\infty)}
\inf_{\zeta\in D(\Aos)}
\Psi(\ti{x};\zeta,t)
\leq\inf_{t\in(0,\infty)}
\Psi(\ti{x};\hat{\zeta},t)
=\inf_{t\in(0,\infty)}(1+t)
\norm{e_{\Ao}}_{\Hit}^2
=\norm{e_{\Ao}}_{\Hit}^2.$$
Thus
\begin{align}
\label{funccompmintwo}
\norm{e_{\Ao}}_{\Hit}^2
=\min_{\substack{t\in[0,\infty],\\\zeta\in D(\Aos)}}
\Psi(\ti{x};\zeta,t)
=\min_{\substack{t\in[0,\infty],\\\zeta\in D(\Aos)}}
\big((1+t^{-1})\,c_{1}^2\,\norm{\Aos\zeta-g}_{\Hio}^2
+(1+t)\norm{\zeta-\ti{x}}_{\Hit}^2\big)
\end{align}
and the minimum is attained at $(t,\zeta)=(0,\hat{\zeta})$.
For fixed $\zeta\in D(\Aos)$ the minimal $t_{\zeta}\in[0,\infty]$ is given by
$$t_{\zeta}=
\begin{cases}
\displaystyle
c_{1}\frac{\norm{\Aos\zeta-g}_{\Hio}}{\norm{\zeta-\ti{x}}_{\Hit}}
&\text{, if }\zeta\neq\ti{x},\\
\infty
&\text{, if }\zeta=\ti{x}.
\end{cases}$$
We note that the case $t_{\zeta}=\infty$ can only happen if $\ti{x}\in D(\Aos)$.
In any case, inserting $t_{\zeta}$ into \eqref{funccompmintwo} we get back
the right hand side of  \eqref{comperrorfuncmaxmin}, i.e.,
$$\norm{e_{\Ats}}_{\Hit}^2
\leq\min_{\zeta\in D(\Aos)}
\big(c_{1}\norm{\Aos\zeta-g}_{\Hio}
+\norm{\zeta-\ti{x}}_{\Hit}\big)^2
=\norm{e_{\Ao}}_{\Hit}^2.$$
On the other hand, for fixed $0<t<\infty$ the minimization of $\Psi_{t}(\zeta):=\Psi(\ti{x};\zeta,t)$ 
over $\zeta\in D(\Aos)$ is equivalent to find $\zeta_{t}\in D(\Aos)$, such that
\begin{align}
\mylabel{vfcompone}
\forall\,\zeta\in D(\Aos)\qquad
\frac{t}{2c_{1}^2(1+t)}\Psi_{t}'(\zeta_{t})(\zeta)
=\scp{\Aos\zeta_{t}-g}{\Aos\zeta}_{\Hio}
+\frac{t}{c_{1}^2}\scp{\zeta_{t}-\ti{x}}{\zeta}_{\Hit}
=0.
\end{align}
Especially $\Aos\zeta_{t}-g\in D(\Ao)$ with 
\begin{align}
\mylabel{algoubdeqAoAos}
\Ao(\Aos\zeta_{t}-g)
=\frac{t}{c_{1}^2}(\ti{x}-\zeta_{t})
\in R(\Ao)
\end{align}
and hence \eqref{vfcompone} is a standard weak formulation of the coercive problem 
(in formally strong form)
$(\Ao\Aos+\frac{t}{c_{1}^2})\zeta_{t}=\Ao g+\frac{t}{c_{1}^2}\ti{x}$, i.e.,
\begin{align}
\mylabel{vfcomptwo}
\forall\,\zeta\in D(\Aos)\qquad
\scp{\Aos\zeta_{t}}{\Aos\zeta}_{\Hio}
+\frac{t}{c_{1}^2}\scp{\zeta_{t}}{\zeta}_{\Hit}
=\scp{g}{\Aos\zeta}_{\Hio}
+\frac{t}{c_{1}^2}\scp{\ti{x}}{\zeta}_{\Hit}.
\end{align}
Moreover, as $g\in R(\Aos)$ we even have 
$\Aos\zeta_{t}-g\in D(\cAo)$
and the strong form holds rigorously if $g\in D(\Ao)$.
Furthermore, inserting $\zeta_{t}$ into \eqref{funccompmintwo} 
and using the Friedrichs/Poincar\'e type estimate shows
\begin{align*}
\begin{split}
\norm{e_{\Ao}}_{\Hit}^2
&\leq\min_{t\in[0,\infty]}
\big((1+t^{-1})\,c_{1}^2\,\norm{\Aos\zeta_{t}-g}_{\Hio}^2
+(1+t)\norm{\zeta_{t}-\ti{x}}_{\Hit}^2\big)\\
&\leq\min_{t\in[0,\infty]}
\big((1+t^{-1})\,c_{1}^4\,\bnorm{\Ao(\Aos\zeta_{t}-g)}_{\Hit}^2
+(1+t)\norm{\zeta_{t}-\ti{x}}_{\Hit}^2\big)\\
&=
\begin{cases}
\displaystyle\min_{t\in[0,\infty]}
(1+t)^2\norm{\zeta_{t}-\ti{x}}_{\Hit}^2,\\
\displaystyle\min_{t\in[0,\infty]}
(1+t^{-1})^2\,c_{1}^4\,\bnorm{\Ao(\Aos\zeta_{t}-g)}_{\Hit}^2,
\end{cases}
\end{split}
\end{align*}
compare to \eqref{ubdsimple}. Hence the overestimation by the factor $2$ is removed
as long as $t$ is close to $0$ or $\infty$.
A suitable algorithm for computing a good pair $(t,\zeta)$ 
for approximately minimizing \eqref{funccompmintwo} is the following:

%

\begin{algo}
\label{algotwo}
Computing a minimizer $(t,\zeta)$ in \eqref{funccompmintwo}, i.e., an upper bound for $\norm{e_{\Ao}}_{\Hit}$:
\begin{itemize}
\item{\sf initialization:} 
Set $n:=0$. Pick $\zeta_{0}\in D(\Aos)$ with $\zeta_{0}\neq\ti{x}$.
\item{\sf loop:} Set $n:=n+1$. Compute 
$\displaystyle t_{n}=c_{1}\frac{\norm{\Aos\zeta_{n-1}-g}_{\Hio}}{\norm{\zeta_{n-1}-\ti{x}}_{\Hit}}$
and then $\zeta_{n}\in D(\Aos)$ by solving
$$\forall\,\zeta\in D(\Aos)\qquad
c_{1}^2\scp{\Aos\zeta_{n}}{\Aos\zeta}_{\Hio}
+t_{n}\scp{\zeta_{n}}{\zeta}_{\Hit}
=c_{1}^2\scp{g}{\Aos\zeta}_{\Hio}
+t_{n}\scp{\ti{x}}{\zeta}_{\Hit}.$$
Compute 
$\Psi_{\Aos}(\ti{x};\zeta_{n},t_{n})
:=(1+t_{n}^{-1})\,c_{1}^2\,\norm{\Aos\zeta_{n}-g}_{\Hio}^2+(1+t_{n})\norm{\zeta_{n}-\ti{x}}_{\Hit}^2$.
\item{\sf stop} if $\Psi_{\Aos}(\ti{x};\zeta_{n},t_{n})-\Psi_{\Aos}(\ti{x};\zeta_{n-1},t_{n-1})$ is small.
\end{itemize}
\end{algo}

\begin{rem}
\label{algorem}
\eqref{vfcomptwo} shows for $\zeta=\zeta_{t}$
$$c_{1}^2\norm{\Aos\zeta_{t}}_{\Hio}^2
+t\norm{\zeta_{t}}_{\Hit}^2
=c_{1}^2\scp{g}{\Aos\zeta_{t}}_{\Hio}
+t\scp{\ti{x}}{\zeta_{t}}_{\Hit}
\leq\big(c_{1}^2\norm{g}_{\Hio}^2
+t\norm{\ti{x}}_{\Hit}^2\big)^{\oh}
\big(c_{1}^2\norm{\Aos\zeta_{t}}_{\Hio}^2
+t\norm{\zeta_{t}}_{\Hit}^2\big)^{\oh}$$
and thus
$$c_{1}^2\norm{\Aos\zeta_{t}}_{\Hio}^2
+t\norm{\zeta_{t}}_{\Hit}^2
\leq c_{1}^2\norm{g}_{\Hio}^2
+t\norm{\ti{x}}_{\Hit}^2.$$
By \eqref{algoubdeqAoAos} and since $\Aos\zeta_{t}-g\in D(\cAo)$ we get
$$\Aos\zeta_{t}-g=\frac{t}{c_{1}^2}\cA_{1}^{-1}(\ti{x}-\zeta_{t})$$
and hence
$$\norm{\Aos\zeta_{t}-g}_{\Hio}
\leq c\,t^{\oh}\big(\norm{g}_{\Hio}+t^{\oh}\norm{\ti{x}}_{\Hit}\big)$$
with $c>0$ independent of $t$ and $\zeta_{t}$.
Let us assume $t_{n}\to0$ in Algorithm \ref{algotwo}.
Then by the latter considerations $(\Aos\zeta_{n})$ 
and $(t_{n}^{\oh}\zeta_{n})$ are bounded
and $\Aos\zeta_{n}\to g$ with the minimal rate $t_{n}^{\oh}$.
Moreover, the projected sequence $(\pi_{\Ao}\zeta_{n})\subset D(\cAos)$ is bounded in $D(\Aos)$
by $\Aos\pi_{\Ao}\zeta_{n}=\Aos\zeta_{n}$ and the Friedrichs/Poincar\'e estimate 
$\norm{\pi_{\Ao}\zeta_{n}}_{\Hit}\leq c_{1}\norm{\Aos\pi_{\Ao}\zeta_{n}}_{\Hio}$.
If $D(\cAos)\dhookrightarrow\Hit$ is compact, then we can extract a subsequence, again denoted by $(t_{n})$,
such that $\pi_{\Ao}\zeta_{n}\to\check{\zeta}$ in $\Hit$.
Thus $\check{\zeta}\in D(\cAos)$ and $\Aos\check{\zeta}=g$ as $\cAos$ is closed,
which shows $\check{\zeta}=(\cAos)^{-1}g=x_{g}=\pi_{\Ao}x$, see Theorem \ref{soltheofos}.
As the limit $x_{g}$ is unique, even the whole sequence $\pi_{\Ao}\zeta_{n}$ converges to $x_{g}$.
For the other part $(1-\pi_{\Ao})\zeta_{n}\subset N(\Aos)$ 
we apply the projector $1-\pi_{\Ao}$ to \eqref{algoubdeqAoAos} and obtain $(1-\pi_{\Ao})(\ti{x}-\zeta_{n})=0$, i.e.,
$(1-\pi_{\Ao})\zeta_{n}=(1-\pi_{\Ao})\ti{x}$ is constant. Hence
$$\zeta_{n}
=\pi_{\Ao}\zeta_{n}+(1-\pi_{\Ao})\zeta_{n}
\to\pi_{\Ao}x+(1-\pi_{\Ao})\ti{x}
=e_{\Ao}+\ti{x}
=\hat{\zeta},$$
where $\hat{\zeta}\in D(\Aos)$ is the unique minimizer from Corollary \ref{apostestfoscortsb} (ii).
Finally, Algorithm \ref{algotwo} defines a sequence $(\zeta_{n})$ converging in $D(\Aos)$ to $\hat{\zeta}$
provided that $D(\cAos)\dhookrightarrow\Hit$ is compact and $t_{n}\to0$.
\end{rem}

\section{Applications}
\label{secappl}

\subsection{Prototype First Order System: Electro-Magneto Statics}
\label{emssec}

As a prototypical example for a first order system we will discuss
the system of electro-magneto statics with mixed boundary conditions.
Let $\om\subset\rt$ be a bounded weak Lipschitz domain,
see \cite[Definition 2.3]{bauerpaulyschomburgmcpweaklip}, and
let $\ga:=\p\om$ denote its boundary (Lipschitz manifold), which is supposed to be decomposed
into two relatively open weak Lipschitz subdomains (Lipschitz submanifolds) $\gat$ and $\gan:=\ga\setminus\ovl{\gat}$
see \cite[Definition 2.5]{bauerpaulyschomburgmcpweaklip}.
Let us consider the linear first order system (in classical strong formulation)
for a vector field $E:\om\to\rt$
\begin{align}
\nonumber
\rot E&=F
&
\text{in }&\om,
&
n\times E&=0
&
\text{at }&\gat,\\
\mylabel{elmgstat}
-\div\eps E&=g
&
\text{in }&\om,
&
n\cdot\eps E&=0
&
\text{at }&\gan,\\
\nonumber
\pi_{\harm}E&=K
&
\text{in }&\om.
\end{align}
Here, $\eps:\om\to\rttt$ is a symmetric, uniformly positive definite $\li$-matrix field
and $n$ denotes the outer unit normal at $\ga$. Let us put $\mu:=\eps^{-1}$.
The usual Lebesgue and Sobolev (Hilbert) spaces will be denoted by $\ltom$, $\hlom$, $\ell\in\n_{0}$, 
and (in the distributional sense) we introduce
\begin{align*}
\rom:=\setb{E\in\ltom}{\rot E\in\ltom},\qquad 
\dom:=\setb{E\in\ltom}{\div E\in\ltom}.
\end{align*}
Let us also define
$$\ltbotom:=\ltom\cap\reals^{\bot_{\ltom}},\qquad
\hobotom:=\hoom\cap\ltbotom.$$
With the test functions or test vector fields
\begin{align*}
\cgen{}{\infty}{\gat}(\om):=\setb{\varphi|_\om}{\varphi\in\ci(\rt),
\,\supp\varphi\text{ compact in }\rt,\,\dist(\supp\varphi,\gat)>0},\qquad
\cgen{}{\infty}{\emptyset}(\om)=\ciomb,
\end{align*}
we define as closures of test functions resp. test fields
\begin{align*}
\hogatom:=\overline{\cgen{}{\infty}{\gat}(\om)}^{\hoom},\qquad 
\rgatom :=\overline{\cgen{}{\infty}{\gat}(\om)}^{\rom},\qquad
\dganom :=\overline{\cgen{}{\infty}{\gan}(\om)}^{\dom},
\end{align*}
generalizing homogeneous scalar, tangential, and normal traces on $\gat$ and $\gan$, respectively.
Moreover, we introduce the closed subspaces
\begin{align*}
\rzom&:=\set{E\in\rom}{\rot E=0},
&
\dzom&:=\set{E\in\dom}{\div E=0},\\
\rgatzom&:=\rgatom\cap\rzom,
&
\dganzom&:=\dganom\cap\dzom,
\end{align*}
and the Dirichlet-Neumann fields including the corresponding orthonormal projector
$$\harmgatnepsom:=\rgatzom\cap\mu\,\dganzom,\qquad
\pi_{\harm}:\ltepsom\to\harmgatnepsom.$$
Here, $\ltepsom$ denotes $\ltom$ equipped with the inner product 
$\scp{\,\cdot\,}{\,\cdot\,}_{\ltepsom}:=\scpltom{\eps\,\cdot\,}{\,\cdot\,}$. 
Let $\Hio:=\ltom$, $\Hif:=\ltom$ (both scalar valued) 
and $\Hit:=\ltepsom$, $\Hith:=\ltom$ (both vector valued) as well as
\begin{align*}
\Ao:=\grad_{\gat}:D(\Ao):=\hogatom\subset\ltom&\to\ltepsom,\\
\At:=\rot_{\gat}:D(\At):=\rgatom\subset\ltepsom&\to\ltom,\\
\Ath:=\div_{\gat}:D(\Ath):=\dgatom\subset\ltom&\to\ltom.
\intertext{In \cite{bauerpaulyschomburgmcpweaklip} it has been shown that the adjoints are}
\Aos=\grad_{\gat}^{*}=-\div_{\gan}\eps:D(\Aos)=\mu\,\dganom\subset\ltepsom&\to\ltom,\\
\Ats=\rot_{\gat}^{*}=\mu\rot_{\gan}:D(\Ats)=\rganom\subset\ltom&\to\ltepsom,\\
\Aths=\div_{\gat}^{*}=-\grad_{\gan}:D(\Aths)=\hoganom\subset\ltom&\to\ltom.
\end{align*}
As $\Ao$, $\At$, $\Ath$ define the well known de Rham complex, see, e.g., \cite[Lemma 2.2]{bauerpaulyschomburgmcpweaklip},
so do their adjoints, i.e., for\footnote{For $\gat=\emptyset$ 
we have
$$\begin{CD}
\reals @> \Az=\iota_{\reals} >>
\hoom @> \Ao=\grad >>
\rom @> \At=\rot >>
\dom @> \Ath=\div >>
\ltom @> \A_{4}=\pi_{\{0\}} >>
\{0\},
\end{CD}$$
$$\begin{CD}
\reals @< \Azs=\pi_{\reals} <<
\ltom @< \Aos=-\div_{\ga}\eps <<
\mu\,\dgaom @< \Ats=\mu\rot_{\ga} <<
\rgaom @< \Aths=-\grad_{\ga} <<
\hogaom @< \A_{4}^{*}=\iota_{\{0\}} <<
\{0\},
\end{CD}$$
which also shows the case $\gat=\ga$ by interchanging $\gat$ and $\gan$ and shifting $\eps$.
More precisely, for $\gat=\ga$ it holds
$$\begin{CD}
\{0\} @> \Az=\iota_{\{0\}} >>
\hogaom @> \Ao=\grad_{\ga} >>
\rgaom @> \At=\rot_{\ga} >>
\dgaom @> \Ath=\div_{\ga} >>
\ltom @> \A_{4}=\pi_{\reals} >>
\reals,
\end{CD}$$
$$\begin{CD}
\{0\} @< \Azs=\pi_{\{0\}} <<
\ltom @< \Aos=-\div\eps <<
\mu\,\dom @< \Ats=\mu\rot <<
\rom @< \Aths=-\grad <<
\hoom @< \A_{4}^{*}=\iota_{\reals} <<
\reals.
\end{CD}$$
} 
$\emptyset\neq\gat\neq\ga$
$$\begin{CD}
\{0\} @> \Az=\iota_{\{0\}} >>
\hogatom @> \Ao=\grad_{\gat} >>
\rgatom @> \At=\rot_{\gat} >>
\dgatom @> \Ath=\div_{\gat} >>
\ltom @> \A_{4}=\pi_{\{0\}} >>
\{0\},
\end{CD}$$
$$\begin{CD}
\{0\} @< \A_{0}^{*}=\pi_{\{0\}} <<
\ltom @< \Aos=-\div_{\gan}\eps <<
\mu\,\dganom @< \Ats=\mu\rot_{\gan} <<
\rganom @< \Aths=-\grad_{\gan} <<
\hoganom @< \A_{4}^{*}=\iota_{\{0\}} <<
\{0\},
\end{CD}$$
where we have introduced the additional canonical embedding and projection
operators $\Az$, $\Azs$, $\A_{4}$, $\A_{4}^{*}$ by
\begin{align*}
\Az=\begin{cases}
\iota_{\{0\}}:\hsymbol_{0}=\{0\}&\text{, if }\gat\neq\emptyset\\
\iota_{\reals}:\hsymbol_{0}=\reals&\text{, if }\gat=\emptyset
\end{cases}\,&\to\ltom,
&
\A_{4}=\begin{cases}
\pi_{\{0\}}&\text{, if }\gat\neq\ga\\
\pi_{\reals}&\text{, if }\gat=\ga
\end{cases}\,:\ltom&\to\begin{cases}
\{0\}&\text{, if }\gat\neq\ga\\
\reals&\text{, if }\gat=\ga
\end{cases},\\
\A_{4}^{*}=\begin{cases}
\iota_{\{0\}}:\hsymbol_{5}=\{0\}&\text{, if }\gat\neq\ga\\
\iota_{\reals}:\hsymbol_{5}=\reals&\text{, if }\gat=\ga
\end{cases}\,&\to\ltom,
&
\Azs=\begin{cases}
\pi_{\{0\}}&\text{, if }\gat\neq\emptyset\\
\pi_{\reals}&\text{, if }\gat=\emptyset
\end{cases}\,:\ltom&\to\begin{cases}
\{0\}&\text{, if }\gat\neq\emptyset\\
\reals&\text{, if }\gat=\emptyset
\end{cases}.
\end{align*}
For the kernels we have
\begin{align*}
N(\Az)&=\{0\}
&
N(\Azs)&=\begin{cases}\ltom&\text{, if }\gat\neq\emptyset,\\\ltbotom&\text{, if }\gat=\emptyset,\end{cases},\\
N(\Ao)&=\begin{cases}\{0\}&\text{, if }\gat\neq\emptyset,\\\reals&\text{, if }\gat=\emptyset,\end{cases}
&
N(\Aos)&=\mu\,\dganzom,\\
N(\At)&=\rgatzom,
&
N(\Ats)&=\rganzom,\\
N(\Ath)&=\dgatzom,
&
N(\Aths)&=\begin{cases}\{0\}&\text{, if }\gat\neq\ga,\\\reals&\text{, if }\gat=\ga,\end{cases},\\
N(\A_{4})&=\begin{cases}\ltom&\text{, if }\gat\neq\ga,\\\ltbotom&\text{, if }\gat=\ga,\end{cases},
&
N(\A_{4}^{*})&=\{0\}
\end{align*}
and for the cohomology groups
\begin{align*}
K_{0}&=N(\Az)=\{0\},\\
K_{1}&=N(\Ao)\cap N(\Azs)=\{0\},\\
K_{2}&=N(\At)\cap N(\Aos)=\rgatzom\cap\mu\,\dganzom=\harmgatnepsom,\\
K_{3}&=N(\Ath)\cap N(\Ats)=\dgatzom\cap\rganzom=:\harmgantom,\\
K_{4}&=N(\A_{4})\cap N(\Aths)=\{0\},\\
K_{5}&=N(\A_{4}^{*})=\{0\}.
\end{align*}
Using the latter operators $\At=\rot_{\gat}$ and $\Aos=-\div_{\gan}\eps$,
the linear first order system \eqref{elmgstat} 
(in weak formulation) has the form of \eqref{Aprob} resp. \eqref{Aprobsoltheo}, i.e.,
find a vector field 
$$E\in D_{2}=D(\At)\cap D(\Aos)=\rgatom\cap\mu\,\dganom,$$
such that
\begin{align}
\mylabel{elmgstatpo}
\begin{split}
\rot_{\gat}E&=F,\\
-\div_{\gan}\eps E&=g,\\
\pi_{\harm}E&=K,
\end{split}
\end{align}
where $K_{2}=\harmgatnepsom$.
In \cite[Theorem 5.1]{bauerpaulyschomburgmcpweaklip}
the embedding $D_{2}\dhookrightarrow\Hit$, i.e.,
$$\rgatom\cap\mu\,\dganom\dhookrightarrow\ltepsom,$$
was shown to be compact. Hence also the embedding $D_{3}=D(\Ath)\cap D(\Ats)\dhookrightarrow\Hith$, i.e.,
$$\dgatom\cap\rganom\dhookrightarrow\ltom,$$
is compact. Thus, by the results of the functional analysis toolbox Section \ref{sectoolbox}, 
all occurring ranges are closed, certain Helmholtz type decompositions hold,
corresponding Friedrichs/Poincar\'e type estimates are valid, 
and the respective inverse operators are continuous resp. compact.
Especially, the reduced operators are
\begin{align*}
\cAo:=\widetilde\grad_{\gat}:D(\cAo)=\hogatom\cap\ltom\subset\ltom&\to\grad\hogatom,\\
\cAt:=\widetilde\rot_{\gat}:D(\cAt)=\rgatom\cap\mu\rot\rganom\subset\mu\rot\rganom&\to\rot\rgatom,\\
\cAth:=\widetilde\div_{\gat}:D(\cAth)=\dgatom\cap\grad\hoganom\subset\grad\hoganom&\to\ltom,
\intertext{where $\grad\hogatom$ and $\mu\rot\rganom$ have to be understood as closed subspaces of $\ltepsom$,
and $\ltom$ has to be replaced by $\ltbotom$ 
in $\cAo$, if $\gat=\emptyset$, 
and in $\cAth$, if $\gat=\ga$, with adjoints}
\cAos=\widetilde\grad_{\gat}^{*}=-\widetilde\div_{\gan}\eps:D(\cAos)=\mu\,\dganom\cap\grad\hogatom\subset\grad\hogatom&\to\ltom,\\
\cAts=\widetilde\rot_{\gat}^{*}=\mu\,\widetilde\rot_{\gan}:D(\cAts)=\rganom\cap\rot\rgatom\subset\rot\rgatom&\to\mu\rot\rganom,\\
\cAths=\widetilde\div_{\gat}^{*}=-\widetilde\grad_{\gan}:D(\cAths)=\hoganom\cap\ltom\subset\ltom&\to\grad\hoganom,
\end{align*}
where $\ltom$ has to be replaced by $\ltbotom$ 
in $\cAos$, if $\gat=\emptyset$, 
and in $\cAths$, if $\gat=\ga$.
Note that the reduced operators possess bounded resp. compact inverse operators.
For the ranges we have
\begin{align*}
R(\Ao)&=R(\cAo)\subset N(\At),\text{ i.e.,}
&
\grad\hogatom&=\grad\big(\hogatom\cap\ltom\big)\subset\rgatzom,\\
R(\At)&=R(\cAt)\subset N(\Ath),\text{ i.e.,}
&
\rot\rgatom&=\rot\big(\rgatom\cap\mu\rot\rganom\big)\subset\dgatzom,\\
R(\Ath)&=R(\cAth),\text{ i.e.,}
&
\div\dgatom&=\div\big(\dgatom\cap\grad\hoganom\big),\\
R(\Aos)&=R(\cAos),\text{ i.e.,}
&
\div\dganom&=\div\big(\dganom\cap\eps\grad\hogatom\big),\\
R(\Ats)&=R(\cAts)\subset N(\Aos),\text{ i.e.,}
&
\mu\rot\rganom&=\mu\rot\big(\rganom\cap\rot\rgatom\big)\subset\mu\,\dganzom,\\
R(\Aths)&=R(\cAths)\subset N(\Ats),\text{ i.e.,}
&
\grad\hoganom&=\grad\big(\hoganom\cap\ltom\big)\subset\rganzom,
\end{align*}
where $\ltom$ has to be replaced by $\ltbotom$ for $\gat=\emptyset$ resp. $\gat=\ga$.
Note that the assertions of $R(\Ath)$, $R(\Ats)$, $R(\Aths)$ are already included in those of
$R(\Ao)$, $R(\At)$, $R(\Aos)$ by interchanging $\gat$ and $\gan$ and setting $\eps:=\id$.
Furthermore, the following Friedrichs/Poincar\'e type estimates hold:
\begin{align*}
\forall\,u&\in D(\cAo)=\hogatom\cap\ltom
&
\normltom{u}&\leq c_{\mathsf{fp}}\,\normltepsom{\grad u},\\
\forall\,E&\in D(\cAos)=\mu\,\dganom\cap\grad\hogatom,
&
\normltepsom{E}&\leq c_{\mathsf{fp}}\,\normltom{\div\eps E},\\
\forall\,E&\in D(\cAt)=\rgatom\cap\mu\rot\rganom,
&
\normltepsom{E}&\leq c_{\mathsf{m}}\,\normltom{\rot E},\\
\forall\,E&\in D(\cAts)=\rganom\cap\rot\rgatom,
&
\normltom{E}&\leq c_{\mathsf{m}}\,\normltmuom{\rot E},\\
\forall\,E&\in D(\cAth)=\dgatom\cap\grad\hoganom,
&
\normltom{E}&\leq\ti{c}_{\mathsf{fp}}\,\normltom{\div E},\\
\forall\,u&\in D(\cAths)=\hoganom\cap\ltom
&
\normltom{u}&\leq\ti{c}_{\mathsf{fp}}\,\normltom{\grad u},
\end{align*}
where the Friedrichs/Poincar\'e and Maxwell constants 
$c_{\mathsf{fp}}$, $c_{\mathsf{m}}$, $\ti{c}_{\mathsf{fp}}$, 
are given by the respective Raleigh quotients, and
$\ltom$ has to be replaced by $\ltbotom$ for $\gat=\emptyset$ resp. $\gat=\ga$.
Again note that the latter two assertions are already included in 
the first two inequalities by interchanging $\gat$ and $\gan$ and setting $\eps:=\id$.

\begin{rem}
\label{remconvex}
Let the Friedrichs and the Poincar\'e constants $c_{\mathsf{f}}$, $c_{\mathsf{p}}$ 
as well as upper and lower bounds for the matrix field $\eps$ be defined by
\begin{align*}
\frac{1}{c_{\mathsf{p}}}
&:=\inf_{0\neq\varphi\in\hobotom}\frac{\normltom{\grad\varphi}}{\normltom{\varphi}},
&
\frac{1}{\ovl{\eps}}
&:=\inf_{0\neq\Phi\in\ltom}\frac{\normltom{\Phi}}{\normltepsom{\Phi}},\\
\frac{1}{c_{\mathsf{f}}}
&:=\inf_{0\neq\varphi\in\hogaom}\frac{\normltom{\grad\varphi}}{\normltom{\varphi}},
&
\frac{1}{\unl{\eps}}
&:=\inf_{0\neq\Phi\in\ltom}\frac{\normltepsom{\Phi}}{\normltom{\Phi}}.
\end{align*}
In \cite{paulymaxconst1}, see also \cite{paulymaxconst0,paulymaxconst2}, the following has been proved
for bounded and convex $\om$:
\begin{itemize}
\item[\bf(i)]
If $\gat=\emptyset$ or $\gat=\ga$, then 
$c_{\mathsf{m}}\leq\ovl{\eps}\,c_{\mathsf{p}}\leq\ovl{\eps}\diam\om/\pi$.
\item[\bf(ii)]
If $\gat=\emptyset$ we have
$c_{\mathsf{p}}/\ovl{\eps}\leq c_{\mathsf{fp}}\leq\unl{\eps}c_{\mathsf{p}}$
and $\ti{c}_{\mathsf{fp}}=c_{\mathsf{f}}<c_{\mathsf{p}}$.
\item[\bf(iii)]
If $\gat=\ga$ it holds
$c_{\mathsf{f}}/\ovl{\eps}\leq c_{\mathsf{fp}}\leq\unl{\eps}c_{\mathsf{f}}$ 
and $c_{\mathsf{f}}<c_{\mathsf{p}}=\ti{c}_{\mathsf{fp}}$.
\end{itemize}
\end{rem}

Finally, the following Helmholtz decompositions hold:
\begin{align*}
\Hio=\ltom
&=\begin{cases}
\{0\}&\text{, if }\gat\neq\emptyset,\\
\reals&\text{, if }\gat=\emptyset,
\end{cases}\,
\oplus_{\ltom}
\begin{cases}
\ltom&\text{, if }\gat\neq\emptyset,\\
\ltbotom&\text{, if }\gat=\emptyset,
\end{cases}
&
\big(\Hio&=R(\Az)\oplus_{\Hio}N(\Azs)\big)\\
&=\begin{cases}
\{0\}&\text{, if }\gat\neq\emptyset,\\
\reals&\text{, if }\gat=\emptyset,
\end{cases}\,
\oplus_{\ltom}\div\dganom,
&
\big(\Hio&=N(\Ao)\oplus_{\Hio}R(\Aos)\big)\\
\Hit=\ltepsom&=\grad\hogatom\oplus_{\ltepsom}\mu\,\dganzom
&
\big(\Hit&=R(\Ao)\oplus_{\Hit}N(\Aos)\big)\\
&=\rgatzom\oplus_{\ltepsom}\mu\rot\rganom
&
\big(\Hit&=N(\At)\oplus_{\Hit}R(\Ats)\big)\\
&=\grad\hogatom\oplus_{\ltepsom}\harmgatnepsom\oplus_{\ltepsom}\mu\rot\rganom,
&
\big(\Hit&=R(\Ao)\oplus_{\Hit}K_{2}\oplus_{\Hit}R(\Aos)\big)\\
\Hith=\ltom&=\grad\hoganom\oplus_{\ltom}\dgatzom
&
\big(\Hith&=R(\Aths)\oplus_{\Hith}N(\Ath)\big)\\
&=\rganzom\oplus_{\ltom}\rot\rgatom
&
\big(\Hith&=N(\Ats)\oplus_{\Hith}R(\At)\big)\\
&=\grad\hoganom\oplus_{\ltom}\harmgantom\oplus_{\ltom}\rot\rgatom,
&
\big(\Hith&=R(\Aths)\oplus_{\Hith}K_{3}\oplus_{\Hith}R(\At)\big)\\
\Hif=\ltom
&=\begin{cases}
\{0\}&\text{, if }\gat\neq\ga,\\
\reals&\text{, if }\gat=\ga,
\end{cases}\,
\oplus_{\ltom}
\begin{cases}
\ltom&\text{, if }\gat\neq\ga,\\
\ltbotom&\text{, if }\gat=\ga,
\end{cases}
&
\big(\Hif&=R(\A_{4}^{*})\oplus_{\Hif}N(\A_{4})\big)\\
&=\begin{cases}
\{0\}&\text{, if }\gat\neq\ga,\\
\reals&\text{, if }\gat=\ga,
\end{cases}\,
\oplus_{\ltom}\div\dgatom.
&
\big(\Hif&=N(\Aths)\oplus_{\Hif}R(\Ath)\big)
\end{align*}
The latter two decompositions are already given by the first two ones
by interchanging $\gat$ and $\gan$ and setting $\eps:=\id$.
Especially, it holds
\begin{align*}
\grad\hogatom&=\rgatzom\ominus_{\ltepsom}\harmgatnepsom,
&
\mu\rot\rganom&=\mu\,\dganzom\ominus_{\ltepsom}\harmgatnepsom,\\
\grad\hoganom&=\rganzom\ominus_{\ltom}\harmgantom,
&
\rot\rgatom&=\dgatzom\ominus_{\ltom}\harmgantom.
\end{align*}
If $\gat=\ga$ and $\ga$ is connected, then the Dirichlet fields are trivial, i.e., 
$$\harmgatnepsom=\rgazom\cap\mu\,\dzom=\{0\}.$$
If $\gat=\emptyset$ and $\om$ is simply connected, then the Neumann fields are trivial, i.e., 
$$\harmgatnepsom=\rzom\cap\mu\,\dgazom=\{0\}.$$

Now we can apply the general results of Section \ref{secsoltheovarform} and Section \ref{secfuncapostest}.

\begin{theo}[Theorem \ref{soltheofos}]
\label{soltheoemsys}
\eqref{elmgstat} resp. \eqref{elmgstatpo} is uniquely solvable, if and only if
$$F\in\rot\rgatom=\dgatzom\ominus_{\ltom}\harmgantom,\quad
g\in\ltom,\quad
K\in\harmgatnepsom,$$ 
where $\ltom$ has to be replaced by $\ltbotom$ if $\gat=\emptyset$. 
The unique solution $E\in\rgatom\cap\mu\,\dganom$ is given by 
\begin{align*}
E:=E_{F}+E_{g}+K
&\in\big(\rgatom\cap\mu\rot\rganom\big)\oplus_{\ltepsom}\big(\mu\,\dganom\cap\grad\hogatom\big)\oplus_{\ltepsom}\harmgatnepsom\\
&=\rgatom\cap\mu\,\dganom,\\
E_{F}:=(\widetilde\rot_{\gat})^{-1}F
&\in\rgatom\cap\mu\rot\rganom
=\rgatom\cap\mu\,\dganzom\cap\harmgatnepsom^{\bot_{\ltepsom}},\\
E_{g}:=-(\widetilde\div_{\gan}\eps)^{-1}g
&\in\mu\,\dganom\cap\grad\hogatom
=\mu\,\dganom\cap\rgatzom\cap\harmgatnepsom^{\bot_{\ltepsom}}
\end{align*}
and depends continuously on the data, i.e.,
$\norm{E}_{\ltepsom}
\leq c_{\mathsf{m}}\,\norm{F}_{\ltom}
+c_{\mathsf{fp}}\,\norm{g}_{\ltom}
+\norm{K}_{\ltom}$,
as 
$$\norm{E_{F}}_{\ltepsom}\leq c_{\mathsf{m}}\,\norm{F}_{\ltom},\qquad
\norm{E_{g}}_{\ltepsom}\leq c_{\mathsf{fp}}\,\norm{g}_{\ltom}.$$
Moreover, $\norm{E}_{\ltepsom}^2=\norm{E_{F}}_{\ltepsom}^2+\norm{E_{g}}_{\ltepsom}^2+\norm{K}_{\ltepsom}^2$.
\end{theo}

The partial solutions $E_{F}$ and $E_{g}$, solving 
\begin{align*}
\rot_{\gat}E_{F}&=F,
&
\rot_{\gat}E_{g}&=0,\\
-\div_{\gan}\eps E_{F}&=0,
&
-\div_{\gan}\eps E_{g}&=g,\\
\pi_{\harm}E_{F}&=0,
&
\pi_{\harm}E_{g}&=0,
\end{align*}
can be found and computed by the following 
four variational formulations: 

\begin{theo}[Theorem \ref{soltheofosvarform}]
\label{soltheofosvarformappfos}
The partial solutions $E_{F}$ and $E_{g}$ in Theorem \ref{soltheoemsys}
can be found by the following four variational formulations: 
\begin{itemize}
\item[\bf(i)]
There exists a unique 
$\ti{E}_{F}\in\rgatom\cap\mu\rot\rganom$ such that
\begin{align}
\mylabel{varxfappfos}
\forall\,\Phi\in\rgatom\cap\mu\rot\rganom\qquad
\scpltom{\rot\ti{E}_{F}}{\rot\Phi}
=\scpltom{F}{\rot\Phi}.
\end{align}
\eqref{varxfappfos} is even satisfied for all $\Phi\in\rgatom$.
Moreover, the equation $\rot\ti{E}_{F}=F$ holds if and only if $F\in\rot\rgatom$.
In this case $\ti{E}_{F}=E_{F}$.
\item[\bf(i')]
There exists a unique potential 
$H_{F}\in\rganom\cap\rot\rgatom$ such that
\begin{align}
\mylabel{varyfappfos}
\forall\,\Psi\in\rganom\cap\rot\rgatom\qquad
\scpltom{\mu\rot H_{F}}{\rot\Psi}
=\scpltom{F}{\Psi}.
\end{align}
\eqref{varyfappfos} even holds for all $\Psi\in\rganom$ 
if and only if $F\in\rot\rgatom$.
In this case we have 
$$\mu\rot H_{F}\in\rgatom\cap\mu\rot\rganom$$ 
with $\rot\mu\rot H_{F}=F$ and hence $\mu\rot H_{F}=E_{F}$.
\item[\bf(ii)]
Let $\gat\neq\emptyset$.
There is a unique 
$\ti{E}_{g}\in\mu\,\dganom\cap\grad\hogatom$ such that
\begin{align}
\mylabel{varxgappfos}
\forall\,\Theta\in\mu\,\dganom\cap\grad\hogatom\qquad
\scpltom{\div\eps\ti{E}_{g}}{\div\eps\Theta}
=-\scpltom{g}{\div\eps\Theta}.
\end{align}
\eqref{varxgappfos} is even satisfied for all $\Theta\in\mu\,\dganom$.
Moreover, $-\div\eps\ti{E}_{g}=g$ and $\ti{E}_{g}=E_{g}$.
In the case $\gat=\emptyset$ the condition 
$g\in\ltbotom$ has to be added, i.e., 
$-\div\eps\ti{E}_{g}=g$ if and only if $g\in\ltbotom$ 
and in this case $\ti{E}_{g}=E_{g}$.
\item[\bf(ii')]
Let $\gat\neq\emptyset$.
There exists a unique potential $u_{g}\in\hogatom$ such that
\begin{align}
\mylabel{varzgappfos}
\forall\,\psi\in\hogatom\qquad
\scpltom{\eps\grad u_{g}}{\grad\psi}
=\scpltom{g}{\psi}.
\end{align}
It holds 
$$\grad u_{g}\in\mu\,\dganom\cap\grad\hogatom$$ 
with $-\div\eps\grad u_{g}=g$ and thus $\grad u_{g}=E_{g}$.
In the case $\gat=\emptyset$ we replace $\hogatom$
by $\hobotom$. Then
\eqref{varzgappfos} even holds for all $\psi\in\hoom$
if and only if $g\in\ltbotom$.
In this case the other assertions hold as stated before. 
\end{itemize}
\end{theo}

\begin{rem}[Remark \ref{soltheofosvarformremmatrix}]
\label{soltheofosvarformappfosremone}
Let us note the following:
\begin{itemize}
\item[\bf(i)]
It holds $\grad\hogatom=\rgatzom\cap\harmgatnepsom^{\bot_{\ltepsom}}$ and
\begin{align*}
\mu\rot\rganom
=\mu\,\dganzom\cap\harmgatnepsom^{\bot_{\ltepsom}},\qquad
\rot\rgatom
=\dgatzom\cap\harmgantom^{\bot_{\ltom}}.
\end{align*}
\item[\bf(ii)]
We have
\begin{align*}
E_{F}
=(\widetilde\rot_{\gat})^{-1}F
&\in D(\widetilde\rot_{\gat})
=\rgatom\cap\mu\rot\rganom,\\
H_{F}
=(\mu\,\widetilde\rot_{\gan})^{-1}E_{F}
=(\mu\,\widetilde\rot_{\gan})^{-1}(\widetilde\rot_{\gat})^{-1}F
&\in D(\widetilde\rot_{\gat}\mu\,\widetilde\rot_{\gan})
\subset\rganom\cap\rot\rgatom,\\
E_{g}
=-(\widetilde\div_{\gan}\eps)^{-1}g
&\in D(\widetilde\div_{\gan}\eps)=\mu\,\dganom\cap\grad\hogatom,\\
u_{g}
=(\widetilde\grad_{\gat})^{-1}E_{g}
=-(\widetilde\grad_{\gat})^{-1}(\widetilde\div_{\gan}\eps)^{-1}g
&\in D(\widetilde\div_{\gan}\eps\,\widetilde\grad_{\gat})
\subset\hogatom,
\end{align*}
and these vector fields and functions solve
\begin{align*}
\rot_{\gat}E_{F}&=F,
&
\rot_{\gat}\mu\rot_{\gan}H_{F}&=F,
&
-\div_{\gan}\eps E_{g}&=g,
&
-\div_{\gan}\eps\grad_{\gat}u_{g}&=g,\\
-\div_{\gan}\eps E_{F}&=0,
&
\div_{\gat}H_{F}&=0,
&
\rot_{\gat}E_{g}&=0,
&
\pi_{\{0\}/\reals}u_{g}&=0,\\
\pi_{\harm}E_{F}&=0,
&
\pi_{\ti{\mathcal{H}}}H_{F}&=0,
&
\pi_{\harm}E_{g}&=0,
\end{align*}
where $\pi_{\ti{\mathcal{H}}}:\ltom\to\harmgantom$
is the Neumann-Dirichlet orthonormal projector
and $\pi_{\{0\}/\reals}$ denotes $\pi_{\{0\}}$
or $\pi_{\reals}$ if $\gat=\emptyset$.
Mooreover, \eqref{varxfappfos}-\eqref{varzgappfos} are weak formulations of
\begin{align*}
\mu\rot_{\gan}\rot_{\gat}\ti{E}_{F}&=\mu\rot_{\gan}F,
&
-\div_{\gan}\eps\ti{E}_{F}&=0,
&
\pi_{\harm}\ti{E}_{F}&=0,\\
\rot_{\gat}\mu\rot_{\gan}H_{F}&=F,
&
\div_{\gat}H_{F}&=0,
&
\pi_{\ti{\mathcal{H}}}H_{F}&=0,\\
-\grad_{\gat}\div_{\gan}\eps\ti{E}_{g}&=\grad_{\gat}g,
&
\rot_{\gat}\ti{E}_{g}&=0,
&
\pi_{\harm}\ti{E}_{g}&=0,\\
-\div_{\gan}\eps\grad_{\gat}u_{g}&=g,
&
\pi_{\{0\}/\reals} u_{g}&=0,
\end{align*}
i.e., in formal matrix notation
\begin{align*}
\begin{bmatrix}
\mu\rot_{\gan}\rot_{\gat}\\
-\div_{\gan}\eps\\
\pi_{\harm}
\end{bmatrix}
\begin{bmatrix}
\ti{E}_{F}
\end{bmatrix}
&=\begin{bmatrix}
\mu\rot_{\gan}F\\
0\\
0
\end{bmatrix},
&
\begin{bmatrix}
\rot_{\gat}\mu\rot_{\gan}\\
\div_{\gat}\\
\pi_{\ti{\mathcal{H}}}
\end{bmatrix}
\begin{bmatrix}
H_{F}
\end{bmatrix}
&=\begin{bmatrix}
F\\
0\\
0
\end{bmatrix},\\
\begin{bmatrix}
-\grad_{\gat}\div_{\gan}\eps\\
\rot_{\gat}\\
\pi_{\harm}
\end{bmatrix}
\begin{bmatrix}
\ti{E}_{g}
\end{bmatrix}
&=\begin{bmatrix}
\grad_{\gat}g\\
0\\
0
\end{bmatrix},
&
\begin{bmatrix}
-\div_{\gan}\eps\grad_{\gat}\\
\pi_{\{0\}/\reals}
\end{bmatrix}
\begin{bmatrix}
u_{g}
\end{bmatrix}
&=\begin{bmatrix}
g\\
0
\end{bmatrix}.
\end{align*}
\end{itemize}
\end{rem}

\begin{rem}[Remark \ref{solremfosvarform}]
\label{soltheofosvarformappfosremtwo}
Let us note the following, especially for possible numerical purposes and applications.
\begin{itemize}
\item[\bf(i)]
Using the variational formulation in Theorem \ref{soltheofosvarformappfos} (i)
corresponding to $E_{F}=\ti{E}_{F}\in\rgatom$
for finding a numerical (discrete) approximation $E_{F,h}$ of
$E_{F}$ proposes a $\rgatom$-conforming method 
in some finite dimensional (discrete) subspace $\rsymbol_{\gat,h}(\om)$ of $\rgatom$
giving also a $\rgatom$-conforming discrete solution $E_{F,h}\in\rsymbol_{\gat,h}(\om)\subset\rgatom$.
\item[\bf(i')]
Utilizing the variational formulation in Theorem \ref{soltheofosvarformappfos} (i')
for $E_{F}=\mu\rot H_{F}\in\mu\rot\rganom$ 
to find a discrete approximation $E_{F,h}=\mu\rot H_{F,h}$ of $E_{F}$
proposes a $\rganom$-conforming method
in some discrete subspace $\rsymbol_{\gan,h}(\om)$ of $\rganom$
giving then a $\rganom$-conforming discrete potential $H_{F,h}\in\rsymbol_{\gan,h}(\om)\subset\rganom$,
but yielding a $\mu\,\dganom$-conforming solution as
$$E_{F,h}=\mu\rot H_{F,h}\in
\mu\rot\rganom
=\mu\,\dganzom\cap\harmgatnepsom^{\bot_{\ltepsom}}
\subset\mu\,\dganom.$$
\item[\bf(ii)]
Using the variational formulation in Theorem \ref{soltheofosvarformappfos} (ii)
corresponding to $E_{g}=\ti{E}_{g}\in\mu\,\dganom$
for finding a discrete approximation $E_{g,h}$ of
$E_{g}$ proposes a $\mu\,\dganom$-conforming method 
in some discrete subspace $\mu\,\dsymbol_{\gan,h}(\om)$ of $\mu\,\dganom$
giving also a $\mu\,\dganom$-conforming discrete solution 
$E_{g,h}\in\mu\,\dsymbol_{\gan,h}(\om)\subset\mu\,\dganom$.
\item[\bf(ii')]
Utilizing the variational formulation in Theorem \ref{soltheofosvarformappfos} (ii')
for $E_{g}=\grad u_{g}\in\grad\hogatom$ 
to find a discrete approximation $E_{g,h}=\grad u_{g,h}$ of $E_{g}$
proposes a $\hogatom$-conforming method
in some discrete subspace $\hsymbol^{1}_{\gat,h}(\om)$ of $\hogatom$
giving then a $\hogatom$-conforming discrete potential $u_{g,h}\in\hsymbol^{1}_{\gat,h}(\om)\subset\hogatom$,
but yielding a $\rgatom$-conforming solution as
$$E_{g,h}=\grad u_{g,h}\in
\grad\hogatom
=\rgatzom\cap\harmgatnepsom^{\bot_{\ltepsom}}
\subset\rgatom.$$
\item[\bf(iii)]
A possible discrete solution $E_{F,h}=\mu\rot H_{F,h}$ from (ii') satisfies 
automatically the side conditions 
$$-\div_{\gan}\eps E_{F,h}=0,\qquad
\pi_{\harm}E_{F,h}=0,$$
i.e., even on the discrete level there is no error in the side conditions.
The other option from (ii) yields a discrete solution $E_{F,h}$, which
most probably has got errors in the side conditions.
\item[\bf(iii')]
A possible discrete solution $E_{g,h}=\grad u_{g,h}$ from (iii') satisfies 
automatically the side conditions 
$$\rot_{\gat}E_{g,h}=0,\qquad
\pi_{\harm}E_{g,h}=0,$$
i.e., even on the discrete level there is no error in the side conditions.
The other option from (iii) yields a discrete solution $E_{g,h}$, which
most probably has got errors in the side conditions.
\end{itemize}
\end{rem}

\begin{theo}[Theorem \ref{soltheofosvarformtogetherwithK2}]
\label{soltheofosvarformtogetherwithK2appfos}
The unique solution $E=E_{F}+E_{g}+K\in\rgatom\cap\mu\,\dganom$ in Theorem \ref{soltheoemsys}
can be found by the following two variational double saddle point formulations: 
\begin{itemize}
\item[\bf(i)]
Let $\gat\neq\emptyset$.
There exists a unique tripple
$(\ti{E},u,H)\in\rgatom\times\hogatom\times\harmgatnepsom$ such that for all
$(\Phi,\varphi,\Theta)\in\rgatom\times\hogatom\times\harmgatnepsom$
\begin{align}
\nonumber
\scpltom{\rot\ti{E}}{\rot\Phi}
+\scpltom{\eps\grad u}{\Phi}
+\scpltom{\eps H}{\Phi}
&=\scpltom{F}{\rot\Phi},\\
\mylabel{vartiEappfos}
\scpltom{\eps\ti{E}}{\grad\varphi}
&=\scpltom{g}{\varphi},\\
\nonumber
\scpltom{\eps\ti{E}}{\Theta}
&=\scpltom{\eps K}{\Theta}.
\end{align}
It holds $u=0$ and $H=0$. $\rot\ti{E}=F$ if and only if $F\in\rot\rgatom$.
Moreover, $\eps\ti{E}\in\dganom$ and $-\div\eps\ti{E}=g$ as well as $\pi_{\harm}\ti{E}=K$.
In this case, i.e., $F\in\rot\rgatom$, we have $\ti{E}=E$ from Theorem \ref{soltheoemsys}.
If $\gat=\emptyset$, we have to replace $\hogatom$ by $\hobotom$. 
Then \eqref{vartiEappfos} even holds for all $\varphi\in\hoom$
if and only if $g\in\ltbotom$ if and only if
$\eps\ti{E}\in\dgaom$ and $-\div\eps\ti{E}=g$. Furthermore, $\pi_{\harm}\ti{E}=K$.
In this case, i.e., $F\in\rot\rom$ and $g\in\ltbotom$, we have $\ti{E}=E$ from Theorem \ref{soltheoemsys}.
\item[\bf(ii)]
Let $\gat\neq\emptyset$.
There exists a unique tripple
$(\hat{E},U,H)\in\mu\,\dganom\times\big(\rganom\cap\rot\rgatom\big)\times\harmgatnepsom$ such that for all
$(\Psi,\Phi,\Theta)\in\mu\,\dganom\times\big(\rganom\cap\rot\rgatom\big)\times\harmgatnepsom$
\begin{align}
\nonumber
\scpltom{\div\eps\hat{E}}{\div\eps\Psi}
+\scpltom{\rot U}{\Psi}
+\scpltom{\eps H}{\Psi}
&=-\scpltom{g}{\div\eps\Psi},\\
\mylabel{varhatEappfos}
\scpltom{\hat{E}}{\rot\Phi}
&=\scpltom{F}{\Phi},\\
\nonumber
\scpltom{\eps\hat{E}}{\Theta}
&=\scpltom{\eps K}{\Theta}.
\end{align}
It holds $U=0$ and $H=0$ as well as $-\div\eps\hat{E}=g$.
\eqref{varhatEappfos} holds for all $\Phi\in\rganom$ 
if and only if $F\in\rot\rgatom$
if and only if $\hat{E}\in\rgatom$ with $\rot\hat{E}=F$.
Moreover, $\pi_{\harm}\hat{E}=K$.
In this case, i.e., $F\in\rot\rgatom$, we have $\hat{E}=E$ from Theorem \ref{soltheoemsys}.
If $\gat=\emptyset$, the condition 
$g\in\ltbotom$ has to be added, i.e., 
$-\div\eps\hat{E}=g$ if and only if $g\in\ltbotom$.
In this case, i.e., $F\in\rot\rom$ and $g\in\ltbotom$, we have $\ti{E}=E$ from Theorem \ref{soltheoemsys}.
\end{itemize}
\end{theo}

\begin{rem}[Remark \ref{soltheofosvarformtogetherrem}]
\label{soltheofosvarformtogetherwithK2appfosrem}
Let us note the following:
\begin{itemize}
\item[\bf(i)]
Using the saddle point formulation in Theorem \ref{soltheofosvarformtogetherwithK2appfos} (i)
for finding a numerical approximation $E_{h}$ of $E$
provides a $\rgatom$-conforming approximation $E_{h}\in\rgatom$ of \eqref{elmgstat} or \eqref{elmgstatpo},
whereas using the saddle point formulation in Theorem \ref{soltheofosvarformtogetherwithK2appfos} (ii)
for finding a numerical approximation $E_{h}$ of $E$
provides a $\mu\,\dganom$-conforming approximation $E_{h}\in\mu\,\dganom$ of of \eqref{elmgstat} or \eqref{elmgstatpo}.
\item[\bf(ii)]
Related variational formulations to those presented in Theorem \ref{soltheofosvarformtogetherwithK2appfos}
have recently been announced and proposed in \cite{alonsorodriguezbertolazzivalli2017}.
\item[\bf(iii)]
\eqref{vartiEappfos} and \eqref{varhatEappfos} are weak formulations of
\begin{align*}
\mu\rot_{\gan}\rot_{\gat}\ti{E}
+\grad_{\gat}u
+H
&=\mu\rot_{\gan}F,
&
-\div_{\gan}\eps\ti{E}
&=g,
&
\pi_{\harm}\ti{E}
&=K,\\
-\grad_{\gat}\div_{\gan}\eps\hat{E}
+\mu\rot_{\gan}U
+H
&=\grad_{\gat}g,
&
\rot_{\gat}\hat{E}
&=F,
&
\pi_{\harm}\hat{E}
&=K,
\end{align*}
i.e., in formal matrix notation
\begin{align*}
\begin{bmatrix}
\mu\rot_{\gan}\rot_{\gat} & \grad_{\gat} & \iota_{\harm}\\
-\div_{\gan}\eps & 0 & 0\\
\pi_{\harm} & 0 & 0
\end{bmatrix}
\begin{bmatrix}
\ti{E}\\
u\\
H
\end{bmatrix}
&=\begin{bmatrix}
\mu\rot_{\gan}F\\
g\\
K
\end{bmatrix},
&
\begin{bmatrix}
-\grad_{\gat}\div_{\gan}\eps & \mu\rot_{\gan} & \iota_{\harm}\\
\rot_{\gat} & 0 & 0\\
\pi_{\harm} & 0 & 0
\end{bmatrix}
\begin{bmatrix}
\hat{E}\\
U\\
H
\end{bmatrix}
&=\begin{bmatrix}
\grad_{\gat}g\\
F\\
K
\end{bmatrix}.
\end{align*}
\end{itemize}
\end{rem}

\begin{theo}[Theorem \ref{soltheofosvarformpartsolfull}]
\label{soltheofosvarformpartsolfullappfos}
The partial solution vector fields $E_{F}=\ti{E}_{F}\in\rgatom\cap\mu\rot\rganom$
and $E_{g}=\ti{E}_{g}\in\mu\,\dganom\cap\grad\hogatom$ together with their potentials
$H_{F}\in\rganom\cap\rot\rgatom$, $u_{g}\in\hogatom$ resp. $u_{g}\in\hobotom$
from Theorem \ref{soltheoemsys} and Theorem \ref{soltheofosvarformappfos}
can be found by the following four variational double saddle point formulations: 
\begin{itemize}
\item[\bf(i)]
Let $\gat\neq\emptyset$.
There exists a unique tripple
$(\ti{E}_{F},u,H)\in\rgatom\times\hogatom\times\harmgatnepsom$ such that for all
$(\Phi,\varphi,\Theta)\in\rgatom\times\hogatom\times\harmgatnepsom$
\begin{align}
\nonumber
\scpltom{\rot\ti{E}_{F}}{\rot\Phi}
+\scpltom{\eps\grad u}{\Phi}
+\scpltom{\eps H}{\Phi}
&=\scpltom{F}{\rot\Phi},\\
\mylabel{vartiEFappfos}
\scpltom{\eps\ti{E}_{F}}{\grad\varphi}
&=0,\\
\nonumber
\scpltom{\eps\ti{E}_{F}}{\Theta}
&=0.
\end{align}
It holds $u=0$ and $H=0$. $\rot\ti{E}_{F}=F$ if and only if $F\in\rot\rgatom$.
Moreover, $\eps\ti{E}_{F}\in\dganzom$ and $\pi_{\harm}\ti{E}_{F}=0$.
Hence, if $F\in\rot\rgatom$, we have $\ti{E}_{F}=E_{F}$ 
from Theorem \ref{soltheoemsys}, see Theorem \ref{soltheofosvarformappfos} (i).
If $\gat=\emptyset$, we have to replace $\hogatom$ by $\hobotom$. 
Then \eqref{vartiEFappfos} even holds for all $\varphi\in\hoom$
and thus $\eps\ti{E}_{F}\in\dgazom$. Furthermore, $\pi_{\harm}\ti{E}_{F}=0$.
Again, if $F\in\rot\rom$, we have $\ti{E}_{F}=E_{F}$ from Theorem \ref{soltheoemsys}.
\item[\bf(i')]
Let $\gat\neq\ga$.
There exists a unique tripple
$(H_{F},v,H)\in\rganom\times\hoganom\times\harmgantom$ such that for all
$(\Psi,\psi,\Theta)\in\rganom\times\hoganom\times\harmgantom$
\begin{align}
\nonumber
\scpltom{\mu\rot H_{F}}{\rot\Psi}
-\scpltom{\grad v}{\Psi}
+\scpltom{H}{\Psi}
&=\scpltom{F}{\Psi},\\
\mylabel{varHFappfos}
\scpltom{H_{F}}{\grad\psi}
&=0,\\
\nonumber
\scpltom{H_{F}}{\Theta}
&=0.
\end{align}
It holds $v=0$ if and only if $F\bot\grad\hoganom$
if and only if $F\in\dgatzom$. 
$H=0$ if and only if $F\bot\harmgantom$.
Thus $v=0$ and $H=0$ if and only if $F\in\dgatzom\cap\harmgantom^{\bot_{\ltom}}=\rot\rgatom$.
Moreover, $\mu\rot H_{F}\in\rgatom$ and $\rot\mu\rot H_{F}=F$
if and only if $F\in\rot\rgatom$. Furthermore, $H_{F}\in\dgatzom$ and $\pi_{\ti{\mathcal{H}}}H_{F}=0$.
Hence, if $F\in\rot\rgatom$, we have $\mu\rot H_{F}=E_{F}$
from Theorem \ref{soltheoemsys}, see Theorem \ref{soltheofosvarformappfos} (i').
If $\gat=\ga$, we have to replace $\hoganom$ by $\hobotom$. 
Then \eqref{varHFappfos} even holds for all $\psi\in\hoom$
and thus $H_{F}\in\dgazom$. Furthermore, $\pi_{\ti{\mathcal{H}}}H_{F}=0$.
Again, if $F\in\rot\rgaom$, we have $\mu\rot H_{F}=E_{F}$ from Theorem \ref{soltheoemsys}.
\item[\bf(ii)]
Let $\gat\neq\emptyset$.
There exists a unique tripple
$(\ti{E}_{g},U,H)\in\mu\,\dganom\times\big(\rganom\cap\rot\rgatom\big)\times\harmgatnepsom$ such that for all
$(\Psi,\Phi,\Theta)\in\mu\,\dganom\times\big(\rganom\cap\rot\rgatom\big)\times\harmgatnepsom$
\begin{align}
\nonumber
\scpltom{\div\eps\ti{E}_{g}}{\div\eps\Psi}
+\scpltom{\rot U}{\Psi}
+\scpltom{\eps H}{\Psi}
&=-\scpltom{g}{\div\eps\Psi},\\
\mylabel{vartiEgappfos}
\scpltom{\ti{E}_{g}}{\rot\Phi}
&=0,\\
\nonumber
\scpltom{\eps\ti{E}_{g}}{\Theta}
&=0.
\end{align}
It holds $U=0$ and $H=0$ as well as $-\div\eps\ti{E}_{g}=g$.
\eqref{vartiEgappfos} holds for all $\Phi\in\rganom$ 
and hence $\ti{E}_{g}\in\rgatzom$.
Moreover, $\pi_{\harm}\ti{E}_{g}=0$.
Finally, we have $\ti{E}_{g}=E_{g}$ from Theorem \ref{soltheoemsys}, see Theorem \ref{soltheofosvarformappfos} (ii).
If $\gat=\emptyset$, the condition 
$g\in\ltbotom$ has to be added, i.e., 
$-\div\eps\ti{E}_{g}=g$ if and only if $g\in\ltbotom$.
Again, \eqref{vartiEgappfos} holds for all $\Phi\in\rgaom$, 
$\ti{E}_{g}\in\rzom$, and $\pi_{\harm}\ti{E}_{g}=0$.
Finally, if $g\in\ltbotom$, we have $\ti{E}_{g}=E_{g}$ from Theorem \ref{soltheoemsys}.
\item[\bf(ii')]
For $\gat\neq\emptyset$ see Theorem \ref{soltheofosvarformappfos} (ii').
Let $\gat=\emptyset$.
There exists a unique pair $(u_{g},r)\in\hoom\times\reals$ such that
\begin{align}
\mylabel{varugappfos}
\begin{split}
\forall\,(\psi,\varrho)\in\hoom\times\reals\qquad
\scpltom{\eps\grad u_{g}}{\grad\psi}
+\scpltom{\iota_{\reals}r}{\psi}
&=\scpltom{g}{\psi},\\
\scpltom{u_{g}}{\iota_{\reals}\varrho}
&=0.
\end{split}
\end{align}
It holds $r=0$ if and only if $g\bot_{\ltom}\iota_{\reals}\reals=\reals$
if and only if $g\in\ltbotom=\div\dgaom$.
Moreover, $\grad u_{g}\in\mu\,\dgaom$
with $-\div\eps\grad u_{g}=g$ 
if and only if $g\in\ltbotom$.
The second equation of \eqref{varugappfos} shows $u_{g}\in\ltbotom$,
i.e., $u_{g}\in\hobotom$.
Finally, if $g\in\ltbotom$, we have $\grad u_{g}=E_{g}$
from Theorem \ref{soltheoemsys}, see Theorem \ref{soltheofosvarformappfos} (ii').
\end{itemize}
\end{theo}

\begin{rem}[Remark \ref{soltheofosvarformpartsolfullremmatrix}]
\label{soltheofosvarformpartsolfullappfosrem}
\eqref{vartiEFappfos}-\eqref{varugappfos} are weak formulations of
\begin{align*}
\mu\rot_{\gan}\rot_{\gat}\ti{E}_{F}
+\grad_{\gat}u
+H
&=\mu\rot_{\gan}F,
&
-\div_{\gan}\eps\ti{E}_{F}
&=0,
&
\pi_{\harm}\ti{E}_{F}
&=0,\\
\rot_{\gat}\mu\rot_{\gan}H_{F}
-\grad_{\gan}v
+H
&=F,
&
\div_{\gat}H_{F}
&=0,
&
\pi_{\ti{\mathcal{H}}}H_{F}
&=0,\\
-\grad_{\gat}\div_{\gan}\eps\ti{E}_{g}
+\mu\rot_{\gan}U
+H
&=\grad_{\gat}g,
&
\rot_{\gat}\ti{E}_{g}
&=0,
&
\pi_{\harm}\ti{E}_{g}
&=0,\\
-\div_{\ga}\eps\grad_{\emptyset}u_{g}
+\iota_{\reals}r
&=g,
&
\pi_{\reals}u_{g}
&=0,
\end{align*}
i.e., in formal matrix notation
\begin{align*}
\begin{bmatrix}
\mu\rot_{\gan}\rot_{\gat} & \grad_{\gat} & \iota_{\harm}\\
-\div_{\gan}\eps & 0 & 0\\
\pi_{\harm} & 0 & 0
\end{bmatrix}
\begin{bmatrix}
\ti{E}_{F}\\
u\\
H
\end{bmatrix}
&=\begin{bmatrix}
\mu\rot_{\gan}F\\
0\\
0
\end{bmatrix},
&
\begin{bmatrix}
\rot_{\gat}\mu\rot_{\gan} & -\grad_{\gat} & \iota_{\ti{\mathcal{H}}}\\
\div_{\gat} & 0 & 0\\
\pi_{\ti{\mathcal{H}}} & 0 & 0
\end{bmatrix}
\begin{bmatrix}
H_{F}\\
v\\
H
\end{bmatrix}
&=\begin{bmatrix}
F\\
0\\
0
\end{bmatrix},\\
\begin{bmatrix}
-\grad_{\gat}\div_{\gan}\eps & \mu\rot_{\gan} & \iota_{\harm}\\
\rot_{\gat} & 0 & 0\\
\pi_{\harm} & 0 & 0
\end{bmatrix}
\begin{bmatrix}
\ti{E}_{g}\\
U\\
H
\end{bmatrix}
&=\begin{bmatrix}
\grad_{\gat}g\\
0\\
0
\end{bmatrix},
&
\begin{bmatrix}
-\div_{\ga}\eps\grad_{\emptyset} & \iota_{\reals}\\
\pi_{\reals} & 0
\end{bmatrix}
\begin{bmatrix}
u_{g}\\
r
\end{bmatrix}
&=\begin{bmatrix}
g\\
0
\end{bmatrix}.
\end{align*}
\end{rem}

\begin{theo}[Theorem \ref{soltheofosvarformtogetherwithK2andmore}]
\label{soltheofosvarformtogetherwithK2andmoreappfos}
Let $F\in\rot\rgatom$ and $g\in\ltom$. If $\gat=\emptyset$, let $g\in\ltbotom$.
The unique solution $E=E_{F}+E_{g}+K\in\rgatom\cap\mu\,\dganom$ in Theorem \ref{soltheoemsys}
can be found by the following three variational multiple saddle point formulations: 
\begin{itemize}
\item[\bf(i)]
For $\gat\neq\emptyset$ see Theorem \ref{soltheofosvarformtogetherwithK2appfos} (i).
Let $\gat=\emptyset$.
There is
$(\ti{E},u,r,H)\in\rom\times\hoom\times\reals\times\harmgatnepsom$,
a unique quadruple, such that for all
$(\Phi,\varphi,\varrho,\Theta)\in\rom\times\hoom\times\reals\times\harmgatnepsom$
\begin{align}
\mylabel{vartiEurappfos}
\begin{split}
\scpltom{\rot\ti{E}}{\rot\Phi}
+\scpltom{\eps\grad u}{\Phi}
+\scpltom{\eps H}{\Phi}
&=\scpltom{F}{\rot\Phi},\\
\scpltom{\eps\ti{E}}{\grad\varphi}
+\scpltom{\iota_{\reals}r}{\varphi}
&=\scpltom{g}{\varphi},\\
\scpltom{u}{\iota_{\reals}\varrho}
&=0,\\
\scpltom{\eps\ti{E}}{\Theta}
&=\scpltom{\eps K}{\Theta}.
\end{split}
\end{align}
It holds $u=0$, $H=0$, and $r=0$.
Moreover, $\rot\ti{E}=F$ and $\eps\ti{E}\in\dgaom$ with 
$-\div\eps\ti{E}=g$ as well as $\pi_{\harm}\ti{E}=K$.
Finally, $\ti{E}=E$ from Theorem \ref{soltheoemsys}.
\item[\bf(ii)]
Let $\gat\neq\ga$.
There is
$(\hat{E},U,v,H,\ti{H})\in\mu\,\dganom\times\rganom\times\hoganom\times\harmgatnepsom\times\harmgantom$,
a unique five tuple, such that for all
$(\Psi,\Phi,\psi,\Theta,\ti{\Theta})\in\mu\,\dganom\times\rganom\times\hoganom\times\harmgatnepsom\times\harmgantom$
\begin{align}
\nonumber
\scpltom{\div\eps\hat{E}}{\div\eps\Psi}
+\scpltom{\rot U}{\Psi}
+\scpltom{\eps H}{\Psi}
&=-\scpltom{g}{\div\eps\Psi},\\
\nonumber
\scpltom{\hat{E}}{\rot\Phi}
-\scpltom{\grad v}{\Phi}
+\scpltom{\ti{H}}{\Phi}
&=\scpltom{F}{\Phi},\\
\mylabel{varhatEUvappfos}
-\scpltom{U}{\grad\psi}
&=0,\\
\nonumber
\scpltom{\eps\hat{E}}{\Theta}
&=\scpltom{\eps K}{\Theta},\\
\nonumber
\scpltom{U}{\ti{\Theta}}
&=0.
\end{align}
It holds $U=0$, $H=0$ and $v=0$, $\ti{H}=0$.
Moreover, $-\div\eps\hat{E}=g$ and $\hat{E}\in\rgatom$ with $\rot\hat{E}=F$
as well as $\pi_{\harm}\hat{E}=K$.
Finally, $\hat{E}=E$ from Theorem \ref{soltheoemsys}.
If $\gat=\ga$, we have to replace $\hoganom$ by $\hobotom$
and the assertions hold as before. 
\item[\bf(ii')]
Let $\gat=\ga$.
There is
$(\hat{E},U,v,r,H,\ti{H})\in\mu\,\dom\times\rom\times\hoom\times\reals\times\harmgatnepsom\times\harmgantom$,
a unique six tuple, such that for all
$(\Psi,\Phi,\psi,\varrho\Theta,\ti{\Theta})\in\mu\,\dom\times\rom\times\hoom\times\reals\times\harmgatnepsom\times\harmgantom$
\begin{align}
\mylabel{varhatEUvrappfos}
\begin{split}
\scpltom{\div\eps\hat{E}}{\div\eps\Psi}
+\scpltom{\rot U}{\Psi}
+\scpltom{\eps H}{\Psi}
&=-\scpltom{g}{\div\eps\Psi},\\
\scpltom{\hat{E}}{\rot\Phi}
-\scpltom{\grad v}{\Phi}
+\scpltom{\ti{H}}{\Phi}
&=\scpltom{F}{\Phi},\\
-\scpltom{U}{\grad\psi}
+\scpltom{\iota_{\reals}r}{\psi}
&=0,\\
\scpltom{v}{\iota_{\reals}\varrho}
&=0,\\
\scpltom{\eps\hat{E}}{\Theta}
&=\scpltom{\eps K}{\Theta},\\
\scpltom{U}{\ti{\Theta}}
&=0.
\end{split}
\end{align}
It holds $U=0$, $H=0$ and $v=0$, $\ti{H}=0$
as well as $r=0$.
Moreover, $-\div\eps\hat{E}=g$ and $\hat{E}\in\rgaom$ with $\rot\hat{E}=F$
as well as $\pi_{\harm}\hat{E}=K$.
Finally, $\hat{E}=E$ from Theorem \ref{soltheoemsys}.
\end{itemize}
\end{theo}

Theorem \ref{soltheofosvarformpartsolfullappfos}
can be extended in the same way.

\begin{rem}[Remark \ref{soltheofosvarformtogetherwithK2andmorerem}]
\label{soltheofosvarformtogetherwithK2andmoreappfosrem}
\eqref{vartiEurappfos}-\eqref{varhatEUvrappfos} are weak formulations of
{\small\begin{align*}
\mu\rot_{\ga}\rot\ti{E}
+\grad u
+H
&=\mu\rot_{\ga}F,
&
-\div_{\ga}\eps\ti{E}
+\iota_{\reals}r
&=g,
&
\pi_{\reals}u
&=0,\\
-\grad_{\gat}\div_{\gan}\eps\hat{E}
+\mu\rot_{\gan}U
+H
&=\grad_{\gat}g,
&
\rot_{\gat}\hat{E}
-\grad_{\gan}v
+\ti{H}
&=F,
&
\div_{\gat}U
&=0,\\
-\grad_{\ga}\div\eps\hat{E}
+\mu\rot U
+H
&=\grad_{\ga}g,
&
\rot_{\ga}\hat{E}
-\grad v
+\ti{H}
&=F,
&
\div_{\ga}U
+\iota_{\reals}r
&=0,
&
\pi_{\reals}v
&=0,
\end{align*}}
and $\pi_{\harm}\ti{E}=K$ as well as $\pi_{\harm}\hat{E}=K$, $\pi_{\ti{\mathcal{H}}}U=0$,
resp. $\pi_{\harm}\hat{E}=K$, $\pi_{\ti{\mathcal{H}}}U=0$,
i.e., in formal matrix notation
\begin{align*}
\begin{bmatrix}
\mu\rot_{\ga}\rot & \grad & 0 & \iota_{\harm}\\
-\div_{\ga}\eps & 0 & \iota_{\reals} & 0\\
0 & \pi_{\reals} & 0 & 0\\
\pi_{\harm} & 0 & 0 & 0
\end{bmatrix}
\begin{bmatrix}
\ti{E}\\
u\\
r\\
H
\end{bmatrix}
&=\begin{bmatrix}
\mu\rot_{\ga}F\\
g\\
0\\
K
\end{bmatrix},\\
\begin{bmatrix}
-\grad_{\gat}\div_{\gan}\eps & \mu\rot_{\gan} & 0 & \iota_{\harm} & 0\\
\rot_{\gat} & 0 & -\grad_{\gan} & 0 & \iota_{\ti{\mathcal{H}}}\\
0 & \div_{\gat} & 0 & 0 & 0\\
\pi_{\harm} & 0 & 0 & 0 & 0\\
0 & \pi_{\ti{\mathcal{H}}} & 0 & 0 & 0
\end{bmatrix}
\begin{bmatrix}
\hat{E}\\
U\\
v\\
H\\
\ti{H}
\end{bmatrix}
&=\begin{bmatrix}
\grad_{\gat}g\\
F\\
0\\
K\\
0
\end{bmatrix},\\
\begin{bmatrix}
-\grad_{\ga}\div\eps & \mu\rot & 0 & 0 & \iota_{\harm} & 0\\
\rot_{\ga} & 0 & -\grad & 0 & 0 & \iota_{\ti{\mathcal{H}}}\\
0 & \div_{\ga} & 0 & \iota_{\reals} & 0 & 0\\
0 & 0 & \pi_{\reals} & 0 & 0 & 0\\
\pi_{\harm} & 0 & 0 & 0 & 0 & 0\\
0 & \pi_{\ti{\mathcal{H}}} & 0 & 0 & 0 & 0
\end{bmatrix}
\begin{bmatrix}
\hat{E}\\
U\\
v\\
r\\
H\\
\ti{H}
\end{bmatrix}
&=\begin{bmatrix}
\grad_{\ga}g\\
F\\
0\\
0\\
K\\
0
\end{bmatrix}.
\end{align*}
\end{rem}

We can apply the main functional a posteriori error estimate Corollary \ref{apostestfoscortsb}
to \eqref{elmgstat} resp. \eqref{elmgstatpo}.

\begin{theo}
\label{apostestemsys}
Let $E\in\rgatom\cap\mu\,\dganom$ be the exact solution of \eqref{elmgstat} resp. \eqref{elmgstatpo}
and $\ti{E}\in\ltepsom$. Then the following estimates hold for the error 
$e=E-\ti{E}$ defined in \eqref{edef}:
\begin{itemize}
\item[\bf(i)]
The error decomposes, i.e.,
$e=e_{\grad}+e_{\harm}+e_{\rot}
\in\grad\hogatom\oplus_{\ltepsom}\harmgatnepsom\oplus_{\ltepsom}\mu\rot\rganom$
and
$$\norm{e}_{\ltepsom}^2
=\norm{e_{\grad}}_{\ltepsom}^2
+\norm{e_{\harm}}_{\ltepsom}^2
+\norm{e_{\rot}}_{\ltepsom}^2.$$
\item[\bf(ii)]
The projection $e_{\grad}=\pi_{\grad}e=E_{g}-\pi_{\grad}\ti{E}\in\grad\hogatom$ satisfies
\begin{align*}
\norm{e_{\grad}}_{\ltepsom}^2
&=\min_{\Phi\in\mu\,\dganom}
\big(c_{\mathsf{fp}}\norm{\div\eps\,\Phi+g}_{\ltom}
+\norm{\Phi-\ti{E}}_{\ltepsom}\big)^2\\
&=\max_{\varphi\in\hogatom}
\big(2\scp{g}{\varphi}_{\ltom}
-\scp{2\ti{E}+\grad\varphi}{\eps\grad\varphi}_{\ltom}\big)
\end{align*}
and the minimum resp. maximum is attained at 
$$\hat{\Phi}
:=e_{\grad}+\ti{E}
\in\mu\,\dganom,\qquad
\hat{\varphi}
:=(\widetilde\grad_{\gat})^{-1}e_{\grad}
\in\hogatom$$
with $-\div\eps\,\hat{\Phi}=-\div\eps\,E=g$,
where $\hogatom$ has to be replaced by $\hobotom$, if $\gat=\emptyset$.
In the latter case $\hat{\varphi}$ is unique only up to a constant.
\item[\bf(iii)]
The projection $e_{\rot}=\pi_{\rot}e=E_{F}-\pi_{\rot}\ti{E}\in\mu\rot\rganom$ satisfies
\begin{align*}
\norm{e_{\rot}}_{\ltepsom}^2
&=\min_{\Phi\in\rgatom}
\big(c_{\mathsf{m}}\norm{\rot\Phi-F}_{\ltom}
+\norm{\Phi-\ti{E}}_{\ltepsom}\big)^2\\
&=\max_{\Psi\in\rganom}
\big(2\scp{F}{\Psi}_{\ltom}
-\scp{2\ti{E}+\mu\rot\Psi}{\rot\Psi}_{\ltom}\big)
\end{align*}
and the minimum resp. maximum is attained at 
$$\hat{\Phi}
:=e_{\rot}+\ti{E}
\in\rgatom,\qquad
\hat{\Psi}
:=(\mu\,\widetilde\rot_{\gan})^{-1}e_{\rot}
\in\rganom\cap\rot\rgatom$$
with $\rot\hat{\Phi}=\rot E=F$,
and at any $\hat{\Psi}\in\rganom$ with $\mu\rot\hat{\Psi}=e_{\rot}$.
\item[\bf(iv)]
The projection $e_{\harm}=\pi_{\harm}e=H-\pi_{\harm}\ti{E}\in\harmgatnepsom$ satisfies
\begin{align*}
\norm{e_{\harm}}_{\ltepsom}^2
&=\min_{\varphi\in\hogatom}
\min_{\Phi\in\rganom}
\norm{H-\ti{E}+\grad\varphi+\mu\rot\Phi}_{\ltepsom}^2\\
&=\max_{\Psi\in\harmgatnepsom}
\bscp{2(H-\ti{E})-\Psi}{\Psi}_{\ltepsom}
\end{align*}
and the minimum resp. maximum is attained at 
$$\hat{\varphi}
:=(\widetilde\grad_{\gat})^{-1}\pi_{\grad}\ti{E}
\in\hogatom,\qquad
\hat{\Phi}
:=(\mu\,\widetilde\rot_{\gan})^{-1}\pi_{\rot}\ti{E}
\in\rganom\cap\rot\rgatom$$
resp. $\hat{\Psi}:=e_{\harm}\in\harmgatnepsom$
with $\grad\hat{\varphi}+\mu\rot\hat{\phi}=(\pi_{\grad}+\pi_{\rot})\ti{E}=(1-\pi_{\harm})\ti{E}$,
and at any $\hat{\Phi}\in\rganom$ with $\mu\rot\hat{\Phi}=\pi_{\rot}\ti{E}$,
where $\hogatom$ has to be replaced by $\hobotom$, if $\gat=\emptyset$.
In the latter case $\hat{\varphi}$ is unique only up to a constant.
\end{itemize}
If $\ti{E}:=H+\ti{E}_{\bot}$ with some $\ti{E}_{\bot}\in\harmgatnepsom^{\bot_{\ltepsom}}$,
then $e_{\harm}=0$, and 
in (ii) and (iii) $\ti{E}$ can be replaced by $\ti{E}_{\bot}$.
In this case, for the attaining minima it holds
$$\hat{\Phi}_{\bot}:=e_{\grad}+\ti{E}_{\bot}\in\mu\,\dganom,\qquad
\hat{\Phi}_{\bot}:=e_{\rot}+\ti{E}_{\bot}\in\rgatom.$$
\end{theo}

\begin{rem}
\label{apostestemsysrem}
For conforming approximations Corollary \ref{apostestfosconfcor}
and Remark \ref{apostestfosconfcorrem} yield the following:
\begin{itemize}
\item[\bf(i)]
If $\ti{E}\in\mu\,\dganom$, then $e\in\mu\,\dganom$ and 
$$\norm{e_{\grad}}_{\ltepsom}
\leq c_{\mathsf{fp}}\norm{\div\eps\,\ti{E}+g}_{\ltom}
=c_{\mathsf{fp}}\norm{\div\eps\,e}_{\ltom}.$$
\item[\bf(ii)]
If $\ti{E}\in\rgatom$, then $e\in\rgatom$ and 
$$\norm{e_{\rot}}_{\ltepsom}
\leq c_{\mathsf{m}}\norm{\rot\ti{E}-F}_{\ltom}
=c_{\mathsf{m}}\norm{\rot e}_{\ltom}.$$
\item[\bf(iii)]
If $\ti{E}\in\rgatom\cap\mu\,\dganom$, then $e\in\rgatom\cap\mu\,\dganom$ and 
this very conforming error is equivalent to 
the weighted least squares functional
$$\calF(\ti{E})
:=\norm{H-\pi_{\harm}\ti{E}}_{\ltepsom}^2
+(1+c_{\mathsf{m}}^2)\norm{\rot\ti{E}-F}_{\ltom}^2
+(1+c_{\mathsf{fp}}^2)\norm{\div\eps\,\ti{E}+g}_{\ltom}^2,$$
i.e., $\norm{e}_{\rgatom\,\cap\,\mu\,\dganom}^2
\leq\calF(\ti{E})
\leq(1+\max\{c_{\mathsf{fp}},c_{\mathsf{m}}\}^2)\norm{e}_{\rgatom\,\cap\,\mu\,\dganom}^2$.
\end{itemize}
\end{rem}

\subsection{Prototype Second Order Systems: Laplacian and $\rot\rot$}

As prototypical examples for second order systems we will discuss
the Laplacian and the $\rot\rot$-system, both with mixed boundary conditions.
Suppose the assumptions of Section \ref{emssec} are valid and recall the notations.
For simplicity and to avoid case studies we assume $\emptyset\neq\gat\neq\ga$.

\subsubsection{The Laplacian}

Suppose $g\in\ltom$.
Let us consider the linear second order equation (in classical strong formulation)
of the perturbed negative Laplacian with mixed boundary conditions
for a function $u:\om\to\reals$
\begin{align}
\mylabel{Lapsos}
-\div\eps\grad u&=g\text{ in }\om,
&
u&=0\text{ at }\gat,
&
n\cdot\eps\grad u&=0\text{ at }\gan.
\end{align}
The corresponding variational formulation, which is uniquely solvable 
by Lax-Milgram's lemma, is the following: Find $u\in\hogatom$, such that
$$\forall\,\varphi\in\hogatom\qquad
\scp{\grad u}{\grad\varphi}_{\ltepsom}=\scpltom{g}{\varphi}.$$
Then, by definition and the results of \cite{bauerpaulyschomburgmcpweaklip}, 
we get $\eps\grad u\in\dganom$ with $-\div\eps\grad u=g$.
Hence, by setting 
$$E:=\grad u\in\mu\,\dganom\cap\grad\hogatom=\mu\,\dganom\cap\rgatzom\cap\harmgatnepsom^{\bot_{\ltepsom}}$$ 
we see that the pair $(u,E)$ 
solves the linear first order system (in classical strong formulation)
of electro-magneto statics type with mixed boundary conditions
\begin{align}
\nonumber
\grad u&=E,
&
\rot E&=0
&
\text{in }&\om,
&
u&=0,
&
n\times E&=0
&
\text{at }&\gat,\\
\mylabel{Lapsossys}
&
&
-\div\eps E&=g
&
\text{in }&\om,
&
&
&
n\cdot\eps E&=0
&
\text{at }&\gan,\\
\nonumber
&
&
\pi_{\harm}E&=0
&
\text{in }&\om.
\end{align}
Similar to the latter subsection we define the operators $\Ao$, $\At$, $\Ath$ and also $\Az$, $\A_{4}$
together with the respective adjoints and reduced operators by the de Rham complexes
$$\begin{CD}
\{0\} @> \Az=\iota_{\{0\}} >>
\hogatom @> \Ao=\grad_{\gat} >>
\rgatom @> \At=\rot_{\gat} >>
\dgatom @> \Ath=\div_{\gat} >>
\ltom @> \A_{4}=\pi_{\{0\}} >>
\{0\},
\end{CD}$$
$$\begin{CD}
\{0\} @< \Azs=\pi_{\{0\}} <<
\ltom @< \Aos=-\div_{\gan}\eps <<
\mu\,\dganom @< \Ats=\mu\rot_{\gan} <<
\rganom @< \Aths=-\grad_{\gan} <<
\hoganom @< \A_{4}^*=\iota_{\{0\}} <<
\{0\}.
\end{CD}$$
As before, all basic Hilbert spaces are $\ltom$ except of $\Hit=\ltepsom$.
Then \eqref{Lapsos} turns to 
\begin{align*}
\Aos\Ao u&=g,\\
\Azs u=\pi_{\{0\}}u&=0,\\
\pio u=\pi_{\{0\}}u&=0
\end{align*}
and this system is (again) uniquely solvable by Theorem \ref{soltheosos}
as $g\in\ltom=R(\Aos)$
with solution $u$ depending continuously on the data.
\eqref{Lapsossys} reads
\begin{align*}
\Ao u=\grad_{\gat}u&=E,
&
\At E=\rot_{\gat}E&=0,\\
\Azs u=\pi_{\{0\}}u&=0,
&
\Aos E=-\div_{\gan}\eps\,E&=g,\\
\pio u=\pi_{\{0\}}u&=0,
&
\pit E=\pi_{\harm}E&=0.
\end{align*}
We can apply the main functional a posteriori error estimates from Theorem \ref{apostestsostheo}.

\begin{theo}
\label{apostestsostheoemsys}
Let $u\in\hogatom$ be the exact solution of \eqref{Lapsos}, $E:=\grad u$, and 
$(\ti{u},\ti{E})\in\ltom\times\ltepsom$. Then the following estimates hold for the errors
$e_{u}:=u-\ti{u}$ and $e_{E}:=E-\ti{E}$:
\begin{itemize}
\item[\bf(i)]
The error $e_{E}$ decomposes, i.e.,
$$e_{E}=e_{E,\grad}+e_{E,\harm}+e_{E,\rot}
\in\grad\hogatom\oplus_{\ltepsom}\harmgatnepsom\oplus_{\ltepsom}\mu\rot\rganom$$
and
$$\norm{e_{E}}_{\ltepsom}^2
=\norm{e_{E,\grad}}_{\ltepsom}^2
+\norm{e_{E,\harm}}_{\ltepsom}^2
+\norm{e_{E,\rot}}_{\ltepsom}^2.$$
\item[\bf(ii)]
$e_{u}=\pi_{\div}e_{u}\in\div\dganom=\ltom$ and 
\begin{align*}
\normltom{e_{u}}^2
&=\min_{\varphi\in\hogatom}
\min_{\Phi\in\mu\,\dganom}
\big(c_{\mathsf{fp}}^2\normltom{\div\eps\,\Phi+g}
+c_{\mathsf{fp}}\norm{\Phi-\grad\varphi}_{\ltepsom}
+\normltom{\varphi-\ti{u}}\big)^2\\
&=\min_{\substack{\varphi\in\hogatom,\\\grad\varphi\in\mu\,\dganom}}
\big(c_{\mathsf{fp}}^2\normltom{\div\eps\grad\varphi+g}
+\normltom{\varphi-\ti{u}}\big)^2\\
&=\max_{\substack{\phi\in\hogatom,\\\grad\phi\in\mu\,\dganom}}
\big(2\scpltom{g}{\phi}+\scpltom{2\ti{u}-\div\eps\grad\phi}{\div\eps\grad\phi}\big)
\end{align*}
and the minima resp. maximum are attained at
$$\hat\varphi:=e_{u}+\ti{u}\in\hogatom,\qquad
\hat\Phi:=E\in\mu\,\dganom,\qquad
\hat\phi:=(\widetilde\grad_{\gat})^{-1}(-\widetilde\div_{\gan}\eps)^{-1}\in\hogatom$$
with $\grad\hat\varphi,\grad\hat\phi\in\mu\,\dganom$ and
$\grad\hat\varphi=\grad u=E$ and $-\div\eps\grad\hat\varphi=-\div\eps\,E=g$ as well as
$-\div\eps\,\hat\Phi=-\div\eps\,E=g$.
\item[\bf(iii)]
The projection $e_{E,\grad}=\pi_{\grad}e_{E}=E-\pi_{\grad}\ti{E}\in\grad\hogatom$ satisfies
\begin{align*}
\norm{e_{E,\grad}}_{\ltepsom}^2
&=\min_{\Phi\in\mu\,\dganom}
\big(c_{\mathsf{fp}}\norm{\div\eps\,\Phi+g}_{\ltom}
+\norm{\Phi-\ti{E}}_{\ltepsom}\big)^2\\
&=\max_{\varphi\in\hogatom}
\big(2\scp{g}{\varphi}_{\ltom}
-\scp{2\ti{E}+\grad\varphi}{\grad\varphi}_{\ltepsom}\big)
\end{align*}
and the minimum resp. maximum is attained at 
$$\hat{\Phi}
:=e_{E,\grad}+\ti{E}
\in\mu\,\dganom,\qquad
\hat{\varphi}
:=(\widetilde\grad_{\gat})^{-1}e_{E,\grad}
\in\hogatom$$
with $-\div\eps\,\hat{\Phi}=-\div\eps\,E=g$.
\item[\bf(iv)]
The projection $e_{E,\rot}=\pi_{\rot}e_{E}=-\pi_{\rot}\ti{E}\in\mu\rot\rganom$ satisfies
\begin{align*}
\norm{e_{E,\rot}}_{\ltepsom}^2
&=\min_{\Phi\in\rgatom}
\big(c_{\mathsf{m}}\norm{\rot\Phi}_{\ltom}
+\norm{\Phi-\ti{E}}_{\ltepsom}\big)^2
=\min_{\Phi\in\rgatzom}
\norm{\Phi-\ti{E}}_{\ltepsom}^2\\
&=\max_{\Psi\in\rganom}
\big(-\scp{2\ti{E}+\mu\rot\Psi}{\mu\rot\Psi}_{\ltepsom}\big)
\end{align*}
and the minimum resp. maximum is attained at 
$$\hat{\Phi}
:=e_{E,\rot}+\ti{E}
\in\rgatzom,\qquad
\hat{\Psi}
:=(\mu\,\widetilde\rot_{\gan})^{-1}e_{E,\rot}
\in\rganom\cap\rot\rgatom$$
with $\rot\hat{\Phi}=\rot E=0$.
\item[\bf(v)]
The projection $e_{E,\harm}=\pi_{\harm}e_{E}=-\pi_{\harm}\ti{E}\in\harmgatnepsom$ satisfies
\begin{align*}
\norm{e_{E,\harm}}_{\ltepsom}^2
&=\min_{\varphi\in\hogatom}
\min_{\Phi\in\rganom}
\norm{-\ti{E}+\grad\varphi+\mu\rot\Phi}_{\ltepsom}^2\\
&=\max_{\Psi\in\harmgatnepsom}
\big(-\bscp{2\ti{E}+\Psi}{\Psi}_{\ltepsom}\big)
\end{align*}
and the minimum resp. maximum is attained at 
$$\hat{\varphi}
:=(\widetilde\grad_{\gat})^{-1}\pi_{\grad}\ti{E}
\in\hogatom,\qquad
\hat{\Phi}
:=(\mu\,\widetilde\rot_{\gan})^{-1}\pi_{\rot}\ti{E}
\in\rganom\cap\rot\rgatom$$
resp. $\hat{\Psi}:=e_{E,\harm}\in\harmgatnepsom$
with $\grad\hat{\varphi}+\mu\rot\hat{\phi}=(\pi_{\grad}+\pi_{\rot})\ti{E}=(1-\pi_{\harm})\ti{E}$.
\end{itemize}
If $\ti{E}:=\ti{E}_{\bot}$ with some $\ti{E}_{\bot}\in\harmgatnepsom^{\bot_{\ltepsom}}$,
then $e_{E,\harm}=0$, and 
in (iii) and (iv) $\ti{E}$ can be replaced by $\ti{E}_{\bot}$.
In this case, for the attaining minima it holds
$$\hat{\Phi}_{\bot}:=e_{E,\grad}+\ti{E}_{\bot}\in\mu\,\dganom,\qquad
\hat{\Phi}_{\bot}:=e_{E,\rot}+\ti{E}_{\bot}\in\rgatzom.$$
\end{theo}

For conforming approximations $\ti{E}\in\grad\hogatom$ we have $e_{E,\rot}=e_{E,\harm}=0$
and $e_{E}=e_{E,\grad}$. Especially, if $\ti{u}\in\hogatom$ and $\ti{E}:=\grad\ti{u}$
with a conforming approximation $\ti{u}\in\hogatom$, the estimates of the latter theorem simplify.
More precisely, (ii) turns to the following result:
If $\ti{u}\in\hogatom$, then $e_{u}\in\hogatom$ and we can choose, e.g., $\varphi:=\ti{u}$ yielding, e.g.,
$$\normltom{e_{u}}
\leq\min_{\Phi\in\mu\,\dganom}
\big(c_{\mathsf{fp}}^2\normltom{\div\eps\,\Phi+g}
+c_{\mathsf{fp}}\norm{\Phi-\grad\ti{u}}_{\ltepsom}\big),$$
which might not be sharp anymore. Similarly, the results of (iii) read as follows:
If $\ti{u}$ belongs to $\hogatom$, then $\ti{E}:=\grad\ti{u}\in\grad\hogatom$
and $\grad(u-\ti{u})=e_{E}=e_{E,\grad}\in\grad\hogatom$ as well as
\begin{align}
\label{apostlapegrad}
\begin{split}
\norm{e_{E}}_{\ltepsom}^2
&=\min_{\Phi\in\mu\,\dganom}
\big(c_{\mathsf{fp}}\norm{\div\eps\,\Phi+g}_{\ltom}
+\norm{\Phi-\grad\ti{u}}_{\ltepsom}\big)^2\\
&=\max_{\varphi\in\hogatom}
\big(2\scp{g}{\varphi}_{\ltom}
-\scp{\grad(2\ti{u}+\varphi)}{\grad\varphi}_{\ltepsom}\big)
\end{split}
\end{align}
and the minimum resp. maximum is attained at 
$$\hat{\Phi}
:=e_{E}+\grad\ti{u}=\grad u
\in\mu\,\dganom,\qquad
\hat{\varphi}
:=(\widetilde\grad_{\gat})^{-1}e_{E}
\in\hogatom$$
with $-\div\eps\,\hat{\Phi}=-\div\eps\,E=g$.
Note that \eqref{apostlapegrad} are the well known functional a posteriori error estimates 
for the energy norm associated to the Laplacian, see, e.g., \cite{repinbookone}.

\subsubsection{The $\rot\rot$-operator}

Suppose $F\in\rot\rgatom=\dgatzom\cap\harmgantom^{\bot_{\ltom}}$ and $g\in\ltom$ as well as $H\in\harmgantom$.
Let us consider the linear second order equation (in classical strong formulation)
of the perturbed $\rot\rot$-operator with mixed boundary conditions for a vector field $B:\om\to\rt$
\begin{align}
\nonumber
\rot\mu\rot B&=F
&
&\text{in }\om,
&
n\times B&=0
&&&
&\text{at }\gan,\\
\mylabel{rotrotsosone}
\div\nu B&=g
&
&\text{in }\om,
&
n\cdot\nu B&=0,
&
n\times\mu\rot B&=0
&
&\text{at }\gat,\\
\nonumber
\pi_{\ti\harmsymbol}B&=H
&
&\text{in }\om.
\end{align}
Here $\pi_{\ti\harmsymbol}:\ltom\to\harmgantom$ and 
for simplicity we set $\nu:=\id$ for the matrix field $\nu$.
The partial solution $B_{g}$ can be computed by solving a Laplace problem.
The corresponding variational formulation, which is uniquely solvable 
by Lax-Milgram's lemma, to find the partial solution $B_{F}$ of
\begin{align*}
\rot\mu\rot B_{F}&=F
&
&\text{in }\om,
&
n\times B_{F}&=0
&&&
&\text{at }\gan,\\
\div B_{F}&=0
&
&\text{in }\om,
&
n\cdot B_{F}&=0,
&
n\times\mu\rot B_{F}&=0
&
&\text{at }\gat,\\
\pi_{\ti\harmsymbol}B_{F}&=0
&
&\text{in }\om,
\end{align*}
is the following: Find $B_{F}\in\rganom\cap\rot\rgatom$, 
such that\footnote{Note that \eqref{rotrotvarform}
holds for all $\Phi\in\rganom\cap\rot\rgatom$ if and only if 
it holds for all $\Phi\in\rganom$ since $F\in\rot\rgatom$.}
\begin{align}
\mylabel{rotrotvarform}
\forall\,\Phi\in\rganom\qquad
\scp{\rot B_{F}}{\rot\Phi}_{\ltmuom}=\scpltom{F}{\Phi}.
\end{align}
Then, by definition and the results of \cite{bauerpaulyschomburgmcpweaklip}, 
we get $\mu\rot B_{F}\in\rgatom$ with $\rot\mu\rot B_{F}=F$.
Hence, by setting 
$$E:=\mu\rot B_{F}\in\rgatom\cap\mu\rot\rganom=\rgatom\cap\mu\,\dganzom\cap\harmgatnepsom^{\bot_{\ltepsom}}$$ 
we see that the pair $(B,E)$ 
solves the linear first order system (in classical strong formulation)
of electro-magneto statics type with mixed boundary conditions
\begin{align}
\nonumber
\mu\rot B&=\mu\rot B_{F}=E,
&
\rot E&=F
&
\text{in }&\om,
&
n\times B&=0,
&
n\cdot\eps E&=0
&
\text{at }&\gan,\\
\mylabel{rotrotsossys}
\div B&=g,
&
\div\eps E&=0
&
\text{in }&\om,
&
n\cdot B&=0,
&
n\times E&=0
&
\text{at }&\gat,\\
\nonumber
\pi_{\ti\harmsymbol}B&=H,
&
\pi_{\harm}E&=0
&
\text{in }&\om.
\end{align}
Let us define operators $\T_{1}$, $\T_{2}$, $\T_{3}$ using $\Ao$, $\At$, $\Ath$
together with the respective adjoints and reduced operators by the complexes
{\footnotesize
$$\begin{CD}
\{0\} @> \T_{4}^{*}:=\iota_{\{0\}} >>
\hogatom @> \T_{3}^{*}:=\Ao=\grad_{\gat} >>
\rgatom @> \T_{2}^{*}:=\At=\rot_{\gat} >>
\dgatom @> \T_{1}^{*}:=\Ath=\div_{\gat} >>
\ltom @> \T_{0}^{*}:=\pi_{\{0\}} >>
\{0\},
\end{CD}$$
$$\begin{CD}
\{0\} @< \T_{4}:=\pi_{\{0\}} <<
\ltom @< \T_{3}:=\Aos=-\div_{\gan}\eps <<
\mu\,\dganom @< \T_{2}:=\Ats=\mu\rot_{\gan} <<
\rganom @< \T_{1}:=\Aths=-\grad_{\gan} <<
\hoganom @< \T_{0}:=\iota_{\{0\}} <<
\{0\}.
\end{CD}$$}
As before, all basic Hilbert spaces are $\ltom$ except of $\Hith=\ltepsom$,
corresponding to the domain of definition of $\T_{3}$.
Then \eqref{rotrotsosone} turns to 
\begin{align*}
\T_{2}^{*}\T_{2}B=\rot_{\gat}\mu\rot_{\gan}B&=F,\\
\T_{1}^{*}B=\div_{\gat}B&=g,\\
\pit B=\pi_{\ti\harmsymbol}B&=H
\end{align*}
and this system is uniquely solvable by Theorem \ref{soltheosos}
as $F\in R(\T_{2}^{*})$, $g\in R(\T_{1}^{*})$, and $H\in K_{2}$
with solution $B$ depending continuously on the data.
\eqref{rotrotsossys} reads
\begin{align*}
\T_{2}B=\mu\rot_{\gan}B&=E,\
&
\T_{3}E=-\div_{\gan}\eps\,E&=0,\\
\T_{1}^{*}B=\div_{\gat}B&=g,
&
\T_{2}^{*}E=\rot_{\gat}E&=F,\\
\pit B=\pi_{\ti\harmsymbol}B&=H,
&
\pith E=\pi_{\harm}E&=0.
\end{align*}
Again, we can apply the main functional a posteriori error estimates from Theorem \ref{apostestsostheo}.

\begin{theo}
\label{apostestsostheoemsysrotrot}
Let $B\in\rganom\cap\dgatom$ be the exact solution of \eqref{rotrotsosone}, 
$E:=\mu\rot B\in\rgatom$, and 
$(\ti{B},\ti{E})\in\ltom\times\ltepsom$. Then the following estimates hold for the errors
$e_{B}:=B-\ti{B}$ and $e_{E}:=E-\ti{E}$:
\begin{itemize}
\item[\bf(i)]
The errors $e_{B}$ and $e_{E}$ decompose, i.e.,
\begin{align*}
e_{B}=e_{B,\grad}+e_{B,\ti\harmsymbol}+e_{B,\rot}
&\in\grad\hoganom\oplus_{\ltom}\harmgantom\oplus_{\ltom}\rot\rgatom,\\
e_{E}=e_{E,\grad}+e_{E,\harm}+e_{E,\rot}
&\in\grad\hogatom\oplus_{\ltepsom}\harmgatnepsom\oplus_{\ltepsom}\mu\rot\rganom
\end{align*}
and
\begin{align*}
\norm{e_{B}}_{\ltom}^2
&=\norm{e_{B,\grad}}_{\ltom}^2
+\norm{e_{B,\ti\harmsymbol}}_{\ltom}^2
+\norm{e_{B,\rot}}_{\ltom}^2,\\
\norm{e_{E}}_{\ltepsom}^2
&=\norm{e_{E,\grad}}_{\ltepsom}^2
+\norm{e_{E,\harm}}_{\ltepsom}^2
+\norm{e_{E,\rot}}_{\ltepsom}^2.
\end{align*}
\item[\bf(ii)]
The projection $e_{B,\grad}=\pi_{\grad}e_{B}=B_{g}-\pi_{\grad}\ti{B}\in\grad\hoganom$ satisfies
\begin{align*}
\norm{e_{B,\grad}}_{\ltom}^2
&=\min_{\Phi\in\dgatom}
\big(\ti{c}_{\mathsf{fp}}\norm{\div\Phi-g}_{\ltom}
+\norm{\Phi-\ti{B}}_{\ltom}\big)^2\\
&=\max_{\varphi\in\hoganom}
\big(2\scpltom{g}{\varphi}+\scp{2\ti{B}-\grad\varphi}{\grad\varphi}_{\ltom}\big)
\end{align*}
and the minimum resp. maximum is attained at 
$$\hat{\Phi}
:=e_{B,\grad}+\ti{B}
\in\dgatom,\qquad
\hat{\varphi}
:=-(\widetilde\grad_{\gan})^{-1}e_{B,\grad}
\in\hoganom$$
with $\div\hat{\Phi}=\div B=g$.
\item[\bf(iii)]
The projection $e_{B,\rot}=\pi_{\rot}e_{B}=B_{E}-\pi_{\rot}\ti{B}\in\rot\rgatom$ satisfies
\begin{align*}
\norm{e_{B,\rot}}_{\ltom}^2
&=\min_{\Psi\in\rganom}
\min_{\Phi\in\rgatom}
\big(c_{\mathsf{m}}^2\norm{\rot\Phi-F}_{\ltom}
+c_{\mathsf{m}}\norm{\Phi-\mu\rot\Psi}_{\ltepsom}
+\norm{\Psi-\ti{B}}_{\ltom}\big)^2\\
&=\min_{\substack{\Psi\in\rganom,\\\mu\rot\Psi\in\rgatom}}
\big(c_{\mathsf{m}}^2\norm{\rot\mu\rot\Psi-F}_{\ltom}
+\norm{\Psi-\ti{B}}_{\ltom}\big)^2\\
&=\max_{\substack{\Theta\in\rganom,\\\mu\rot\Theta\in\rgatom}}
\big(2\scpltom{F}{\Theta}-\scp{2\ti{E}+\rot\mu\rot\Theta}{\rot\mu\rot\Theta}_{\ltom}\big)
\end{align*}
and the minima resp. maximum is attained at 
$$\hat{\Psi}
:=e_{B,\rot}+\ti{B}
\in\rganom,\qquad
\hat{\Phi}
:=E\in\rgatom,$$
and $\hat{\Theta}
:=(\mu\,\widetilde\rot_{\gan})^{-1}
(\widetilde\rot_{\gat})^{-1}e_{B,\rot}
\in\rganom\cap\rot\rgatom$
with $\mu\rot\hat\Psi,\mu\rot\hat\Theta,\in\rgatom$
and $\mu\rot\hat\Psi=\mu\rot B=E$ and $\rot\mu\rot\hat\Psi=\rot E=F$
as well as $\rot\hat\Phi=\rot E=F$.
\item[\bf(iv)]
The projection $e_{B,\ti\harmsymbol}=\pi_{\ti\harmsymbol}e_{B}=H-\pi_{\ti\harmsymbol}\ti{B}\in\harmgantom$ satisfies
\begin{align*}
\norm{e_{B,\ti\harmsymbol}}_{\ltom}^2
&=\min_{\varphi\in\hoganom}
\min_{\Phi\in\rgatom}
\norm{H-\ti{B}-\grad\varphi+\rot\Phi}_{\ltom}^2\\
&=\max_{\Psi\in\harmgantom}
\bscp{2(H-\ti{B})-\Psi}{\Psi}_{\ltom}
\end{align*}
and the minimum resp. maximum is attained at 
$$\hat{\varphi}
:=-(\widetilde\grad_{\gan})^{-1}\pi_{\grad}\ti{B}
\in\hoganom,\qquad
\hat{\Phi}
:=(\widetilde\rot_{\gat})^{-1}\pi_{\rot}\ti{B}
\in\rgatom\cap\mu\rot\rganom$$
resp. $\hat{\Psi}:=e_{B,\ti\harmsymbol}\in\harmgantom$
with $-\grad\hat{\varphi}+\rot\hat{\phi}=(\pi_{\grad}+\pi_{\rot})\ti{B}=(1-\pi_{\ti\harmsymbol})\ti{B}$.
\item[\bf(v)]
The projection $e_{E,\grad}=\pi_{\grad}e_{E}=-\pi_{\grad}\ti{E}\in\grad\hogatom$ satisfies
\begin{align*}
\norm{e_{E,\grad}}_{\ltepsom}^2
&=\min_{\Phi\in\mu\,\dganom}
\big(c_{\mathsf{fp}}\norm{\div\eps\,\Phi}_{\ltom}
+\norm{\Phi-\ti{E}}_{\ltepsom}\big)^2
=\min_{\Phi\in\mu\,\dganzom}
\norm{\Phi-\ti{E}}_{\ltepsom}^2\\
&=\max_{\varphi\in\hogatom}
\big(-\scp{2\ti{E}+\grad\varphi}{\grad\varphi}_{\ltepsom}\big)
\end{align*}
and the minimum resp. maximum is attained at 
$$\hat{\Phi}
:=e_{E,\grad}+\ti{E}
\in\mu\,\dganzom,\qquad
\hat{\varphi}
:=(\widetilde\grad_{\gat})^{-1}e_{E,\grad}
\in\hogatom$$
with $-\div\eps\,\hat{\Phi}=-\div\eps\,E=0$.
\item[\bf(vi)]
The projection $e_{E,\rot}=\pi_{\rot}e_{E}=E-\pi_{\rot}\ti{E}\in\mu\rot\rganom$ satisfies
\begin{align*}
\norm{e_{E,\rot}}_{\ltepsom}^2
&=\min_{\Phi\in\rgatom}
\big(c_{\mathsf{m}}\norm{\rot\Phi-F}_{\ltom}
+\norm{\Phi-\ti{E}}_{\ltepsom}\big)^2\\
&=\max_{\Psi\in\rganom}
\big(2\scpltom{F}{\Psi}-\scp{2\ti{E}+\mu\rot\Psi}{\mu\rot\Psi}_{\ltepsom}\big)
\end{align*}
and the minimum resp. maximum is attained at 
$$\hat{\Phi}
:=e_{E,\rot}+\ti{E}
\in\rgatom,\qquad
\hat{\Psi}
:=(\mu\,\widetilde\rot_{\gan})^{-1}e_{E,\rot}
\in\rganom\cap\rot\rgatom$$
with $\rot\hat{\Phi}=\rot E=F$.
\item[\bf(vii)]
The projection $e_{E,\harm}=\pi_{\harm}e_{E}=-\pi_{\harm}\ti{E}\in\harmgatnepsom$ satisfies
\begin{align*}
\norm{e_{E,\harm}}_{\ltepsom}^2
&=\min_{\varphi\in\hogatom}
\min_{\Phi\in\rganom}
\norm{-\ti{E}+\grad\varphi+\mu\rot\Phi}_{\ltepsom}^2\\
&=\max_{\Psi\in\harmgatnepsom}
\big(-\bscp{2\ti{E}+\Psi}{\Psi}_{\ltepsom}\big)
\end{align*}
and the minimum resp. maximum is attained at 
$$\hat{\varphi}
:=(\widetilde\grad_{\gat})^{-1}\pi_{\grad}\ti{E}
\in\hogatom,\qquad
\hat{\Phi}
:=(\mu\,\widetilde\rot_{\gan})^{-1}\pi_{\rot}\ti{E}
\in\rganom\cap\rot\rgatom$$
resp. $\hat{\Psi}:=e_{E,\harm}\in\harmgatnepsom$
with $\grad\hat{\varphi}+\mu\rot\hat{\phi}=(\pi_{\grad}+\pi_{\rot})\ti{E}=(1-\pi_{\harm})\ti{E}$.
\end{itemize}
If $\ti{B}=H+\ti{B}_{\bot}$ with some $\ti{B}_{\bot}\in\harmgantom^{\bot_{\ltom}}$,
then $e_{B,\ti\harmsymbol}=0$, and 
in (ii) and (iii) $\ti{B}$ can be replaced by $\ti{B}_{\bot}$.
If $\ti{E}=\ti{E}_{\bot}$ with some $\ti{E}_{\bot}\in\harmgatnepsom^{\bot_{\ltepsom}}$,
then $e_{E,\harm}=0$, and 
in (v) and (vi) $\ti{E}$ can be replaced by $\ti{E}_{\bot}$.
\end{theo}

A reasonable assumption is, that we have conforming approximations 
$$\ti{B}_{g}\in\grad\hoganom=\rganzom\cap\harmgantom^{\bot},\qquad
\ti{B}_{F}\in\rganom$$
of $B_{g}\in\dgatom\cap\grad\hoganom$ and $B_{F}\in\rganom\cap\rot\rgatom$ 
and hence a conforming approximation 
$$\ti{E}:=\mu\rot\ti{B}_{F}\in\mu\rot\rganom$$
of $E\in\rgatom\cap\mu\rot\rganom$,
which implies $e_{E}=e_{E,\rot}\in\mu\rot\rganom$ and $e_{E,\grad}=e_{E,\harm}=0$ 
as well as $\ti{B}-H=\ti{B}_{F}+\ti{B}_{g}\in\rganom$ and $e_{B}\in\rganom$. 
In this case the estimates of the latter theorem simplify.
More precisely, e.g., (iii) turns to the following result:
If $\ti{B}_{F},\ti{B}_{g}\in\rganom$, then $\ti{B},e_{B}\in\rganom$ 
and we can choose, e.g., $\Psi:=\ti{B}$ yielding, e.g.,
$$\norm{e_{B,\rot}}_{\ltom}
\leq\min_{\Phi\in\rgatom}
\big(c_{\mathsf{m}}^2\norm{\rot\Phi-F}_{\ltom}
+c_{\mathsf{m}}\norm{\Phi-\mu\rot\ti{B}}_{\ltepsom}\big),$$
which might not be sharp anymore. Similarly, the results of (vi) read as follows:
If $\ti{B}_{F}\in\rganom$, then $\ti{E}:=\mu\rot\ti{B}_{F}\in\mu\rot\rganom$
and $\mu\rot(B-\ti{B}_{F})=e_{E}=e_{E,\rot}\in\mu\rot\rganom$ as well as
\begin{align}
\label{apostrotroterot}
\begin{split}
\norm{e_{E}}_{\ltepsom}^2
&=\min_{\Phi\in\rgatom}
\big(c_{\mathsf{m}}\norm{\rot\Phi-F}_{\ltom}
+\norm{\Phi-\mu\rot\ti{B}_{F}}_{\ltepsom}\big)^2\\
&=\max_{\Psi\in\rganom}
\big(2\scpltom{F}{\Psi}-\scp{\mu\rot(2\ti{B}_{F}+\Psi)}{\mu\rot\Psi}_{\ltepsom}\big)
\end{split}
\end{align}
and the minimum resp. maximum is attained at 
$$\hat{\Phi}
:=e_{E}+\mu\rot\ti{B}_{F}
\in\rgatom,\qquad
\hat{\Psi}
:=(\mu\,\widetilde\rot_{\gan})^{-1}e_{E}
\in\rganom\cap\rot\rgatom$$
with $\rot\hat{\Phi}=\rot E=F$.
Note that \eqref{apostrotroterot} are in principle the functional a posteriori error estimates 
for the energy norm associated to the $\rot\rot$-operator,
which have been proved in \cite{paulyrepinmaxst}.

\subsection{More Applications}
\mylabel{moreappsec}

There are plenty more applications fitting our general theory 
for the systems \eqref{Aprob}, \eqref{AsAprob}, \eqref{AsAAsAprob}, i.e.,
\begin{align*}
\At x&=f,
&
\Ats\At x&=f,
&
\Ats\At x&=f,\\
\Aos x&=g,
&
\Aos x&=g,
&
\Ao\Aos x&=g,\\
\pit x&=k,
&
\pit x&=k,
&
\pit x&=k.
\end{align*}
E.g., if we denote the exterior derivative and the co-derivative 
associated with some Riemannian manifold having compact closure by $\ed$ and $\cd$,
we can discuss problems like
\begin{align*}
\ed E&=F,
&
-\cd\mu\ed E&=F,
&
-\cd\mu\ed E&=F,\\
-\cd\eps E&=G,
&
-\cd\eps E&=G,
&
-\ed\cd\eps E&=G,\\
\pi E&=H,
&
\pi E&=H,
&
\pi E&=H
\end{align*}
for mixed tangential and normal boundary conditions for some differential form $E$.
Moreover, problems in linear elasticity, Stokes equations, biharmonic theory, 
general relativity, $\rot\rot\rot\rot$-operators, to mention just a few examples,
fit into our general framework. Note that all these problems feature the underlying complexes
\eqref{seqAl}-\eqref{seqAls}. More precisely, 
let $\om\subset\rt$ or $\om\subset\rN$, $N\geq2$, be a bounded weak Lipschitz domain
with weak Lipschitz interface, and, for simplicity, let us just present
homogeneous material parameters with $\eps=\id$, $\mu=\id$ and skip the cohomology  projector $\pi$.
Then we have the following complexes and linear systems:
\begin{itemize}
\item
electro-magnetics (as already extensively discussed before)
$$\begin{CD}
\cdots @> \Az=\iota_{\cdots} >>
\hogatom @> \Ao=\grad_{\gat} >>
\rgatom @> \At=\rot_{\gat} >>
\dgatom @> \Ath=\div_{\gat} >>
\ltom @> \A_{4}=\pi_{\cdots} >>
\cdots
\end{CD}$$
$$\begin{CD}
\cdots @< \Azs=\pi_{\cdots} <<
\ltom @< \Aos=-\div_{\gan} <<
\dganom @< \Ats=\rot_{\gan} <<
\rganom @< \Aths=-\grad_{\gan} <<
\hoganom @< \A_{4}^{*}=\iota_{\cdots} <<
\cdots
\end{CD}$$
E.g., we can handle the systems
\begin{align*}
\rot_{\gat}E&=F,
&
\rot_{\gan}\rot_{\gat}E&=F,
&
-\grad_{\gan}\div_{\gat}E&=F,\\
-\div_{\gan}E&=g,
&
-\div_{\gan}E&=g,
&
\rot_{\gan}E&=G,
\end{align*}
or 
\begin{align*}
-\div_{\gan}\grad_{\gat}u&=f,
&
\rot_{\gan}\rot_{\gat}E&=F,\\
&&
-\grad_{\gat}\div_{\gan}E&=G.
\end{align*}
\item
generalized electro-magnetics (differential forms)
{\tiny
$$\begin{CD}
\cdots @> \Az=\iota_{\cdots} >>
\dgen{}{0}{\gat}(\om) @> \A_{1}=\ed_{\gat} >>
\cdots @> \A_{q-1}=\ed_{\gat} >>
\dgen{}{q-1}{\gat}(\om) @> \A_{q}=\ed_{\gat} >>
\dgen{}{q}{\gat}(\om) @> \A_{q+1}=\ed_{\gat} >>
\cdots @> \A_{N}=\ed_{\gat} >>
\lgen{}{2,N}{}(\om) @> \A_{N+1}=\pi_{\cdots} >>
\cdots
\end{CD}$$
$$\begin{CD}
\cdots @< \Azs=\pi_{\cdots} <<
\lgen{}{2,0}{}(\om) @< \A_{1}^{*}=-\cd_{\gan} <<
\cdots @< \A_{q-1}^{*}=-\cd_{\gan} <<
\Delta^{q-1}_{\gan}(\om) @< \A_{q}^{*}=-\cd_{\gan} <<
\Delta^{q}_{\gan}(\om) @< \A_{q+1}^{*}=-\cd_{\gan} <<
\cdots @< \A_{N}^{*}=-\cd_{\gan} <<
\dgen{}{N}{\gan}(\om) @< \A_{N+1}^{*}=\iota_{\cdots} <<
\cdots
\end{CD}$$
}
E.g., we can handle the systems
\begin{align*}
\ed_{\gat}E&=F,
&
-\cd_{\gan}\ed_{\gat}E&=F,
&
\ed_{\gat}E&=F,
&
-\cd_{\gan}\ed_{\gat}E&=F,\\
-\cd_{\gan}E&=G,
&
-\cd_{\gan}E&=G,
&
-\ed_{\gat}\cd_{\gan}E&=G,
&
-\ed_{\gat}\cd_{\gan}E&=G.
\end{align*}
\item
biharmonic problems, Stokes problems, and general relativity
{\tiny
$$\begin{CD}
\cdots @> \Az=\iota_{\cdots} >>
\hgen{}{2}{\gat}(\om) @> \Ao=\mathrm{Grad}\grad_{\gat} >>
\rgen{}{}{\gat}(\om;\mathbb{S}) @> \At=\mathrm{Rot}_{\mathbb{S},\gat} >>
\dgen{}{}{\gat}(\om;\mathbb{T}) @> \Ath=\mathrm{Div}_{\mathbb{T},\gat} >>
\ltom @> \A_{4}=\pi_{\cdots} >>
\cdots
\end{CD}$$
$$\begin{CD}
\cdots @< \Azs=\pi_{\cdots} <<
\ltom @< \Aos=\div\mathrm{Div}_{\mathbb{S},\gan} <<
\dgen{}{}{}\dgen{}{}{\gan}(\om;\mathbb{S}) @< \Ats=\sym\mathrm{Rot}_{\mathbb{T},\gan} <<
\rgen{}{}{\sym,\gan}(\om;\mathbb{T}) @< \Aths=-\dev\mathrm{Grad}_{\gan} <<
\hoganom @< \A_{4}^{*}=\iota_{\cdots} <<
\cdots
\end{CD}$$
}
E.g., we can handle the systems
\begin{align*}
\mathrm{Rot}_{\mathbb{S},\gat}S&=F,
&
\mathrm{Div}_{\mathbb{T},\gat}T&=F,\\
\div\mathrm{Div}_{\mathbb{S},\gan}S&=g,
&
\sym\mathrm{Rot}_{\mathbb{T},\gan}T&=G,
\end{align*}
or 
\begin{align*}
\sym\mathrm{Rot}_{\mathbb{T},\gan}\mathrm{Rot}_{\mathbb{S},\gat}S&=F,
&
-\dev\mathrm{Grad}_{\gan}\mathrm{Div}_{\mathbb{T},\gat}T&=F,\\
\div\mathrm{Div}_{\mathbb{S},\gan}S&=g,
&
\sym\mathrm{Rot}_{\mathbb{T},\gan}T&=G,
\end{align*}
or 
\begin{align*}
\mathrm{Rot}_{\mathbb{S},\gat}S&=F,
&
\mathrm{Div}_{\mathbb{T},\gat}T&=F,\\
\mathrm{Grad}\grad_{\gat}\div\mathrm{Div}_{\mathbb{S},\gan}S&=G,
&
\mathrm{Rot}_{\mathbb{S},\gat}\sym\mathrm{Rot}_{\mathbb{T},\gan}T&=G,
\end{align*}
or 
\begin{align*}
\div\mathrm{Div}_{\mathbb{S},\gan}\mathrm{Grad}\grad_{\gat}u&=f,
&
-\mathrm{Div}_{\mathbb{T},\gat}\dev\mathrm{Grad}_{\gan}E&=F.
\end{align*}
\item
linear elasticity
{\tiny
$$\begin{CD}
\cdots @> \Az=\iota_{\cdots} >>
\hogatom @> \Ao=\sym\mathrm{Grad}_{\gat} >>
\rgen{}{}{}\rgen{}{\top}{\gat}(\om;\mathbb{S}) @> \At=\mathrm{Rot}\mathrm{Rot}^{\top}_{\mathbb{S},\gat} >>
\dgen{}{}{\gat}(\om;\mathbb{S}) @> \Ath=\mathrm{Div}_{\mathbb{S},\gat} >>
\ltom @> \A_{4}=\pi_{\cdots} >>
\cdots
\end{CD}$$
$$\begin{CD}
\cdots @< \Azs=\pi_{\cdots} <<
\ltom @< \Aos=-\mathrm{Div}_{\mathbb{S},\gan} <<
\dgen{}{}{\gan}(\om;\mathbb{S}) @< \Ats=\mathrm{Rot}\mathrm{Rot}^{\top}_{\mathbb{S},\gan} <<
\rgen{}{}{}\rgen{}{\top}{\gan}(\om;\mathbb{S}) @< \Aths=-\sym\mathrm{Grad}_{\gan} <<
\hoganom @< \A_{4}^{*}=\iota_{\cdots} <<
\cdots
\end{CD}$$
}
E.g., we can handle the systems
\begin{align*}
\mathrm{Rot}\mathrm{Rot}^{\top}_{\mathbb{S},\gat}S&=F,
&
\mathrm{Rot}\mathrm{Rot}^{\top}_{\mathbb{S},\gan}\mathrm{Rot}\mathrm{Rot}^{\top}_{\mathbb{S},\gat}S&=F,
&
\mathrm{Rot}\mathrm{Rot}^{\top}_{\mathbb{S},\gat}S&=F,\\
-\mathrm{Div}_{\mathbb{S},\gan}S&=G,
&
-\mathrm{Div}_{\mathbb{S},\gan}S&=G,
&
-\sym\mathrm{Grad}_{\gat}\mathrm{Div}_{\mathbb{S},\gan}S&=G,
\end{align*}
or 
\begin{align*}
\mathrm{Rot}\mathrm{Rot}^{\top}_{\mathbb{S},\gan}\mathrm{Rot}\mathrm{Rot}^{\top}_{\mathbb{S},\gat}S&=F,
&
-\mathrm{Div}_{\mathbb{S},\gan}\sym\mathrm{Grad}_{\gat}E&=G,\\
-\sym\mathrm{Grad}_{\gat}\mathrm{Div}_{\mathbb{S},\gan}S&=G.
\end{align*}
\end{itemize}


\bibliographystyle{plain} 
\bibliography{paule}

\end{document}